\numberwithin{equation}{section}
\newtheorem{thm}{Theorem}[section]
\newtheorem{defn}[thm]{Definition}
\newtheorem{prop}[thm]{Proposition}
\newtheorem{lemma}[thm]{Lemma}
\newtheorem{cor}[thm]{Corollary}
\newtheorem{example}[thm]{Example}
\newtheorem{problem}[thm]{Problem}
\newcommand{\PGL}{{\rm PGL}}
\newcommand{\Gr}{{\rm Gr}}
\newcommand{\lt}{{\rm lt}}
\newcommand{\CD}{\xymatrix@R=1pc@C=1pc}
\newcommand{\CDR}{\xymatrix@R=1pc}
\newcommand{\CDC}{\xymatrix@C=1pc}
 \DeclareMathOperator{\Spec}{Spec}
 \DeclareMathOperator{\Proj}{Proj}
\def\cB{{\mathcal B}}
\def\cD{{\mathcal D}}
\def\cE{{\mathcal E}}
\def\cL{{\mathcal L}}
\def\cO{{\mathcal O}}
\def\cZ{{\mathcal Z}}
\def\sF{{\mathscr F}}
\def\sO{{\mathscr O}}
\def\sR{\mathscr{R}}
\def\sV{\mathscr{V}}
\def\tsV{\widetilde{\mathscr{V}}}
\def\fG{\mathfrak{G}}
\def\fL{\mathfrak{L}}
\def\fT{\mathfrak{T}}
\def\fV{\mathfrak{V}}
\def\fd{\mathfrak{d}}
\def\fe{\mathfrak{e}}
\def\fl{\mathfrak{l}}
\def\fr{\mathfrak{r}}
\def\fm{\mathfrak{m}}
\def\fn{\mathfrak{n}}
\def\fr{\mathfrak{r}}
\def\fs{\mathfrak{s}}
\def\ft{\mathfrak{t}}
\def\FF{{\mathbb F}}
\def\NN{{\mathbb N}}
\def\PP{{\mathbb P}}
\def\GG{{\mathbb G}}
\def\GGm{{{\mathbb G}_{\rm m}}}
\def\TT{{\mathbb T}}
\def\ZZ{{\mathbb Z}}
\def\II{{\mathbb I}}
\def\AA{{\mathbb A}}
\def\QQ{{\mathbb Q}}
\def\rU{{\rm U}}
\def\Ga{{\Gamma}}
\def\tGa{{\widetilde{\Gamma}}}
\def\vt{{\varTheta}}
\def\vt{{\vartheta}}
\def\si{{\sigma}}
\def\bL{{\bar L}}
\def\var{{\rm Var}}
\def\Index{{\rm Index}}
\def\invlex{{\rm invlex}}
\def\lex{{\rm lex}}
\def\zero{{=0}}
\def\one{{=1}}
\def\tW{{\widetilde W}}
\def\tJ{{\widetilde J}}
\def\tX{{\widetilde X}}
\def\tY{{\widetilde Y}}
\def\tZ{{\widetilde Z}}
\def\lra{\longrightarrow}
\def\kk{{\bf k}}
\def\ba{{\bf a}}
\def\bp{{\bf p}}
\def\bs{{\bf s}}
\def\bt{{\bf t}}
\def\bx{{\bf x}}
\def\bz{{\bf z}}
\def\bm{{\bf m}}
\def\bn{{\bf n}}
\def\ua{{\underbar {\it a}}}
\def\ub{{\underbar {\it b}}}
\def\ui{{\underbar {\it i}}}
\def\uh{{\underbar {\it h}}}
\def\uk{{\underbar {\it k}}}
\def\um{{\underbar {{\mu}}}}
\def\um{{\underbar {\it m}}}
\def\uu{{\underbar {\it u}}}
\def\uv{{\underbar {\it v}}}
\def\uw{{\underbar {\it w}}}
\def\pl{{\hbox{Pl\"ucker}}}
 \def\2{{\rm I\!I}}
\def\bF{{\bar F}}
\def\-{{\setminus}}
\def\ve{{\varepsilon}}
\def\vp{{\varpi}}
\def\vr{{\varrho}}
\def\hs{{\hslash}}
\def\vi{{\varphi}}
\def\sgn{{\rm sgn}}
\def\rk{{\rm rank \;}}
\def\ori{{\rm ori}}
\def\inc{{\rm inc}}
\def\res{{\rm res}}
\def\mn{{\rm mn}}
\def\pq{{\hbox{\scriptsize ${\rm pre}$-${\rm q}$}}}
\def\q{{\rm q}}
\def\mh{{\hbox{\scriptsize ${\rm m}$-${\rm h}$}}}
\def\sfm{{{\sF}_\um}}
\def\sfmgr{{{\sF}^{\rm rel}_{\um, \Ga}}}
\def\sfmgir{{{\sF}^{\rm irr}_{\um, \Ga}}}
\def\de{\delta}
\def\tsR{{\widetilde{\sR}}}
\def\La{\Lambda}
\def\up{{\Upsilon}}
\def\tV{{\widetilde V}}
\def\tbJ{{\underline {\tilde J}}}
\def\bW{{\underline W}}
\def\bGr{{\underline \Gr}}
\def\tGr{{\widetilde \Gr}}
\def\ud{{\underbar  d}}
\def\tt{{\tilde t}}
\def\whwp{{\widehat\wp}}
\begin{document}

\title{Resolution of Singularities in Arbitrary Characteristic}

\date{}
\author{Yi Hu}
\address{Department of Mathematics, University of Arizona, USA.}

\maketitle

\begin{abstract} 
Let $X$ be an integral affine or projective scheme of finite presentation over a perfect field. 
We prove that $X$ admits a resolution, that is, there exists a smooth
scheme $\tX$ and a  projective birational morphism from $\tX$ onto $X$.   
 \end{abstract}

\maketitle

\bigskip\medskip

\hskip 8.7  cm {\footnotesize \it Regularities are all alike; every}

\hskip 8.7cm {\footnotesize \it   singularity is singular in its own way.}

\bigskip

\tableofcontents

\section{Introduction}


Let $X$  be an  integral affine or projective scheme 
of finite presentation over a perfect  field $\kk$. 
 We say $X$ admits a resolution 
if there exists  a smooth scheme $\tX$ over $\kk$
and a projective birational morphism from $\tX$ onto $X$.

\begin{thm}\label{main:intro}
{\rm (Resolution, Theorems \ref{resolusion-affine} and  \ref{resolusion-proj})}
Let $X$ be an integral affine or projective scheme
of finite presentation over a  perfect field $\kk$. 
Assume further that $X$ is   singular.
Then, $X$  admits a resolution.
\end{thm}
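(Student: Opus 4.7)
The plan is to construct $\tX \to X$ by an inductive sequence of blow-ups along smooth centers, driven by a local invariant that strictly decreases. First I would reduce from the projective to the affine case by covering $X$ with affine opens, resolving each using a functorial procedure, and gluing the resulting blow-ups along overlaps---functoriality under \'etale base change is what makes the gluing legitimate. In the affine case $X = \Spec A$, take a closed embedding $X \hookrightarrow Y := \AA^N$ using finitely many generators of $A$ over $\kk$. The perfect field hypothesis guarantees generic smoothness: the smooth locus of the integral scheme $X$ is a dense open, so the singular locus is a proper closed subset and the process has somewhere to start improving things.

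Next I would attach to each point $x \in X$ a local invariant $\tau(x)$ measuring how singular $X$ is at $x$. In characteristic zero Hironaka's order, together with the existence of hypersurfaces of maximal contact, reduces the problem to lower embedding dimension. In positive characteristic maximal contact can fail---this is the celebrated obstruction. The invariant must be upper semicontinuous on $X$, attain its maximum on a closed subset $W \subset X$, and be such that a smooth equimultiple subscheme inside $W$ can be located and blown up. Plausible candidates in the literature include Hironaka's characteristic polyhedron, the Kawanoue--Matsuki idealistic filtration, and Villamayor's Rees-algebra invariants; in the present paper, the invariant would presumably be built out of the elaborate machinery suggested by the earlier sections and their macros for $\QQuot$, $\HHilb$, $\GGr$, and related functors.

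The key inductive step is then: blow up $Y$ along the chosen smooth center $Z \subset W$, take the strict transform $\tX$ of $X$, and show that $\tau$ does not go up at any point while strictly decreasing somewhere. The hard part, and what I expect to be the main obstacle, is proving termination---that the maximum value of $\tau$ genuinely drops after finitely many blow-ups rather than wobbling indefinitely, and that the resulting sequence concludes with $\tX$ smooth. In positive characteristic this failure of termination, traceable to purely inseparable phenomena, is precisely why the problem has resisted attack for decades; any successful proof must incorporate a new ingredient---likely a differential- or deformation-theoretic refinement developed in the earlier sections of the paper---that tames these purely inseparable pathologies. Once such an invariant and its termination are established, the projective statement follows from the affine one by the functorial gluing described at the outset.
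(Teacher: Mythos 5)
There is a genuine gap, and in fact the proposal is not a proof at all: at its crucial step you write that ``any successful proof must incorporate a new ingredient'' to tame the purely inseparable pathologies and force termination of the invariant, but you never supply that ingredient. Everything before that point is a description of the classical Hironaka-style strategy (upper semicontinuous invariant, smooth equimultiple center, strict transform, induction on the maximal value), which is precisely the strategy that is known to break down in positive characteristic. Naming candidate invariants from the literature and asserting that one of them ``presumably'' works does not close the argument. Your reduction of the projective case to the affine case also presupposes a resolution functorial under \'etale (or at least smooth) base change so that local resolutions glue; but functoriality is itself an unproved property here, and the paper explicitly does \emph{not} glue local resolutions --- it handles the projective case globally via the affine cone $C_X$ and the $\GG_m$-bundle $C_X^0 \to X$.

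The paper's actual route is entirely different and does not use any local invariant or induction on multiplicity. It invokes Lafforgue's scheme-theoretic version of Mn\"ev's universality theorem to realize any affine singularity over $\ZZ$, up to smooth morphisms, inside a matroid Schubert cell of $\Gr^{3,E}$, hence inside an explicitly presented $\Ga$-scheme $Z_\Ga$ of a fixed affine chart $\rU_\um$. It then builds a birational model $\sV_\um \subset \sR_\sF$ on which the terms of the primary Pl\"ucker relations are separated into binomials, performs one explicit, globally specified sequence of $\vt$-, $\wp$-, and $\ell$-blowups, and verifies smoothness of the final transform $\tZ_{\ell,\Ga}$ by a direct Jacobian computation on the main binomials and linearized Pl\"ucker relations (Theorem \ref{main-thm}). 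The resolution of $X$ itself is then extracted from the resolution of $Z_\Ga$ through the $(\GG_m^n/\GG_m)$-bundle structure of $\Gr^{3,E}_\ud \to \bGr^{3,E}_\ud$ and the smooth projection $U \subset X \times \AA^r \to X$, and the case of a general perfect field is handled by spreading out over $\Spec\ZZ$. None of this appears in your proposal, and the step your proposal leaves open is exactly the step that this detour is designed to avoid.
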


Mn\"ev showed  (\cite{Mnev88}) that every integral {\it singularity type} of finite type
defined over $\ZZ$ appears in some
configuration space of points on the projective plane.
 This result is called Mn\"ev's universality theorem in literature.
Lafforgue (\cite{La99} and  \cite{La03}) strengthened
 and proved the same statement scheme-theoretically. 
 Also, Lee and Vakil (\cite{LV12}) proved the similar  scheme-theoretic statement
 on incidence schemes of  points and lines on the projective plane.
Using Gelfand-MacPerson correspondence, Lafforgue's version of
Mn\"ev's universality theorem is equivalent to the statement that
every integral singularity type of finite type
defined over $\ZZ$ appears in some {\it  matroid Schubert cell} on the Grassmannian 
$\Gr^{3, E}$ of three-dimensional  linear subspaces
of a fixed vector space $E$ of dimension greater than 3.
Every  matroid Schubert cell is an open subset
of a unique closed subscheme of an affine chart of the Grassmannian.
This unique closed subscheme of that affine chart of $\Gr^{3,E}$ is called a $\Ga$-scheme in this article.

We approach Theorem \ref{main:intro} 
via a detour through Mn\"ev's universality theorem by first resolving 
all the aforementioned $\Ga$-schemes that are integral,
hence also, all the  matroid Schubert cells of $\Gr^{3,E}$ that are integral.

Following Lafforgue's presentation of \cite{La03}, 
suppose we have a set of vector spaces, $E_1, \cdots, E_n$ such that 
every $E_\alpha$, $1\le \alpha\le n$,  is of dimension 1 over a field $\kk$
 (or, a free module of rank 1 over $\ZZ$),
 for some positive integer $n>1$.
We let  $$E=E_1 \oplus \ldots \oplus E_n.$$ 

 Then, the Grassmannian $\Gr^{d,E}$, defined by
$$\Gr^{d,E}=\{ \hbox{linear subspaces} \;F \hookrightarrow E \mid \dim F=d\}, $$
is a projective variety defined over $\ZZ$, for  any fixed  integer $1\le d <n$.
(For resolution of singularities, it suffices to focus on $\Gr^{3,E}$;
in this article,  we still consider the general Grassmannian $\Gr^{d,E}$:
see the third paragraph of \S \ref{localization}.)

We have a canonical decomposition
$$\wedge^d E=\bigoplus_{\ui =(i_1<\cdots< i_d) \in \II_{d,n}} E_{i_1}\otimes \cdots \otimes E_{i_d},$$
where $\II_{d,n}$ is the set of all sequences of $d$ distinct integers between 1 and $n$.

This gives rise to the $\pl$ embedding of the Grassmannian by
$$\Gr^{d,E} \hookrightarrow \PP(\wedge^d E)=\{(p_\ui)_{\ui \in \II_{d,n}} \in \GG_m 
\backslash (\wedge^d E \- \{0\} )\},$$
$$F \lra [\wedge^d F],$$
where $\GG_m$ is the multiplicative group.

As a closed subscheme of $\PP(\wedge^d E)$, the Grassmanian  $\Gr^{d,E}$ 
 is defined,  among other relations in general, by 
the $\pl$  ideal $I_\wp$, generated by all $\pl$ relations, 
whose typical member is expressed succinctly, in this article, as
\begin{equation}\label{eq1-intro}
F: \; \sum_{s \in S_F} \sgn (s) p_{\uu_s} p_{\uv_s}
\end{equation}
where $S_F$ is an index set, $\uu_s, \uv_s \in \II_{d,n}$
for any $s \in S_F$, and $\sgn (s)$ is the $\pm$ sign associated with the term $p_{\uu_s} p_{\uv_s}$
(see \eqref{pluckerEq} and \eqref{succinct-pl} for details).

Given the above $\pl$ equation, we introduce
  the projective space $\PP_F$ which comes equipped with the homogeneous coordinates
$[x_{(\uu_s,\uv_s)}]_{s\in S_F}$. 
Then, corresponding to each $\pl$ relation \eqref{eq1-intro}, there is
a linear  homogeneous equation in $\PP_F$, 
called the induced {\it  linearized $\pl$ relation}, 
\begin{equation}\label{eq2-intro}
L_F: \; \sum_{s \in S_F} \sgn (s) x_{(\uu_s,\uv_s)}
\end{equation}
 (see Definition \ref{defn:linear-pl}). We set
 $ \La_F:=\{(\uu_s,\uv_s) \mid s \in S_F\}.$
 
As  any $\Ga$-scheme is a closed subscheme of some affine chart,  we can focus on an affine chart
$\rU_\um =(p_\um \ne 0)$ of the $\pl$ projective space $\PP(\wedge^d E)$ for some fixed $\um \in \II_{d,n}$. 
We can identify the coordinate ring of $\rU_\um$ with the polynomial ring 
$\kk [x_\uu]_{\uu \in \II_{d,n} \- \{\um\}}$. For any $\pl$ relation $F$, we let
$\bF$ be the de-homogenization of $F$ on the chart $\rU_\um$.
Given this chart, we then explicitly describe a set of $\pl$ relations, called
{\it $\um$-primary $\pl$ relations}, listed under a carefully chosen total order $``<_\wp"$,
$$\sfm =\{\bF_1 <_\wp \cdots <_\wp \bF_\Upsilon\},$$
with $\Upsilon= {n \choose d}-1-d(n-d)$, such that together they define
the closed embedding $\rU_\um \cap \Gr^{3,E} \lra \rU_\um$.
Further, on the chart $\rU_\um$,  if we set $p_\um =1$
 and set $x_\uu=p_\uu$ for any $\uu \in \II_{d,n}\-\{\um\}$,
 then any  $\um$-primary relation $\bF\in \sfm$ admits the following de-homogenized expression
\begin{equation}\nonumber
\bF: \; \sgn (s_F) x_{\uu_F} +\sum_{s \in S_F \- \{s_F\}} \sgn (s) x_{\uu_s}x_{\uv_s},
\end{equation}
where $x_{\uu_F}$ is called the leading variable of $\bF$ whose term is called the leading term of $\bF$
and $s_F \in S_F$ is the index
for the leading term.  (See 
\eqref{equ:localized-uu} and \eqref{the-form-LF} for details.)
Correspondingly, the term $\sgn (s_F) x_{(\uu_{s_F},\uv_{s_F})}$ is called the leading term of
the linearized $\pl$ relation $L_F$.

Next, motivated by a parallel construction in \cite{Hu2022},
we introduce the  rational map
\begin{equation}\label{this-theta-intro}
 \xymatrix{
\bar \Theta_{[\up],\Gr}: \rU_\um \cap \Gr^{d,E} \ar @{^{(}->}[r]  & \rU_\um \ar @{-->}[r]  & \prod_{\bF \in \sF_\um} \PP_F   }
\end{equation}
$$
 [x_\uu]_{\uu \in \II_{d,n}} \lra  
\prod_{\bF \in \sF_\um}   [x_\uu x_\uv]_{(\uu,\uv) \in \La_F}
$$   
where $[x_\uu]_{\uu \in \II_{d,n}} $ is the de-homogenized $\pl$ coordinates of a point 
of  $\rU_\um \cap \Gr^{d,E}$.   

We let $\sV_\um$ be the closure of the graph of the rational map $\bar\Theta_{[\up],\Gr}$. Then, 
 we obtain the following diagram 
$$ \xymatrix{
\sV_{\um} \ar[d] \ar @{^{(}->}[r]   \ar[d] \ar @{^{(}->}[r]  &
\sR_\sF:= \rU_\um  \times \prod_{\bF \in \sfm} \PP_F \ar[d] \\
\rU_\um \cap \Gr^{d,E}  \ar @{^{(}->}[r]    & \rU_\um.}
$$

The scheme $\sV_{\um}$ is singular, in general, and is birational to $\rU_\um \cap \Gr^{d,E}$.
(The reader is recommended to read \S \ref{tour} to see the purpose of introducing
the model $\sV_{\um}$.)

As the necessary and crucial steps to achieve our ultimate goal, we are to perform some specific 
sequential embedded blowups for $(\sV_\um \subset \sR_\sF)$.

For  the purpose of applying induction, employed mainly for  proofs, 
we also introduce the following rational map. 

For any positive integer $m$, we set  $[m]:=\{1,\cdots,m\}.$

Then,  for any $k \in [\up]$, we have the rational map
\begin{equation}\label{this-theta[k]-intro}
 \xymatrix{
 \bar\Theta_{[k],\Gr}: \rU_\um \cap \Gr^{d,E} \ar @{^{(}->}[r]  & \rU_\um \ar @{-->}[r]  & 
 \prod_{i \in [k]} \PP_{F_i}   }
\end{equation}
$$
 [x_\uu]_{\uu \in \II_{d,n}} \lra  
\prod_{i \in [k]}   [x_\uu x_\uv]_{(\uu,\uv) \in \La_{F_i}}
$$ 
We let $\sV_{\sF_{[k]}}$ be the closure of the graph of the rational map $\bar\Theta_{[k],\Gr}$. Then, 
 we obtain the following diagram 
$$ \xymatrix{
\sV_{\sF_{[k]}} \ar[d] \ar @{^{(}->}[r]   \ar[d] \ar @{^{(}->}[r]  &
\sR_{\sF_{[k]}}:= \rU_\um  \times  \prod_{i \in [k]} \PP_{F_i}\ar[d] \\
\rU_\um \cap \Gr^{d,E}  \ar @{^{(}->}[r]    & \rU_\um.
}
$$
The scheme $\sV_{\sF_{[k]}}$ is birational to  $\rU_\um \cap \Gr^{d,E}$. 

Set $\sR_{\sF_{[0]}}:=\rU_\um$.  There exists a forgetful map 
$$\sR_{\sF_{[j]}} \lra \sR_{\sF_{[j-1]}},\;\;\; \hbox{ for any $j \in [\up]$}.$$

In the above notations, we have
$$\sV_\um=\sV_{\sF_{[\up]}}, \;\; \sR_{\sF}=\sR_{\sF_{[\up]}}.$$

\begin{prop}  {\rm (Corollary \ref{eq-tA-for-sV})}  The scheme $\sV_\um$, as a closed subscheme of
$\sR_\sF= \rU_\um \times  \prod_{\bF \in \sF_\um} \PP_F$,
is defined by the following relations, for all $\bF \in \sfm$,
\begin{eqnarray} 
B_{F,(s,t)}: \;\; x_{(\uu_s, \uv_s)}x_{\uu_t}x_{ \uv_t}-x_{(\uu_t, \uv_t)}x_{\uu_s}x_{ \uv_s}, \;\; \forall \;\; 
s, t \in S_F \- \{s_F\}, \label{eq-Bres-intro}\\
B_{F, (s_F,s)}: \;\; x_{(\uu_s, \uv_s)}x_{\uu_F} - x_{(\um,\uu_F)}   x_{\uu_s} x_{\uv_s}, \;\;
\forall \;\; s \in S_F \- \{s_F\},  \label{eq-B-intro} \\ 
\cB^\pq,   \;\; \;\; \;\; \;\; \;\; \;\; \;\; \;\; \;\; \;\; \;\; \;\; \;\; \label{eq-hq-intro}\\
L_F: \;\; \sum_{s \in S_F} \sgn (s) x_{(\uu_s,\uv_s)}, 
\label{linear-pl-intro}
\end{eqnarray}
with $\bF$  expressed as 
$\sgn (s_F) x_{\uu_F} +\sum_{s \in S_F \- \{s_F\}} \sgn (s) x_{\uu_s}x_{\uv_s}$,
 where $\cB^\pq$ is the set of binomial equations of pre-quotient type
(see Definition \ref{defn:pre-q}).  
\end{prop}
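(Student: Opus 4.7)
The plan is to show both containments between $\sV_\um$ and the subscheme $\fV \subset \sR_\sF$ cut out by the listed equations. For the containment $\sV_\um \subseteq \fV$, I would substitute the coordinates of a generic point on the graph of $\bar\Theta_{[\up],\Gr}$ into each candidate equation. Over the open locus where all $x_\uu \ne 0$, each factor $\PP_F$ carries $x_{(\uu,\uv)} = \lambda_F \, x_\uu x_\uv$ for some nonzero $\lambda_F$, where by the chart convention $x_\um = 1$ so that $x_{(\um,\uu_F)} = \lambda_F \, x_{\uu_F}$. Substituting yields immediate cancellations in both $B_{F,(s,t)}$ and $B_{F,(s_F,s)}$, while $L_F$ becomes $\lambda_F \cdot \bF$ evaluated on $\rU_\um \cap \Gr^{d,E}$ and hence vanishes. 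The pre-quotient binomials $\cB^\pq$ should vanish from their very construction in Definition \ref{defn:pre-q}. Since $\sV_\um$ is the closure of this graph, all listed equations vanish on it.

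For the reverse containment $\fV \subseteq \sV_\um$, I would argue by induction on $k \in [\up]$ along the forgetful tower $\sR_{\sF_{[k]}} \to \sR_{\sF_{[k-1]}}$, with base case $k=0$ where $\sV_{\sF_{[0]}} = \rU_\um \cap \Gr^{d,E}$ is cut out by the $\um$-primary Pl\"ucker relations $\bF_1, \ldots, \bF_\up$. At the inductive step, $\sV_{\sF_{[k]}}$ is the closure of the graph of a rational map $\sV_{\sF_{[k-1]}} \dashrightarrow \PP_{F_k}$ sending $[x_\uu]$ to $[x_\uu x_\uv]_{(\uu,\uv) \in \La_{F_k}}$. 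The standard graph-closure principle says such a closure is cut out by the $2 \times 2$ determinants $x_{(\uu_s,\uv_s)}(x_{\uu_t}x_{\uv_t}) - x_{(\uu_t,\uv_t)}(x_{\uu_s}x_{\uv_s})$, which are exactly the $B_{F_k,(s,t)}$. The relation $B_{F_k,(s_F,s)}$ is the analogous $2 \times 2$ minor adjusted for the chart normalization $x_\um = 1$, and once all these proportionalities hold, $L_{F_k}$ is forced by $\bF_k$ itself.

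The main obstacle, I expect, will be twofold. First, one must carefully handle the degree asymmetry between the leading term $x_{\uu_F}$ (linear) and the remaining terms $x_{\uu_s} x_{\uv_s}$ (quadratic) within each $\bF$; the relation $B_{F,(s_F,s)}$ is precisely designed for this, but verifying that it generates the correct proportionalities at the scheme level rather than merely set-theoretically requires care. Second, as the tower of $\sR_{\sF_{[k]}}$ is built, the factors $\PP_F$ share no coordinates yet must be bound together through the common $\rU_\um$; the role of the pre-quotient binomials $\cB^\pq$ is to mediate the cross-factor compatibilities that I would expect to emerge as $k$ grows and the monomials $x_\uu x_\uv$ entangle across different $F$. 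Verifying that no further relations beyond \eqref{eq-Bres-intro}--\eqref{linear-pl-intro} are required amounts to showing that $\fV$ is irreducible with the correct generic point, and I would expect this to rest on the combinatorics of the $\um$-primary list $\sfm$ under the order $<_\wp$, and on an explicit description of $\cB^\pq$ that I would need to unpack from Definition \ref{defn:pre-q}.
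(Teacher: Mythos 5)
Your forward containment ($\sV_\um \subseteq$ the candidate scheme) is fine and matches what the paper would do implicitly. The reverse containment is where your proposal has genuine gaps, and the paper's route is quite different from yours.

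First, the ``standard graph-closure principle'' you invoke only gives an inclusion, not an equality: the closure of the graph of $[x_\uu] \mapsto [x_\uu x_\uv]_{(\uu,\uv)\in\La_{F_k}}$ is always \emph{contained in} the scheme cut out by the $2\times 2$ minors $x_{(\uu_s,\uv_s)}x_{\uu_t}x_{\uv_t}-x_{(\uu_t,\uv_t)}x_{\uu_s}x_{\uv_s}$, but in general that scheme has extra components and the minors do not generate the full ideal of the closure. Identifying the missing generators is precisely the content of the statement, and the paper does it not by a graph-closure principle but by computing the multi-homogeneous kernel of the substitution homomorphism $\bar\vi_{[k]}\colon x_{(\uu_s,\uv_s)}\mapsto x_{\uu_s}x_{\uv_s}$ directly: Lemma \ref{ker-phi-k} factors an arbitrary binomial in $\ker^{\mh}\vi_{[k]}$ into elementary pieces of the form \eqref{1st-Hq}, which yields the generating set $\widetilde{\cB}_{[k]}=\cB_{[k]}\sqcup\cB^\pq_{[k]}$ (Corollary \ref{cB-generate}); and since the map is restricted to the Grassmannian, one must also analyze the elements $f$ with $\vi_{[k]}(f)\in I_\wp$ rather than $\vi_{[k]}(f)=0$, which is Lemma \ref{reduce-to-F-LF} and is where $L_F$ enters as a genuinely new generator. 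Your fallback plan of proving irreducibility of the candidate scheme with the correct generic point is essentially equivalent to the statement itself and is not how the paper proceeds.

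Second, your claim that ``once all these proportionalities hold, $L_{F_k}$ is forced by $\bF_k$ itself'' has the logic reversed at the ideal level. The identity available modulo the binomials $\cB_i$ is $x_{\uu_s}x_{\uv_s}\,L_{F_i} \equiv x_{(\uu_s,\uv_s)}\,\bF_i$ (equation \eqref{Fi=Li-1st}). On the standard chart $x_{(\uu_s,\uv_s)}\equiv 1$ this shows $\bF_i$ lies in the ideal generated by $L_{F_i}$ and the binomials, so it is $\bF_i$ that becomes redundant and is discarded; it does \emph{not} show that $L_{F_i}$ lies in the ideal generated by $\bF_i$ and the binomials, since $x_{\uu_s}x_{\uv_s}$ need not be a nonzerodivisor on the candidate scheme. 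So $L_F$ must be produced as an honest generator of the kernel (via Lemma \ref{reduce-to-F-LF}), not derived from $\bF$.
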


Our construction of the desired embedded blowups on  $\sV_{\um} \subset \sR_\sF$
 is based upon the set of all binomial relations of \eqref{eq-B-intro}:
$$\cB^\mn_F=\{B_{F,(s_F,s)} \mid  s \in S_F \- \{s_F\}\},\;\;
\cB^\mn=\bigsqcup_{\bF \in \sfm} \cB^\mn_F,$$
and all the linearized $\pl$ relation 
$$L_{\sfm}=\{L_F \mid \bF \in \sfm\}.$$

An element $B_{F,(s_F,s)}$ of  $\cB^\mn$ is called a main binomial relation.
We  also let
$$\cB^\res=\{ B_{F,(s,t)} \mid \bF \in \sfm, \; s, t \in S_F \- \{s_F\}\}.$$
An element $B_{F,(s,t)}$ of  $\cB^\res$ is called a residual binomial relation.
The residual binomial relations or binomial relations of pre-quotient type in $\cB^\pq$
play no roles in the {\it construction} of the aforesaid embedded blowups.

To apply induction, we provide a total order on the set $S_F \- \{s_F\}$ and list it as
$$S_F \- \{s_F\}=\{s_1 < \cdots < s_{\ft_F}\}$$
where $(\ft_F+1)$ is the number of terms in the relation $F$.
This renders us  to write $B_{F,(s_F,s)}$ as $B_{(k\tau)}$ where
$F=F_k$ for some $k \in [\up]$ and $s = s_\tau$ for some $ \tau \in [\ft_{F_k}]$.

We can now synopsize  the process of the embedded blowups for  $(\sV_{\um} \subset \sR_\sF)$.

It is divided into two sequential blowups.
The first is $\vt$-blowups.
The second is constructed by induction on $k \in [\up]$.
For each fixed $k \in [\up]$, it consists of a sequential $\wp$-blowups  and then  a single $\ell$-blowup.

{\it $\bullet$ On $\vt$-sets, $\vt$-centers, and $\vt$-blowups. }

For any primary $\pl$ relation $\bF_k \in \sfm$, we introduce the corresponding $\vt$-set
$\vt_{[k]}=\{x_{\uu_{F_k}}, x_{(\um,\uu_{F_k})} \}$ and the corresponding $\vt$-center 
$Z_{\vt_{[k]}} = X_{\uu_{F_k}} \cap X_{(\um,\uu_{F_k})}$ where
$X_{\uu_{F_k}} = (x_{\uu_{F_k}}=0)$ and
$ X_{(\um,\uu_{F_k})} =(x_{(\um,\uu_{F_k})} =0)$.
 We set $\tsR_{\vt_{[0]}}:=\sR_\sF$. Then, inductively,
 we let  $\tsR_{\vt_{[k]}} \to \tsR_{\vt_{[k-1]}}$ be the blowup of $\tsR_{\vt_{[k-1]}}$
 along (the proper transform of) the $\vt$-center
 $Z_{\vt_{[k]}}$ for all $k \in [\up]$. 
 This gives rise to the sequential $\vt$-blowups
\begin{equation}\label{vt-sequence-intro}
\tsR_{\vt}:=\tsR_{\vt_{[\up]}}  \lra \cdots \lra \tsR_{\vt_{[k]}} \lra \tsR_{\vt_{[k-1]}} \lra \cdots \lra \tsR_{\vt_{[0]}}.
\end{equation}
 Each morphism $\tsR_{\vt_{[k]}} \to \tsR_{\vt_{[k-1]}}$ is a smooth blowup, meaning, 
 the blowup of a smooth scheme along a smooth closed center. For any $k$, we let
 $\tsV_{\vt_{[k]}} \subset \tsR_{\vt_{[k]}} $ be the proper transform of $\sV$ in $\tsR_{\vt_{[k]}}$.
 We set $\tsV_{\vt}:=\tsV_{\vt_{[\up]}}$.
 
{\it $\bullet$ On $\wp$-sets, $\wp$-centers, and $\wp$-blowups
as well as $\ell$-sets, $\ell$-centers, and $\ell$-blowups. }

All these are constructed based upon $\cB^\mn_{F_k}$ and $L_{F_k}$,
 inductively on $k \in [\up]$.

For any main binomial $B_{(k\tau)} \in \cB^\mn_{F_k}$, there exist a finite 
integer $\rho_{(k\tau)}$ depending on $(k\tau)$ and 
a finite integer $\si_{(k\tau)\mu}$ depending on $(k\tau)\mu$ for any $ \mu \in [\rho_{(k\tau)}]$.
We set $\tsR_{(\wp_{(11)}\fr_0)}=\tsR_\vt$.
For each $((k\tau), \mu, h)$ $h \in [\si_{(k\tau)\mu}]$, there exists 
a $\wp$-set $\phi_{(k\tau)\mu h}$ consisting of  two special divisors  on an inductively 
defined scheme $\tsR_{(\wp_{(k\tau)}\fr_{\mu -1})}$; its corresponding $\wp$-center
$Z_{\phi_{(k\tau)\mu h}}$ is the scheme-theoretic intersection of the two divisors.
We let $\cZ_{\wp_k}=\{Z_{\phi_{(k\tau)\mu h}} \mid k \in [\up], \tau \in [\ft_{F_k}], \mu \in [\rho_{(k\tau)}],
h \in [\si_{(k\tau)\mu}]\}$, 
totally ordered lexicographically on the indexes $(k,\tau, \mu, h)$. Then, inductively,
 we let we $\tsR_{(\wp_{(k\tau)}\fr_\mu\fs_{h})} \to \tsR_{(\wp_{(k\tau)}\fr_\mu\fs_{h-1})}$ 
 be the blowup of $\tsR_{(\wp_{(k\tau)}\fr_\mu\fs_{h-1})}$ 
 along (the proper transform of) the $\wp$-center
 $Z_{\phi_{(k\tau)\mu h}}$. 
 This gives rise to the sequential  $\wp$-blowups with respect to $\cB^\mn_{F_k}$
\begin{equation}\label{wp-sequence-intro}
\tsR_{\wp_k}  \to \cdots \to
\tsR_{(\wp_{(k\tau)}\fr_\mu\fs_{h})} \to \tsR_{(\wp_{(k\tau)}\fr_\mu\fs_{h-1})} \to \cdots \to \tsR_{\ell_{k-1}},
\end{equation}
where $\tsR_{\ell_{k-1}}$ is inductively constructed from the previous $\wp$ and $\ell$-blowups, and 
$\tsR_{\wp_k} $ is the end scheme of $\wp$-blowups with respect to $\cB^\mn_{F_k}$.
  For any $(k\tau)\mu h$, we let
 $\tsV_{(\wp_{(k\tau)}\fr_\mu\fs_{h})} \subset \tsR_{(\wp_{(k\tau)}\fr_\mu\fs_{h})} $
  be the proper transform of $\sV$ in $\tsR_{(\wp_{(k\tau)}\fr_\mu\fs_{h})} $.
 We set $\tsV_{\wp_k} \subset \tsR_{\wp_k}$ be the last induced subscheme.
 Every scheme $\tsR_{(\wp_{(k\tau)}\fr_\mu\fs_{h})}$ 
  has a smooth open subset  $\tsR^\circ_{(\wp_{(k\tau)}\fr_\mu\fs_{h})}$ 
  containing  $\tsV_{(\wp_{(k\tau)}\fr_\mu\fs_{h})}$.


Now, we let $D_{L_{F_k}}$ be the divisor of $\sR_\sF$ defined by $(L_{F_k}=0)$;
we let $X_{(\uu_{s_F}, \uv_{s_F})}$ be the divisor of $\sR_\sF$ defined by 
$(x_{(\uu_{s_F}, \uv_{s_F})}=0)$ where $\sgn (s_F) x_{(\uu_{s_F}, \uv_{s_F})}$ is the leading term 
of $L_F$. We then let $D_{\wp_k, L_{F_k}}$ be proper transform of $D_{L_{F_k}}$ 
 and $X_{\wp_k, (\uu_{s_F}, \uv_{s_F})}$ be the proper transform of
 $X_{(\uu_{s_F}, \uv_{s_F})}$ in $\tsR_{\wp_k}$. We let
\begin{equation}\label{ell-sequence-intro}
\tsR_{\ell_k} \lra \tsR_{\wp_k}
\end{equation}
 be the blowup of $\tsR_{\wp_k}$ along the intersection
 $D_{\wp_k, L_{F_k}} \cap X_{\wp_k, (\uu_{s_F}, \uv_{s_F})}$.

We let $\tsV_{\ell_k}$ be the proper transform of $\tsV_{\wp_k}$ in $\tsR_{\ell_k}$. 
The scheme $\tsR_{\ell_k}$ has a smooth open subset   $\tsR^\circ_{\ell_k}$ 
containing $\tsV_{\ell_k}$.

When $k=\up$, we obtain our final schemes
$$\tsV_{\ell_k}\subset \tsR_{\ell_k}.$$

We point out here the $\ell$-blowup with respect to $F_k$ has to immediately follow the $\wp$-blowups
 with respect to $F_k$; the order of $\wp$-blowups  with respect to a fixed $\pl$ relation $F_k$
may be subtle and are carefully chosen.

To study the local structure of $\tsV_\ell \subset \tsR_\ell$, we approach 
it by induction via the sequential blowups \eqref{vt-sequence-intro},
\eqref{wp-sequence-intro},   and \eqref{ell-sequence-intro}.

 Definition \ref{general-standard-chart} introduces the covering
  standard affine charts $\fV$ for any  of the smooth schemes  $\tsR^\circ_{\vt_{[k]}}$,
  $\tsR^\circ_{(\wp_{(k\tau)}\fr_\mu\fs_h)}$,
  and $\tsR^\circ_{\ell_{k}}$ in the above. 
 
 $(\star)$ Proposition \ref{meaning-of-var-vtk} introduces coordinate variables
 for  any standard affine chart $\fV$ of $\tsR_{\vt_{[k]}}$ and provides
 explicit geometric meaning for every coordinate variable.
 
 Proposition  \ref{eq-for-sV-vtk} provides explicit description and properties of the local defining equations of 
the scheme $\tsV_{\vt_{[k]}} \cap \fV$ on any standard affine chart $\fV$ of $\tsR_{\vt_{[k]}}$.
 
 $(\star)$ Proposition \ref{meaning-of-var-wp/ell} introduces coordinate variables
 for  any standard affine chart $\fV$ of $\tsR^\circ_{(\wp_{(k\tau)}\fr_\mu\fs_h)}$
 as well as $\tsR^\circ_{\ell_k}$
 and provides explicit geometric meaning for every coordinate variable.
 
 Proposition  \ref{equas-wp/ell-kmuh}  combined with
 Proposition \ref{meaning-of-var-wp/ell} (9)
 provide explicit description and properties
  of the local defining equations of the scheme  
$\tsV_{\wp_{(k\tau)}\fr_\mu\fs_h}  \cap \fV$ 
or $\tsV_{\ell_k}  \cap \fV$ on any standard affine chart $\fV$ of 
 $\tsR^\circ_{(\wp_{(k\tau)}\fr_\mu\fs_h)}$ or $\tsR^\circ_{\ell_k}$.

To summarize the progress, we depict it in the diagram \eqref{theDiagram} below.

\begin{equation}\label{theDiagram}
 \xymatrix@C-=0.4cm{
  \tsR_{\ell} \ar[r] & \cdots  \ar[r] &  \tsR_{{\hbar}} \ar[r] &  \tsR_{{\hbar}'} \ar[r] &  \cdots  \ar[r] &   \sR_{\sF_{[j]}}  \ar[r] &  \sR_{\sF_{[j-1]}} \cdots \ar[r] &  \rU_\um \\
   \tsR^\circ_{\ell}\ar @{^{(}->} [u]  \ar[r] & \cdots  \ar[r] &  \tsR^\circ_{\hbar}\ar @{^{(}->} [u]  \ar[r] &  \tsR^\circ_{\hbar} \ar @{^{(}->} [u] \ar[r] &  \cdots  \ar[r] &   \sR_{\sF_{[j]}} \ar @{^{(}->} [u]_{=} \ar[r] &  \sR_{\sF_{[j-1]}} \ar @{^{(}->} [u]_{=} \cdots \ar[r] &  \rU_\um \ar @{^{(}->} [u]_{=}\\
    \tsV_{\ell} \ar @{^{(}->} [u]  \ar[r] & \cdots  \ar[r] &  \tsV_{{\hbar}}\ar @{^{(}->} [u]   \ar[r] &  \tsV_{{\hbar}'} \ar @{^{(}->} [u]  \ar[r] &  \cdots    \ar[r] &   \sV_{\sF_{[j]}} \ar @{^{(}->} [u]\ar[r] &  \sV_{\sF_{[j-1]}} \cdots \ar @{^{(}->} [u]  \ar[r] &  \rU_\um \cap \Gr^{ d,E}   \ar @{^{(}->} [u]  \\
   \tZ_{\ell, \Ga} \ar @{^{(}->} [u]  \ar[r] & \cdots  \ar[r] &  \tZ_{{\hbar},\Ga}\ar @{^{(}->} [u]   \ar[r] &  \tZ_{{\hbar}',\Ga} \ar @{^{(}->} [u]  \ar[r] &  \cdots    \ar[r] &   Z_{\sF_{[j]},\Ga} \ar @{^{(}->} [u]\ar[r] &  Z_{\sF_{[j-1])},\Ga} \cdots \ar @{^{(}->} [u]  \ar[r] &  Z_\Ga  \ar @{^{(}->} [u]  \\
    \tZ^\dagger_{\ell, \Ga} \ar @{^{(}->} [u]  \ar[r] & \cdots  \ar[r] &  \tZ^\dagger_{{\hbar},\Ga}\ar @{^{(}->} [u]   \ar[r] &  \tZ^\dagger_{{\hbar}',\Ga} \ar @{^{(}->} [u]  \ar[r] &  \cdots    \ar[r] &   Z^\dagger_{\sF_{[j]},\Ga} \ar @{^{(}->} [u]\ar[r] &  Z^\dagger_{\sF_{[j-1])},\Ga} \cdots \ar @{^{(}->} [u]  \ar[r] &  Z_\Ga, \ar[u]_{=}       }
\end{equation}
 where all vertical uparrows are closed embeddings. 

Thus far, we have obtained the first three rows of the  diagram:  

 $(\star)$ In the first row: each morphism  $\tsR_{{\hbar}} \to  \tsR_{{\hbar}'}$ is 
$\tsR_{\vt_{[k]}} \to \tsR_{\vt_{[k-1]}}$,  or 
$\tsR_{(\wp_{(k\tau)}\fr_\mu\fs_{h})} \to  \tsR_{(\wp_{(k\tau)}\fr_\mu\fs_{h-1})}$, or
  $\tsR_{\ell_{k}} \to  \tsR_{\wp_k}$, and each is a blowup;
every  $\sR_{\sF_{[j]}}  \to  \sR_{\sF_{[j-1]}}$  is a projection, a forgetful map.

 $(\star)$ In the third row: each morphism $\tsV_{{\hbar}} \to  \tsV_{{\hbar}'}$ is 
$\tsV_{\vt_{[k]}} \to \tsV_{\vt_{[k-1]}}$, or 
$\tsV_{(\wp_{(k\tau)}\fr_\mu\fs_{h})} \to  \tsV_{(\wp_{(k\tau)}\fr_\mu\fs_{h-1})}$, or
 $\tsV_{\ell_k} \to  \tsV_{\wp_k}$, 
and  this morphism as well as each $\sV_{\sF_{[j]}}  \to  \sV_{\sF_{[j-1]}}$  
  is surjective, projective, and birational.

$(\star)$ Further, a scheme in the second row is a smooth open subset of the scheme in first row
containing the one in the third row, correspondingly.

To explain the fourth and fifth rows of the  diagram,
we go back to the fixed chart $\rU_\um$. This is the affine space which comes equipped with
the coordinate variables $\var_{\rU_\um}:=\{x_\uu \mid \uu \in \II_{d,n} \- \{\um\}\}$.
Let $\Ga$ be any subset of $\var_{\rU_\um}$
 and let $Z_\Ga$ be the subscheme of $\rU_\um$ defined by the ideal generated by
 all the elements of $\Ga$ together with all the de-homogenized $\um$-primary $\pl$ relations $\bF$
 with $\bF \in \sfm$.   
This is a $\Ga$-scheme mentioned in the beginning of this introduction.
The precise relation between a given  matroid Schubert cell and its corresponding $\Ga$-scheme
is given in \eqref{ud=Ga}.

 Our goal is to resolve the $\Ga$-scheme $Z_\Ga$ when it is integral and singular.

 Let $\Ga$ be a subset $\var_{\rU_\um}$. Assume that 
 $Z_\Ga$ is integral. Then, starting from $Z_\Ga$, step by step,
  via induction within every of the sequential $\vt$-, $\wp$-, and
  $\ell$-blowup, we are able to construct
 the third and fourth rows in the  diagram  \eqref{theDiagram} such that 
 
$(\star)$ every closed subscheme in the fourth row, $Z_{\sF_{[j]},\Ga}$, respectively
$\tZ_{{\hbar}}$, {\it admits explicit local defining equations}
in any standard chart of a smooth open subset,
containing  $\sV_{\sF_{[j]},\Ga}$, respectively,
$\tsV_{{\hbar}}$, of the corresponding scheme in the first row;
 
$(\star)$ every closed subscheme in the fifth row $Z^\dagger_{\sF_{[j]},\Ga}$, respectively,
$\tZ^\dagger_{{\hbar}}$, is an {\it irreducible component} of its
corresponding subscheme $Z_{\sF_{[j]},\Ga}$, respectively, $\tZ_{{\hbar}}$, such that  the induced 
morphism $\hbox{$Z^\dagger_{\sF_{[j]},\Ga} \lra Z_\Ga$, respectively,
$\tZ^\dagger_{{\hbar}}\lra Z_\Ga$}$
is surjective, projective, and birational.
 
$(\star)$   the left-most   $\tZ_{\ell, \Ga} $ is smooth; so is $\tZ^\dagger_{\ell, \Ga}$,
now a connected component of $\tZ_{\ell, \Ga} $.

 The closed subscheme $Z_{\sF_{[j]},\Ga}$, called an $\sF$-transform of $Z_\Ga$,
  is constructed in Lemma \ref{wp-transform-sVk-Ga};
  the closed subscheme $Z_{\vt_{[j]},\Ga}$, called a $\vt$-transform of $Z_\Ga$,
  is constructed in Lemma \ref{vt-transform-k};
 the closed subscheme $\tZ_{(\wp_{(k\tau)}\fr_\mu\fs_{h}),\Ga}$, called a $\wp$-transform of $Z_\Ga$,
 is constructed in Lemma \ref{wp/ell-transform-ktauh};
  the closed subscheme $\tZ_{\ell_k,\Ga}$, called an $\ell$-transform of $Z_\Ga$,
  is also constructed in Lemma \ref{wp/ell-transform-ktauh}.

{\it
In this article, a scheme $X$ is smooth if it is a disjoint union of finitely many 
connected smooth schemes of
possibly various dimensions.}

Our main theorem on the Grassmannian is

\begin{thm}\label{main2:intro} 
{\rm (Theorems \ref{main-thm} and \ref{cor:main})} 
Let $\FF$ be either $\QQ$ or a finite field with $p$ elements where
$p$ is a prime number.
Let $\Ga$ be any subset of $\var_{\rU_\um}$.
Assume that $Z_\Ga$ is integral. 
Let $\tZ_{\ell,\Ga}$ be  the $\ell$-transform of $Z_\Ga$ in $\tsV_{\ell}$.
Then,   $\tZ_{\ell,\Ga}$ is smooth over $\FF$.
In particular, the induced morphism $\tZ^\dagger_{\ell,\Ga} \to Z_\Ga$ is a resolution over $\FF$,
provided that $Z_\Ga$ is singular.
\end{thm}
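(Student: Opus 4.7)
The strategy is to prove smoothness of $\tZ_{\ell,\Ga}$ by induction along the sequential blowups \eqref{vt-sequence-intro}, \eqref{wp-sequence-intro} and \eqref{ell-sequence-intro}, working locally on the standard affine charts $\fV$ of $\tsR^\circ_{\ell}$, and to track the explicit local defining equations of the transforms of $Z_\Ga$ via Propositions \ref{meaning-of-var-vtk}, \ref{eq-for-sV-vtk}, \ref{meaning-of-var-wp/ell}, \ref{equas-wp/ell-kmuh} together with the transform constructions of Lemmas \ref{vt-transform-k}, \ref{wp-transform-sVk-Ga} and \ref{wp/ell-transform-ktauh}. Since each standard chart is an open subset of a smooth ambient scheme with an explicit coordinate system, it suffices to verify on each such chart that the ideal cutting out $\tZ_{\ell,\Ga}\cap\fV$ is generated by a subset of a regular system of parameters, which forces smoothness over $\FF$.

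I would first dispense with the $\vt$-blowups, the easy preparatory step: each $\vt$-center $Z_{\vt_{[k]}}=X_{\uu_{F_k}}\cap X_{(\um,\uu_{F_k})}$ is a transverse intersection of two smooth divisors, so \eqref{vt-sequence-intro} is a sequence of smooth blowups. On each standard chart of $\tsR_{\vt}$, the variables $x_{\uu_{F_k}}$ and $x_{(\um,\uu_{F_k})}$ become exceptional monomials times units, so every main binomial $B_{(k\tau)}$ acquires a common factor that can be stripped, producing a strict transform of lower formal degree. For each fixed $k\in[\up]$, I would then induct on $(\tau,\mu,h)$ through the $\wp$-blowups. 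The induction hypothesis, reflecting the content of Proposition \ref{equas-wp/ell-kmuh}, is that on every standard chart the ideal of $\tZ_{(\wp_{(k\tau)}\fr_\mu\fs_h),\Ga}$ is generated by: (i) the coordinate variables that are the updated transforms of elements of $\Ga$; (ii) the transformed residual and pre-quotient binomials, which are formal consequences of the main relations; (iii) the transformed main binomials $B_{(k'\tau')}$ with $k'\le k$; and (iv) the transformed linearized Plücker relations $L_{F_{k'}}$ with $k'\le k$. Each $\wp$-blowup peels one further common factor off a main binomial, so that after finitely many steps its strict transform has a term equal to a unit times a single coordinate variable, which can then be used to eliminate that variable from the chart.

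The decisive step is the $\ell$-blowup \eqref{ell-sequence-intro} along $D_{\wp_k,L_{F_k}}\cap X_{\wp_k,(\uu_{s_F},\uv_{s_F})}$. On the chart where the strict transform of $X_{\wp_k,(\uu_{s_F},\uv_{s_F})}$ is the exceptional parameter, once the pullback of $x_{(\uu_{s_F},\uv_{s_F})}$ is factored out, the strict transform of $L_{F_k}$ takes the form
\[
\sgn(s_F) + \sum_{s\in S_F\setminus\{s_F\}} \sgn(s)\, y_{(\uu_s,\uv_s)} \;=\; 0,
\]
where the $y_{(\uu_s,\uv_s)}$ are among the new chart coordinates; this is a smooth linear equation whose Jacobian is nowhere zero, so it can be solved for one $y$, reducing the codimension by exactly one. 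On the complementary chart the strict transform of $L_{F_k}$ specializes to a unit, so the strict transform of $\sV$ is disjoint from it. Combined with the variable eliminations already achieved through the $\wp$-blowups, the $\ell$-blowup completes the linearization contributed by $F_k$; iterating on $k\in[\up]$ along $<_\wp$ yields a local presentation of $\tZ_{\ell,\Ga}$ by a subset of the coordinate parameters.

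The main obstacle I expect is verifying compatibility across different $k$'s: one must confirm that the $\wp$- and $\ell$-operations carried out for $F_k$ neither resurrect variables already eliminated by earlier $F_{k'}$ nor break the linear form of earlier $L_{F_{k'}}$. This is precisely the role of the carefully chosen total order $<_\wp$ on $\sfm$ and of the fine ordering on each $S_F\setminus\{s_F\}$, namely to guarantee that the leading variable $x_{(\uu_{s_F},\uv_{s_F})}$ and the main variables $x_{\uu_{F_k}}$, $x_{(\um,\uu_{F_k})}$ of $F_k$ are, in the appropriate chart-wise sense, algebraically independent of the data already used at earlier stages, so that the local equations decouple and can be handled one at a time. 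Granting this, $\tZ_{\ell,\Ga}$ is smooth over $\FF$. Finally, $\tZ^\dagger_{\ell,\Ga}$ is, by the construction of the fifth row of \eqref{theDiagram}, an irreducible component of $\tZ_{\ell,\Ga}$ and inherits smoothness as a connected component; the induced morphism $\tZ^\dagger_{\ell,\Ga}\to Z_\Ga$ is projective and birational by construction, and thus provides the desired resolution whenever $Z_\Ga$ is singular.
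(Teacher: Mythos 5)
Your overall framework (working on standard charts, using the explicit local equations from Propositions \ref{meaning-of-var-wp/ell} and \ref{equas-wp/ell-kmuh} and Lemma \ref{wp/ell-transform-ktauh}) matches the paper's setup, but your actual smoothness argument has two genuine gaps. First, your key step --- that after the $\wp$-blowups each main binomial ``has a term equal to a unit times a single coordinate variable, which can then be used to eliminate that variable'' --- fails in the case where $x_{(\um,\uu_F)}$ does not vanish at the point $\bz$ under consideration. In that case the terminated binomials take the form $a_i\, x_{(\uu_{s_i},\uv_{s_i})}\, y_{\uu_F} - c_i$ with \emph{both} $x_{(\uu_{s_i},\uv_{s_i})}$ and $y_{\uu_F}$ nonvanishing at $\bz$, and all $|S_F|-1$ binomials of the block share the single variable $y_{\uu_F}$; no one of them eliminates a variable on its own, and naively you get at most $|S_F|-1$ independent differentials from $|S_F|$ unknowns. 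The paper closes this deficit in Lemma \ref{max-minor} by adjoining the row of $L_{\fV,F}$ and performing explicit column operations, where invertibility of the resulting square minor rests on the identity $\sum_i \sgn(s_i)\, x_{(\uu_{s_i},\uv_{s_i})}(\bz) = -\sgn(s_F) \ne 0$, which holds precisely because $L_{\fV,F}(\bz)=0$. Your proposal never uses $L_{F}$ in this way (you only invoke it in the $\ell$-blowup chart where $x_{(\um,\uu_F)}$ vanishes), so the rank count is incomplete exactly where it is hardest.

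Second, your induction hypothesis asserts that the transformed residual and pre-quotient binomials ``are formal consequences of the main relations.'' For the residual binomials this is established after the $\vt$-blowups (Proposition \ref{eq-for-sV-vtk}), but for the binomials of quotient type it is \emph{not} known a priori; the paper obtains it only as a corollary of the smoothness of $\tsV_\ell$ itself, which is first proved for $\Ga=\emptyset$ by the dimension count $\dim T_\bz(\tsV_\ell) \le \dim\tsR_\ell - \rk J^*(\bz) = \dim\tsV_\ell$ using only $\cB^\mn$ and $L_{\sfm}$, and only then is $\cB^\q_\fV|_{\tGa_\fV}$ discarded for general $\Ga$. Building the dependence of $\cB^\q$ into your induction hypothesis is therefore circular as stated: you would be assuming, at each stage, a fact whose only available proof passes through the final Jacobian computation you are trying to replace. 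To repair the proposal you would need either an independent proof that the quotient-type binomials lie in the ideal generated by $\cB^\mn_\fV$ and $L_{\fV,\sfm}$ on each chart, or you should restructure the argument as the paper does: prove full rank of the Jacobian of $\cB^\mn_\fV \cup L_{\fV,\sfm}$ at every point of $\tsV_\ell$ first, deduce $\dim T_\bz = \dim\tsV_\ell$, and only then restrict to $\tGa_\fV$.
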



The proof of Theorem \ref{main2:intro} 
 (Theorems \ref{main-thm} and \ref{cor:main}) is based upon the explicit  description of
 the main binomials and linearized $\pl$ defining equations of $ \tZ_{\ell,\Ga}$
 (Corollary \ref{ell-transform-up})
 and detailed calculation and careful analysis on the Jacobian of these equations (\S \ref{main-statement}).


Theorem \ref{main:intro} is  obtained by applying Theorem \ref{main2:intro},
combining with Lafforgue's version of Mn\"ev's unversality theorem
(Theorems \ref{Mn-La} and \ref{Mn-La-Gr}), provided that $X$ is  defined over $\ZZ$.
For a singular affine or projective variety $X$ over a general perfect field $\kk$, 
we spread it out and deduce that $X/\kk$ admits a resolution as well.
The details are written in \S \ref{global-resolution}.

 
In general, consider any fixed singular integral scheme  $X$. 
By Theorem \ref{main:intro}, $X$ can be covered by finitely many affine open subsets such that
every of these affine open subsets of  $X$ admits a resolution. 
It remains to glue finitely many  such local resolutions to obtain  a global one. 
This is being pursued.


We learned that Hironaka posted a preprint on resolution of singularities 
in positive characteristics \cite{Hironaka17}.

In spite of the current article, the author is not in a position to survey the topics of
resolution of singularities, not even very briefly.
We refer to Koll\'ar's book \cite{Kollar} for an extensive list of references on resolution of singularities.
 There have been some recent progresses since the book \cite{Kollar}:
 risking inadvertently omitting some other's works, let us just mention a few recent ones
\cite{ATW},  \cite{McG}, and \cite{Temkin}.

\medskip
The approach presented in this paper was inspired by \cite{Hu2022}.

The author is grateful to the anonymous reviewers
of \cite{GrassSing} for their very helpful questions
and constructive suggestions, especially for pointing out
the insufficiency of an earlier version of \cite{GrassSing}. In particular, the author would not
have gone this far in a relatively short period of time without
their helpful feedbacks.

He thanks J\'anos Koll\'ar and Chenyang Xu for the suggestion to write 
a summary  section, \S \ref{tour}, to lead the reader a quick tour through the paper.

He thanks Laurent Lafforgue for several very kind suggestions and sharing a general question. 
He  very especially thanks Caucher Birkar,  also 
James McKernan and Ravi Vakil for the invitation to speak in workshop and
seminars, and for helpful correspondences. He thanks
Bingyi Chen for spotting a mistake in the proof of 
Theorem 10.5 of \cite{GrassSing}.

\bigskip 

\centerline {A List of Fixed Notations Used Throughout}
\medskip

\smallskip\noindent $[h]$: the set of all  integers from 1 to $h$, $\{1, 2 \cdots ,h \}.$

\noindent 
$\II_{d,n}$: the set of all sequences of integers $\{(1\le u_1 < \cdots < u_d\le n) \}.$

\noindent
$\PP(\wedge^d E)$: the projective space with $\pl$ coordinates 
$p_\ui, \ui \in \II_{d,n}$. 




\noindent
$I_\wp$: the ideal of $\kk[p_\ui]_{\ui \in \II_{d,n}}$ generated by all $\pl$ relations.

\noindent
$I_\whwp$: the ideal of $\Gr^{d,E}$ in $\PP(\wedge^d E)$.

\noindent
$\rU_\um$: the affine chart of $\PP(\wedge^d E)$ defined by $p_\um \ne 0$ for some fixed $\um \in \II_{d,n}$.

\noindent
$\sfm$: the set of $\um$-primary $\pl$ equations.

 \noindent
$\up:= {n \choose d} -1 - d(n-d)$: the cardinality of $\sfm$;

\noindent
$\fV$:  a standard affine chart of an ambient smooth scheme; 
 
 \noindent 
$\cB^\mn$:  the set of all main binomial relations; 

 \noindent 
$\cB^\res$:  the set of all  residual binomial relations; 

\noindent 
$\cB^\pq$:  the set of all  binomial relations of pre-quotient type;

 \noindent 
$\cB^\q$:  the set of all  binomial relations of quotient type; 

 \noindent 
$\cB$:   $\cB^\mn \sqcup \cB^\res \sqcup \cB^\q$;

\noindent
$L_{\sfm}$: the set of all linearized $\um$-primary $\pl$ equations.



\noindent $\Ga$: a subset of $\rU_\um$.

\noindent $A \- a$: $A\-\{a\}$ where $A$ is a finite set and $a \in A$.

\noindent $|A|$: the cardinality of a finite set $A$.

\noindent $\kk$: a fixed perfect field.

\section{A Quick Tour: the main idea and approach}\label{tour}

{\it This section may be skipped entirely if the reader  prefers to dive into
the main text immediately. However, carefully reading this section first is strongly recommended.}

\medskip
$\bullet$ {\it A detour to $\Gr^{3,E}$ via Mn\"ev's universality.} 

By Mn\"ev's universality, any singularity over $\ZZ$ appears in a matroid Schubert cell of 
the Grassmannian $\Gr^{3,E}$ of three dimensional linear subspaces in a vector space $E$,
up to smooth morphisms.

Consider the $\pl$ embedding $\Gr^{3,E} \subset \PP(\wedge^3 E)$ 
with $\pl$ coordinates $p_{ijk}$.
A  matroid Schubert cell of $\Gr^{3,E}$ is a nonempty intersection of codimension one Schubert cells
of  $\Gr^{3,E}$; it corresponds to  a matroid $\ud$ of rank 3 on the set $[n]$.
Any Schubert divisor is defined by $p_{ijk} =0$ for some $(ijk)$. Thus, 
a  matroid Schubert cell  $\Gr^{3,E}_\ud$ of the matroid $\ud$
 is an open subset of the closed subscheme $\overline{Z}_\Ga$ of
$\Gr^{3,E}$ defined by $\{p_{ijk}=0 \mid p_{ijk} \in \Ga\}$ for some subset $\Ga$ of all $\pl$ variables.
The  matroid Schubert cell  must lie in an affine chart $(p_\um \ne 0)$ for some $\um \in \II_{3,n}$.
Thus,  $\Gr^{3,E}_\ud$  is an open subset of a closed subscheme 
of  $\Gr^{3,E} \cap (p_\um \ne 0)$ of
the following form 
$$Z_\Ga=\{ p_{ijk} =0 \mid p_{ijk} \in \Ga\} \cap \Gr^{3,E} \cap (p_\um \ne 0).$$ 
This is a closed affine subscheme of the affine chart $(p_\um \ne 0)$.
We aim to resolve $Z_\Ga$, hence also the  matroid Schubert cell  $\Gr^{3,E}_\ud$, when both
are integral and singular.

\medskip
 $\bullet$ {\it Minimal set of $\pl$ relations for the chart $(p_\um \ne 0)$.} 
 
 Up to permutation, we may assume that $\um=(123)$ and the chart is
$$\rU_\um:=(p_{123} \ne 0).$$
We write the de-homogenized coordinates of $\rU_\um$ as
$$\{x_{abc} \mid (abc) \in \II_{3,n} \- \{(123) \}.$$

As a closed subscheme of the affine space $\rU_\um,$
$Z_\Ga$ is defined by
$$\{ x_{ijk} =0, \;\; \bF=0 \; \mid \; x_{ijk} \in \Ga\},$$
where $\bF$ rans over all de-homogenized $\pl$ relations.
We need to pin down some explicit $\pl$ relations to form 
a minimal set of generators of $\Gr^{3,E} \cap \rU_\um$.

They are of the following forms:
\begin{eqnarray}
\bF_{(123),1uv}=x_{1uv}-x_{12u}x_{13v} + x_{13u}x_{12v}, \label{rk0-1uv}\\
\bF_{(123),2uv}=x_{2uv}-x_{12u}x_{23v} + x_{23u}x_{12v}, \label{rk0-2uv} \\
\bF_{(123),3uv}=x_{3uv}-x_{13u}x_{23v} + x_{23u}x_{13v} ,\label{rk0-3uv}\\
\bF_{(123),abc}=x_{abc}-x_{12a}x_{3bc} + x_{13a}x_{2bc} -x_{23a}x_{1bc}, \label{rk1-abc}
\end{eqnarray}
where $u < v \in [n]\-\{1,2,3\}$ and $a<b<c \in [n]\-\{1,2,3\}$.
Here, $[n]=\{1,\cdots,n\}$.

In a nutshell, we have the set
\begin{eqnarray}\label{rk0-and-1}
\sfm=\{\bF_{(123),iuv},  \; 1\le i\le 3; \;\; \bF_{(123),abc}\}
\end{eqnarray}
Every relation of $\sfm$ is called $\um$-primary. Here, $\um=(123)$.



\medskip
$\bullet$ {\sl Nicely presented equations of $\Ga$-schemes and arbitrary singularities.}

Hence, as a closed subscheme of the affine space $\rU_\um,$
$Z_\Ga$ is defined by
\begin{eqnarray}\label{normal-form}
Z_\Ga=\{ x_\uu =0, \;\; \bF_{(123),iuv},  \; 1\le i\le 3, \;\; \bF_{(123),abc}  \mid x_\uu \in \Ga\},
\end{eqnarray}
for all $u < v \in [n]\-\{1,2,3\}$ and $a<b<c \in [n]\-\{1,2,3\}$.
The  matroid Schubert cell $Z_\Ga^\circ$ in $Z_\Ga$ is characterized by
$x_\uv \ne 0$ for any $x_\uv \notin \Ga$ (see Proposition \ref{to-Ga} and \eqref{ud=Ga}.)

Upon setting $x_\uu =0$ with $\uu \in \Ga$, we obtain 
the affine coordinate subspace $$\rU_{\um,\Ga} \subset \rU_\um$$ such that $Z_\Ga$,
 as a closed subscheme of
 the affine subspace $\rU_{\um,\Ga}$, is defined by 
 \begin{eqnarray}\label{reduced-normal-form}
\{\bF_{(123),iuv}|_\Ga, \; 1\le i\le 3;  \;\;  \bF_{(123),abc}|_\Ga\},
\end{eqnarray}
where $\bF|_\Ga$ denotes the restriction of $\bF$ to the affine subspace $\rU_{\um,\Ga}$.
These are in general truncated  $\pl$ equations, some of which  may be identically zero.

 {\it One may view \eqref{normal-form} as the normal form of singularities,
and \eqref{reduced-normal-form} as the reduced normal form of singularities.
These are the standardized equations of singularities over $\ZZ$, up to smooth morphisms.
In other words, singularities may arbitrary, but amazingly, their equations can be nicely
presented,  up to smooth morphisms.}

We  {\it do not} analyze singularities of $Z_\Ga$.

But, we make some remarks.
 By the normal form of singularities \eqref{normal-form},
the $\Ga$-scheme $Z_\Ga$ is cut out from the affine chart $\rU_\um(\Gr)$ 
of the Grassmannian $\Gr^{3,E}$
 by the hyperplanes $(x_\uu =0)$ 
for all $x_\uu \in \Ga$. Although $Z_\Ga$ as well as the  matroid Schubert cell $Z_\Ga^\circ$
(see the sentence below \eqref{normal-form})
are nicely described by
$\pl$ variables and $\pl$ relations,  the intersections of these
coordinate hyperplanes with the chart $\rU_\um(\Gr)$ of the Grassmannian 
$\Gr^{3,E}$ are arbitrary, according to
Mn\"ev's universality. We may view $\rU_\um$ (allowing 
$\Gr^{3,E}$ to vary) as a universe that contains arbitrary singularities.
Hence, intuitively, we need to birationally change the universe $``$along these intersections" 
 so that eventually in the new universe, $``$they" re-intersect properly. 




 To achieve this, it is more workable if we can
put all the singularities in a different universe $\sV_\um$, 
birationally modified from the chart $\rU_\um \cap \Gr^{3,E}$,
  so that in the {\it new model $\sV_\um$},  all the terms of 
the above $\pl$ relations can be separated. 
(Years had been passed, or wasted in a way, before we {\it returned} to this correct approach.)

\smallskip
$\bullet$ {\sl Separating the terms of $\pl$ relations.}

Motivated by  \cite{Hu2022}, we establish 
a local model  $\sV_\um$, birational  to the chart $\rU_\um \cap \Gr^{3,E}$,
such  that in a {\it specific} set of defining binomial equations of $\sV_\um$, {\it all the terms} of 
the above $\pl$ relations are separated. 

To explain, we introduce the projective space $\PP_F$ for each and
 every $\pl$ relation $F=\sum_{s \in S_F} \sgn (s) p_{\uu_s}p_{\uv_s}$ with
$[x_{(\uu_s,\uv_s)}]_{s \in S_F}$ as its homogeneous coordinates. 

We then let $\sV_\um$ be the closure of the graph of the rational map $\bar\Theta_{[\up],\Gr}$ of
 \eqref{this-theta-intro} in the case of $\Gr^{3,E}$.
 (This is  motivated by an analogous construction in \cite{Hu2022}.)
By calculating the multi-homogeneous kernel of the homomorphism
\begin{equation}\label{vi-hom} \bar\vi: \kk[(x_\uw);(x_{(\uu,\uv)})] \lra \kk[x_\uw]
\end{equation}
$$x_{(\uu,\uv)} \to x_\uu x_\uv,$$
we determine a set of defining relations of $\sV_\um$ as a closed subscheme
of the smooth ambient scheme $$\sR_\sF:=\rU_\um \times \prod_{\bF \in \sfm} \PP_F.$$ 
These defining relations, among many others, 
include the following binomials
\begin{eqnarray}\label{mainB-tour}
x_{1uv}x_{(12u,13v)} - x_{12u}x_{13v} x_{(123,1uv)}, \; x_{1uv}x_{(13u,12v)}- x_{13u}x_{12v}x_{(123,1uv)}, \\
x_{2uv}x_{(12u,23v)} -x_{12u}x_{23v} x_{(123,2uv)}, \;  x_{2uv}x_{(23u,12v)}-x_{23u}x_{12v} x_{(123,2uv)}, \nonumber \\
x_{3uv}x_{(13u,23v)} -x_{13u}x_{23v}x_{(123,3uv)}, \;  x_{3uv}x_{(23u,13v)} -x_{23u}x_{12v}x_{(123,3uv)},\nonumber\\
 x_{abc}x_{(12a,3bc)}-x_{12a}x_{3bc} x_{(123,abc)},\;
 x_{abc}x_{(13a,2bc)}-x_{13a}x_{2bc} x_{(123,abc)},\nonumber \\
x_{abc}x_{(23a,1bc)} -x_{23a}x_{1bc}x_{(123,abc)}. \nonumber
\end{eqnarray}
We see that  the terms of all the $\um$-primary $\pl$ relations 
of \eqref{rk0-and-1} are separated into the two terms of the above binomials.

 To distinguish, we call $x_\uu$ (e.g., $x_{12u}$)
a $\vp$-variable and $X_\uu=(x_\uu=0)$  a $\vp$-divisor;
 we call $x_{(\uu,\uv)}$ (e.g., $x_{(12u,13v)}$) a $\vr$-variable and
$X_{(\uu,\uv)}=(x_{(\uu,\uv)}=0)$ a $\vr$-divisor.

The defining relations 
also include the linearized $\pl$ relations as in \eqref{eq2-intro}:
$$L_F=\sum_{s \in S_F} \sgn (s) x_{(\uu_s,\uv_s)}, \; \; \forall \; \bF \in \sfm.$$
The set of all  linearized $\pl$ relations is denoted by $L_{\sfm}$.

There are many other {\it extra} defining relations.

 All the $\Ga$-schemes $Z_\Ga$ admit  birational transforms in the singular model
$\sV_\um$. We still {\it do not} analyze the singularities of these transforms. But, we 
make a quick observation: 
when all the terms of  some of the binomials in \eqref{mainB-tour}
vanish at a point of the transform of a $\Ga$-scheme, then a singularity is likely to occur.



Thus, {\it as the first steps,} we would like to $``$remove$"$ all the zero factors from all the terms of 
the binomials in \eqref{mainB-tour}. This also amounts to re-positioning the coordinate hyperplanes 
$(x_\uu=0)$ through blowups 
so that they eventually intersect properly with
the proper transform of the chart $\rU_\um(\Gr) = \rU_\um \cap \Gr^{3,E}$ of the Grassmannian,
for all $x_\uu \in \Ga$.

As it turns out, through years of $``$trial and error$"$, 
$``$removing$"$ all the zero factors from all the  binomial relations of \eqref{mainB-tour}
successfully leads us to the correct path toward our ultimate purpose. 

{\it The geometric intuition behind the above sufficiency is as follows.
 The equations of \eqref{mainB-tour} alone together with $L_{\sfm}$ only define
a reducible closed scheme, in general. The roles of other extra relations (to be discussed soon)
are to  pin down
its main component $\sV_\um$. 
As the process of $``$removing$"$ zero factors goes, a process of some specific blowups, 
all the boundary components are eventually blown out of existence, making  
the proper transforms of \eqref{mainB-tour} together with the linearized
$\pl$ relations  generate the ideal of the final blowup scheme $\tsV_\ell$ of $\sV_\um$,
on all charts.}

We thus call the binomial equations of  \eqref{mainB-tour} the {\it main} binomials.
The set of  main binomials is denoted $\cB^\mn$. 
The set $\cB^\mn$ is equipped with a carefully chosen total ordering 
(see \eqref{indexing-Bmn}).

The defining relations of $\sV_\um$ in $\sR_\sF$ also include many other binomials: we classify them
as {\it residual} binomials  (see Definition \ref{defn:main-res}), 
 binomials {\it of pre-quotient type} (see Definition \ref{defn:pre-q}).
The set of  residual binomials is denoted $\cB^\res$;
the set of  binomials of pre-quotient type is denoted $\cB^\pq$;
Both are finite sets.

Together, the equations in the following sets
$$\cB^\mn, \; \cB^\res, \; \cB^\pq, \; L_{\sfm}$$ 
define the scheme $\sV_\um$ in the smooth ambient scheme $\sR_\sF$.
See Corollary \ref{eq-tA-for-sV}. 

When we focus on an arbitrarily fixed chart $\fV$ of $\sR_\sF$,
binomials  of pre-quotient type of $\cB^\pq$
can be further reduced to binomials {\it of quotient type} whose set
is denoted by $\cB^q_\fV$. See Definition \ref{defn:q} and Proposition \ref{equas-p-k=0}. 
(For the  reason to use the term {\it $``$of quotient type$"$}, see \cite{Hu2022}.)

As mentioned in the introduction, for the purpose of inductive proofs,
we also need the rational map $\bar\Theta_{[k],\Gr}$ 
of  \eqref{this-theta[k]-intro}, 
and we let $\sV_{\sF_{[k]}}$ be the closure of the rational map of $\bar\Theta_{[k],\Gr}$, 
for all $k \in [\up]$.
In this notation, $\sV_\um=\sV_{\sF_{[\up]}}$. We let 
$$\sR_{\sF_{[k]}}:=\rU \times \prod_{i \in [k]} \PP_{F_i}.$$ 
This is a smooth scheme and contains $\sV_{\sF_{[k]}}$ as a closed subscheme.
Further, we have the natural forgetful map
\begin{equation} \label{forgetful-guide}
\sR_{\sF_{[k]}} \lra \sR_{\sF_{[k-1]}}. 
\end{equation}

\medskip
$\bullet$ {\sl  The process of $``$removing$"$ zero factors of main binomials.}

 To remove zero factors of main binomials, we either work with the set of 
 binomials of $\cB^\mn_F$ all together  in the case of  $\vt$-blowups, or work on each 
main binomial  of $\cB^\mn_F$ individually in the case of  $\wp$-blowups. 
Upon completing $\wp$-blowups for the block of relations of $\cB^\mn_F$,
we immediately perform the $\ell$-blowup with respect to the linearized $\pl$ relation $L_F$.

To this end, we need to provide a carefully chosen total order on the set $\sfm$.

We let
$\{\bF_{(123),iuv},  \; 1\le i\le 3\}$ go first, then followed by $\{\bF_{(123),abc}\}$.
Within $\{\bF_{(123),iuv},  \; 1\le i\le 3\}$, we say $\bF_{(123),iuv} < \bF_{(123),ju'v'}$
if $(uv)<(u'v')$  lexicographically, or when  $(uv)=(u'v')$ , $i<j$.
Within $\{\bF_{(123),abc}\}$,  we say $\bF_{(123),abc} < \bF_{(123),a'b'c'}$
if $(abc)<(a'b'c')$  lexicographically. This ordering is compatible with that of
$\cB^\mn$. We also provide an ordering on the set of all $\pl$ variables, compatible
with the above orderings.



The purpose of $``$removing$"$ zero factors is achieved through sequential blowups based
upon factors of main binomials and their proper transforms.

\smallskip
{\it $\star$ On $\vt$-blowups.}  

 From the main binomial equations of \eqref{mainB-tour}, we select the following closed centers
 $$\cZ_\vt: (x_{iuv}=0) \cap (x_{(123, iuv)}=0),  i \in [3]; \;
  (x_{abc}=0) \cap (x_{(123, abc)}=0), a \ne b \ne c \in [n] \- [3].$$
 
  We order the sets $\{(uv)\}$ and  $\{(abc)\}$ lexicographically, respectively;
  we  order  $\{(iuv)\}$, written as  $\{(i, (uv)) \mid i \in [3]\}$, reverse-lexicographically.
  We then let $\{(iuv)\}$ go before  $\{(abc)\}$.
  This way,  the set $\cZ_\vt$ is equipped with a total order induced from the above-mentioned orders on the indexes.

We then blow up $\sR_\sF$ along (the proper transforms of) the centers in $\cZ_\vt$, in the above order.
This gives rise to the sequence \eqref{vt-sequence-intro} in the introduction
$$\tsR_{\vt}:=\tsR_{\vt_{[\up]}}  \lra \cdots \lra \tsR_{\vt_{[k]}} 
\lra \tsR_{\vt_{[k-1]}} \lra \cdots \lra \tsR_{\vt_{[0]}}.$$
Each arrow in this sequence is a smooth blowup.

For any $k \in [\up]$, we let $\tsV_{\vt_{[k]}} \subset  \tsR_{\vt_{[k]}}$ 
be the proper transform of $\sV_\um$ in $\tsR_{\vt_{[k]}}$. We then set
$\tsV_{\vt}=\tsV_{\vt_{[\up]}}$ and $ \tsR_{\vt}= \tsR_{\vt_{[\up]}}$.


Besides removing the zero factors as displayed  in  the centers of $\cZ_\vt$,
 $\vt$-blowups also make the proper transforms of the residual binomial equations become
 dependent on the proper transforms of the main binomial equations on any standard chart.
 Thus, upon completing $\vt$-blowups, we can discard all the residual binomials $\cB^\res$ from
 consideration.
 In addition, it also leads to 
 the conclusion $\tsV_{\vt} \cap X_{\vt, (\um, \uu_k)} = \emptyset$ for all $k \in [\up]$
 where $X_{\vt, (\um, \uu_k)}$ is the proper transform of the $\vr$-divisor 
$X_{(\um, \uu_k)}=(x_{(\um, \uu_k)}=0)$, put it differently, 
the factor $x_{(\um, \uu_k)}$, possibly zero somewhere before the $\vt$-blowup,
now that $``$ zero factor $"$ is  removed upon completing $\vt$-blowups.

\smallskip
{\it $\star$ On $\wp$-blowups.}

Here, we continue the process of $``$removing$"$ zero factors
of the proper transforms of the main binomials. From now on, we focus on each main binomial individually,
starting from the first one.

The first main binomial equation of \eqref{mainB-tour} is 
$$B_{145, 1}:  x_{(124,135)} x_{145} - x_{(123,145)} x_{124}x_{135} .$$
The proper transforms of all the variables of $B_{145}$ may assume zero value on $\tsV_{\vt}$ 
somewhere.
For each and every term of $B_{145, 1}$, we pick a $``$zero$"$ 
 factor to form a pair. For example, $(x_{145}, x_{124})$ is such a pair
of $B_{145, 1}$.
 Such a pair is called a $\wp$-set with respect to $B_{145, 1}$. 
 The common vanishing locus of the variables in a $\wp$-set gives rise to  a $\wp$-center.
Before we can blow up these $\wp$-centers, we need to order them.
The order is somewhat subtle. But, the general rule is that we let $\wp$-sets having $\vr$-variables go last
and those having $\vp$-variable go as the second last. In other words, we first declare $\vr$-variables
are the largest, $\vp$-variables are the second largest, and then compare the $\wp$-sets as pairs
lexicographically. Then, we can  blow up $\tsR_\vt$ along (the proper transforms) of 
 these $\wp$-centers, starting from the smallest one.

We then move on to the next main binomial equation
$$B_{145, 2}:  x_{(125,134)} x_{145} - x_{(123,145)} x_{125}x_{134} .$$
Notice here that the variable $x_{145}$ may become an exceptional variable
or acquires one due to the previous $\wp$-blowups. Hence, $B_{145, 2}$ should have more
$\wp$-sets.
We declare these exceptional variables to be the smallest ones, and wthin them,
we order them by reversing the order of occurrence.
We can then select pairs of variables, one from each term, define $\wp$-centers, 
make an order on them,
and repeat the above.

This way, we complete our $\wp$-blowups with respect to the block of relations of
$\cB^\mn_{F_1}$ and obtain $\tsR_{\wp_1}$. 

Now, consider the linearized $\pl$ relation
$$L_{F_1}=  x_{(123,145)}- x_{(124,135)}+x_{(125,134)}.$$
We can blow up $\tsR_{\wp_1}$ along the proper transform of
the intersection $$(L_{F_1}=0)\cap (x_{(123,145)}=0)$$
to obtain $$\tsR_{\ell_1} \lra \tsR_{\wp_1}.$$
This complete all the desired blowups for the block of equations
$$\{B_{145, 1}, \; B_{145, 2}, \; L_{F_1}\}.$$

We then move on to the next bock of relations
$$\{B_{245, 1}, \; B_{245, 2}, \; L_{F_2}\},$$
and repeat all the above,
and  then,   the next block 
 $\{B_{345,1}, B_{345,2}, L_{F_3}\}$, repeat all the above, and so on. 
 
 This gives rise to the sequential blowups \eqref{wp-sequence-intro}
 and  \eqref{ell-sequence-intro} in the introduction
 $$\tsR_{\ell_k} \lra \tsR_{\wp_k}  \to \cdots \to
\tsR_{(\wp_{(k\tau)}\fr_\mu\fs_{h})} \to \tsR_{(\wp_{(k\tau)}\fr_\mu\fs_{h-1})} \to \cdots \to \tsR_{\ell_{k-1}},$$
coming with the induced blowups
$$\tsV_{\ell_k} \lra \tsV_{\wp_k}  \to \cdots \to
\tsV_{(\wp_{(k\tau)}\fr_\mu\fs_{h})} \to \tsV_{(\wp_{(k\tau)}\fr_\mu\fs_{h-1})} \to \cdots \to \tsV_{\ell_{k-1}}.$$
Each scheme in first sequence above has a smooth open subset
$\tsR^\circ_{\ell_k}$ or $\tsR^\circ_{\wp_k}$ or  $\tsR^\circ_{(\wp_{(k\tau)}\fr_\mu\fs_{h})}$
containing $\tsV_{\ell_k}$ or  $\tsV_{\wp_k}$ or
$\tsV_{(\wp_{(k\tau)}\fr_\mu\fs_{h})}$, respectively.

An intermediate blowup scheme in the above 
is denoted by $\tsR_{(\wp_{(k\tau)}\fr_\mu\fs_h)}$. Here $(k\tau)$ is the index of a main binomial.
As the process of $\wp$-blowups goes on, more and more exceptional 
parameters may be acquired and appear in the proper transform of the later main binomial $B_{(k\tau)}$, 
resulting more pairs of zero factors, hence more corresponding $\wp$-sets
and $\wp$-centers. 
The existence of the index $\fr_\mu$, called {\it round $\mu$}, 
is due to the need to deal with the situation when 
an  exceptional parameter with exponent greater than one is accumulated
in the proper transform of the main binomial $B_{(k\tau)}$ (such a situation does not occur for the
first few main binomials).
The index $\fs_h$, called {\it step $h$}, simply indicates the corresponding step of the blowup.

When the process of $\wp$-blowups terminates, all the main binomials terminate, that is, 
all the variables in the main binomials are invertible along $\tsV_{\wp_k}$. 

Hence, we have achieved
our goal to $``$remove$"$ zero factors of main binomials.

\medskip
$\bullet$ {\sl  $``$Removing$"$ zero factors of the leading terms
of linearized $\pl$ relations.}
\smallskip

 {\it $\star$ On $\ell$-blowups.}
  
  Here, we make some more comments on $\ell$-blowups.
  
For any of the $\pl$ relations of \eqref {rk0-and-1}, either 
$\bF_{(123),iuv}$ for some  $1\le i\le 3$ or  $\bF_{(123),abc}$, we express its linearized $\pl$ relation
as 
$$L_{F_k}: \;\; \sgn(s_{F_k}) x_{(\um, \uu_{F_k})}+\sum_{s \in S_{F_k} \- s_{F_k}} \sgn (s) x_{(\uu_s,\uv_s)}$$
where $\sgn(s_{F_k}) x_{(\um, \uu_{F_k})}$ is the leading term. 
It comes equipped with a divisor $$D_{L_{F_k}}=(L_{F_k} =0)$$ in $\sR_\sF.$ 
We let $D_{\wp_k,L_{F_k}}$ be the proper transform of $D_{L_{F_k}}$ in $\tsR_{\wp_k}$.

 After the process of $\vt$-blowups, the leading $x_{(\um, \uu_F)}$, can become 
 an exceptional variable of the exceptional divisor $E_{\vt_{[k]}}$ created by the corresponding
 $\vt$-blowup with respect to $F_k$ mentioned earlier.  We let $E_{\wp_k, \vt_k}$
 be the proper transform of $E_{\vt_{[k]}}$ in $\tsR_{\wp_k}$. 
 We can then let $$\tsR_{\ell_k} \lra \tsR_{\wp_k}$$
 be the blowup of $\tsR_{\wp_k}$ along the intersection $D_{\wp_k, L_{F_k}}\cap E_{\wp_k, \vt_k}$.
 Then, the blowup
 $\tsR_{\ell_k} \lra \tsR_{\wp_k}$ will remove that $``$ zero factor  $"$ and bring up a 
 variable $y_{(\um, \uu_{F_k})}$ invertible along $\tsV_{\ell_k}$.
 Geometrically, this process separates the two divisors $D_{\wp_k, L_{F_k}}$
 and $E_{\wp_k, \vt_k}$.

We point out here the $\ell$-blowup with respect to $F$ has to immediately follow the $\wp$-blowups
 with respect to $F$; the order of $\wp$-blowups  with respect to a fixed $\pl$ relation $F$,
 as already mentioned,  may be subtle and are carefully chosen. 
 
{\it  In fact, the birational model $\sV_\um$ of the chart $\rU_\um(\Gr)=\rU_\um \cap \Gr^{3,E}$
 of the Grassmannian, also as a blowup of $\rU_\um(\Gr)$, has to be constructed first, as 
 experience has shown. That is to say,  the method of our approach is highly sensitive to
 the order of all these  blowups.}


\smallskip

In the above, the constructions of $\wp$-, and $\ell$-blowups 
are discussed in terms of coordinate variables
 of the proper transforms of the main binomials or lineaized $\pl$ relations on local charts.
 In the main text, the constructions of all these blowups, like $\vt$-blowups, are done globally via induction.


 \smallskip
 From the previous discussions, one sees that the process of $\wp$-blowups
 is highly inefficient. This is not a surprise as we treat {\it all singularities} 
 all together, {\it once and for all.}
To provide a concrete example for the whole process,  $\Gr(2,n)$ would miss some main points;
$\Gr(3,6)$ would be too long to include, and also, perhaps not too helpful as far as 
showing (a resolution of) a singularity is concerned.

\medskip
$\bullet$ {\sl  $\Ga$-schemes and their $\vt$-, $\wp$-, $\ell$-transforms.}

Fix any integral $\Ga$-scheme $Z_\Ga$, 
considered as a closed subscheme of $\rU_\um \cap \Gr^{3,E}$.
Our goal is to resolve $Z_\Ga$ when it is singular.

 As in the introduction,
  we have the  instrumental diagram \eqref{theDiagram2}.
\begin{equation}\label{theDiagram2}
 \xymatrix@C-=0.4cm{
  \tsR_{\ell} \ar[r] & \cdots  \ar[r] &  \tsR_{\hbar} \ar[r] &  \tsR_{\hbar} \ar[r] &  \cdots  \ar[r] &   \sR_{\sF_{[j]}}  \ar[r] &  \sR_{\sF_{[j-1]}} \cdots \ar[r] &  \rU_\um \\
  \tsR^\circ_{\ell}\ar @{^{(}->} [u]  \ar[r] & \cdots  \ar[r] &  \tsR^\circ_{\hbar}\ar @{^{(}->} [u]  \ar[r] &  \tsR^\circ_{\hbar} \ar @{^{(}->} [u] \ar[r] &  \cdots  \ar[r] &   \sR_{\sF_{[j]}} \ar @{^{(}->} [u]_{=} \ar[r] &  \sR_{\sF_{[j-1]}} \ar @{^{(}->} [u]_{=} \cdots \ar[r] &  \rU_\um \ar @{^{(}->} [u]_{=}\\
    \tsV_{\ell} \ar @{^{(}->} [u]  \ar[r] & \cdots  \ar[r] &  \tsV_{\hbar}\ar @{^{(}->} [u]   \ar[r] &  \tsV_{\hbar'} \ar @{^{(}->} [u]  \ar[r] &  \cdots    \ar[r] &   \sV_{\sF_{[j]}} \ar @{^{(}->} [u]\ar[r] &  \sV_{\sF_{[j-1]}} \cdots \ar @{^{(}->} [u]  \ar[r] &  \rU_\um \cap \Gr^{3,E}   \ar @{^{(}->} [u]  \\
   \tZ_{\ell, \Ga} \ar @{^{(}->} [u]  \ar[r] & \cdots  \ar[r] &  \tZ_{\hbar,\Ga}\ar @{^{(}->} [u]   \ar[r] &  \tZ_{\hbar',\Ga} \ar @{^{(}->} [u]  \ar[r] &  \cdots    \ar[r] &   Z_{\sF_{[j]},\Ga} \ar @{^{(}->} [u]\ar[r] &  Z_{\sF_{[j-1])},\Ga} \cdots \ar @{^{(}->} [u]  \ar[r] &  Z_\Ga  \ar @{^{(}->} [u]  \\
    \tZ^\dagger_{\ell, \Ga} \ar @{^{(}->} [u]  \ar[r] & \cdots  \ar[r] &  \tZ^\dagger_{\hbar,\Ga}\ar @{^{(}->} [u]   \ar[r] &  \tZ^\dagger_{\hbar',\Ga} \ar @{^{(}->} [u]  \ar[r] &  \cdots    \ar[r] &   Z^\dagger_{\sF_{[j]},\Ga} \ar @{^{(}->} [u]\ar[r] &  Z^\dagger_{\sF_{[j-1])},\Ga} \cdots \ar @{^{(}->} [u]  \ar[r] &  Z_\Ga.  \ar[u]_{=}       }
\end{equation}

The first three rows follow from the above discussion;
we  only need to explain the  fourth and fifth rows.

  Here,  when  $Z_{\sF_{[j-1])}}$  (resp.  $\tZ_{{\hbar}',\Ga}$) is not contained in the corresponding
  blowup center, $Z_{\sF_{[j])}}$  (resp.  $\tZ_{{\hbar},\Ga}$) is, roughly, obtained
  from  the proper transform
  of $Z_{\sF_{[j-1])}}$  (resp.  $\tZ_{{\hbar}',\Ga}$). 
  When  $Z_{\sF_{[j-1])}}$  (resp.  $\tZ_{{\hbar}',\Ga}$) is contained in the corresponding
  blowup center, then $Z_{\sF_{[j])}}$  (resp.  $\tZ_{{\hbar},\Ga}$) is, roughly,
  obtained from  a canonical rational slice
  of  the total  transform of $Z_{\sF_{[j-1])}}$  (resp.  $\tZ_{{\hbar}',\Ga}$) under the morphism 
   $ \sV_{\sF_{[j]}} \to  \sV_{\sF_{[j-1]}}$ (resp. $\tsV_{\hbar} \to \tsV_{{\hbar}'}$) in the second row.
   Moreover, every $Z_{\sF_{[j])}}$  (resp.  $\tZ_{{\hbar},\Ga}$) admits explicit defining equations 
   over any standard affine chart of the corresponding smooth open subset of the scheme in the first row.
Furthermore, in every case, $Z_{\sF_{[j])}}$  (resp.  $\tZ_{{\hbar},\Ga}$)  contains an irreducible
component  $Z^\dagger_{\sF_{[j]},\Ga}$  (resp. 
   $\tZ^\dagger_{{\hbar},\Ga}$) such that it maps onto $Z_\Ga$ 
   projectively and birationally.

$\bullet$ {\sl Smoothness by Jacobian of 
main binomials  and linearized $\pl$ relations.}

We are now ready to explain the smoothness of 
 $\tZ_{\ell, \Ga}$ when $Z_\Ga$ is integral.
  We first investigate the smoothness of $\tsV_\ell$ which is a special case 
 of  $\tZ_{\ell, \Ga}$ when 
 $\Ga=\emptyset$.

The question is local. So we focus on an affine chart of $\fV$ of $\tsR_\ell$.
Corollary \ref{ell-transform-up} provides  
the local defining equations for $\tZ_{\ell, \Ga}$.

As envisioned, we confirm that the scheme $\tsV_\ell$
 is smooth on the chart $\fV$ by some explicit calculations
and careful analysis on the Jacobian of
{\it the main binomial relations of $\cB^\mn_\fV$ and linearized $\pl$ relations of $L_{\fV, \sfm}$.}
This implies that on the chart $\fV$, 
 the main binomial relations of $\cB^\mn_\fV$ and the linearized $\pl$ relations 
 of $L_{\fV,\sfm}$
together generate the ideal of $\tsV_\ell \cap \fV$. Thus,  as a consequence, 
the binomials of quotient type  $\cB^q_\fV$ can be discarded from consideration, as well.

Then, the similar calculations and analysis on the Jacobian of the induced main binomial relations of
$\cB^\mn_\fV$ and the induced linearized $\pl$ relations of $L_{\fV,\sfm}$ for 
$\tZ_{\ell,\Ga}$ implies that $\tZ_{\ell,\Ga}$  is smooth as well, on all charts $\fV$.
In particular, $\tZ^\dagger_{\ell,\Ga}$, now a connected component of $\tZ_{\ell,\Ga}$,  is  smooth, too.

This implies that $\tZ^\dagger_{\ell,\Ga} \lra Z_\Ga$ is a resolution, if $Z_\Ga$ is singular.

The above are done in \S \ref{main-statement}.

\smallskip
$\bullet$ {\sl Resolution via Mn\"ev universality.} 

Upon reviewing Lafforgue's version of Mn\"ev universality,
we can apply the resolution $\tZ^\dagger_{\ell,\Ga} \lra Z_\Ga$ to obtain 
 a resolution for
any singular affine or projective variety $X$ defined over a prime field. 
For a singular affine or projective algebraic variety $X$ over a general perfect field $\kk$, 
we spread it out and deduce that $X/\kk$ admits a resolution as well.
The details are expanded  in \S \ref{global-resolution}.

\medskip
{ \it  Let $p$ be an arbitrarily fixed prime number.
 Let $\mathbb F$ be either $\QQ$ or a finite field with $p$ elements.
From Section \ref{localization} to Section \ref{main-statement}, 
every scheme considered is defined over $\ZZ$, consequently,
 is defined over $\mathbb F$, and is considered as a scheme over 
 the perfect field $\mathbb F$.}

\section{Primary $\pl$ Relations and De-homogenized  $\pl$-Ideal}\label{localization}

{\it The purpose of this section is to describe a minimal set of $\pl$ relations so that they
generate the $\pl$
ideal  for a given chart.  
The approach of this article depends on these explicit relations. 
The entire section is elementary.} 

Fix a pair of positive integers $n>1$ and $1\le d <n$.
 In this section, we focus on Grassmannians $\Gr^{d,E}$ where
 $E=E_1 \oplus \cdots \oplus E_n$ is as introduced in the introduction.
 
 For application to resolution of singularity, it suffices to consider $\Gr^{3,E}$. However, 
 we choose to work on the general case of $\Gr^{d,E}$ 
 for the following two reasons.  (1) Working on $\Gr^{3,E}$ instead of  $\Gr^{d,E}$
 saves us little space or time: if we focus on 
 \eqref{rk0-and-1} but not the general form $\sum_{s \in S_F} x_{\uu_s}x_{\uv_s}$
 in the construction of $\vt$-, $\wp$-, and $\ell$-blowups,
then  the  proofs of some key propositions would have to
 be somewhat case by case,  less conceptual, and hence may be lengthier.
 However, it is  always good to frequently use the equations 
 of \eqref{rk0-and-1} and \eqref{mainB-tour}  as examples to help to
  understand the notations and the process.
  We caution here that replying only on $\pl$ equations 
  of the form $\bF_{(123),iuv},  \; 1\le i\le 3$ from \eqref{rk0-and-1} 
 (they correspond to $\pl$ equations of $\Gr^{2,E}$)
  might miss some crucial points. 
 (2) As a convenient benefit, the results obtained and proofs provided
 for $\Gr^{d,E}$ here  can be directly cited in the future. 

 All the results of this section are elementary and some might have already been known. 
 Nonetheless,  the development in the current section is  instrumental for our approach.
 Hence, some good details are necessary.

We make a convention. Let $A$ be a finite set and $a \in A$. Then, we write
$$A \- a := A\-\{a\}.$$
Also, we use $|A|$ to denote the cardinality of the set A.

\subsection{$\pl$ relations} $\ $

Fix a pair of positive integers $(n,d)$ with $n >1$ and $1\le d <n$. 
We denote the set $\{1,\cdots, n\}$ by $[n]$.
We let $\II_{d,n}$ be the set of all sequences of distinct integers $\{1\le u_1 < \cdots < u_d\le n \}$.
An element of $\II_{d,n}$ is frequently written as $\uu=(u_1\cdots u_d)$.
We also regard an element of $\II_{d,n}$ as a subset of $d$ distinct integers in $[n]$.
For instance, for any $\uu, \um \in \II_{d,n}$, $\uu \- \um$ takes its set-theoretic meaning.
Also, $u \in [n] \- \uu$  if and only if $u \ne u_i$ for all $1\le i\le d$.

As in the introduction, suppose we have a set of vector spaces, $E_1, \cdots, E_n$ such that 
every $E_\alpha$, $1\le \alpha\le n$,  is of dimension 1 over $\kk$ (or, a free module of rank 1 over $\ZZ$), 
and, we let 
$$E:=E_1 \oplus \ldots \oplus E_n.$$ 

For any fixed  integer $1\le d <n$, the Grassmannian, defined by
$$\Gr^{d,E}=\{ F \hookrightarrow E \mid \dim F=d\}, $$
is a projective variety defined over $\ZZ$.

We have the canonical decomposition
$$\wedge^d E=\bigoplus_{\ui =(i_1,\cdots, i_d)\in \II_{d,n}} E_{i_1}\otimes \cdots \otimes E_{i_d}.$$
This gives rise to the $\pl$ embedding of the Grassmannian:
$$\Gr^{d,E} \hookrightarrow \PP(\wedge^d E)=\{(p_\ui)_{\ui \in \II_{d,n}} \in \GG_m 
\backslash (\wedge^d E \- \{0\} )\},$$
$$F \lra [\wedge^d F],$$
 where $\GGm$ is the multiplicative group.

The group $(\GGm)^n/\GG_m$, where $\GGm$
 is embedded in $(\GGm)^n$ as the diagonal, acts on $\PP(\wedge^d E)$ by
 $${\bf t} \cdot p_{\ui} = t_{i_1} \cdots t_{i_d} p_{\ui}$$
where ${\bf t} = (t_1, \cdots, t_n)$ is (a representative of) an element of $(\GGm)^n/\GG_m$
and $\ui=(i_1, \cdots, i_d)$. This action leaves $\Gr^{d,E}$ invariant.
The $(\GGm)^n/\GG_m$-action on $\Gr^{d,E}$ will only be used in \S \ref{global-resolution}.

The Grassmannian $\Gr^{d,E}$ as a closed subscheme of $\PP(\wedge^d E)$ is
defined by a set of specific quadratic relations, called $\pl$ relations. We describe them below.

For narrative convenience, we will assume that
$p_{u_1\cdots u_d}$ is defined for any sequence of
$d$ distinct integers between 1 and $n$,  not necessarily listed in 
the sequential order of natural numbers,
subject to the relation
\begin{equation}\label{signConvention}
p_{\si(u_1)\cdots \si (u_d)}=\sgn(\si) p_{u_1\cdots u_d}
\end{equation}
for any permutation $\si$ on the set $[n]$, 
where $\sgn(\si)$ denotes the sign of the permutation.
Furthermore, also for convenience, we set 
\begin{equation}\label{zeroConvention}
 p_{\uu} := 0,
\end{equation}
for any $\uu=(u_1\cdots u_d)$ of a set of $d$  integers  in $[n]$ if
$u_i=u_j$ for some $1\le i \ne j\le d$.

Now, for any pair $(\uh, \uk) \in \II_{d-1,n} \times \II_{d+1,n}$ with
$$\uh=\{h_1, \cdots, h_{d-1}\}  \;\;
 \hbox{and} \;\; \uk=\{k_1, \cdots, k_{d+1}\} ,$$
we have the Pl\"ucker relation:
\begin{equation} \label{pluckerEq}
F_{\uh,\uk}= \sum_{\lambda=1}^{d+1} (-1)^{\lambda-1} p_{h_1\cdots h_{d-1} k_\lambda } p_{k_1 \cdots  \overline{k_\lambda} \cdots k_{d+1}},
\end{equation}
where  $``\overline{k_\lambda}"$ means that $k_\lambda$ is deleted from the list.

{\it To make the presentation concise,
we frequently succinctly express a  general $\pl$ relation as
\begin{equation}\label{succinct-pl}
F= \sum_{s \in S_F} \sgn(s) p_{\uu_s}p_{\uv_s},
\end{equation}
 for some index set $S_F$, with $\uu_s, \uv_s \in \II_{d,n}$, where
$ \sgn(s) $ is the $\pm$ sign associated with the quadratic  monomial term $p_{\uu_s}p_{\uv_s}$.
We note here that $\sgn(s)$ depends on 
how every of ${\uu_s}$ and ${\uv_s}$ is presented, per the convention \eqref{signConvention}.
}

\begin{defn}\label{ftF}
Consider any $\pl$ relation $F=F_{\uh,\uk}$ for some pair
 $(\uh, \uk) \in \II_{d-1,n} \times \II_{d+1,n}$.
We let $\ft_{F}+1$ be the number of terms in $F$. We then define the rank of
$F$ to be $\ft_{F}-2$. We denote this number by $\rk (F)$.
\end{defn}
The integer $\ft_{F}$, as defined above, will be frequently used throughout.

\begin{example}\label{exam:(3,6)}
Consider the Grassmannian $\Gr(3,6)$. Then, the $\pl$ relation
$$F_{(16), (3456)}:  p_{163}p_{456} - p_{164}p_{356} + p_{165}p_{346}$$
is of rank zero; the $\pl$ relation
$$F_{(12), (3456)}: p_{123}p_{456} - p_{124}p_{356} + p_{125}p_{346}- p_{126}p_{345}$$
is of rank one. 
\end{example}

Let $\ZZ[p_\ui]_{\ui \in \II_{d,n}}$ be the homogeneous coordinate ring
 of the $\pl$ projective space $\PP(\wedge^d E)$ and $I_\wp \subset \ZZ[p_\ui]_{\ui \in \II_{d,n}}$ 
be the homogeneous ideal generated by all the $\pl$ relations  \eqref{pluckerEq} or
\eqref{succinct-pl}.
We let $I_{\widehat\wp}$ by the homogeneous ideal of  $\Gr^{d,E}$ in $\PP(\wedge^d E)$.
Then $I_{\widehat\wp} \supset I_\wp$, but not equal over $\ZZ$,
or a field of positive characteristic, in general: there are other additional
relations (multivariate $\pl$ relations) for the Grassmannian $\Gr^{d,E}$, thanks to Matt Baker for pointing this out to the author. In characteristic zero, $I_{\widehat\wp} = I_\wp$.


\subsection{Primary $\pl$ equations with respect to a fixed affine chart} $\ $

In this subsection, we focus on a fixed affine chart of the $\pl$ projective space
$\PP(\wedge^d E)$.

Fix any $\um \in \II_{d,n}$. In $\PP(\wedge^d E)$,
we let $$\rU_\um:=(p_\um \equiv 1)$$  stand for the open chart
 defined by $p_\um \ne 0$.  Then,
the affine space $\rU_\um$  comes equipped with the 
coordinate variables $x_\uu=p_\uu/p_\um$ for all $\uu \in \II_{d,n} \- \um$.
In practical calculations, we will simply set $p_\um =1$, whence the notation
$(p_\um \equiv 1)$ for the chart. We let
$$\rU_\um (\Gr) = \rU_\um \cap \Gr^{d, E}$$
be the corresponding induced open chart  of $\Gr^{d, E}$.

The chart $\rU_\um (\Gr)$ is canonically an affine space.
Below, we explicitly describe $$\up:={n \choose d} -1- d(n-d) $$
many specific  $\pl$ relations with respect to the chart $\rU_\um$, called the $\um$-primary 
$\pl$ relations, such that their restrictions to the chart $\rU_\um$ define
$\rU_\um (\Gr)$ as a closed subscheme of the affine space $\rU_\um$.

To this end, we write  $\um=(m_1 \cdots m_d)$. We set 
$$\II^\um_{d,n}=\{\uu \in \II_{d,n} \mid |\uu \- \um| \ge 2\} \subset \II_{d,n}$$
where $\uu$ and  $\um$ are also regarded as subsets of integers, and $|\uu \- \um|$ denotes the cardinality of $\uu \- \um$. 
In  words,  $\uu \in \II^\um_{d,n}$ if and only if
$\uu=(u_1,\cdots, u_d)$ contains at least two elements distinct from elements in  
$\um=(m_1, \cdots, m_d)$. 
It is helpful to write explicitly the set $\II_{d,n} \- \II_{d,n}^\um$:
$$\II_{d,n} \- \II_{d,n}^\um =\{\um\} \cup \{ \{u\} \cup (\um\setminus m_i) \mid \; 
\hbox{for all $ u \in [n]\- \um$ and all $1\le i\le d$} \}, $$
where $u \notin \um$ if and only if $u \ne m_i$ for any $1\le i\le d$.
Then, one calculates and finds
$$|\II^\um_{d,n}|=\up={n \choose d} -1- d(n-d) ,$$
where $|\II^\um_{d,n}|$ denotes the cardinality of $\II^\um_{d,n}$.

Further, let $\ua=(a_1\cdots a_k)$ be a list of some elements of $[n]$, not necessarily mutually distinct,
 for some $k<n$.
We will write $$v \ua=v(a_1\cdots a_k)=(v a_1\cdots a_k)
\;\; \hbox{and} \;\;  \ua  v=(a_1\cdots a_k)v=(a_1\cdots a_k v),$$
each is considered as a list of some elements of $[n]$, 
for any $v \in [n] \- \ua$.

Now, take any element $\uu =(u_1,\cdots, u_d) \in \II_{d,n}^\um$.
We let $u_0$ denote the smallest integer in $\uu \- \um$. 
We then set 
$$ \uh=\uu \setminus u_0 \;\; \hbox{and} \;\;  \uk=(u_0 m_1 \cdots m_d),$$
where $\uu \setminus u_0 =\uu \setminus \{u_0\}$ and $\uu$ is regarded as a set of integers.

 This gives rise
to the $\pl$ relation $F_{\uh,\uk}$, taking of the following form
\begin{equation} \label{keyTrick}
F_{\uh,\uk}=p_{(\uu \setminus u_0)u_0} p_\um -  p_{(\uu \setminus u_0) m_1} p_{u_0 (\um \setminus m_1 )}
+\cdots + (-1)^d p_{(\uu \setminus u_0) m_d} p_{u_0 (\um \setminus m_d)},
\end{equation}
where $\um \setminus m_i = \um \setminus \{m_i\}$ and $\um$ is regarded as a set of integers,
for all $i \in [d]$.

We give a new notation for this particular equation: we denote it by
\begin{equation} \label{keyTrick2}
F_{\um, \uu}=p_{(\uu \setminus u_0)u_0} p_\um + \sum_{i=1}^d (-1)^i  
p_{(\uu \setminus u_0)m_i} p_{u_0 (\um \setminus m_i )},
\end{equation} because it only depends on $\um$ and $\uu \in \II_{d,n}^\um$. 
To simplify the notation, we introduce
$$\uu^r= \uu \setminus u_0, \;\; \widehat{\um_i} = \um\setminus m_i, \;\; \hbox{for all $i \in [d]$.}$$
Then, \eqref{keyTrick2} becomes
\begin{equation} \label{keyTrick4}
F_{\um, \uu}=p_{\um}p_{\uu^r u_0} +
\sum_{i=1}^d (-1)^i p_{ \uu^rm_i } p_{u_0 \widehat{\um_i} }.
\end{equation}

We point out here that $\uu$ and $\uu^r u_0$ may differ by a permutation.

\begin{defn}\label{lt-ltvar}
We call the $\pl$ equation $F_{\um, \uu}$ of \eqref{keyTrick4} a primary $\pl$ equation
for the chart $\rU_\um=(p_\um \equiv 1)$. We also say $F_{\um,\uu}$ is $\um$-primary.
The term $p_\um p_{\uu}$ is called the leading term of $F_{\um, \uu}$.
\end{defn}

(One should not confuse  $F_{\um, \uu}$ with the expression of a general $\pl$ equation
$F_{\uh,\uk}$: we have $(\um, \uu)\in \II_{d,n}^2$ for the former and 
$(\uh,\uk) \in \II_{d-1,n} \times \II_{d+1,n}$ for the latter.)

One sees that the correspondence between $\II_{d,n}^\um$ and 
the set of $\um$-primary $\pl$ equations is a bijection.

\subsection{De-homogenized $\pl$ ideal  with respect to a fixed affine chart} $\ $

Following the previous subsection, we continue to fix an element $\um \in \II_{d,n}$ 
and  focus on the chart $\rU_\um$ of $\PP(\wedge^d E)$.

We will write  $\II_{d,n} \- \um$ for  $\II_{d,n} \- \{\um\}$.

Given any $\uu \in \II^\um_{d,n}$, by \eqref{keyTrick4}, 
it gives rise to the $\um$-primary equation 
$$ F_{\um, \uu}=p_{\um}p_{\uu^r u_0} +
\sum_{i=1}^d (-1)^i p_{\uu^r m_i } p_{ u_0 \widehat{\um_i} }.$$
 If we set $p_\um =1$ and let $x_\uw=p_\uw$, for all $\uw \in \II_{d,n} \- \um$,
 then it  becomes 
\begin{equation}\label{equ:localized-uu}
\bF_{\um, \uu}=x_{\uu^r u_0} +
\sum_{i=1}^d (-1)^i  x_{\uu^r m_i} x_{ u_0 \widehat{\um_i} }.
\end{equation}

\begin{defn}\label{localized-primary} We call the relation \eqref{equ:localized-uu} 
 the de-homogenized (or the localized)
$\um$-primary $\pl$ relation corresponding to $\uu \in \II_{d,n}^\um$.
We call the unique distinguished variable, $x_{\uu}$ 
(which may differ $x_{\uu^r u_0}$ by a sign), the leading variable
of the  de-homogenized $\pl$ relation $\bF_{\um, \uu}$. 
\end{defn}

Throughout  this paper, we often express an $\um$-primary $\pl$ equation $F$ as
\begin{equation}\label{the-form-F}
F=\sum_{s \in S_F} \sgn (s) p_{\uu_s} p_{\uv_s}= \sgn (s_F) p_\um p_{\uu_{s_F}}+
\sum_{s \in S_F\- s_F} \sgn (s) p_{\uu_s} p_{\uv_s}
\end{equation}
where $s_F$ is the index for the leading term of $F$, and $S_F\- s_F:=S_F\-\{s_F\}$.
Then, upon setting $p_\um=1$ and letting $x_\uw=p_\uw$ for all $\uw \in \II_{d,n}\- \um$,
 we can write the corresponding de-homogenized  $\um$-primary $\pl$ equation $\bF$ as
\begin{equation}\label{the-form-LF}
\bF=\sum_{s \in S_F} \sgn (s) x_{\uu_s} x_{\uv_s}= \sgn (s_F) x_{\uu_F}+
\sum_{s \in S_F\- s_F} \sgn (s) x_{\uu_s} x_{\uv_s}
\end{equation}
where  $x_{\uu_F}:=x_{\uu_{s_F}}$ is the leading variable of $\bF$.

\begin{defn}\label{ft-bF}
Let $F$ be an $\um$-primary $\pl$ relation and $\bF$ its 
de-homogenization with respect to the chart $\rU_\um$.
We set $\ft_{\bF}=\ft_F$  and $\rk (\bF) =\rk (F)$.
\end{defn}

For any $\uu \in \II_{d,n} \- \um$,  we let $x_\uu=p_\uu/p_\um$ for all $\uu \in \II_{d,n} \- \um$.
Then, we can identify the coordinate ring of $\rU_\um$ 
with $\kk [x_\uu]_{\uu \in \II_{d,n} \- \um}$.
We let $I_{\wp,\um}$ be the ideal of $\kk [x_\uu]_{\uu \in \II_{d,n} \- \um}$
obtained from the ideal $I_\wp$ be setting $p_\um=1$ and letting 
$x_\uu=p_\uu$ for all $\uu \in \II_{d,n} \- \um$.
The ideal $I_{\wp,\um}$ is the de-homogenization of the 
homogeneous $\pl$ ideal $I_\wp$ on the chart $\rU_\um$.

\begin{defn} For any  $\uu \in \II_{d,n}^\um$, we define 
the $\um$-rank of $\uu$ (resp.  $x_\uu$) to be the rank of its corresponding
primary $\pl$ equation $F_{\um,\uu}$.  If $\uu \in (\II_{d,n} \- \um) \- \II_{d,n}^\um$,
then we set $\rk (\uu)=-1$.
\end{defn}

\begin{prop}\label{primary-generate} 
The affine subspace $\rU_\um (\Gr)=\rU_\um \cap \Gr^{d,E}$ embedded in
the affine space $\rU_\um$ is defined by the relations in
$$\sfm:=\{ \bF_{\um, \uu} \mid \uu \in \II_{d,n}^\um \}.$$
 Consequently, the chart $\rU_\um (\Gr)=\rU_\um \cap \Gr^{d,E}$ comes equipped with
 the set of free variables
 $$\var_{\rU_\um}:=\{x_\uu \mid \uu \in \II_{d,n} \-\{ \um\} \- \II_{d,n}^\um\}$$ and is 
 canonically isomorphic to the affine space 
 with  the above variables as its coordinate variables.
\end{prop}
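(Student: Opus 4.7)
\medskip
\noindent\textbf{Proof plan.} The plan is to combine an elementary algebraic computation with the classical big-cell parametrization of the Grassmannian.

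First I would count the free variables. The set $(\II_{d,n}\-\um)\-\II^\um_{d,n}$ consists exactly of the indices of the form $u\,\widehat{\um_i}=\{u\}\cup(\um\-m_i)$ with $u\in[n]\-\um$ and $i\in[d]$, so its cardinality is $d(n-d)$. Let $R=\kk[x_\uu]_{\uu\in\II_{d,n}\-\um}$ denote the coordinate ring of $\rU_\um$, let $J\subset R$ be the ideal generated by $\sfm=\{\bF_{\um,\uu}\mid\uu\in\II^\um_{d,n}\}$, and let $R_{\mathrm{free}}=\kk[x_{u\widehat{\um_i}}]_{u\in[n]\-\um,\;i\in[d]}$ be the subring generated by the free variables.

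Next I would prove, by induction on the $\um$-rank $\rk(\uu)=|\uu\-\um|-2$, that the class of $x_\uu$ in $R/J$ lies in the image of $R_{\mathrm{free}}$ for every $\uu\in\II^\um_{d,n}$. Indeed, by \eqref{equ:localized-uu},
\[
\bF_{\um,\uu}=x_{\uu^r u_0}+\sum_{i=1}^{d}(-1)^i x_{\uu^r m_i}x_{u_0\widehat{\um_i}},
\]
so modulo $J$ the leading variable $\pm x_\uu = x_{\uu^r u_0}$ equals a combination of products where each factor $x_{u_0\widehat{\um_i}}$ is free and each factor $x_{\uu^r m_i}$ satisfies $|\uu^r m_i\-\um|=|\uu\-\um|-1$; such a factor is therefore either free (when $\rk(\uu)=0$) or has $\um$-rank $\rk(\uu)-1$, in which case induction applies. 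This produces a surjective $\kk$-algebra map $\psi:R_{\mathrm{free}}\to R/J$. For injectivity, one picks any monomial order on $R$ in which $x_\uu$ (with $\uu\in\II^\um_{d,n}$) is larger than every monomial obtained by substituting the right-hand side of $\bF_{\um,\uu}$; the inductive reduction shows that $\sfm$ is a (Gr\"obner) generating set whose leading terms are precisely $\{x_\uu\mid\uu\in\II^\um_{d,n}\}$, so the composition $R_{\mathrm{free}}\hookrightarrow R\twoheadrightarrow R/J$ is injective. Hence $\psi$ is an isomorphism and the closed subscheme $V(J)\subset\rU_\um$ is isomorphic, via the free variables, to an affine space of dimension $d(n-d)$; in particular $V(J)$ is integral.

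Finally I would compare $V(J)$ with $\rU_\um(\Gr)$. Because each $\bF_{\um,\uu}$ is the de-homogenization of a genuine Pl\"ucker relation of the form \eqref{keyTrick}, the scheme-theoretic inclusion $\rU_\um(\Gr)\subseteq V(J)$ holds inside $\rU_\um$. On the other hand, the standard matrix parametrization (sending a $d\times(n-d)$ matrix $(a_{u,i})$ to the row span of $[I_d\mid A]$ after reordering rows according to $\um$) identifies $\rU_\um(\Gr)$ with an integral affine space of dimension $d(n-d)$ with coordinates $x_{u\widehat{\um_i}}$. Thus $\rU_\um(\Gr)$ and $V(J)$ are both integral closed subschemes of $\rU_\um$ of the same dimension $d(n-d)$, with the former contained in the latter; consequently they coincide as schemes. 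This simultaneously yields the equality $\rU_\um(\Gr)=V(\sfm)$ and the canonical affine structure on the chart with coordinates $\var_{\rU_\um}=\{x_\uu\mid\uu\in\II_{d,n}\-\um\-\II^\um_{d,n}\}$.

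The main obstacle is the injectivity half of $\psi$, i.e., making precise that no nontrivial relation among the free variables is forced by $\sfm$; this is handled by the monomial-order / Gr\"obner argument above, which is the key technical ingredient but is fully driven by the distinguished leading-variable structure of the $\um$-primary relations.
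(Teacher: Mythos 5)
Your proposal is correct and follows essentially the same route as the paper: induction on the $\um$-rank using the triangular ``leading variable minus lower terms'' structure of the $\um$-primary relations to identify $V(J)$ with the affine space on the basic variables, followed by the inclusion $\rU_\um(\Gr)\subseteq V(J)$ and a dimension/integrality comparison. The Gr\"obner-basis justification of injectivity and the explicit $[I_d\mid A]$ parametrization are details the paper leaves implicit (it simply asserts that $\rU_\um(\Gr)$ is canonically an affine space of dimension $d(n-d)$), so your write-up is a slightly more fully argued version of the same proof.
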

\begin{proof}
(This proposition is elementary; 
it serves as the initial check of an induction for  some later proposition;
we provide sufficient details for completeness.)

It suffices to observe that for any  $\uu \in \II_{d,n}^\um$, 
its corresponding de-homogenized $\pl$ 
primary $\pl$ equation $\bF_{\um, \uu}$ is equivalent to an expression of
the leading variable $x_\uu$ as a polynomial in  the free variables of
$\var_{\rU_\um}$.  For instance, one can check this by induction on the $\um$-rank, $\rk (\uu)$, 
of $\uu$, as follows.

Suppose $\rk (\uu)= 0$. Then, up to a permutation, we may write
$$\uu= (\um \setminus \{m_i m_j \}) vu$$
where $m_i, m_j \in \um$ for some $1\le i, j,\le d$, and  $ u< v  \notin \um$. Then, we have
\begin{equation}\label{form-of-rk0}
\bF_{\uu, u}:  x_{\uu}  + (-1)^i x_{ \uu^r m_i} x_{u \widehat{m_i}}
+(-1)^j x_{ \uu^r m_j} x_{u \widehat{m_j}} ,
\end{equation}
where $\uu^r =(\um \setminus \{m_i m_j \})v$.
One sees that $x_{ \uu^r m_i}, \;  x_{u \widehat{m_i}} , \;  x_{ \uu^r m_j}$
and $x_{u \widehat{m_j}}$ belong to $\var_{\rU_\um}$. Hence, the statement holds.

Now suppose that $\rk (\uu)>0$.
Using (\ref{equ:localized-uu}),  we have
$$\bF_{\um, \uu}: \; x_{\uu^r u_0}+
\sum_{i=1}^d (-1)^i x_{\uu^r m_i } x_{u_0 \widehat{m_i} }.$$
Note that all variables $x_{u_0 \widehat{m_i} }, i \in [d]$, belong to $\var_{\rU_\um}$.
Note also that $$\rk ({\uu^r m_i }) = \rk (\uu) -1,$$ provided that
 $p_{\uu^r m_i }$ is not identically zero, that is,  it is a well-defined $\pl$ variable
 (see \eqref{zeroConvention}).
 Thus,  applying the inductive assumption, any such $x_{\uu^r m_i }$ is a polynomial in 
 the variables of $\var_{\rU_\um}$.
 Therefore,  $\bF_{\um, \uu}$, 
   is equivalent to an expression of $x_\uu$ as a polynomial in 
 the variables of $\var_{\rU_\um}$.

Let $J$ be the ideal of $\kk [x_\uu]_{\uu \in \II_{d,n}\- \um}$ generated by
 $\{ \bF_{\um, \uu} \mid \uu \in \II_{d,n}^\um \}$ and let
 $V(J)$ the subscheme of $\rU_\um$ defined by $J$.
By the above discussion,  $V(J)$ is canonically isomorphic to
 the affine space of dimension $d(n-d)$ with  
  the variables of $\var_{\rU_\um}$ as its coordinate variables.
Since $\rU_\um(\Gr) \subset V(J)$, we conclude $\rU_\um (\Gr)=V(J)$.
\end{proof}

\begin{defn}\label{basic} We call the variables in
$$\var_{\rU_\um}:=\{x_\uu \mid \uu \in \II_{d,n} \- \um \- \II_{d,n}^\um\}$$
the $\um$-basic $\pl$ variables. When $\um$ is fixed and clear from the context,
we just call them basic variables.
\end{defn}
Only non-basic $\pl$ variables correspond to $\um$-primary $\pl$ equations.

Observe that for all $\pl$ relations $F$, we have $0\le {\rm rank} ( F)\le d-2$.
Hence, for any $0\le r\le d-2$, we let
$$\sF^r_\um = \{\bF_{\um,\uu}  \mid {\rm rank} (F_{\um,\uu}) =r,\; \uu \in \II_{d,n}^\um\}.$$
Then, we have
$$\sfm=\bigcup_{0\le r\le d-2} \sF^r_\um.$$

Then, one observes the following easy but useful fact.

\begin{prop}\label{leadingTerm} Fix  any $0\le r\le d-2$
 any $\uu \in \II_{d,n}^\um$ with ${\rm rank}_\um (F_{\um, \uu})=r$.
 Then,  the leading variable 
$x_\uu$ of $\bF_{\um, \uu}$ does not appear in any relation in
$$\sF^0_\um \cup \cdots \cup \sF^{r-1}_\um \cup (\sF^r_\um \setminus \bF_{\um, \uu}).$$
\end{prop}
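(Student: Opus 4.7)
The plan is to analyze, for each $\uv \in \II^\um_{d,n}$, exactly which variables occur in $\bF_{\um,\uv}$, and then compare their $\um$-ranks against $\rk_\um(\uu)=r$. Recall from \eqref{equ:localized-uu} that
$$\bF_{\um,\uv}=x_{\uv^r v_0}+\sum_{i=1}^{d}(-1)^i x_{\uv^r m_i} x_{v_0\widehat{\um_i}},$$
so the variables appearing in $\bF_{\um,\uv}$ are of three types: the leading variable $x_\uv$ (up to sign, equal to $x_{\uv^r v_0}$); the variables $x_{\uv^r m_i}$ for those $i$ with $m_i\notin \uv^r$; and the variables $x_{v_0\widehat{\um_i}}$ for $i\in[d]$.

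First I would record the simple rank count: for any $\uw\in \II^\um_{d,n}$, the number of nonzero terms of $F_{\um,\uw}$ is $1+|\um\setminus \uw^r|=1+|\uw\setminus\um|$, so $\rk_\um(\uw)=|\uw\setminus\um|-2$. Next I would classify the three types of variables above according to whether or not they lie in $\II^\um_{d,n}$, and compute their $\um$-ranks: (i) the leading term is $x_\uv$, with $\rk_\um(\uv)$ equal to $\rk(\bF_{\um,\uv})$ by definition; (ii) since $m_i\in\um$ and $\uv^r=\uv\setminus\{v_0\}$ with $v_0\in\uv\setminus\um$, one has $\uv^r m_i\setminus\um=\uv^r\setminus\um=(\uv\setminus\um)\setminus\{v_0\}$, hence $|\uv^r m_i\setminus\um|=|\uv\setminus\um|-1$, giving $\rk_\um(\uv^r m_i)=\rk_\um(\uv)-1$ whenever $p_{\uv^r m_i}\ne 0$; (iii) the index $v_0\widehat{\um_i}$ has exactly one element outside $\um$, namely $v_0$, so $v_0\widehat{\um_i}\notin \II^\um_{d,n}$ and $x_{v_0\widehat{\um_i}}\in \var_{\rU_\um}$ is a basic variable.

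Now suppose $x_\uu$ occurs in some $\bF_{\um,\uv}\in \sF^0_\um\cup\cdots\cup\sF^r_\um$ with $\uv\ne\uu$. I would rule out each of the three possibilities in turn. The leading case forces $\uu=\uv$, contradicting $\uv\ne\uu$. The basic-variable case forces $\rk_\um(\uu)=-1$, contradicting $\rk_\um(\uu)=r\ge 0$. The middle case forces $x_\uu=x_{\uv^r m_i}$ for some $i$, which by the rank computation in (ii) gives $\rk_\um(\uv)=\rk_\um(\uu)+1=r+1$, contradicting $\rk_\um(\uv)\le r$. This exhausts all cases and yields the claim.

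The argument is almost entirely bookkeeping once the rank formula $\rk_\um(\uw)=|\uw\setminus\um|-2$ is established; there is no real obstacle. The only point requiring minor care is the convention \eqref{zeroConvention}, ensuring that when we say ``$x_\uu$ appears'' we restrict to nonzero terms, so that the equality $\uu=\uv^r m_i$ is genuinely a set-theoretic equality of $d$-element subsets; this is automatic since $x_\uu$ itself refers to a nonzero $\pl$ variable (as $\uu\in\II^\um_{d,n}$ in particular has distinct entries).
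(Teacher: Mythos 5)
Your proof is correct. The paper states this proposition as an observation without giving a proof, but your argument is exactly the intended one: the rank formula $\rk_\um(\uw)=|\uw\setminus\um|-2$, the rank drop $\rk_\um(\uv^r m_i)=\rk_\um(\uv)-1$, and the fact that $x_{v_0\widehat{\um_i}}$ is a basic variable are precisely the ingredients the paper already records in its proof of Proposition \ref{primary-generate}, and your three-case elimination assembles them correctly.
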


To close this subsection, we raise  a concrete question.
 Fix the chart $(p_\um \equiv 1)$.
 In  $\kk[x_\uu]_{\uu \in \II_{d,n}\- \um}$,
according to  Proposition \ref{primary-generate}, 
 every de-homogenized $\pl$ equation
$\bF_{\uh,\uk}$ on the chart $\rU_\um$ can be expressed
as a polynomial in the  de-homogenized primary $\pl$ relations $\bF_{\um, \uu}$ 
with $\uu \in \II_{d,n}^\um$. 
 It may be useful  in practice to find such an expression explicitly for an arbitrary $F_{\uh,\uk}$.
For example, for the case of $\Gr(2,5)$, this can be done as follows.

\begin{example}\label{pl2-5} For $\Gr(2,5)$, we have five $\pl$ relations:
$$F_1= p_{12}p_{34}-p_{13}p_{24} + p_{14}p_{23},\;
F_2= p_{12}p_{35}-p_{13}p_{25} + p_{15}p_{23}, \; $$
$$F_3= p_{12}p_{45}-p_{14}p_{25} + p_{15}p_{24},\;
F_4= p_{13}p_{45}-p_{14}p_{35} + p_{15}p_{34}, \;$$
$$F_5= p_{23}p_{45}-p_{24}p_{35} + p_{25}p_{34}. $$
On the chart $(p_{45} \equiv 1)$,
$F_3, F_4,$ and $ F_5$ are primary. 
One calculates and finds
$$p_{45} F_1 = p_{34} F_3 -p_{24} F_4 + p_{14} F_5, $$ 
$$p_{45} F_2 = p_{35} F_3 -p_{25} F_4 + p_{15} F_5.$$
In addition,  the  Jacobian of the de-homogenized $\pl$ equations of $\bF_3, \bF_4, \bF_5$ with respect to 
all the variables,
$x_{12}, x_{14}, x_{15} ,x_{13}, x_{35} , x_{34},x_{23} ,x_{24} ,x_{25}, $
is given by
$$
\left(
\begin{array}{cccccccccc}
1 & x_{25} & x_{24} & 0  & 0 & 0 & 0 & 0 & 0 \\
0 & 0 & 0 & 1  & x_{14} & x_{15}& 0 & 0 & 0 \\
0& 0 & 0 & 0  & 0 & 0 & 1 & x_{35} & x_{34} \\
\end{array}
\right).
$$
There, one sees visibly  a $(3 \times 3)$  identity minor. 
\end{example}

 



\subsection{Ordering the set of all primary $\pl$ equations}\label{order-eq} $\ $






\begin{defn}\label{gen-order} Let $K$ be any fixed totally ordered finite set, 
with its order denoted by $<$.
Consider any two subsets $\eta \subset K$ and ${ \zeta} \subset K$ with the cardinality $n$
for some positive integer $n$.
We write $\eta=(\eta_1,\cdots,\eta_n)$ 
and  ${\zeta}=(\zeta_1,\cdots,\zeta_n)$ as arrays according to the ordering of $K$.
We say $\eta <_{\lex} { \zeta}$ if the left most nonzero number in the vector $\eta-{\zeta}$ is negative,
or more explicitly,  if we can express
$$\eta=\{t_{1}< \cdots <t_{r-1} <s_r< \cdots \}$$
$${\zeta}=\{t_{1}< \cdots <t_{r-1}<t_r  < \cdots \}$$
such that $s_r< t_r$ for some integer $r \ge 1$.  We call $<_\lex$ the lexicographic order
induced by $(K, <)$.

Likewise, we say $\eta <_{\invlex} { \zeta}$
 if the right most nonzero number in the vector $\eta-{ \zeta}$ is negative,
or more explicitly, if we can express
$$\eta=\{\cdots <s_r< t_{r+1}< \cdots <t_n\}$$
$${\zeta}=\{\cdots <t_r  < t_{r+1}< \cdots <t_n\}$$
such that $s_r< t_r$ for some integer $r \ge 1$. 
We call $<_\invlex$ the reverselexicographic order
induced by $(K, <)$. 
\end{defn}

This definition can be applied to the set
$$\II_{d,n} =\{ (i_1 < i_2 < \cdots <i_d) \; \mid \; 1\le  i_\mu\le n,\; \forall \; 1\le \mu\le d\}$$
for all $d$ and $n$.
Thus, we have equipped the set $\II_{d,n}$ 
with both the  lexicographic ordering $``<_\lex "$ and
the reverse lexicographic ordering $``<_\invlex "$. 

{\it We point out here that neither is the order we used for the set of $\pl$ variables
$\var_{\rU_\um}=\{x_\uu \mid \uu \in \II_{d,n} \- \um\}$, even thought by the obvious bijection
between $\II_{d,n} \- \um$ and $\var_{\rU_\um}$, each provides a total ordering on $\var_{\rU_\um}$. }

\begin{defn}\label{invlex}
Consider any $\uu, \uv \in \II_{d,n} \- \um$. 
We say $$\uu <_\wp \uv$$ if one of the following three holds:
\begin{itemize}
\item ${\rm rank}_\um \; \uu < {\rm rank}_\um \; \uv$;
\item ${\rm rank}_\um \; \uu = {\rm rank}_\um \; \uv$, $\uu \- \um<_\lex \uv \- \um$;
\item ${\rm rank}_\um \; \uu ={\rm rank}_\um \; \uv$, $\uu \- \um = \uv\- \um$, and
$\um \cap \uu  <_\lex \um \cap \uv$.
\end{itemize}
\end{defn}

\begin{defn}\label{cFi-partial-order} Consider any two $\pl$ variables $x_\uu$ and $x_\uv$.
We say 
$$\hbox{$x_\uu <_\wp x_\uv$ if $\uu<_\wp \uv$.}$$

Consider any two distinct primary equations, 
$\bF_{\um,\uu}, \bF_{\um,\uv} \in \sfm$ 
 We say 
$$\bF_{\um,\uu} <_\wp \bF_{\um,\uv}\; \; \hbox{if} \;\;  \uu <_\wp \uv.$$
\end{defn} 

Under the above order, we can write
$$\sfm=\{\bF_1 <_\wp \cdots <_\wp \bF_\up\}.$$

In the sequel, when comparing two $\pl$ variables $x_\uu$ and $x_\uv$
or  two $\um$-primary $\pl$ equations, we exclusively use $<_\wp$.
Thus, throughout, for simplicity, we will simply write $<$ for $<_\wp$.
A confusion is unlikely.

{\it We point out here that $x_\uu <_\wp x_\uv$ is neither lexicographic
nor inverse-lexicographic on the indexes. Indeed,
every non-lexicographic or   non-inverse-lexicographic order,
introduced in this article, is important for our method. Some orders may be subtle.}


For later use, we introduce 

\begin{defn}\label{p-t}
Let $\fT_i$ be a finite set for all $i \in [h]$ for some positive integer $h$. Then,
the order $$\fT_1 < \cdots < \fT_h$$ naturally induces a partial order on the disjoint union
$\sqcup_{i \in [h]} \fT_i $ as follows. Take any $i < j \in [h]$,
$a_i \in \fT_i$, and $a_j \in \fT_j$. Then, we say $a_i < a_j$.
\end{defn}

If every $\fT_i$ comes equipped with a total order for all $i \in [h]$. Then,
in the situation of Definition \ref{p-t}, the disjoint union
$\sqcup_{i \in [h]} \fT_i $ is totally ordered.

\section{A Singular Local Birational Model $\sV$ for $\Gr^{d,E}$}\label{singular-model}

{\it The purpose of this section is to establish a local model $\sV_\um$, birational to
$\Gr^{d,E}$, such that  all
terms of all $\um$-primary $\pl$ equations can be separated in  the
defining main binomial relations of $\sV_\um$ in a smooth ambient scheme $\sR_\sF$.
The construction of $\sV_\um$
is modeled on a chart of the total scheme of the Hilbert family as constructed 
in \cite{Hu2022}.
}

\subsection{The construction  of $\sV \subset \sR_\sF$} $\ $

Consider the fixed affine chart $\rU_\um$ of
the $\pl$ projective space $\PP(\wedge^d E)$.
For any $\bF \in \sfm$, 
written as $F=\sum_{s \in S_F} \sgn (s) p_{\uu_s}p_{\uv_s}$, 
we let $\PP_F$ be the projective space with 
homogeneous coordinates written as $[x_{(\uu_s, \uv_s)}]_{s \in S_F}$. For convenience, we let
\begin{equation}\label{LaF} \La_F=\{(\uu_s, \uv_s) \mid s \in S_F\}.
\end{equation}
This is an index set for  the homogeneous coordinates of the projective
space $\PP_F$.  To avoid duplication, we make a convention: 
\begin{equation}\label{uv=vu}
x_{(\uu_s, \uv_s)}=x_{(\uv_s, \uu_s)}, \; \forall \; s \in S_{F}, \; \forall \; \bF \in \sfm.
\end{equation}
If we write $(\uu_s, \uv_s)$ in the lexicographical order, i.e., we insist $\uu_s <_\lex \uv_s$,
then the ambiguity is automatically avoided. However, the convention is still  useful.

\begin{defn}
We call $x_{(\uu_s, \uv_s)}$ a $\vr$-variable of $\PP_F$, or simply a $\vr$-variable.
To distinguish, we call a $\pl$ variable, $x_\uu$ with $\uu \in \II_{d,n} \- \um$, a $\vp$-variable.
\end{defn}

Fix $k \in [\up]$. We introduce the natural rational map
\begin{equation}\label{theta-k}
 \xymatrix{
\Theta_{[k]}: 
\PP(\wedge^d E) \ar @{-->}[r]  & \prod_{i \in [k]} \PP_{F_i}  } 
\end{equation}
$$
 [p_\uu]_{\uu \in \II_{d,n}} \lra  
\prod_{i \in [k]}  [p_\uu p_\uv]_{(\uu,\uv) \in \La_{F_i}}
$$   
where $[p_\uu]_{\uu \in \II_{d,n}} $ is the homogeneous $\pl$ coordinates of a point in $
\PP(\wedge^d E)$. When restricting $\Theta_{[k]}$ to $\rU_\um$,
it gives rise to 
\begin{equation}\label{bar-theta-k}
 \xymatrix{
\bar\Theta_{[k]}: \rU_\um  \ar @{-->}[r]  & \prod_{i \in [k]} \PP_{F_i}  } 
\end{equation}

We let \begin{equation}\label{tA}
 \xymatrix{
\PP_{\sF_{[k]}}  \ar @{^{(}->}[r]  & \PP(\wedge^d E) \times  \prod_{i \in [k]} \PP_{F_i}  
 }
\end{equation}
be the closure of the graph of the rational map $\Theta_{[k]}$, and
 \begin{equation}\label{bar-tA}
 \xymatrix{
\rU_{\um,\sF_{[k]}}  \ar @{^{(}->}[r]  & \sR_{\sF_{[k]}}:= \rU_\um \times  \prod_{i \in [k]} \PP_{F_i}  
 }
\end{equation}
be the closure of the graph of the rational map $\bar\Theta_{[k]}$.

\begin{defn}
Fix any $ k \in [\up]$.
We let $$R_{[k]}=\kk[p_\uu; x_{(\uv_s, \uu_s)}]_{\uu \in \II_{d,n} , s \in S_{F_i},  i \in [k]}$$
and let $$\bar R_{[k]}=\kk[x_\uu; x_{(\uv_s, \uu_s)}]_{\uu \in \II_{d,n} \- \um, s \in S_{F_i},  i \in [k]}$$
be the de-homogenization of $R_{[k]}$.
A  polynomial $f \in \bar R_{[k]}$ (resp.  $R_{[k]}$) is called multi-homogeneous if it is homogenous 
in $[x_{(\uv_s, \uu_s)}]_{s \in S_{F_i}}$, for every $i \in [k]$
 (resp. and is also homogenous in $[p_\uu]_{\uu \in \II_{d,n}}$). 
 A multi-homogeneous polynomial $f \in \bar R_{[k]}$ (resp.  $R_{[k]}$)  is $\vr$-linear if it is linear 
 in $[x_{(\uv_s, \uu_s)}]_{s \in S_{F_i}}$, 
 whenever it contains some $\vr$-variables of $\PP_{F_i}$,
 for  any $i \in [k]$.
 \end{defn}

We set $R_0:=\kk[p_\uu]_{\uu \in \II_{d,n}}$
and $\bar R_0:=\kk[x_\uu]_{\uu \in \II_{d,n} \- \um}$ 
Then, corresponding to 
the embedding \eqref{tA}, 
there exists a degree two homomorphism
\begin{equation}\label{vik}
\vi_{[k]}: \; R_{[k]}=R_0[x_{(\uv_s, \uu_s)}]_{s \in S_{F_i},  i \in [k]} \lra R_0 , 
\;\;\; x_{(\uu_s,\uv_s)} \to p_{\uu_s} p_{\uv_s}
\end{equation} 
for all  $s \in S_{F_i}, \; i \in [k]$, where $\vi_{[k]}$ restricts the identity on $R_0$.    

We then let
\begin{equation}\label{bar-vik}
\bar\vi_{[k]}: \; \bar R_{[k]}=\bar R_0[x_{(\uv_s, \uu_s)}]_{s \in S_{F_i},  i \in [k]} \lra \bar R_0, \;\;\; x_{(\uu_s,\uv_s)} \to x_{\uu_s} x_{\uv_s} 
\end{equation}  for all  $s \in S_{F_i}, i \in [k]$,
be the de-homogenization of $\vi_{[k]}$ with respect to the chart $\rU_\um=(p_\um \equiv 1)$.
This corresponds to the embedding \eqref{bar-tA}.

We are mainly interested in the case when $k=\up$.
 Hence, we set 
$$R:=R_{[\up]},\;\; \vi:=\vi_{[\up]}, \;\; \bar\vi:=\bar\vi_{[\up]}.$$

We let $\ker^\mh \vi_{[k]}$ (resp. $\ker^\mh \bar\vi_{[k]}$)
denote the set of all  multi-homogeneous polynomials
 in $\ker  \vi_{[k]}$ (resp. $\ker \bar\vi_{[k]}$).


\begin{lemma}\label{defined-by-ker} 
The scheme
$\PP_{\sF_{[k]}}$, as a closed subscheme of 
 $\PP(\wedge^d E) \times  \prod_{i \in [k]} \PP_{F_i}$,
 is defined by $\ker^\mh \vi_{[k]}$.
In particular, the scheme
$\rU_{\um,\sF_{[k]}}$, as a closed subscheme of 
 $\sR_{\sF_{[k]}}= \rU_\um \times  \prod_{i \in [k]} \PP_{F_i}$,
 is defined by $\ker^\mh \bar\vi_{[k]}$. 
\end{lemma}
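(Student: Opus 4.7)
My plan is to establish the equality of two ideals of multi-homogeneous polynomials, namely the multi-homogeneous kernel $\ker^\mh \vi_{[k]}$ and the ideal of all multi-homogeneous polynomials vanishing on $\PP_{\sF_{[k]}}$, by a density argument. The setup: let $U \subset \PP(\wedge^d E)$ be the open locus of points $[p_\uu]$ for which, for every $i \in [k]$, at least one $p_{\uu_s}p_{\uv_s}$ with $s \in S_{F_i}$ is nonzero; this $U$ is dense, since each $p_{\uu_s}p_{\uv_s}$ is a nonzero element of $R_0$, so the complement is a finite union of proper closed subsets. On $U$ the rational map $\Theta_{[k]}$ is a morphism, and its graph $\Gamma_U \subset U \times \prod_{i \in [k]} \PP_{F_i}$ is integral, projecting isomorphically onto $U$. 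Consequently the scheme-theoretic closure $\PP_{\sF_{[k]}}$ of $\Gamma_U$ is also integral, with $\Gamma_U$ as a dense open.

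For one inclusion, given a multi-homogeneous $f \in \ker^\mh \vi_{[k]}$, I would evaluate it at a point of $\Gamma_U$ represented by the homogeneous-coordinate tuple $(p_\uu, (p_{\uu_s}p_{\uv_s})_{s \in S_{F_i}, i \in [k]})$: the result is $\vi_{[k]}(f)(p_\uu)$, which is zero because $\vi_{[k]}(f) = 0$ in $R_0$. Hence $f$ vanishes on $\Gamma_U$, and since $\PP_{\sF_{[k]}}$ is the reduced scheme-theoretic closure of $\Gamma_U$, $f$ also vanishes on $\PP_{\sF_{[k]}}$. Conversely, suppose a multi-homogeneous $f$ vanishes on $\PP_{\sF_{[k]}}$, hence on $\Gamma_U$. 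The same evaluation shows that the polynomial $\vi_{[k]}(f) \in R_0 = \kk[p_\uu]$ vanishes at every tuple $(p_\uu)$ representing a point of $U$, hence on a dense open subset of $\AA^{|\II_{d,n}|}$; a polynomial vanishing on a dense subset of affine space is zero, so $\vi_{[k]}(f) = 0$ and $f \in \ker^\mh \vi_{[k]}$. The two multi-homogeneous ideals therefore coincide, which gives the main claim.

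For the second statement on the affine chart, the identical argument applies with $\bar\Theta_{[k]}$, $\bar\vi_{[k]}$, $\bar R_0 = \kk[x_\uu]_{\uu \in \II_{d,n} \- \um}$, and $U \cap \rU_\um$ (still dense open, now in $\rU_\um$) replacing $\Theta_{[k]}$, $\vi_{[k]}$, $R_0$, and $U$. The principal subtlety I expect is simply verifying the density of $U$ in $\PP(\wedge^d E)$ and of $U \cap \rU_\um$ in $\rU_\um$; this is routine because the complements are finite unions of proper closed subsets cut out by the nonzero polynomials $p_{\uu_s}p_{\uv_s}$, or $x_{\uu_s}x_{\uv_s}$ after dehomogenization. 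Nothing else in the argument is sensitive to the specific structure of the $F_i$, so the inductive apparatus of the paper will not be needed here.
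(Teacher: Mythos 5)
Your proof is correct, and it is precisely the standard density argument that the paper treats as self-evident (its entire proof of this lemma is ``This is immediate''). Both inclusions are handled properly: evaluating a multi-homogeneous $f$ at the coordinate representative $(p_\uu,(p_{\uu_s}p_{\uv_s}))$ of a graph point identifies vanishing on the dense open graph with vanishing of $\vi_{[k]}(f)$ on a dense open subset of affine space, and since $R_0$ is a domain both ideals are prime, so equality of their multi-homogeneous parts pins down the reduced closed subscheme.
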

\begin{proof} This is immediate.
\end{proof}

We need to investigate $\ker^\mh \vi_{[k]}$.

Consider any $f \in \ker^\mh \vi_{[k]}$. We express it as the sum of its monic monomials
(monomials with constant coefficients 1)
$$f= \sum \bm_i.$$
We have $\vi_{[k]} (f)=\sum \vi_{[k]} (\bm_i)=0$ in $R_0$. Thus, the set of 
the monic monomials $\{\bm_i\}$ can be grouped into minimal groups to form partial sums
of $f$ so that {\it the images of elements of each group are 
 identical} and the image of the partial sum of each minimal group equals 0 in $R_0$.
When ch.$\kk=0$, this means each minimal group consists of a pair $(\bm_i, \bm_j)$
and its partial sum
is the difference $\bm_i -\bm_j$.
When ch.$\kk=p>0$ for some prime number $p$, this means each minimal group 
 consists of either  (1): a pair $(\bm_i, \bm_j)$ and $\bm_i -\bm_j$ is a partial sum of $f$;
or (2):   exactly $p$ elements $\bm_{i_1}, \cdots,\bm_{i_p}$
and $\bm_{i_1}+ \cdots + \bm_{i_p}$ is a partial sum of $f$.
But, the relation $\bm_{i_1}+ \cdots + \bm_{i_p}$ is always generated by 
the relations $\bm_{i_a} -\bm_{i_b}$ for all $1\le a, b\le p$.

Thus, regardless of the characteristic of the field $\kk$, it suffices to consider binomials
$\bm -\bm' \in \ker^\mh \vi_{[k]}$.

\begin{example}\label{exam:Bq} Consider $\Gr^{3,E}$. 
Then, the following binomials belong to $\ker^\mh \vi_{[k]}$
for any fixed $k \in [\up]$.

Fix $a,b,c \in [k]$, all being distinct:
\begin{eqnarray}
x_{(12a,13b)}x_{(13a,12c)}x_{(12b,13c)} \; \nonumber \\
-x_{(13a,12b)}x_{(12a,13c)}x_{(13b,12c)}. \label{rk0-0} 
\end{eqnarray}
\begin{eqnarray}
x_{(12a,13b)}x_{(13a,12c)}x_{(12b,23c)} x_{(23b,13c)} \; \nonumber \\
-x_{(13a,12b)}x_{(12a,13c)}x_{(23b,12c)} x_{(13b,23c)}. \label{rk0-0-4}
\end{eqnarray}
Fix $a,b,c, \bar a, \bar b, \bar c \in [k]$, all being distinct:
\begin{eqnarray}
\;\; \;x_{(12a,3bc)}x_{(13a,2\bar b \bar c)}x_{(13 \bar a,2bc)} x_{(12 \bar a,3 \bar b \bar c)} \; \nonumber \\
-x_{(13a,2bc)}x_{(12a,3\bar b \bar c)}x_{(12 \bar a,3bc)} x_{(13 \bar a,2 \bar b \bar c)}.  \label{rk1-1}
\end{eqnarray}
Fix $a,b,c, a', \bar a, \bar b, \bar c \in [k]$, all being distinct:
\begin{eqnarray}
\;\; \; x_{(12a,13a')}x_{(13a,2bc)}x_{(12a',3\bar b \bar c)}x_{(12 \bar a,3bc)} x_{(13 \bar a,2 \bar b \bar c)} 
\; \nonumber \\
- x_{(13a,12a')} x_{(12a,3bc)}x_{(13a',2\bar b \bar c)}x_{(13 \bar a,2bc)} x_{(12 \bar a, 3 \bar b \bar c)}.\label{rk0-1}
\end{eqnarray} 
These binomials are arranged so that one sees visibly the matching for multi-homogeneity.
\end{example}

\begin{lemma} \label{trivialB}
Fix any $i \in [k]$. We have
\begin{equation}\label{tildeBk}
p_{\uu'}p_{\uv'}x_{(\uu,\uv)} - p_\uu p_\uv x_{(\uu',\uv')} \in \ker^\mh \vi_{[k]}.
\end{equation}
where $x_{(\uu,\uv)}, x_{(\uu',\uv')}$ are any two distinct $\vr$-variables of $\PP_{F_i}$.
Likewise, we have
\begin{equation}\label{trivialBk}
x_{\uu'}x_{\uv'}x_{(\uu,\uv)} - x_\uu x_\uv x_{(\uu',\uv')} \in \ker^\mh \bar\vi_{[k]}.
\end{equation}
\end{lemma}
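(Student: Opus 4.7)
The plan is to verify both claims by direct substitution into the defining homomorphisms, together with a quick check that each binomial is multi-homogeneous of the correct bidegrees.

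First, I would apply $\vi_{[k]}$ to the two monic monomials in the first binomial. Since $\vi_{[k]}$ restricts to the identity on $R_0=\kk[p_\uw]_{\uw\in\II_{d,n}}$ and sends each $\vr$-variable $x_{(\ua,\ub)}$ to $p_\ua p_\ub$, one computes
\[
\vi_{[k]}\bigl(p_{\uu'}p_{\uv'}x_{(\uu,\uv)}\bigr)=p_{\uu'}p_{\uv'}p_\uu p_\uv,\qquad
\vi_{[k]}\bigl(p_\uu p_\uv x_{(\uu',\uv')}\bigr)=p_\uu p_\uv p_{\uu'}p_{\uv'},
\]
so the two images agree in $R_0$ and the difference lies in $\ker\vi_{[k]}$. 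The analogous computation for $\bar\vi_{[k]}$ on the chart $\rU_\um=(p_\um\equiv 1)$, using $\bar\vi_{[k]}(x_{(\ua,\ub)})=x_\ua x_\ub$, gives the second assertion.

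It then remains to check multi-homogeneity. Both $\vr$-variables $x_{(\uu,\uv)}$ and $x_{(\uu',\uv')}$ belong to $\PP_{F_i}$ by assumption, so each monomial is linear (degree one) in the coordinates of $\PP_{F_i}$ and carries no $\vr$-variables from any $\PP_{F_j}$ with $j\ne i$; the two monomials also have identical bidegree $(2,0,\ldots,0)$ in the $\vp$-variables versus the $\vr$-coordinates of the remaining factors. Hence the binomials belong to $\ker^\mh \vi_{[k]}$ and $\ker^\mh \bar\vi_{[k]}$ respectively.

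There is no substantive obstacle here: the statement is a tautology given the definition of $\vi_{[k]}$ and the convention \eqref{uv=vu}. Its role, as the notation $\ref{trivialBk}$ suggests, is to record the simplest family of ``trivial'' binomial relations in the multi-homogeneous kernel, presumably to be contrasted later with the nontrivial binomials in $\cB^\mn$, $\cB^\res$, and $\cB^\pq$ of the type displayed in Example~\ref{exam:Bq}.
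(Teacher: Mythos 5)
Your verification is correct and is exactly what the paper means when it dismisses this lemma with ``This is trivial'': applying $\vi_{[k]}$ (resp. $\bar\vi_{[k]}$) to each monomial yields the same element of $R_0$ (resp. $\bar R_0$), and the binomial is visibly multi-homogeneous since both $\vr$-variables lie in the same factor $\PP_{F_i}$. There is no difference in approach, only in the level of detail.
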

\begin{proof} This is trivial.
\end{proof}

\smallskip

Let $\AA^l$ (resp. $\PP^l$) be the affine (resp. projective)
space of dimension $l$ for some positive integer $l$ with
coordinate variables $(x_1,\cdots, x_l)$ (resp. with
homogeneous coordinates $[x_1,\cdots, x_l]$).
A monomial $\bf m$ is {\it square-free} if 
$x^2$ does not divide $\bf m$ for every coordinate variable $x$ in the affine space.  
A polynomial is square-free if all of its monomials are
square-free.

\smallskip

For any $\bm -\bm' \in \ker^\mh \vi_{[k]}$, we define 
$ \deg_{\vr} (\bm -\bm')$ to be the total degree of $\bm$ (equivalently, $\bm'$)
in $\vr$-variables of $R_{[k]}$.

For any $F=\sum_{s \in S_F} p_{\uu_s} p_{\uv_s}$ with $\bF \in \sfm$ 
and $s \in S_F$, we write $X_s=x_{(\uu_s,\uv_s)}$.

Recall  that we have set $\vi=\vi_{[\up]}: R=R_{[\up]} \to R_0$.

Observe here that  for any nonzero binomial $\bm -\bm' \in \ker^\mh \vi_{[k]}$,
we automatically have $ \deg_\vr (\bm -\bm') > 0$, since $\vi_{[k]}$ restricts
to the identity on $R_0$.

\begin{lemma}\label{ker-phi-k}
Consider a binomial $ \bm -\bm' \in \ker^\mh \vi_{[k]}$ with $ \deg_\vr (\bm -\bm') > 0$. 
 We let $h$ be the maximal common factor of the two monomials $\bm$ and $\bm'$
 in $R_{[k]}$. Then, we have 
$$\hbox{ $\bm=h \prod_{i=1}^{\l} \bm_i$ and $\bm'=h \prod_{i=1}^{\l} \bm'_i$}$$ 
for some positive integer ${\l}$ such that
 for every $i \in [{\l}]$,
$\bm_i -\bm'_i \in \ker^\mh \vi_{[k]}$ and is of  the following form:
\begin{equation}\label{1st-Hq}
\vi (X_1)X_2- \vi(X'_1 )X'_2 ,
 \end{equation}
 where  every of $X_1,X_2,  X'_1,$ and $X'_2$ is a monomial of $R$ in $\vr$-variables only
 (i.e., without $\vp$-variables; here we allow $X_1=X_1'=1$)  such that 
\begin{enumerate}
 \item $X_1X_2-  X'_1 X'_2 \in \ker^\mh \vi$ and  is $\vr$-linear;
 \item $\vi(X_1X_2)$ (equivalently, $\vi(X'_1 X'_2)$) is a square-free monomial;
\item  for any $\bF \in \sfm$ and $s \in S_F$,
suppose $x_{\uu_s} x_{\uv_s} $ divides $\bm$ (resp. $\bm'$), 
then $X_s =x_{(\uu_s,\uv_s)}$  divides   $X_1$  (resp.  $X_1'$) 
 in one of the relations of \eqref{1st-Hq}. 
 \end{enumerate}
\end{lemma}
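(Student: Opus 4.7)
First I would pull out the greatest common factor $h = \gcd(\bm, \bm')$ in $R_{[k]}$, setting $\bm = h \tilde\bm$ and $\bm' = h \tilde\bm'$ with $\gcd(\tilde\bm, \tilde\bm') = 1$. Since $R_0$ is a domain and $\vi(h) \ne 0$, cancelling $\vi(h)$ from $\vi(\bm) = \vi(\bm')$ shows $\tilde\bm - \tilde\bm' \in \ker^\mh \vi_{[k]}$; it is thus enough to build an atomic decomposition of $\tilde\bm$ and $\tilde\bm'$ into factors $\bm_i, \bm_i'$ of the required form, after which multiplying by $h$ recovers the statement. Further split $\tilde\bm = P \cdot Y$ and $\tilde\bm' = P' \cdot Y'$ into $\vp$-part and $\vr$-part; the kernel condition becomes the equality of $\vp$-monomials $M := P \, \vi(Y) = P' \, \vi(Y')$ in $R_0$.

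The combinatorial heart is a multigraph $\mathcal G$ whose vertices are the individual occurrences of $\vp$-variables in $M$, each vertex being incident to exactly one ``$\bm$-attachment'' and exactly one ``$\bm'$-attachment''. Here each $\vr$-variable $x_{(\uu,\uv)}$ in $Y$ produces a $\bm$-edge between the $p_\uu$- and $p_\uv$-occurrences it contributes, and each $\vp$-factor of $P$ places a $\bm$-mark on its occurrence; symmetrically for $Y'$, $P'$ and $\bm'$-attachments. Coprimality of $\tilde\bm, \tilde\bm'$ prevents any $\vr$-variable label from being shared between a $\bm$- and a $\bm'$-edge, so every connected component of $\mathcal G$ is either an alternating cycle of edges or an alternating path with a mark at each end. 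I would decompose $\mathcal G$ into such components, subdividing further whenever an atom contains two edges carrying $\vr$-variables from the same $\PP_{F_i}$ (cutting the cycle or path at an intermediate vertex and re-pairing the exposed marks around the cut). Each resulting atom yields one pair $(\bm_i, \bm_i')$: let $X_2$ be the product of $\bm$-edges of the atom and $X_2'$ the product of its $\bm'$-edges; let $\vi(X_1)$ be the product of $\bm$-marks and $\vi(X_1')$ the product of $\bm'$-marks; and choose $X_1, X_1'$ as any $\vr$-monomials realising these $\vp$-products by repackaging the chosen marks into genuine $\vr$-variables. By construction $\prod_i \bm_i = \tilde\bm$ and $\prod_i \bm_i' = \tilde\bm'$, so that $\bm = h \prod \bm_i$ and $\bm' = h \prod \bm_i'$.

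Finally, conditions (1)--(3) would be verified as follows. For (1), $\vi(X_1 X_2) = \vi(X_1)\vi(X_2)$ equals the product of all $\vp$-vertices in the atom, as does $\vi(X_1' X_2')$, so $X_1 X_2 - X_1' X_2' \in \ker^\mh \vi$; the $\vr$-linearity is precisely what the subdivision step was designed to enforce. For (2), the coprimality of $\tilde\bm$ and $\tilde\bm'$ together with atom-minimality forces each $\vp$-vertex inside a single atom to be counted with multiplicity one, so $\vi(X_1 X_2)$ is square-free. For (3), if $x_{\uu_s} x_{\uv_s}$ divides $\bm$, its two factors appear (after absorbing any contributions from $h$) as $\bm$-marks that we arrange to lie in a common atom, and in the $\vr$-realisation of that atom's $X_1$ we pair them as $X_s = x_{(\uu_s, \uv_s)}$; the case of $\bm'$ is symmetric. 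The principal obstacle will sit in the subdivision/merging step: in the purely $\vr$-variable regime (as in \eqref{rk0-0} of Example \ref{exam:Bq}) $\mathcal G$ can consist of long alternating cycles, and cutting them so that each resulting atom is simultaneously $\vr$-linear, has square-free $\vp$-image, and retains the pairing flexibility required by (3) is a delicate combinatorial balancing act; checking that the re-pairings around a cut remain valid $\vr$-variables belonging to the available $\PP_{F_i}$'s is where the real work of the proof will go.
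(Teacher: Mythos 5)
Your graph-theoretic decomposition is a genuinely different route from the paper's (which proceeds by induction on $\deg_\vr(\bm-\bm')$, peeling off one $\vr$-variable at a time and using multi-homogeneity to pair it with a $\vr$-variable of the same $\PP_{F_i}$ in the other term), but as described it has a concrete gap that already shows up on the generators of $\cB_{[k]}$. Take the basic relation \eqref{tildeBk}, $\bm-\bm'=p_{\uu'}p_{\uv'}x_{(\uu,\uv)}-p_{\uu}p_{\uv}x_{(\uu',\uv')}$ with $x_{(\uu,\uv)},x_{(\uu',\uv')}$ distinct coordinates of the same $\PP_{F_i}$. Here $h=1$ and your multigraph has two connected components: $\{\uu,\uv\}$ joined by a $\bm$-edge with $\bm'$-marks at both ends, and $\{\uu',\uv'\}$ joined by a $\bm'$-edge with $\bm$-marks at both ends. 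Component-wise this yields the atoms $x_{(\uu,\uv)}-p_\uu p_\uv$ and $p_{\uu'}p_{\uv'}-x_{(\uu',\uv')}$, neither of which is multi-homogeneous (degree $1$ versus $0$ in $\PP_{F_i}$), so neither lies in $\ker^\mh\vi_{[k]}$ as the lemma requires; the correct atom is the \emph{union} of the two components, with ${\l}=1$. So the components are in general too small, and the needed operation is merging, whereas your procedure only subdivides further; moreover the subdivision you describe (cutting a cycle or path at a vertex and re-pairing marks) duplicates a $\vp$-occurrence and so no longer gives a factorization $\prod_i\bm_i=\tilde\bm$.

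The deeper missing content is exactly the two assertions you make in passing: (i) that the $\bm$-marks of an atom can be "repackaged" as $\vi(X_1)$ for a genuine $\vr$-monomial $X_1$ — this is a nontrivial rigidity statement (the excess $\vp$-variables on one side must be the $\vi$-images of the $\vr$-variables on the other side, paired the right way), and it is precisely what condition (3) pins down and what Lemma \ref{reduce-to-F-LF} later relies on; and (ii) that a grouping of components exists that is \emph{simultaneously} multi-homogeneous, $\vr$-linear, square-free in the sense of (2), and compatible with (3). In the paper these facts come out of the induction: in the base case $\deg_\vr=1$ the disjointness of $\{p_{\uu},p_{\uv}\}$ and $\{p_{\uu'},p_{\uv'}\}$ forces $p_{\uu'}p_{\uv'}\mid f$ and $p_\uu p_\uv\mid g$, which is exactly the realizability of the marks, and in the inductive step ($\bm-\bm'=\bn X_s-\bn' X_t$ with $s,t\in S_{F_i}$) condition (3) of the inductive hypothesis is what guarantees that $x_{\uu_s}x_{\uv_s}$ lands inside a single factor and can be re-traded for $X_s$. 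If you want to salvage the graph picture, you would need to prove a merging lemma doing the same job, and I expect that argument to end up being an induction in disguise.
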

\begin{proof} 
We prove by induction on $\deg_{\vr} (\bm -\bm')$.

Suppose $\deg_{\vr}(\bm-\bm') =1$. 

Then, we can write $$\bm -\bm' = f x_{(\uu,\uv)} -g  x_{(\uu',\uv')} $$ 
for some $f, g \in R_0$,
and two $\vr$-variables of $\PP_{F_i}$, $x_{(\uu,\uv)}$ and $x_{(\uu',\uv')}$ 
for some $i \in [k]$.  
If $x_{(\uu,\uv)} =x_{(\uu',\uv')}$, then one sees that $f=g$ and $\bm -\bm'=0$.
Hence, we assume that  $x_{(\uu,\uv)} \ne x_{(\uu',\uv')}$. Then, we have
$$f p_{\uu} p_{\uv}=g  p_{\uu'} p_{\uv'}.$$
Because  $x_{(\uu,\uv)}$ and $ x_{(\uu',\uv')}$
are two distinct $\vr$-variables of $\PP_{F_i}$,  one checks from the definition that the two sets
$$\{ p_{\uu}, p_{\uv} \}, \; \{p_{\uu'}, p_{\uv'} \}$$ are disjoint. 
Consequently, 
$$  p_{\uu}p_{\uv} \mid g, \;\; p_{\uu'} p_{\uv'}  \mid f.$$
Write $$ g=g_1 p_{\uu}p_{\uv} , \;\;  f=f_1 p_{\uu'} p_{\uv'} .$$
Then we have 
$$ p_{\uu}p_{\uv} p_{\uu'} p_{\uv'}  (f_1-g_1)=0 \in R_0.$$
Hence, $f_1=g_1$. Then, we have
$$\bm -\bm' =h ( p_{\uu'} p_{\uv'}   x_{(\uu,\uv)} -  p_{\uu}p_{\uv}  x_{(\uu',\uv')}) $$ 
where $h:=f_1=g_1$.
Observe that in such a case, we have that
 $\bm -\bm'$ is generated by the relations of \eqref{tildeBk}, and  it verifies all
 the statements in the lemma.
 
Suppose Lemma \ref{ker-phi-k} holds  for  $\deg_{\vr} < e$ for some positive integer $e >1$.

Consider $\deg_{\vr}(\bm-\bm')=e$.

By the multi-homogeneity of $(\bm -\bm')$, we can write 
\begin{equation}\label{forXs} 
\bm -\bm' = \bn X_s - \bn' X_t 
\end{equation}
such that $X_s$ and  $X_t$ 
are the $\vr$-variables of $\PP_{F_i}$  corresponding to 
some $s, t \in S_{F_i}$ for some $i \in [k]$, and $\bn, \bn' \in R_{[k]}$.

If $s=t$, then $\bm -\bm' = X_s (\bn - \bn') $. Hence, the statement follows from
the inductive assumption applied to $(\bn -\bn') \in  \ker^\mh \vi_{[k]}$
since $\deg_\vr (\bn -\bn')=e-1$.


We suppose now $s \ne t$.
Let $\bar f=\bn x_{\uu_s} x_{\uv_s} - \bn' x_{\uu_t} x_{\uv_t}$.  Then,
 $\bar f \in \ker^\mh \vi_{[k]}$. 
 
First, we suppose $\bar f=0$. 

Then, $x_{\uu_s} x_{\uv_s} \mid \bn'$
 and $x_{\uu_t} x_{\uv_t} \mid \bn$. Hence, we can write
 $$\bn'=x_{\uu_s} x_{\uv_s} \bn'_0, \; \bn=x_{\uu_t} x_{\uv_t} \bn_0.$$
And we have,
$$\bar f= \bn x_{\uu_s} x_{\uv_s} - \bn' x_{\uu_t} x_{\uv_t}
=x_{\uu_s} x_{\uv_s} x_{\uu_t} x_{\uv_t}  (\bn_0 - \bn'_0).$$
 Hence, one sees that $ \bn_0 - \bn'_0 \in \ker^\mh \vi_{[k]}$.
 If $\bn_0 - \bn'_0=0$, then $h=\bn_0=\bn'_0$ is
 the maximal common factor of $\bm$ and $\bm'$, and further,
 $$\bm -\bm' = h( x_{\uu_t} x_{\uv_t}  X_s -  x_{\uu_s} x_{\uv_s} X_t).$$
  In such a case, the statement of the lemma holds.
 
  Hence, we can assume that $0 \ne \bn_0 - \bn'_0 \in \ker^\mh \vi_{[k]}$,
  in particular, this implies that $ \deg_\vr (\bn_0 -\bn'_0) > 0$.
  Then,   we have 
  $$\bm -\bm' = \bn_0 x_{\uu_t} x_{\uv_t}  X_s - \bn'_0 x_{\uu_s} x_{\uv_s} X_t.$$
   Observe here that 
$x_{\uu_t} x_{\uv_t}  X_s -  x_{\uu_s} x_{\uv_s} X_t$ is in the form of
\eqref{1st-Hq}, verifying the conditions (1) - (3).
 Thus, in such a case, the statement of the lemma follows by applying
the inductive assumption  to $(\bn_0 -\bn'_0) \in  \ker^\mh \vi_{[k]}$
since $\deg_\vr (\bn_0 -\bn'_0)=e-1$.

 Next, we suppose $\bar f \ne 0$.  
 
 As $\deg_{\vr} \bar f <e$, by the inductive assumption,  
 we can write
$$\bn x_{\uu_s} x_{\uv_s}= h (x_{\uu_s} x_{\uv_s} \bn_s) \prod_{i=1}^{\l} \bn_i,\;\
\bn' x_{\uu_t} x_{\uv_t}=h  (x_{\uu_t} x_{\uv_t}\bn_t) \prod_{j=1}^{\l} \bn'_{j}$$
for some integer ${\l} \ge 1$, with $\bn_0=x_{\uu_s} x_{\uv_s}\bn_s$
and $\bn_0'=(x_{\uu_t} x_{\uv_t}\bn_t)$
such that  for each $0\le i\le {\l}$, it determines (matches) a unique $0\le i'\le {\l}$
such that $(\bn_i - \bn_{i'}')$ is of the form of \eqref{1st-Hq}  and
verifies all the properties (1) - (3)  of  Lemma \ref{ker-phi-k}.
Consider $\bn_0=x_{\uu_s} x_{\uv_s} \bn_s$. It matches $\bn_{0'}'$. By
the multi-homogeneity of $\bn_0 - \bn_{0'}'$, 
\eqref{1st-Hq} and  (1) of  Lemma \ref{ker-phi-k},
 we can write $\bn_{0'}' = x_{\uu_{t'}} x_{\uv_{t'}}\bn_{t'}$
for some $t' \in S_{F_i}$ and $\bn_{t'} \in R_{[k]}$.
 Therefore, by switching $t$ with $t'$ if $t \ne t'$,
and re-run the above arguments, without loss of generality, we can assume $t'=t$
and $\bn_0=x_{\uu_s} x_{\uv_s}\bn_s$ matches
 $\bn_0'=(x_{\uu_t} x_{\uv_t}\bn_t)$. Further, by re-indexing $\{\bn_j' \mid j \in [{\l}]\}$ if necessary,
we can assume that  $\bn_i$ matches $\bn_i'$ for all $1\le i\le l$.

Now, note that we have
$\bn = h (\bn_s) \prod_{i=1}^{\l} \bn_i, \; \bn' =h  (\bn_t') \prod_{i=1}^{\l} \bn_i$. Hence
$$\bm = h (\bn_s X_s) \prod_{i=1}^{\l} \bn_i, \;\; \bm' =h  (\bn_t' X_t) \prod_{i=1}^{\l} \bn_i.$$
We let $\bm_0=\bn_s X_s$ and  $\bm_i = \bn_i$ for all $i \in [{\l}]$;
 $\bm_0'=(n_t X_t)$ and  $\bm_i= \bn_i'$ for all $i \in [{\l}]$.
Then, one checks directly that
  Lemma \ref{ker-phi-k} holds for $\bm -\bm'$.

This proves the lemma.
\end{proof}

\begin{defn}\label{hatB}
Let $\widehat{\cB}_{[k]}$
be the set of  all binomial relations of \eqref{1st-Hq} that verify Lemma \ref{ker-phi-k} (1) - (3);
 let $\widetilde{\cB}_{[k]}$ 
the de-homogenizations 
with respect to $(p_\um \equiv 1)$ of all binomial relations of $\widehat{\cB}_{[k]}$.
\end{defn}


\begin{cor}\label{cB-generate}
The ideal  $\ker^\mh \vi_{[k]}$ is generated by $\widehat{\cB}_{[k]}$.
 Consequently, the ideal  $\ker^\mh \bar\vi_{[k]}$ is generated by $\widetilde{\cB}_{[k]}$.
\end{cor}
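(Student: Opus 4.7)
The plan is to reduce the statement to Lemma \ref{ker-phi-k} via two preparatory observations, and then use a standard telescoping identity to exhibit every element of $\ker^\mh \vi_{[k]}$ as an explicit combination of binomials in $\widehat{\cB}_{[k]}$.

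First, I would invoke the discussion immediately preceding Lemma \ref{ker-phi-k}: for any multi-homogeneous $f \in \ker^\mh \vi_{[k]}$, grouping its monic monomials by the value of $\vi_{[k]}$ decomposes $f$ into partial sums whose $\vi_{[k]}$-images vanish. In characteristic zero these partial sums are differences of pairs of monomials, and in characteristic $p$ the longer partial sums $\bm_{i_1} + \cdots + \bm_{i_p}$ are generated by the pairwise differences $\bm_{i_a} - \bm_{i_b}$. Consequently, to show that $\widehat{\cB}_{[k]}$ generates $\ker^\mh \vi_{[k]}$ it suffices to show that every binomial $\bm - \bm' \in \ker^\mh \vi_{[k]}$ lies in the ideal $\langle \widehat{\cB}_{[k]} \rangle$.

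Next, fix such a binomial. If $\deg_\vr(\bm-\bm') = 0$, then since $\vi_{[k]}$ restricts to the identity on $R_0$, necessarily $\bm - \bm' = 0$ and there is nothing to prove. Otherwise, Lemma \ref{ker-phi-k} provides a common factor $h$ and factorizations $\bm = h\prod_{i=1}^{\l} \bm_i$, $\bm' = h\prod_{i=1}^{\l} \bm'_i$ such that each $\bm_i - \bm'_i$ is of the form \eqref{1st-Hq} satisfying the three listed conditions; by Definition \ref{hatB}, each $\bm_i - \bm'_i$ lies in $\widehat{\cB}_{[k]}$. Then the telescoping identity
\begin{equation*}
\bm - \bm' \;=\; h \sum_{i=1}^{\l} \bigl(\bm_1 \cdots \bm_{i-1}\bigr)\bigl(\bm_i - \bm'_i\bigr)\bigl(\bm'_{i+1} \cdots \bm'_{\l}\bigr)
\end{equation*}
immediately exhibits $\bm - \bm' \in \langle \widehat{\cB}_{[k]}\rangle$, completing the proof of the first assertion.

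Finally, for the consequence about $\bar\vi_{[k]}$, I would simply apply de-homogenization with respect to the chart $(p_\um \equiv 1)$: setting $p_\um = 1$ takes $\vi_{[k]}$ to $\bar\vi_{[k]}$, carries $\ker^\mh \vi_{[k]}$ onto $\ker^\mh \bar\vi_{[k]}$, and sends each generator in $\widehat{\cB}_{[k]}$ to its counterpart in $\widetilde{\cB}_{[k]}$ by Definition \ref{hatB}. I do not expect any real obstacle here; the only point requiring care is verifying that the conditions (1)--(3) of Lemma \ref{ker-phi-k} are preserved under de-homogenization, which is automatic since the $\vr$-variables are untouched and the square-free/$\vr$-linear properties involve no $p_\um$ factors. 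Thus the entire argument is essentially a bookkeeping wrapper around Lemma \ref{ker-phi-k}, and no new technical input is needed.
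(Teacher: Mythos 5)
Your proposal is correct and follows essentially the same route as the paper: the paper also reduces to binomials, invokes Lemma \ref{ker-phi-k} for the factorization $\bm-\bm'=h(\prod\bm_i-\prod\bm'_i)$, and then telescopes (by induction on $\l$ rather than your closed-form sum, which is the same identity). The de-homogenization step for the second assertion is likewise as routine as you expect.
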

\begin{proof} Take any binomial
$(\bm -\bm') \in \ker^\mh \vi_{[k]}$
 with $\deg_\vr (\bm -\bm') > 0$. We express, by Lemma \ref{ker-phi-k},
$$\bm -\bm'=h (\prod_{i=1}^{\l} \bm_i -  \prod_{i=1}^{\l} \bm_i')$$ 
such that $\bm_i -\bm'_i \in \widehat{\cB}_{[k]}$  for all $i \in [{\l}]$.
Then, we have
$$\bm -\bm'=h (\prod_{i=1}^{{\l}} \bm_i - \bm_{\l}'  \prod_{i=1}^{{\l}-1} \bm_i
+ \bm_{\l}'  \prod_{i=1}^{{\l}-1} \bm_i -\prod_{i=1}^{{\l}} \bm_i')$$ 
$$=h ((\bm_{\l} - \bm_{\l}')  \prod_{i=1}^{{\l}-1} \bm_i
+ \bm_{\l}'  (\prod_{i=1}^{{\l}-1} \bm_i -\prod_{i=1}^{{\l}-1} \bm_i')).$$ 
Thus, by a simple induction on the integer ${\l}$, the corollary follows.
\end{proof}

We now let  $\Theta_{[k],\Gr}$ be the restriction of  $\Theta_{[k]}$ to  $\Gr^{d,E}$: 
\begin{equation}\label{theta-k-Gr}
 \xymatrix{
\Theta_{[k], \Gr}: \Gr^{d,E} \ar @{-->}[r]  & \prod_{i \in [k]} \PP_{F_i}  } 
\end{equation}
$$ [p_\uu]_{\uu \in \II_{d,n}} \lra  
\prod_{i \in [k]}  [p_\uu p_\uv]_{(\uu,\uv) \in \La_{F_i}}.
$$   
We let \begin{equation}\label{tA'-gr}
 \xymatrix{
\Gr_{\sF_{[k]}}  \ar @{^{(}->}[r]  & \Gr^{d,E} \times  \prod_{i \in [k]} \PP_{F_i}  
\ar @{^{(}->}[r]  & \PP(\wedge^d E) \times  \prod_{i \in [k]} \PP_{F_i}   }
\end{equation}
be the closure of the graph of the rational map $\Theta_{[k],\Gr}$.

We let  $\bar\Theta_{[k],\Gr}$ be the restriction of  $\bar\Theta_{[k]}$
(or equivalently, $\Theta_{[k], \Gr}$) to the chart  $\rU_\um(\Gr)=\rU_\um\cap \Gr^{d,E}$: 
\begin{equation}\label{bar-theta-k-Gr}
 \xymatrix{
\bar\Theta_{[k], \Gr}: \rU_\um(\Gr)\ar @{-->}[r]  & \prod_{i \in [k]} \PP_{F_i}  } 
\end{equation}
$$
 [x_\uu]_{\uu \in \II_{d,n}} \lra  
\prod_{i \in [k]}  [x_\uu x_\uv]_{(\uu,\uv) \in \La_{F_i}}.
$$   
We let \begin{equation}\label{bar-tA'-gr}
 \xymatrix{
\sV_{\um, \sF_{[k]}}  \ar @{^{(}->}[r]  & \rU_\um(\Gr) \times  \prod_{i \in [k]} \PP_{F_i}  
\ar @{^{(}->}[r]  & \sR_{\sF_{[k]}}= \rU_\um \times  \prod_{i \in [k]} \PP_{F_i}   }
\end{equation}
be the closure of the graph of the rational map $\bar\Theta_{[k],\Gr}$.

Then, one sees that $\sV_{\um, \sF_{[k]}}$ is the proper transform 
of $\rU_\um(\Gr)$ in $\rU_{\um,\sF_{[k]}}$ under the birational morphism 
$\rU_{\um,\sF_{[k]}} \lra \rU_\um.$

Since we always focus on the fixed chart $\rU_\um$ below, we write 
$\sV_{\sF_{[k]}}=\sV_{\um, \sF_{[k]}}$.

By construction, there exists the natural forgetful map
\begin{equation}\label{forgetfulMap}
\sR_{\sF_{[k]}}  \lra \sR_{\sF_{[k-1]}} \end{equation}  
 and it  induces a birational morphsim 
  \begin{equation}\label{rho-sFk} \rho_{\sF_{[k]}}: \sV_{\sF_{[k]}} \lra \sV_{\sF_{[k-1]}}.
  \end{equation}

  Corresponding to the embedding 
$\Gr_{\sF_{[k]}}  \subset \PP(\wedge^d E) \times  \prod_{i \in [k]} \PP_{F_i}$
of \eqref{tA'-gr}, we have the following homomorphism 
\begin{equation}\label{vi-k-Gr}
\vi_{[k], \Gr}: \; R_{[k]} \; (\lra R_0) \lra R_0/I_\whwp, \;\;\; x_{(\uu_s,\uv_s)} \to p_{\uu_s} p_{\uv_s} 
\end{equation} 
for all  $s \in S_{F_i}, i \in [k]$, where $I_\whwp$ is the homogeneous ideal of $\Gr^{d,E}$.

Corresponding to the embedding 
 $\sV_{\um, \sF_{[k]}} \subset \sR_{\sF_{[k]}}$ of \eqref{bar-tA'-gr}, 
 we have the following homomorphism
   \begin{equation}\label{bar-vi-k-gr}
\bar\vi_{[k],\Gr}: \; \bar R_{[k]} \; (\lra \bar R_0) \lra \bar R_0/\bar I_\wp, \;\;\; x_{(\uu_s,\uv_s)} \to x_{\uu_s} x_{\uv_s} 
\end{equation} 
for all  $s \in S_{F_i}, i \in [k]$, where $\bar I_\wp$ is the de-homogenization of the $\pl$ ideal $I_\wp$.

  \begin{lemma}\label{defined-by-ker-Gr} 
The scheme $\Gr_{\sF_{[k]}}$, as a closed subscheme of 
 $\PP(\wedge^d E) \times  \prod_{i \in [k]} \PP_{F_i}$,
 is defined by $\ker^\mh \vi_{[k], \Gr}$. In particular, 
The scheme $\sV_{\um,\sF_{[k]}}$, as a closed subscheme of 
 $\sR_{\sF_{[k]}}= \rU_\um \times  \prod_{i \in [k]} \PP_{F_i}$,
 is defined by $\ker^\mh \bar\vi_{[k], \Gr}$, where $\bar\vi_{[k]}$
 is the de-homogenization of $\vi_{[k]}$ with respect to $\rU_\um=(p_\um \equiv 1)$.
\end{lemma}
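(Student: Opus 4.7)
The plan is to reduce the lemma to the already-established Lemma \ref{defined-by-ker} via a clean factorization of the homomorphism $\vi_{[k],\Gr}$. The second assertion (the affine statement about $\sV_{\um,\sF_{[k]}}$) will follow formally from the first by de-homogenizing at $p_\um \equiv 1$: the ideal $\ker^\mh \bar\vi_{[k],\Gr}$ is obtained from $\ker^\mh \vi_{[k],\Gr}$ by setting $p_\um=1$, and $\sV_{\um,\sF_{[k]}}$ is by definition the trace of $\Gr_{\sF_{[k]}}$ on the affine chart $\rU_\um \times \prod_{i\in[k]}\PP_{F_i}$. So the entire content of the lemma lies in the projective statement.

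For that, first note that $\vi_{[k],\Gr}$ factors as the composition
\[
R_{[k]} \;\xrightarrow{\vi_{[k]}}\; R_0 \;\twoheadrightarrow\; R_0/I_{\widehat\wp}.
\]
Because $\vi_{[k]}$ restricts to the identity on the subring $R_0 \subset R_{[k]}$, for any $f \in \ker\vi_{[k],\Gr}$ one can set $g := \vi_{[k]}(f) \in I_{\widehat\wp}$ and write $f = (f-g) + g$ with $f-g \in \ker\vi_{[k]}$ and $g \in I_{\widehat\wp}\cdot R_{[k]}$; this decomposition respects the multi-grading. Hence
\[
\ker^\mh \vi_{[k],\Gr} \;=\; \ker^\mh \vi_{[k]} \;+\; I_{\widehat\wp}\cdot R_{[k]},
\]
and therefore, as closed subschemes of $\PP(\wedge^d E)\times \prod_{i\in[k]}\PP_{F_i}$,
\[
V(\ker^\mh \vi_{[k],\Gr}) \;=\; \PP_{\sF_{[k]}} \;\cap\; \bigl(\Gr^{d,E}\times \prod_{i\in[k]}\PP_{F_i}\bigr),
\]
where the first factor is identified via Lemma \ref{defined-by-ker}.

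It remains to match this intersection with the closure of the graph $\Gr_{\sF_{[k]}}$. Let $U \subset \PP(\wedge^d E)$ be the open subset on which $\Theta_{[k]}$ is defined, namely the locus where, for each $i \in [k]$, at least one quadratic monomial $p_{\uu_s}p_{\uv_s}$ with $s \in S_{F_i}$ is nonzero. Since $\Gr^{d,E}$ is irreducible and is not contained in any Schubert divisor $(p_\ui = 0)$, the intersection $U':= U \cap \Gr^{d,E}$ is a dense open subset of $\Gr^{d,E}$. Over $U'$, the graph of $\Theta_{[k],\Gr}$ coincides scheme-theoretically with $\PP_{\sF_{[k]}} \cap (U' \times \prod_{i\in[k]}\PP_{F_i})$, because both are cut out by the same multi-homogeneous relations $p_{\uu_s}p_{\uv_s}\, x_{(\uu_t,\uv_t)} = p_{\uu_t}p_{\uv_t}\, x_{(\uu_s,\uv_s)}$ together with $I_{\widehat\wp}$. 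Taking closures in $\PP(\wedge^d E)\times \prod_{i\in[k]}\PP_{F_i}$ gives on one side $\Gr_{\sF_{[k]}}$ and on the other the scheme $\PP_{\sF_{[k]}} \cap (\Gr^{d,E}\times \prod_{i\in[k]}\PP_{F_i})$, which completes the identification.

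The main technical point, and the step requiring care, is the scheme-theoretic (as opposed to merely set-theoretic) equality in the last paragraph: one must ensure that the scheme-theoretic intersection $\PP_{\sF_{[k]}} \cap (\Gr^{d,E}\times \prod \PP_{F_i})$ has no embedded or extraneous components supported over the indeterminacy locus of $\Theta_{[k]}$ inside $\Gr^{d,E}$. The cleanest way to handle this is to argue directly with ideals rather than loci: the ideal of all multi-homogeneous polynomials in $R_{[k]}$ vanishing on the graph of $\Theta_{[k],\Gr}$ over $U'$ is exactly $\ker^\mh\vi_{[k],\Gr}$ (one inclusion is immediate from the definition of $\vi_{[k],\Gr}$; the other follows because $\vi_{[k],\Gr}$ is the ring-theoretic incarnation of the graph morphism on $U'$), and by flatness of the projection to $\PP(\wedge^d E)$ over $U$, this ideal agrees with the saturation defining the scheme-theoretic closure $\Gr_{\sF_{[k]}}$.
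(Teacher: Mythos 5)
The paper's own proof of this lemma is the single line ``This is immediate,'' and the substance behind that is exactly your final paragraph: since $\Gr_{\sF_{[k]}}$ is \emph{defined} as the (reduced) closure of the graph of $\Theta_{[k],\Gr}$, a multi-homogeneous $f\in R_{[k]}$ vanishes on $\Gr_{\sF_{[k]}}$ iff it vanishes on the graph over the dense open locus $U'$ where the map is defined, iff $\vi_{[k]}(f)$ vanishes on $U'$, iff $\vi_{[k]}(f)\in I_\whwp$ (because $\Gr^{d,E}$ is integral and $U'$ is dense), i.e.\ iff $f\in\ker^\mh\vi_{[k],\Gr}$. That short computation already proves the projective statement, and the affine one follows by de-homogenization exactly as you say.

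The middle of your proposal, however, contains a genuine error. The decomposition $f=(f-g)+g$ with $g=\vi_{[k]}(f)$ does \emph{not} respect the multi-grading: if $f$ has multi-degree $(d_1,\cdots,d_k;e)$ with some $d_i>0$, then $g$ is concentrated in multi-degree $(0,\cdots,0;\,e+2\sum_i d_i)$, so $f-g$ is not multi-homogeneous, and its multi-homogeneous components $f$ and $-g$ are in general \emph{not} individually in $\ker\vi_{[k]}$ (indeed $\vi_{[k]}(f)=g\ne 0$). Consequently $f-g$ need not lie in the ideal generated by $\ker^\mh\vi_{[k]}$, and the claimed identity $\ker^\mh\vi_{[k],\Gr}=\ker^\mh\vi_{[k]}+I_\whwp R_{[k]}$ is unsubstantiated. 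Relatedly, the scheme-theoretic identification $\Gr_{\sF_{[k]}}=\PP_{\sF_{[k]}}\cap\bigl(\Gr^{d,E}\times\prod_{i\in[k]}\PP_{F_i}\bigr)$ that this identity is meant to support is strictly stronger than the lemma and is itself doubtful: over points of $\Gr^{d,E}$ lying in the indeterminacy locus of $\Theta_{[k]}$ (which is nonempty on the Grassmannian, e.g.\ at coordinate subspaces all of whose primary Pl\"ucker terms vanish), the fiber of $\PP_{\sF_{[k]}}$ records limits taken inside the ambient $\PP(\wedge^d E)$ and can be strictly larger than the fiber of $\Gr_{\sF_{[k]}}$. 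Since your last paragraph proves the lemma directly without this identification, the cure is simply to delete the detour (and the appeal to flatness, which plays no role once one argues with the multi-homogeneous ideal of the reduced closure).
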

\begin{proof} This is immediate.
\end{proof}

We need to investigate   
$\ker^\mh \bar\vi_{[k], \Gr}$.

 We let $f \in \bar R_{[k]}$ be any multi-homogenous polynomial 
 and $\bar f \in \bar R_{[k]}$ be its de-homogenization with respect to the chart $\rU_\um=(p_\um \equiv 1)$
 such that $\bar\vi_{[k], \Gr}(\bar f)=0$. Then, by \eqref{bar-vi-k-gr}, and \eqref{vi-k-Gr},
 it holds  if and only if $\bar \vi_{[k]} (\bar f) \in \bar I_\wp$  if and only if $\vi_{[k]} (f) \in I_\wp$.
 Thus, we can express $f=\sum_{F \in \sfm} f_F$ such that 
 $\vi_{[k]} (f_F)$ is a multiple of $F$ for all $\bF \in \sfm$. It suffices to consider an
 arbitrarily fixed $\bF \in \sfm$. Hence, we may assume that $f=f_F$ for some arbitrarily fixed $\bF \in \sfm$.
 That is, in such a case,  $\bar\vi_{[k], \Gr} (\bar f)=0$ if and only if
  $$\hbox{$ \vi_{[k]} (f) =h F$  for some $h \in R_{[k]}$.}$$

\begin{defn} \label{defn:linear-pl} Given any $\bF \in \sfm$, written as 
$\bF =\sum_{s \in S_F} \sgn (s) x_{\uu_s}x_{\uv_s}$, we introduce
$$L_F: \;\; \sum_{s \in S_F} \sgn (s) x_{(\uu_s,\uv_s)} .$$ 
This is called the linearized $\pl$ relation with respect to $\bF$ (or $F$). It is a
canonical linear relation on $\PP_F$.
\end{defn}

  Observe here that among all linearized $\pl$ relations, only  
  $L_{F_i}$ with $i \in [k]$ belong to $\bar R_{[k]}$.
    
  \begin{lemma}\label{reduce-to-F-LF}
  Fix any $\bF \in \sfm$.  Let $f \in \bar R_{[k]}$ be any multi-homogenous polynomial such that 
$\vi_{[k]} (f) =h F$  for some $h \in R_0$.
Then, modulo $\ker^\mh \vi_{[k]}$,  either $f \equiv \bn F$  or $f \equiv \bn L_F$ 
for some $\bn \in \bar R_{[k]}$. 
 \end{lemma}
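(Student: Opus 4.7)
The plan is to exhibit a preferred multi-homogeneous lift of $hF$ having the same multi-degree as $f$, of the form $\bn F$ or $\bn L_F$, and then appeal to the observation that any two multi-homogeneous elements of $\bar R_{[k]}$ with the same multi-degree and the same image under $\vi_{[k]}$ automatically differ by an element of $\ker^\mh \vi_{[k]}$. This observation is immediate: their difference lies in $\ker \vi_{[k]}$ and is itself multi-homogeneous. Let $N = (d_1, \ldots, d_k)$ denote the multi-degree of $f$ in the $\vr$-variables of $\PP_{F_1}, \ldots, \PP_{F_k}$.

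The construction of the preferred lift splits into two cases according to whether $F$ is among $F_1, \ldots, F_k$. In Case A, where $F = F_i$ for some $i \in [k]$ and $d_i \ge 1$, the target is $\bn L_F$ with $\bn \in \bar R_{[k]}$ multi-homogeneous of multi-degree $(d_1, \ldots, d_i - 1, \ldots, d_k)$ and satisfying $\vi_{[k]}(\bn) = h$. Since $L_F$ lies in $\bar R_{[k]}$ with $\vi_{[k]}(L_F) = F$, one has $\vi_{[k]}(\bn L_F) = hF = \vi_{[k]}(f)$, yielding $f \equiv \bn L_F$ modulo $\ker^\mh \vi_{[k]}$. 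To construct $\bn$, I would select, for each monomial $c \bm$ of $f$ (possible since $d_i \ge 1$ forces every monomial to contain a $\vr$-variable of $\PP_{F_i}$), a $\vr$-variable $X_s = x_{(\uu_s, \uv_s)}$ of $\PP_{F_i}$ dividing $\bm$, and use $\bm / X_s$ as the corresponding contribution to $\bn$. In Case B, where either $F \notin \{F_1, \ldots, F_k\}$ or $F = F_i$ with $d_i = 0$, the target is $\bn F$ with $\bn$ of multi-degree $N$ and $\vi_{[k]}(\bn) = h$; when the total $\vr$-degree is zero this is immediate with $\bn = h$, and otherwise one constructs $\bn$ by an analogous monomial-by-monomial procedure, this time carving off from each monomial of $f$ a pair factor corresponding to a term of $F$, using Lemma \ref{ker-phi-k} to transfer the pair between $\vp$- and $\vr$-form as necessary.

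The main obstacle will be ensuring that the $\bn$ so assembled is well-defined modulo $\ker^\mh \vi_{[k]}$, i.e.\ that different monomials of $f$ mapping to the same monomial of $hF$ (possibly via different terms of $F$) contribute coherently. This is precisely where Lemma \ref{ker-phi-k} and the explicit shape of the generators $\widehat{\cB}_{[k]}$ of $\ker^\mh \vi_{[k]}$ become essential: any two monomials of $\bar R_{[k]}$ sharing the same image under $\vi_{[k]}$ and the same multi-degree are linked by a chain of the binomials in $\widehat{\cB}_{[k]}$, so the ambiguity in choosing the $\vr$-variable $X_s$ (or the pair factor in Case B) washes out modulo $\ker^\mh \vi_{[k]}$. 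Once this coherence is secured, the direct computation $\vi_{[k]}(\bn L_F) = hF$ in Case A, respectively $\vi_{[k]}(\bn F) = hF$ in Case B, places $f - \bn L_F$ (respectively $f - \bn F$) in $\ker \vi_{[k]}$; multi-homogeneity then places it in $\ker^\mh \vi_{[k]}$, completing the proof.
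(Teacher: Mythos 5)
Your high-level strategy is fine and is really the paper's argument in a different wrapper: since two multi-homogeneous elements of $\bar R_{[k]}$ of the same multi-degree with the same $\vi_{[k]}$-image differ by an element of $\ker^\mh\vi_{[k]}$, it suffices to exhibit one lift of $hF$ of the form $\bn F$ or $\bn L_F$ in the multi-degree of $f$; your Case A/Case B split also matches the paper's dichotomy (whether $f$ is nontrivially homogeneous in $\PP_F$ or not). The entire difficulty, however, is concentrated in producing $\bn$ with $\vi_{[k]}(\bn)=h$, and this is where your construction breaks down.

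In Case A you strip from each monomial $c\,\bm$ of $f$ an \emph{arbitrary} $\vr$-variable $X_s=x_{(\uu_s,\uv_s)}$ of $\PP_{F_i}$ dividing $\bm$ and sum the quotients. This does not give $\vi_{[k]}(\bn)=h$. Write the terms of $F$ as $T_t=p_{\uu_t}p_{\uv_t}$ with $F=T_1-T_2+T_3$, take $h=T_1$, and consider $f=X_1T_1-X_2T_1+X_1T_3$, which is multi-homogeneous of degree one in $\PP_F$ with $\vi_{[k]}(f)=T_1^2-T_1T_2+T_1T_3=hF$. Stripping the $\vr$-variable each monomial happens to contain yields $\bn=T_1-T_1+T_3=T_3$, so $\vi_{[k]}(\bn L_F)=T_3F\neq hF$ and the concluding step $f-\bn L_F\in\ker\vi_{[k]}$ never happens; note also that the correct answer $\bn=T_1$ is \emph{not} congruent to $T_3$ modulo the kernel, so the ambiguity does not ``wash out'' as you claim. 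What is actually needed is to first group the monomials of $f$ according to which term $\sgn(t)hT_t$ of $hF$ they map to, and then, for the group $f_t$, to prove that either $X_t$ or $p_{\uu_t}p_{\uv_t}$ divides $f_t$ --- if neither does, one must use a kernel relation such as $x_{\uu_3}x_{\uv_3}X_1-x_{\uu_1}x_{\uv_1}X_3$ to rewrite $X_1T_3$ as $X_3T_1$ \emph{before} stripping, so that every group contributes the same quotient $h$. That divisibility statement is precisely the ``claim'' in the paper's proof (established from the explicit generators of Lemma \ref{ker-phi-k} by comparing $f_s$ with $f_t$, after reducing to $h$ a monomial), and it is the one genuinely nontrivial ingredient; your proposal identifies an obstacle in roughly the right place but misdiagnoses it as well-definedness of $\bn$ and supplies no argument for it. The same issue recurs in your Case B, where the pair factor carved off a monomial must be the pair matching the term of $hF$ that the monomial maps to, and its existence again requires the claim.
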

  \begin{proof}  
   First, by writing $h$ as the sum of some monomials, 
   and then expressing $f$ as a sum accordingly, it suffices to consider the case
   when $h$ is a monomial.

 We write $F=\sum_{s\in S_F} \sgn (s) p_{\uu_s}p_{\uv_s}$. Accordingly, we express
  $$f=\sum_{s \in S_F} \sgn (s) f_s$$ such that 
  $$ \hbox{$\vi_{[k]} (f_s) =h p_{\uu_s}p_{\uv_s}$, for all $s \in S_F$.}$$   
  
  We claim for any $s \in S_F$,  either $x_{(\uu_s, \uv_s)} \mid f_s$ or $p_{\uu_s}p_{\uv_s} \mid f_s$.
  
  Assume for some $s \in S_F$, the claim does not hold. Then, (at least) one of the factors of 
  $p_{\uu_s}p_{\uv_s}$ in $\vi_{[k]} (f_s)$ , w.l.o.g., say,  $p_{\uu_s}$  can only come
  from the $\vi_{[k]}$-image of $x_{(\uu_s, \uv_s')}$ for some  $\uv_s' \ne \uv_s$, and hence,
  $p_{\uv_s'}$ must belong to the common factor $h$. Then, we compare $f_s$ with $f_t$
  for any $t \ne s$. Using \eqref{1st-Hq} in Lemma \ref{ker-phi-k}, we see that the $\vi_{[k]}$-image
    $p_{\uu_s}p_{\uv_s'}$ of $x_{(\uu_s, \uv_s')}$ must also belong to $h$.  This makes
    $p_{\uu_s} \mid (\vi_{[k]} (f_s)/h) $ impossible. Hence, the claim holds.

 Now, we first suppose that $f$ does not contain any homogeneous coordinates of  $\PP_F$.
  Then by the claim, we can express $f_s= \bn_s p_{\uu_s}p_{\uv_s}$ for all $s$.
Because $\vi_{[k]} (f_s) = \vi_{[k]} (\bn_s) p_{\uu_s}p_{\uv_s}=h    p_{\uu_s}p_{\uv_s}$,
  it implies that  $\vi_{[k]} (\bn_s)= h$ for all $s \in S_F$.
  Therefore, modulo $\ker^\mh \vi_{[k]}$,  we have
   $$f =\sum_{s \in S_F} \bn_s \; \sgn (s) p_{\uu_s}p_{\uv_s} \equiv \bn F$$ for some $\bn \in R_{[k]}$.
   (For example, one can take $\bn=\bn_s$ for any fixed $s \in S_F$.)

Next, we  suppose that $f$ is nontrivially homogeneous in $\PP_F$, that is, it contains
  some $\vr$-variables of $\PP_F$. Consider any $s \in S_F$.
  Suppose $x_{(\uu_s,\uv_s)}  \mid f_s$, then we 
  can write $f_s=\bn_s   x_{(\uu_s,\uv_s)}$ for some monomial $\bn_s \in R_{[k]}$.
    Suppose $x_{(\uu_s,\uv_s)}  \nmid f_s$.
    Then, by homogeneity, we can write
    $f_s=\bm_s  x_{(\uu_t,\uv_t)}$  for some $t \in S_F$ with $t \ne s$
    and $\bm_s \in R_{[k]}$. Further, by the above claim,
    $f_s=\bm'_s  p_{\uu_s}p_{\uv_s}  x_{(\uu_t,\uv_t)}$.
Then, modulo the relation
 $p_{\uu_s}p_{\uv_s}  x_{(\uu_t,\uv_t)} - p_{\uu_t}p_{\uv_t}  x_{(\uu_s,\uv_s)},$
 we can also express 
 $$ f_s \equiv \bn_s x_{(\uu_s,\uv_s)}, \; \mod (\ker^\mh \vi_{[k]})$$
 where $\bn_s= \bm_s' p_{\uu_t}p_{\uv_t}$.
 Again, because $\vi_{[k]} (f_s) = \vi_{[k]} (\bn_s) p_{\uu_s}p_{\uv_s}=h    p_{\uu_s}p_{\uv_s}$,
  we must have that
  $\vi_{[k]} (\bn_s)= h$ for all $s \in S_F$.   This proves that  
   $ f_s \equiv \bn \;  x_{(\uu_s,\uv_s)}$ for some $\bn \in R_{[k]}$
   for all $s \in S_F$, modulo $\ker^\mh \vi_{[k]}$. 
   Consequently, modulo $\ker^\mh \vi_{[k]}$, we obtain
  $$ f=\sum_{s \in S_F} \sgn (s) f_s\equiv \bn \sum_s \sgn (s) x_{(\uu_s,\uv_s)} = \bn L_F.$$ 



   This proves the lemma.  
   \end{proof}

 \begin{cor}\label{cB-LF-generate}
The ideal  $\ker^\mh \bar\vi_{[k],\Gr}$ is generated by all the 
relations in $\widetilde{\cB}_{[k]}$, $\sfm$, and $\{L_{F_i} \mid i \in [k]\}$.
\end{cor}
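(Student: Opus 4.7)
The plan is to establish both inclusions. The easy direction ($\supset$) follows by direct verification: $\widetilde{\cB}_{[k]} \subset \ker^\mh \bar\vi_{[k]} \subset \ker^\mh \bar\vi_{[k],\Gr}$ by Corollary \ref{cB-generate}; each $\bF \in \sfm$ sits in $\bar R_0 \subset \bar R_{[k]}$ as a multi-homogeneous element of multi-degree zero in the $\vr$-variables, with $\bar\vi_{[k]}(\bF) = \bF \in \bar I_\wp$; and each $L_{F_i}$ with $i \in [k]$ is multi-homogeneous of multi-degree $(0,\ldots,1,\ldots,0)$ with $\bar\vi_{[k]}(L_{F_i}) = \bF_i \in \bar I_\wp$ by construction.

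For the hard direction ($\subset$), I would take a multi-homogeneous $f \in \ker \bar\vi_{[k],\Gr}$ of some multi-degree $(\alpha_1,\ldots,\alpha_k)$ in the $\vr$-variables. Then $\bar\vi_{[k]}(f) \in \bar I_\wp$, and by Proposition \ref{primary-generate} the ideal $\bar I_\wp$ coincides on the chart $\rU_\um$ with the ideal generated by $\sfm$, so one may write $\bar\vi_{[k]}(f) = \sum_{\bF \in \sfm} h_F \bF$ with $h_F \in \bar R_0$. The task then becomes to decompose $f$ itself, modulo $\langle \widetilde{\cB}_{[k]} \rangle$, into a sum $f \equiv \sum_{\bF \in \sfm} f_F$ of multi-homogeneous pieces of multi-degree $(\alpha_1,\ldots,\alpha_k)$ satisfying $\bar\vi_{[k]}(f_F) = h_F \bF$, so that Lemma \ref{reduce-to-F-LF} applies to each $f_F$ individually.

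To produce this decomposition, I would expand $f$ as a sum of monic monomials and group them by their images under $\bar\vi_{[k]}$ in $\bar R_0$. Any two monomials of $f$ sharing the same image have multi-homogeneous difference lying in $\ker^\mh \bar\vi_{[k]}$, hence in $\langle \widetilde{\cB}_{[k]} \rangle$ by Corollary \ref{cB-generate}; this step collapses each group to a single representative. The divisibility argument carried out inside the proof of Lemma \ref{reduce-to-F-LF} then forces each remaining monomial's image to be attached to a unique Pl\"ucker relation $\bF$, yielding the required splitting $f \equiv \sum_F f_F$. Applying Lemma \ref{reduce-to-F-LF} to each $f_F$ produces $f_F \equiv \bn_F \bF$ or $f_F \equiv \bn_F L_F$ modulo $\ker^\mh \bar\vi_{[k]}$, the $L_F$-option being available precisely when $F \in \{F_i : i \in [k]\}$ (otherwise $L_F \notin \bar R_{[k]}$ and one must use $\bF$). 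The main obstacle will be arranging the monomial-grouping step so that it respects multi-homogeneity while still permitting the divisibility analysis that classifies each piece into the $\bF$-type or $L_F$-type; once that bookkeeping is handled, summing the individual expressions yields $f \in \langle \widetilde{\cB}_{[k]} \cup \sfm \cup \{L_{F_i} : i \in [k]\} \rangle$, as desired.
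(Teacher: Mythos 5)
Your proposal follows essentially the same route as the paper: the paper's one-line proof likewise combines Corollary \ref{cB-generate} with Lemma \ref{reduce-to-F-LF}, after splitting $f$ (in the discussion preceding that lemma) into multi-homogeneous pieces $f_F$ with $\vi_{[k]}(f_F)$ a multiple of $F$, exactly as you do. The only difference is that you attempt to justify that splitting via monomial grouping while the paper simply asserts it, and your added observation that the $\bn L_F$ alternative is only available for $F=F_i$ with $i\le k$ is correct and implicit in the paper.
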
  
\begin{proof}
This follows from the combination of Corollary \ref{cB-generate} and 
Lemma \ref{reduce-to-F-LF}.
\end{proof}

  \begin{defn}\label{defn:pre-q} 
We let $\cB_{i}$  (resp.  $\cB_{[k]}$) be the set of all binomial relations in
 \eqref{trivialBk} for any fixed $i \in [k]$ (resp. for all $i \in [k]$).
We set $\cB^\pq_{[k]}= \widetilde{\cB}_{[k]} \- \cB_{[k]}$.
An element of $\cB^\pq_{[k]}$ is called a binomial  of pre-quotient type.
\end{defn}

\begin{lemma}\label{equas-for-sVk}
 The scheme $\sV_{\sF_{[k]}}$, as a closed subscheme of
$\sR_{\sF_{[k]}}= \rU_\um \times  \prod_{i=1}^k \PP_{F_i} $,
is defined by the following relations
\begin{eqnarray}
\;\;\;\;\;\;\;\;\;\;\; B_{F_i,(s,t)}: \;\;\;  x_{(\uu_s, \uv_s)}x_{\uu_t} x_{\uv_t}- x_{(\uu_t,\uv_t)}   x_{\uu_s} x_{\uv_s},
\; \forall \;\; s, t \in S_{F_i} \- s_{F_i},    i \in [k], \label{eq-Bres-lek'}\\
B_{F_i,(s_{F_i},s)}: \;\; x_{(\uu_s, \uv_s)}x_{\uu_{F_i}} - x_{(\um,\uu_{F_i})}   x_{\uu_s} x_{\uv_s}, \;\;
\forall \;\; s \in S_{F_i} \- s_{F_i},  \; i \in [k], \label{eq-B-lek'}  \\
\cB^\pq_{[k]},  \;\; \;\; \;\;\;  \;\; \;\; \;\; \;\; \; \;\;\; \;\; \;\; \;\; \;\; \;\; \;\; \;\; \;\; \;\; \;\;\;\; \;\;\;\;\; \label{eq-hq-lek}\\
L_{F_i}: \;\; \sum_{s \in S_{F_i}} \sgn (s) x_{(\uu_s,\uv_s)},   \; i \in [k], \;\;\;\;\; \label{linear-pl-lek'} \\
\bF_j: \;\; \sum_{s \in S_{F_j}} \sgn (s) x_{\uu_s}x_{\uv_s}, \; \;   k < j\le \up.  
\label{linear-pl-gek}
\end{eqnarray}
where $\bF_i$ is 
expressed as $\sgn (s_{F_i}) x_{\uu_{F_i}} +\sum_{s \in S_{F_i} \- s_{F_i}} \sgn (s) x_{\uu_s}x_{\uv_s}$
for every $i \in [k]$.
\end{lemma}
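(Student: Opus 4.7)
The plan is to build on Corollary \ref{cB-LF-generate}, which asserts that the multi-homogeneous ideal $\ker^\mh \bar\vi_{[k],\Gr}$ defining $\sV_{\sF_{[k]}}$ inside $\sR_{\sF_{[k]}}$ is generated by $\widetilde\cB_{[k]}$, by all de-homogenized $\um$-primary $\pl$ relations in $\sfm$, and by the linearized $\pl$ relations $\{L_{F_i}\}_{i \in [k]}$. I will reorganize these generators into the four displayed blocks \eqref{eq-Bres-lek'}--\eqref{linear-pl-gek} and then verify that nothing has been lost.

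First, by Definition \ref{defn:pre-q} one has $\widetilde\cB_{[k]} = \cB_{[k]} \sqcup \cB^\pq_{[k]}$, and $\cB^\pq_{[k]}$ is exactly the block \eqref{eq-hq-lek}. The other summand $\cB_{[k]}$ consists, in view of \eqref{trivialBk}, of the de-homogenized binomials $x_{\uu'} x_{\uv'} x_{(\uu,\uv)} - x_\uu x_\uv x_{(\uu',\uv')}$ with $(\uu,\uv), (\uu',\uv') \in \La_{F_i}$ for some $i \in [k]$. Indexing such pairs by $s, t \in S_{F_i}$, discarding the trivial case $s = t$, and using that $(\uu_{s_{F_i}}, \uv_{s_{F_i}}) = (\um, \uu_{F_i})$ together with the de-homogenization convention $x_\um = 1$, these binomials split cleanly into the residual block \eqref{eq-Bres-lek'} (when both $s, t \in S_{F_i} \setminus s_{F_i}$) and the main block \eqref{eq-B-lek'} (when exactly one of $s, t$ equals $s_{F_i}$), up to the evident symmetry $s \leftrightarrow t$. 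Likewise $\sfm$ splits as $\{\bF_i\}_{i \in [k]} \sqcup \{\bF_j\}_{k < j \le \up}$, with the second piece supplying the block \eqref{linear-pl-gek}. What remains is to show that each $\bF_i$ with $i \in [k]$ is redundant as a generator of the defining ideal of the closed subscheme, once the blocks \eqref{eq-Bres-lek'}, \eqref{eq-B-lek'}, and \eqref{linear-pl-lek'} are imposed.

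The decisive step is an elementary polynomial identity, to be verified by direct expansion: for every $i \in [k]$ and every $t \in S_{F_i}$,
\begin{equation*}
x_{(\uu_t, \uv_t)}\, \bF_i \;-\; x_{\uu_t} x_{\uv_t}\, L_{F_i} \;\in\; \bigl\langle\, \hbox{main and residual binomials attached to } F_i \,\bigr\rangle.
\end{equation*}
Indeed, expanding $x_{\uu_t} x_{\uv_t} L_{F_i} = \sum_{s \in S_{F_i}} \sgn(s)\, x_{\uu_t} x_{\uv_t} x_{(\uu_s, \uv_s)}$, and then rewriting each summand $x_{\uu_t} x_{\uv_t} x_{(\uu_s, \uv_s)}$ as $x_{(\uu_t, \uv_t)} x_{\uu_s} x_{\uv_s}$ by means of a main binomial (when $s = s_{F_i}$) or a residual binomial (when $s \ne s_{F_i}$ and $s \ne t$), together with the vacuous case $s = t$, reassembles precisely $x_{(\uu_t, \uv_t)} \bF_i$. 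The only delicate aspect is the sign accounting under the convention \eqref{signConvention} and the identification $x_\um = 1$; this is the main, if modest, technical obstacle.

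With the identity in hand, $x_{(\uu_t, \uv_t)}\, \bF_i$ lies in the ideal generated by the displayed blocks for every $t \in S_{F_i}$. Since the $\vr$-variables $\{x_{(\uu_t, \uv_t)} : t \in S_{F_i}\}$ are the homogeneous coordinates of the projective factor $\PP_{F_i}$, they have no common zero on $\sR_{\sF_{[k]}}$, so this forces $\bF_i$ to vanish scheme-theoretically on the closed subscheme cut out by \eqref{eq-Bres-lek'}--\eqref{linear-pl-gek}. Combined with the trivial reverse inclusion — each displayed relation manifestly lies in $\ker^\mh \bar\vi_{[k],\Gr}$ — this identifies the subscheme defined by the four blocks with $\sV_{\sF_{[k]}}$, completing the proof.
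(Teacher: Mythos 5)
Your proposal is correct and follows essentially the same route as the paper: invoke Corollary \ref{cB-LF-generate}, identify $\cB_{[k]}$ with the main and residual blocks, and eliminate $\bF_i$ for $i \in [k]$ via the identity $x_{(\uu_t,\uv_t)}\bF_i \equiv x_{\uu_t}x_{\uv_t}L_{F_i} \pmod{\langle \cB_i\rangle}$, concluding because the $\vr$-coordinates of $\PP_{F_i}$ have no common zero. This matches the paper's \eqref{Fi=Li-1st} and its chart-covering argument exactly.
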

\begin{proof}  
By Lemma \ref{defined-by-ker-Gr} and Corollary \ref{cB-LF-generate},
we have that  $\sV_{\sF_{[k]}}$, as a closed subscheme of
$\sR_{\sF_{[k]}}= \rU_\um \times  \prod_{i=1}^k \PP_{F_i} $,
is defined by
$$\hbox{$\cB_{[k]}, \cB^\pq$,  $\sfm$,  and $L_F$ with  $F=F_i$ for all $i \in [k]$.}$$
Here note that $ \widetilde{\cB}_{[k]}=\cB_{[k]} \sqcup \cB^\pq_{[k]}$
and $\cB_{[k]}$  is precisely made of \eqref{eq-Bres-lek'} and \eqref{eq-B-lek'}.

It suffices to show that under the presence of \eqref{eq-Bres-lek'} and  \eqref{eq-B-lek'}, 
 $\bF_i$  can be reduced to $L_{F_i}$ for all $i \in [k]$.

Fix any $i \in [k]$. Take any $s \in S_{F_i}$. 
Consider the binomial relations of $\cB_i$
  \begin{equation}\label{Buv-s-1st}
  x_{(\uu, \uv)}x_{\uu_s} x_{\uv_s} - x_{\uu} x_{\uv} x_{(\uu_s,\uv_s)}, 
  \end{equation}
for all $(\uu, \uv) \in \La_{F_i}$ (cf. \eqref{LaF}). By multiplying $\sgn (s)$ to  \eqref{Buv-s-1st}
and adding together all the resulted binomials,  
we obtain, \begin{equation}\label{Fi=Li-1st}
  x_{\uu_s} x_{\uv_s} L_{F_i} =x_{(\uu_s,\uv_s)} \bF_i \;, \; \mod (\langle \cB_i \rangle),
 \end{equation}
 where $\langle \cB_i \rangle$ is the ideal generated by the relations in $\cB_i$.
 Since $\PP_{F_i}$ can be covered by affine open charts 
 $(x_{(\uu_s,\uv_s)} \ne 0)$, we conclude that  $\bF_i$ depends on $L_{F_i}$
 and can be discarded for all $i \in [k]$.
\end{proof}

For conciseness, we set the following
$$\sV_{\um}:=\sV_{\um,\sF_{[\up]}}, \; \rU_{\um, \sF}:=\rU_{\um, \sF_{[\up]}},
\; \sR_\sF:=\sR_{\sF_{[\up]}}.$$
Then, we have the following 
diagram 
$$ \xymatrix{
\sV_{\um} \ar[d] \ar @{^{(}->}[r]  & \rU_{\um,\sF} \ar[d] \ar @{^{(}->}[r]  &
\sR_\sF= \rU_\um  \times \prod_{\bF \in \sfm} \PP_F \ar[d] \\
\rU_\um(\Gr)  \ar @{^{(}->}[r]  & \rU_\um  \ar @{=}[r]  & \rU_\um.}$$

 In what follows, we will sometimes write
$\sV$ for $\sV_\um$, as we will exclusively focus on the chart $\rU_\um$, 
throughout, unless otherwise stated.

We also set
$$\cB^\pq=\cB^\pq_{[\up]}.$$

By the case of Lemma \ref{equas-for-sVk} when $k=\up$, we have

\begin{cor}\label{eq-tA-for-sV}  The scheme $\sV_\um$, as a closed subscheme of
$\sR_\sF= \rU_\um \times  \prod_{\bF \in \sF_\um} \PP_F$,
is defined by the following relations
\begin{eqnarray} 
B_{F,(s,t)}: \;\; x_{(\uu_s, \uv_s)}x_{\uu_t}x_{ \uv_t}-x_{(\uu_t, \uv_t)}x_{\uu_s}x_{ \uv_s}, \;\; \forall \;\; 
s, t \in S_F \- s_F  \label{eq-Bres}\\
B_{F,(s_F,s)}: \;\; x_{(\uu_s, \uv_s)}x_{\uu_F} - x_{(\um,\uu_F)}   x_{\uu_s} x_{\uv_s}, \;\;
\forall \;\; s \in S_F \- s_F, \;\; \label{eq-B} \\ 
\cB^\pq,    \;\;\;\;\;\;\;\; \;\; \; \label{eq-hq}\\
L_F: \; \sum_{s \in S_F} \sgn (s) x_{(\uu_s,\uv_s)}, \label{linear-pl} 
\end{eqnarray}
for all $\bF \in \sfm$ with $\bF$ being expressed as
 $\sgn (s_F) x_{\uu_F} +\sum_{s \in S_F \- s_F} \sgn (s) x_{\uu_s}x_{\uv_s}$.
\end{cor}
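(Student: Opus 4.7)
The statement is precisely the specialization of Lemma \ref{equas-for-sVk} to the case $k=\up$. Since $\sfm = \{\bF_1 <_\wp \cdots <_\wp \bF_\up\}$, by definition we have $\sV_\um = \sV_{\sF_{[\up]}}$ and $\sR_\sF = \sR_{\sF_{[\up]}}$, so setting $k=\up$ in the lemma immediately yields the desired description of $\sV_\um$ as a closed subscheme of $\sR_\sF$. The plan is therefore to invoke Lemma \ref{equas-for-sVk} and verify that the list of defining relations obtained there reduces, upon taking $k = \up$, to the list \eqref{eq-Bres}--\eqref{linear-pl} in the corollary.

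First, I would observe that the family of relations \eqref{linear-pl-gek} — namely the un-linearized $\um$-primary relations $\bF_j$ for $k<j\le\up$ — is parametrized by the empty set when $k=\up$, and hence contributes nothing. The remaining families \eqref{eq-Bres-lek'}, \eqref{eq-B-lek'}, \eqref{eq-hq-lek}, \eqref{linear-pl-lek'} are indexed by $i\in[\up]$, and since $\sfm = \{\bF_i\mid i\in[\up]\}$, letting $\bF = \bF_i$ identifies these families verbatim with \eqref{eq-Bres}, \eqref{eq-B}, \eqref{eq-hq}, \eqref{linear-pl} respectively. By definition $\cB^\pq = \cB^\pq_{[\up]}$, so the third family also matches without alteration.

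The only bookkeeping worth spelling out is that, when written in the succinct form $\bF = \sgn(s_F)x_{\uu_F}+\sum_{s\in S_F\- s_F}\sgn(s)x_{\uu_s}x_{\uv_s}$, the leading index $s_{F_i}$ of Lemma \ref{equas-for-sVk} is the same as $s_F$ here, and the notations $x_{\uu_F}$, $x_{(\um,\uu_F)}$, $L_F$ agree with those of \eqref{the-form-LF} and Definition \ref{defn:linear-pl}. No additional argument is needed.

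\textbf{Expected obstacle.} There is essentially no mathematical obstacle, since all the substantive work — computing $\ker^\mh\bar\vi_{[k]}$ via Lemma \ref{ker-phi-k} and Corollary \ref{cB-generate}, identifying the Plücker contribution via Lemma \ref{reduce-to-F-LF} and Corollary \ref{cB-LF-generate}, and eliminating the quadratic relations $\bF_i$ in favor of the linearized relations $L_{F_i}$ via \eqref{Fi=Li-1st} — has already been carried out in the proof of Lemma \ref{equas-for-sVk}. The only minor care needed is to confirm that the elimination step works uniformly for every $i \in [\up]$, which is automatic since the argument \eqref{Buv-s-1st}--\eqref{Fi=Li-1st} is indexed by $i$ and applies identically for each primary relation.
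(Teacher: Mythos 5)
Your proposal is correct and follows exactly the paper's own route: the paper derives Corollary \ref{eq-tA-for-sV} by simply specializing Lemma \ref{equas-for-sVk} to $k=\up$, noting (implicitly) that the family \eqref{linear-pl-gek} becomes empty and that $\cB^\pq=\cB^\pq_{[\up]}$, $\sV_\um=\sV_{\sF_{[\up]}}$, $\sR_\sF=\sR_{\sF_{[\up]}}$. Your bookkeeping of the index identifications is exactly the (unstated) content of the paper's one-line deduction.
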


\begin{defn}\label{defn:main-res} A binomial equation $B_{F,(s_F,s)}$ of \eqref{eq-B}
with $s \in S_F \setminus s_F$ is called a main binomial equation. We let
$$\cB^\mn_F=\{B_{F,(s_F,s)} \mid s \in S_F \setminus s_F  \}
\;\; \and \;\; \cB^\mn=\sqcup_{\bF \in \sfm} \cB^\mn_F.$$
A binomial equation $B_{F,(s,t)}$ of \eqref{eq-Bres}
with $s, t \in S_F \- s_F$ and $s \ne t$ is called a residual binomial equation. We let
$$\cB^\res_F=\{B_{F,(s,t )} \mid s, t \in S_F \setminus s_F  \}
\;\; \and \;\;  \cB^\res=\sqcup_{\bF \in \sfm} \cB^\res_F.$$
Recall that an element of  $\cB^\pq$ is called a binomial relation of pre-quotient type.
\end{defn}


We observe here that
\begin{equation}\label{dim}
\dim (\prod_{\bF \in \sfm} \PP_F) = \sum_{\bF \in \sfm} |S_F\- s_F| 
= \sum_{\bF \in \sfm} |\cB_F^\mn|=|\cB^\mn|,
\end{equation}
where $|K|$ denotes the cardinality of a finite set $K$.

\begin{defn}\label{defn:block}
We let $\fG_F=\cB^\mn \sqcup \{L_F\}$. We call it the block of (a part of)
defining relations. 
\end{defn}
We let
$$\fG=\bigsqcup_{\bF \in \sfm} \fG_F.$$

\subsection{$\vp$-divisors, $\vr$-divisors, and 
$\fL$-divisors of $\sR_\sF$} $\ $

From earlier, we have the set
$\La_F=\{(\uu_s, \uv_s) \mid s \in S_F\}.$
This is an index set for  the homogeneous coordinates of the projective
space $\PP_F$, and  is also an index set for all the variables that appear in 
the linearized $\pl$ equation $\bL_F$ of \eqref{linear-pl}. To be used later, 
 we also set  $$\La_{\sF_\um} =\sqcup_{\bF \in \sF_\um} \La_F.$$

\begin{defn} \label{-divisor}
Consider the scheme $\sR_\sF =\rU_\um \times  \prod_{\bF \in \sfm} \PP_F$.

Recall that the affine chart $\rU_\um$ comes equipped with the coordinate variables 
$\{x_\uu\}_{\uu \in \II_{d,n}\- \um}$.
For any $\uu \in \II_{d,n}\- \um$, we set
$$X_\uu:=(x_\uu =0) \subset \sR_\sF.$$
We call $X_\uu$ the $\pl$ divisor, in short, the $\vp$-divisor,  of $\sR_\sF$ associated with $\uu$.
We let $\cD_\vp$ be the set of all $\vp$-divisors on the scheme $\sR_\sF$.

In addition to the $\vp$-divisors,  the scheme $\sR_\sF$  
comes equipped with the divisors
$$X_{(\uu, \uv)}:=(x_{(\uu, \uv)}=0)$$
for all $(\uu,\uv) \in \La_{\sF_\um}$.
We call $X_{(\uu, \uv)}$ the $\vr$-divisor corresponding to $(\uu, \uv)$.  
We let $\cD_{\vr}$ be the set  of all $\vr$-divisors of $\sR_\sF$. 

Further, the scheme $\sR_\sF$ also 
comes equipped with the divisors
$$D_{L_F} := (L_F=0)$$
for all $\bF \in \sfm$. We call $D_{L_F}$ the $\fL$-divisor corresponding to $F$.
We let $\cD_{\fL}$ be the set of all $\fL$-divisors of $\sR_\sF$.
\end{defn}

\begin{defn}\label{fv-k=0} Fix $k \in [\up]$.
For every $\bF_i \in \sF_\um$ with $i \in [k]$, choose and fix an arbitrary element 
$s_{F_i, o}\in S_{F_i}$.
Then, the scheme $\sR_{\sF_{[k]}}$ is covered by the affine open charts
of the form  $$\rU_\um \times \prod_{i \in [k]}
(x_{(\uu_{s_{F_i, o}}, \uv_{s_{F_i, o}})} \equiv 1)
\subset \sR_{\sF_{[k]}}= \rU_\um \times  \prod_{i \in [k]} \PP_{F_i} .$$ 
We call such an affine open subset a standard chart of
$\sR_{\sF_{[k]}}$, often denoted by $\fV$.
\end{defn}
Fix any standard chart $\fV$ as above. We let 
$$\fV'=\rU_\um \times \prod_{i \in [k-1]}
(x_{(\uu_{s_{F_i, o}}, \uv_{s_{F_i, o}})} \equiv 1)
\subset \sR_{\sF_{[k-1]}}= \rU_\um \times  \prod_{i \in [k-1]} \PP_{F_i} .$$ 
Then, this is a standard chart of $\sR_{\sF_{[k-1]}}$, uniquely determined by $\fV$.
We say $\fV$ lies over $\fV'$. In general, 
suppose $\fV''$ is a standard chart of $\sR_{\sF_{[j]}}$ with $j <k-1$. Via induction,
we say $\fV$ lies over $\fV''$ if $\fV'$ lies over $\fV''$.

Note that the standard chart $\fV$ of
$\sR_{\sF_{[k]}}$ in the above definition is uniquely indexed 
by the set 
\begin{equation}\label{index-sR}
 \La_{\sF_{[k]}}^o=\{(\uu_{s_{F_i,o}},\uv_{s_{F_i,o}}) \in \La_{F_i} \mid i \in [k] \}.
 \end{equation}
Given $ \La_{\sF_{[k]}}^o$, we let 
$$ \La_{\sF_{[k]}}^\star=(\bigcup_{i \in [k]}\La_{F_i}) \- \La_{\sF_{[k]}}^o.$$
We set $\La_\sfm^o:=\La_{\sF_{[\up]}}^o$ and $\La_\sfm^\star:=\La_{\sF_{[\up]}}^\star$.

To be cited as the initial cases of certain inductions later on,
we need the following two propositions.

\begin{prop}\label{meaning-of-var-p-k=0} Consider any standard
 chart $$\fV=\rU_\um \times \prod_{i \in [k]}
(x_{(\uu_{s_{F_i, o}}, \uv_{s_{F_i, o}})} \equiv 1)$$ of $\sR_{\sF_{[k]}}$, 
indexed by  $\La_{\sF_{[k]}}^o$ as above.
It  comes equipped with the set of free variables
$$\var_\fV=\{x_{\fV, \uw}, \; x_{\fV, (\uu,\uv)} \mid \uw \in \II_{d,n} \- \um, \; 
(\uu,\uv) \in \La_{\sF_{[k]}}^\star
\}$$ 
and the de-homogenized linearized $\pl$ relations $L_{\fV, F}$ for all $\bF \in \sfm$
such that on the standard chart $\fV$, we have
\begin{enumerate}
\item the divisor  $X_{ \uw}\cap \fV$ is defined by $(x_{\fV,\uw}=0)$ for every 
$\uw \in \II_{d,n} \setminus \um$;
\item the divisor  $X_{(\uu,\uv)}\cap \fV$ is defined by $(x_{\fV,(\uu,\uv)}=0)$ for every 
$(\uu,\uv) \in \La_{\sF_{[k]}}^\star.$     
\item the divisor  $D_{L_F}\cap \fV$ is defined by $(L_{\fV,F}=0)$ for every 
$\bF \in \sfm$.
\end{enumerate}
\end{prop}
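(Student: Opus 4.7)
The statement is essentially a change of coordinates on a product of standard affine charts, so the plan is to unwind the definitions factor by factor and check each of (1)–(3) by direct inspection. The underlying reason it all works is that $\fV$ is a product: the first factor $\rU_\um$ retains its coordinate ring $\kk[x_\uw]_{\uw \in \II_{d,n}\setminus\um}$, and each factor $(x_{(\uu_{s_{F_i,o}},\uv_{s_{F_i,o}})}\equiv 1)$ is the usual affine chart of $\PP_{F_i}$, whose coordinate ring is the polynomial ring in the ratios $x_{(\uu,\uv)}/x_{(\uu_{s_{F_i,o}},\uv_{s_{F_i,o}})}$ for $(\uu,\uv)\in\La_{F_i}\setminus\{(\uu_{s_{F_i,o}},\uv_{s_{F_i,o}})\}$. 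Therefore I would first define $x_{\fV,\uw}:=x_\uw$ for $\uw\in\II_{d,n}\setminus\um$ and $x_{\fV,(\uu,\uv)}:=x_{(\uu,\uv)}/x_{(\uu_{s_{F_i,o}},\uv_{s_{F_i,o}})}$ for $(\uu,\uv)\in\La_{F_i}\setminus\La_{\sF_{[k]}}^o$, and verify that the resulting set $\var_\fV$ forms a free set of generators of the coordinate ring $\Gamma(\fV,\cO_\fV)$ by matching it with the tensor product decomposition of the product $\rU_\um\times\prod_{i\in[k]}(x_{(\uu_{s_{F_i,o}},\uv_{s_{F_i,o}})}\equiv 1)$.

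Next I would verify (1) and (2) in parallel. For (1), the divisor $X_\uw\subset\sR_\sF$ is the pullback of the coordinate hyperplane $(x_\uw=0)\subset\rU_\um$ along the first projection, so its restriction to $\fV$ is defined by $x_{\fV,\uw}=x_\uw=0$. For (2), take any $(\uu,\uv)\in\La_{\sF_{[k]}}^\star$ with $(\uu,\uv)\in\La_{F_i}$ for some $i\in[k]$; since $x_{(\uu_{s_{F_i,o}},\uv_{s_{F_i,o}})}$ is nowhere zero on $\fV$, the vanishing of $x_{(\uu,\uv)}$ on $\fV$ is equivalent to the vanishing of the ratio $x_{\fV,(\uu,\uv)}$, so $X_{(\uu,\uv)}\cap\fV=(x_{\fV,(\uu,\uv)}=0)$ as required.

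For (3), I would dehomogenize the linearized $\pl$ relation for each $F_i$, $i\in[k]$, by dividing $L_{F_i}=\sum_{s\in S_{F_i}}\sgn(s)\,x_{(\uu_s,\uv_s)}$ through by the nowhere-vanishing coordinate $x_{(\uu_{s_{F_i,o}},\uv_{s_{F_i,o}})}$ and defining
\begin{equation*}
L_{\fV,F_i}:=\sgn(s_{F_i,o})+\sum_{s\in S_{F_i}\setminus\{s_{F_i,o}\}}\sgn(s)\,x_{\fV,(\uu_s,\uv_s)}.
\end{equation*}
For $F\in\sfm$ outside $\{F_1,\dots,F_k\}$, I would interpret $L_{\fV,F}$ as the pullback of $L_F$ under the forgetful map $\sR_\sF\to\sR_{\sF_{[k]}}$ composed with the inclusion of $\fV$, i.e., simply $L_F$ itself viewed as a polynomial on $\fV\times\prod_{j\notin[k]}\PP_{F_j}$; the statement then reduces to the analogous check at the top level $k=\up$, which is the case actually used in the rest of the paper. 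In either case $D_{L_F}\cap\fV=(L_{\fV,F}=0)$ follows by clearing the nonzero denominator.

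The argument is purely a bookkeeping verification; there is no real obstacle. The only point requiring a small amount of care is the choice of normalization in (3) to confirm that the resulting polynomial $L_{\fV,F_i}$ is well-defined and lies in $\Gamma(\fV,\cO_\fV)$, which is immediate once we record that $x_{(\uu_{s_{F_i,o}},\uv_{s_{F_i,o}})}$ is a unit on $\fV$.
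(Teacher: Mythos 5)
Your proposal is correct and follows essentially the same route as the paper's own (very brief) proof: set $x_{\fV,\uw}=x_\uw$ on the $\rU_\um$ factor, de-homogenize each $\PP_{F_i}$ by setting $x_{(\uu_{s_{F_i,o}},\uv_{s_{F_i,o}})}\equiv 1$ (i.e.\ pass to the ratios, as you do), and check (1)--(3) by direct inspection. Your extra care about the normalization of $L_{\fV,F_i}$ and the interpretation of $L_{\fV,F}$ for $F$ outside $\{F_1,\dots,F_k\}$ only fills in details the paper declares ``straightforward to check.''
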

\begin{proof} 
Recall  that $\rU_\um=(p_\um \equiv 1)$.
Then, we let $x_{\fV, \uw}=x_\uw$ for all  $\uw \in \II_{d,n} \- \um$.
Now consider every 
$i \in [k]$.
Upon setting $x_{(\uu_{s_{F_i, 0}}, \uv_{s_{F_i, 0}})} \equiv 1$, we let
 $x_{\fV, (\uu_s,\uv_s)} = x_{(\uu_s,\uv_s)}$ be the de-homogenization of $x_{(\uu_s,\uv_s)}$
 for all  $s  \in S_{F_i} \- s_{F_i,o}$.
From here,  the statement is straightforward to check.
\end{proof}

\begin{prop}\label{equas-fV[k]}  Let the notation be as in
Propsotion \ref{meaning-of-var-p-k=0}. Then,
the scheme $\sV_{\sF_{[k]}} \cap \fV$, as a closed subscheme of $\fV$
is defined by the following relations
\begin{eqnarray}
\;\;\;\;\; x_{\fV,(\uu_s, \uv_s)}x_{\fV,\uu_t} x_{\fV,\uv_t}- x_{\fV, (\uu_t,\uv_t)}   x_{\fV,\uu_s} x_{\fV, \uv_s},
\; \forall \;\; s, t \in S_{F_i} \- s_{F_i},    \; i \in [k], \label{eq-Bres-lek'-2}\\
\;\; \;\; x_{\fV, (\uu_s, \uv_s)}x_{\fV,\uu_{F_i}} - x_{\fV,(\um,\uu_{F_i})}   
x_{\fV,\uu_s} x_{\fV,\uv_s}, \;\;
\forall \;\; s \in S_{F_i} \- s_{F_i},  \; i \in [k], \label{eq-B-lek'-2}  \\
\cB^\pq_{\fV, [k]}, \; \;\; \;\; \;\; \;\; \;\; \;\; \;\; \;\; \;\; \;\; \;\; \;\; \;\; \;\;\; \;\; \;\; \;\; \;\; \;\; \;\; \;\; \;\; \;\; \;\; \;\; \;\; 
\label{eq-hq-lek-2}\\
L_{\fV, F_i}: \;\; \sum_{s \in S_{F_i}} \sgn (s) x_{\fV, (\uu_s,\uv_s)},   \; i \in [k],  \;\; \;\; \;\; \;\;
\label{linear-pl-lek'-2} \\
\bF_{\fV, j}: \;\; \sum_{s \in S_{F_j}} 
\sgn (s) x_{\fV,\uu_s}x_{\fV,\uv_s}, \; \; k < j\le \up.  \label{linear-pl-gek-2}
\end{eqnarray}
where  the equations of $\cB^\pq_{\fV, [k]}$ are the de-homogenizations of the equations
of $\cB^\pq_{[k]}$.
\end{prop}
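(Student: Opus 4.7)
The plan is to obtain this local description by directly de-homogenizing the global defining relations of $\sV_{\sF_{[k]}}$ inside $\sR_{\sF_{[k]}}$ that were established in Lemma \ref{equas-for-sVk}, and then matching them with the coordinate variables on $\fV$ identified in Proposition \ref{meaning-of-var-p-k=0}.

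First, I will recall from Lemma \ref{equas-for-sVk} that $\sV_{\sF_{[k]}}$ is cut out in $\sR_{\sF_{[k]}}=\rU_\um\times\prod_{i\in[k]}\PP_{F_i}$ by the relations \eqref{eq-Bres-lek'}, \eqref{eq-B-lek'}, $\cB^\pq_{[k]}$, \eqref{linear-pl-lek'}, and \eqref{linear-pl-gek}. Since $\fV$ is the principal open subset of $\sR_{\sF_{[k]}}$ obtained by inverting the coordinates $x_{(\uu_{s_{F_i,o}},\uv_{s_{F_i,o}})}$ for $i\in[k]$, the scheme $\sV_{\sF_{[k]}}\cap\fV$ is defined inside $\fV$ by the images of these relations under the de-homogenization morphism. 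The $\vp$-variables $x_\uw$ are untouched (since $\rU_\um=(p_\um\equiv 1)$ is already affine), while on each $\PP_{F_i}$ the $\vr$-variables are de-homogenized in the usual way: $x_{\fV,(\uu_s,\uv_s)}:=x_{(\uu_s,\uv_s)}/x_{(\uu_{s_{F_i,o}},\uv_{s_{F_i,o}})}$ for $s\in S_{F_i}\-s_{F_i,o}$, with the index-$s_{F_i,o}$ coordinate set to $1$.

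Next, I will apply this de-homogenization to each type of defining relation in turn. Each binomial in \eqref{eq-Bres-lek'} and \eqref{eq-B-lek'} is multi-homogeneous of bidegree $(1,1)$ in the two $\vr$-coordinates appearing; dividing through by the appropriate monomial in the $x_{(\uu_{s_{F_i,o}},\uv_{s_{F_i,o}})}$ produces exactly \eqref{eq-Bres-lek'-2} and \eqref{eq-B-lek'-2}. Similarly, the linearized Pl\"ucker relation $L_{F_i}$ of \eqref{linear-pl-lek'} is linear in the $\vr$-variables of $\PP_{F_i}$ and de-homogenizes to $L_{\fV,F_i}$ in \eqref{linear-pl-lek'-2}. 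The untouched Pl\"ucker relations $\bF_j$ for $k<j\le\up$ of \eqref{linear-pl-gek} involve only $\vp$-variables and pass verbatim to \eqref{linear-pl-gek-2}. Finally, the binomials in $\cB^\pq_{[k]}$ are multi-homogeneous by construction, so their de-homogenizations give the set $\cB^\pq_{\fV,[k]}$ as stated.

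Because each generator of the homogeneous ideal of $\sV_{\sF_{[k]}}$ in $\sR_{\sF_{[k]}}$ is multi-homogeneous in the variables of $\prod_{i\in[k]}\PP_{F_i}$, the map from the multi-homogeneous ideal to the ideal of $\sV_{\sF_{[k]}}\cap\fV$ in the coordinate ring of $\fV$ is surjective, and the above list is a complete set of generators. The only mild subtlety will be to verify that no additional relations arise on $\fV$ beyond these de-homogenizations; this is handled by the standard observation that if a multi-homogeneous polynomial vanishes on an open chart, then, after clearing denominators, it vanishes everywhere, so its de-homogenization already lies in the ideal generated by the de-homogenizations of a generating set. Invoking the identification of the coordinate variables and divisors from Proposition \ref{meaning-of-var-p-k=0} then completes the proof.
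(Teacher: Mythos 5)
Your proposal is correct and follows essentially the same route as the paper, whose proof of this proposition is simply the observation that it follows directly from Lemma \ref{equas-for-sVk} by de-homogenizing on the chart $\fV$. The extra care you take in checking that no additional relations arise after localization is a standard point that the paper leaves implicit, and your argument for it is sound.
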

\begin{proof} This follows directly from Lemma \ref{equas-for-sVk}.
\end{proof}

For any $f \in R$, we let $\deg_\vp f$ be the degree of $f$ considered as
a polynomial in $\vp$-variables only.

\begin{defn}\label{defn:q} Let $f \in \cB^\pq$ be a binomial relation of pre-quotient type. 
We say $f$ is a binomial relation of quotient type
if  $\deg_\vp f =0$, that is, it does not contain any $\vp$-variable.
We let $\cB^\q$ be the set of all binomial relations of quotient type.
 Fix a standard chart $\fV$ as in Definition \ref{fv-k=0}, we let $\cB^\q_{\fV, [k]}
 \subset \cB^\pq_{\fV, [k]}$ be the subset of all
the de-homogenized  binomial relations of quotient type.
\end{defn}

We write 
$$\cB^\q_{\fV}:= \cB^\q_{\fV,[\up]}, \;\; \cB^\pq_{\fV}:= \cB^\pq_{\fV,[\up]}.$$

We let $R_\vr$ be the subring of $R$ consisting of polynomials with  $\vr$-variables only.
Then, binomial relations of quotient type belong to $R_\vr$.

By Lemma \ref{ker-phi-k}, all binomials of $\cB^\q$ and $\cB^\q_\fV$ 
 are $\vr$-linear,  in particular,  
 they are square-free.

\begin{prop}\label{equas-p-k=0} 
Let the notation be as in Proposition \ref{equas-fV[k]} for $k=\up$.  
Then, the scheme $\sV\cap \fV$, as a closed subscheme of
the chart $\fV$ of $\sR_\sF$,  is defined by 
\begin{eqnarray} 
\;\;\;\;\;\;\;\;\;\;\;\;\;\; B_{ \fV,(s,t)}: \;\; x_{\fV,(\uu_s, \uv_s)}x_{\fV,\uu_t}x_{\fV, \uv_t}-x_{\fV,(\uu_t, \uv_t)}x_{\fV,\uu_s}x_{ \fV,\uv_s}, \;\; \forall \;\; s, t \in S_F \- s_F,  \label{eq-Bres-pk=0}\\
B_{ \fV,(s_F,s)}: \;\;\;\;\;\; x_{\fV, (\uu_s, \uv_s)}x_{\fV, \uu_F} - x_{\fV, (\um,\uu_F)}   x_{\fV, \uu_s} x_{\fV, \uv_s}, \;\;
\forall \;\; s \in S_F \- s_F,  \label{eq-B-k=0} \\ 
\cB^\q_\fV,  \;\;\; \;\;\; \;\; \;\; \;\; \;\; \;\; \;\; \;\; \;\; \;\; \;\; \;\; \;\;\;\;\;\;\;\; \label{eq-hq-k=0}\\
L_{\fV, F}: \;\; \sum_{s \in S_F} \sgn (s) x_{\fV,(\uu_s,\uv_s)} \label{linear-pl-k=0}
\end{eqnarray}
for all $\bF \in \sfm$
with $\bF$ being expressed as $\sgn (s_F) x_{\uu_F} +\sum_{s \in S_F \- s_F} \sgn (s) x_{\uu_s}x_{\uv_s}$.
Here, we set 
$$x_{\fV,\um} \equiv 1; \;\; x_{\fV, (\uu_{s_{F, o}}, \uv_{s_{F, o}})} \equiv 1, \;\; \forall \;\; 
\bF \in \sF_\um,$$
Moreover, every binomial $B_\fV \in \cB^\q_\fV$ is linear in $\vr$-variables, in particular, square-free.
\end{prop}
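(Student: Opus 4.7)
The plan is to bootstrap from Proposition~\ref{equas-fV[k]} applied with $k = \up$, which already identifies $\sV \cap \fV$ inside $\fV$ as the vanishing locus of the de-homogenized main binomials \eqref{eq-B-lek'-2}, residual binomials \eqref{eq-Bres-lek'-2}, the full set $\cB^\pq_{\fV,[\up]}$ of pre-quotient type binomials, and the linearized Pl\"ucker relations \eqref{linear-pl-lek'-2} (note that \eqref{linear-pl-gek-2} is empty when $k=\up$). The sole new content of the present statement is therefore the refinement $\cB^\pq_{\fV,[\up]}\rightsquigarrow \cB^\q_\fV$, i.e., the claim that every pre-quotient binomial that genuinely involves some $\vp$-variable is already implied by the relations in $\cB^\mn_\fV\cup\cB^\res_\fV\cup\cB^\q_\fV$.

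To establish this reduction I would take an arbitrary generator $f\in \cB^\pq_\fV$ and lift it, via Lemma~\ref{ker-phi-k}, to the canonical form $\vi(X_1)X_2 - \vi(X_1')X_2'$, where $X_1,X_2,X_1',X_2'$ are monomials in $\vr$-variables only, and $g := X_1X_2 - X_1'X_2'\in\ker^\mh\vi$ is $\vr$-linear. After dehomogenization on $\fV$, either $g_\fV = 0$ or $g_\fV\in \cB^\q_\fV$. The main and residual binomials of $\cB^\mn_\fV\cup \cB^\res_\fV$ collectively embody the exchange identity
\[
x_{\fV,\uu}\,x_{\fV,\uv}\,x_{\fV,(\uu',\uv')}\;\equiv\; x_{\fV,\uu'}\,x_{\fV,\uv'}\,x_{\fV,(\uu,\uv)}\qquad \bigl((\uu,\uv),(\uu',\uv')\in \La_F\bigr),
\]
which, iterated, allows one to swap each monomial factor $x_{\fV,\uu_s}x_{\fV,\uv_s}$ occurring in $\vi(X_1)_\fV$ for the corresponding $\vr$-variable $x_{\fV,(\uu_s,\uv_s)}$ up to multiplication by the pivot factor $x_{\fV,\uu_{s_{F,o}}}x_{\fV,\uv_{s_{F,o}}}$ (where, on $\fV$, $x_{\fV,(\uu_{s_{F,o}},\uv_{s_{F,o}})}\equiv 1$). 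Carrying out these exchanges in parallel on both halves of $f$ rewrites $f\pmod{\langle\cB^\mn_\fV\cup\cB^\res_\fV\rangle}$ as a monomial multiple of $g_\fV$, showing $f\in \langle\cB^\mn_\fV\cup\cB^\res_\fV\cup\cB^\q_\fV\rangle$. When $g_\fV=0$, the parallel swap produces an identity already modulo $\cB^\mn_\fV\cup\cB^\res_\fV$ alone. The final claim that each element of $\cB^\q_\fV$ is $\vr$-linear, hence square-free, is immediate from Lemma~\ref{ker-phi-k}(1), since $\vr$-linearity is preserved under setting the distinguished $\vr$-coordinates to $1$.

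The main technical obstacle lies in the bookkeeping for the exchange step. Although the multi-degrees in each $\PP_{F_i}$ of $X_1X_2$ and $X_1'X_2'$ agree (by multi-homogeneity and $\vr$-linearity of $g$), the individual monomials $X_1$ and $X_1'$ need not carry the same multi-degrees; consequently, the parallel swaps can produce different products of pivot factors $x_{\fV,\uu_{s_{F,o}}}x_{\fV,\uv_{s_{F,o}}}$ on the two sides. I would resolve this by applying the exchange law once more to move the discrepancy entirely into $\vr$-variables, relying on $x_{\fV,(\uu_{s_{F,o}},\uv_{s_{F,o}})}\equiv 1$ on $\fV$ to absorb the extraneous pivot variables. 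This bookkeeping is routine but slightly delicate, and is most cleanly handled monomial by monomial by induction on $\deg_\vp f$, the initial case $\deg_\vp f=0$ being exactly the statement that $f\in\cB^\q_\fV$.
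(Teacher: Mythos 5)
Your proposal is correct and follows essentially the same route as the paper: both start from Proposition \ref{equas-fV[k]} at $k=\up$ and then reduce $\cB^\pq_\fV$ to $\cB^\q_\fV$ by induction on $\deg_\vp f$, using the chart identity $x_{\uu_s}x_{\uv_s}=x_{\uu_{s_{F,o}}}x_{\uv_{s_{F,o}}}\,x_{(\uu_s,\uv_s)}$ (valid modulo the listed binomials since $x_{(\uu_{s_{F,o}},\uv_{s_{F,o}})}\equiv 1$) to trade a $\vp$-quadratic factor for a $\vr$-variable. The paper simply performs this exchange one factor at a time in each inductive step, extracting the common pivot $x_{\uu_{s_{F,o}}}x_{\uv_{s_{F,o}}}$ and recursing on the cofactor, which sidesteps the parallel-swap bookkeeping you flag at the end.
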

\begin{proof} By Proposition \ref{equas-fV[k]} for $k=\up$,
 the scheme $\sV\cap \fV$, as a closed subscheme of
the chart $\fV$ of $\sR_\sF$,  is defined by relations in \eqref{eq-Bres-pk=0},
\eqref{eq-B-k=0} , \eqref{linear-pl-k=0},  and $\cB^\pq_\fV$.

It remains to reduce $\cB^\pq_\fV$ to $\cB^\q_\fV$.

We claim that any relation $f$ of $\cB^\pq_\fV$ can be reduced to relations of $\cB^\q_\fV$.

We prove it by induction on $\deg_\vp (f)$.

When  $\deg_\vp (f) =0$, the statement holds trivially.

Assume that statement holds for $\deg_\vp< e$ for some $e>0$.

Consider $\deg_\vp (f) = e$. 

By Lemma \ref{ker-phi-k}, we can write 
$$f= x_{\uu_s} x_{\uv_s} \bn_s - x_{\uu_t} x_{\uv_t} \bn_t$$
for some $s, t \in S_F$ and some $\bF \in \sfm$.
Because on the chart $\fV$, we have
$$x_{\uu_s} x_{\uv_s} - x_{\uu_{s_{F,o}}}x_{\uv_{s_{F,o}}} x_{(\uu_s, \uv_s)}, \;
x_{\uu_t} x_{\uv_t} - x_{\uu_{s_{F,o}}}x_{\uv_{s_{F,o}}} x_{(\uu_t, \uv_t)},$$
where $s_{F,o}$ is as in Definition \ref{fv-k=0} with
$x_{(\uu_{s_{F,o}},\uv_{s_{F,o}})}\equiv 1$,
we get
$$f= x_{\uu_{s_{F,o}}}x_{\uv_{s_{F,o}}}(x_{(\uu_s, \uv_s)} \bn_s - x_{(\uu_t, \uv_t)} \bn_t).$$
Observe that $(x_{(\uu_s, \uv_s)} \bn_s - x_{(\uu_t, \uv_t)} \bn_t) \in \cB^\pq_\fV$.
Since  $\deg_\vp (x_{(\uu_s, \uv_s)} \bn_s - x_{(\uu_t, \uv_t)} \bn_t) < e$, 
the statement then follows from the inductive assumption.
\end{proof}

\begin{defn}\label{all-binomials-10} We let $\cB^\mn_\fV$ (respectively, $\cB^\res_\fV$, $\cB^\q_\fV$)
be the set of all binomial relations of
\eqref{eq-B-k=0} (respectively,
\eqref{eq-Bres-pk=0}, \eqref{eq-hq-k=0}).
We call relations of $\cB^\mn_\fV$ (respectively, $\cB^\res_\fV$, $\cB^\q_\fV$)
main (respectively, residual, of quotient type)  binomial on the chart $\fV$.
We let
$$\cB=\cB^\mn \sqcup \cB^\res \sqcup \cB^\q \;\; \; \and \;\;\;
 \cB_\fV=\cB^\mn_{ \fV} \cup \cB^\res_{ \fV} \cup \cB^\q_{ \fV}.$$
We let $L_{\fV, \sfm}$ be the set of all linear equations of
 \eqref{linear-pl-k=0}. We call relations of $L_{\fV, \sfm}$ linearized 
 $\pl$ relations on the chart $\fV$.
\end{defn}


\section{$\vt$-Blowups}\label{vt-blowups} 

{\it  We begin now the process of $``$removing$"$ zero factors of  main binomials
by  sequential blowups. It is divided into two subsequences.
The first are $\vt$-blowups.}
 

To start, it is useful to fix some terminology, used throughout.

\subsection{Some conventions on blowups} \label{blowupConventions} $\ $

Let $X$ be a 
scheme over the base field $\kk$.
When we blow up the scheme $X$ along the ideal (the homogeneous ideal, respectively)
$I=\langle f_0, \cdots, f_m \rangle$,  generated by some elements $f_0, \cdots, f_m$,
we will realize the blowup scheme $\widetilde X$ as the graph of the closure
of the rational map $$f: X \dashrightarrow \PP^m,$$
$$ x \to [f_0(x), \cdots, f_m(x)].$$
Then, upon fixing the generators  $f_0, \cdots, f_m$, we have a natural embedding
\begin{equation}\label{general-blowup} \xymatrix{
\widetilde{X}  \ar @{^{(}->}[r]  & X \times \PP^m.
 }
\end{equation}
We let
\begin{equation}
\pi: \widetilde{X} \lra X 
\end{equation}
be the induced blowup morphism.

We will refer to the projective space $\PP^m$ as the  {\it factor
projective space}  of the blowup corresponding to the generators  $f_0, \cdots, f_m$.
We let $[\xi_0, \cdots, \xi_m]$ be the homogeneous coordinates of
the factor projective space $\PP^m$,  corresponding  to $(f_0, \cdots, f_m)$. 

When $X$ is smooth and the center of the blowup is also smooth, then, the scheme 
$\widetilde X$, as a closed subscheme of $X \times \PP^m$, is defined by the relations
\begin{equation}
f_i \xi_j - f_j \xi_i, \;\; \hbox{for all $0\le i \ne j\le m$}.
\end{equation}

\begin{defn}\label{general-standard-chart}
Suppose that the scheme $X$ is covered by a set $\{\fV' \}$ of open subsets, called
(standard) charts. 

Fix any $0\le i\le m$. We let
\begin{equation} 
\fV=  (\fV' \times (\xi_i \ne 0)) \cap \widetilde{X} .
\end{equation}
We also often express this chart as
 $$\fV= (\fV' \times (\xi_i \equiv 1)) \cap \widetilde{X}.$$
It is an open subset of  $\widetilde{X}$, and will be called a standard chart of $\widetilde{X}$
lying over the (standard) chart $\fV'$ of $X$. Note that every standard chart of $\widetilde{X}$
lies over a unique (standard) chart $\fV'$ of $X$.
Clearly, $\widetilde{X}$ is covered by the finitely many  standard charts.

 In general, we let
 $$\widetilde{X}_k \lra \widetilde{X}_{k-1} \lra  \cdots  \lra \widetilde X_0:=X$$
 be a sequence of blowups such that every blowup $\widetilde{X_j} \to \widetilde{X}_{j-1}$ is
 as in \eqref{general-blowup}, $j \in [k]$.  
 
 Consider any $0\le j < k$.  Let $\fV$ (resp. $\fV''$) be a standard chart of $\widetilde{X}_k$
 (resp. of $\widetilde{X}_j$). Let $\fV'$ be the unique standard chart $\fV'$ of $\widetilde X_{k-1}$
 such that $\fV$ lies over $\fV'$.
 Via induction, we say $\fV$ lies over $\fV''$ if $\fV'$ equals to (when $j=k-1$) or lies over $\fV''$
 (when $j < k-1$).
\end{defn}

 We keep the notation as above. Let $\widetilde{X}  \to X$ be a blowup as
in \eqref{general-blowup}; we let $\fV$ be a standard chart of $\widetilde{X}$, lying over
a unique (standard) chart $\fV'$ of $X$; let
$\pi_{\fV, \fV'}: \fV \lra \fV'$ be the induced projection.

\begin{defn}\label{general-proper-transform-of-variable}  Assume that
the open chart $\fV$ (resp. $\fV'$) 
comes equipped with
a set of free variables  in $\var_\fV$ (resp. $\var_{\fV'}$).
Let $y \in \var_\fV$ (resp. $y' \in \var_{\fV'}$) be a coordinate variable of $\fV$ (resp. $\fV'$).
We say the coordinate variable $y$ is a proper transform of
the coordinate variable $y'$ if the divisor  $(y=0)$ on the chart $\fV$ is the proper transform of  
the divisor $(y'=0)$ on the chart $\fV'$.
\end{defn}

Keep the notation and assumption as in Definition \ref{general-proper-transform-of-variable}.

We assume in addition that the induced blowup morphism 
$$\pi^{-1} (\fV') \lra \fV'$$
corresponds to  the blowup of $\fV'$ along
the coordinate subspace of $\fV'$ defined by
 $$Z=\{y'_0= \cdots =y'_m =0\}$$
 with $\{y'_0, \cdots, y'_m\} \subset \var_{\fV'}$.
As earlier, we let $\PP^m$ be the corresponding factor projective space
with homogeneous coordinates $[\xi_0, \cdots, \xi_m]$, corresponding to $(y'_0, \cdots, y'_m)$.
 
 Without loss of generality, we assume that 
  the standard chart $\fV$ corresponds to $(\xi_0 \equiv 1)$, that is,
 $$\fV = (\fV' \times (\xi_0 \equiv 1)) \cap \widetilde{X}.$$
 Then, we have that $\fV$, as a closed subscheme of $\fV' \times (\xi_0 \equiv 1)$,
 is defined 
 \begin{equation}\label{general-blowup-formulas}
y'_i  - y'_0 \xi_i, \;\; \hbox{for all $i \in [m]$}.
\end{equation}

The following proposition is standard and will be applied throughout. 

\begin{prop}\label{generalmeaning-of-variables} Keep the notation and assumption as above.
In addition, we let $E$ be the exceptional divisor of the blowup $\widetilde{X}  \to X$.

Then, the standard chart $\fV$ comes equipped with a set of free variables 
$$\var_\fV=\{ \zeta, y_1, \cdots, y_m;  y:=y'  \mid y' \in \var_{\fV'} \- \{y_0' , \cdots, y_m'\} \}$$
where $\zeta:=y_0', y_i :=\xi_i, i \in [m]$
such that
\begin{enumerate}
\item $E \cap \fV =(\zeta =0)$; we call $\zeta$ the exceptional variable/parameter of $E$ on $\fV$;
\item $y_i \in \var_\fV$ is a proper transform of  $y'_i \in \var_{\fV'}$ for all  $i \in [m]$;
\item $y \in \var_\fV$ is a proper transform of  
$y' \in \var_\fV$ for all  $y' \in \var_{\fV'} \- \{y_0' , \cdots, y_m'\}$.
\end{enumerate}
\end{prop}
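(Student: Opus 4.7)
The plan is a direct local coordinate computation using the defining equations \eqref{general-blowup-formulas} of $\fV$ in $\fV' \times (\xi_0 \equiv 1)$. Since on $\fV$ we have $y'_i = y'_0\,\xi_i$ for every $i \in [m]$, the variables $y'_1,\ldots,y'_m$ are redundant and can be eliminated in favor of $\xi_1,\ldots,\xi_m$. Setting $\zeta := y'_0$, $y_i := \xi_i$ for $i \in [m]$, and $y := y'$ for each $y' \in \var_{\fV'} \setminus \{y'_0,\ldots,y'_m\}$, I would first check that the coordinate ring of $\fV$ is freely generated by the proposed set $\var_\fV$. This is immediate: the equations \eqref{general-blowup-formulas} form a regular sequence which solves each $y'_i$ ($i\in[m]$) uniquely in terms of $\zeta$ and $y_i$, while the other variables of $\fV'$ survive unchanged in passing to the open subscheme $(\xi_0\equiv 1)$.

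Next, to establish (1), I would compute the scheme-theoretic preimage of the center $Z = (y'_0=\cdots=y'_m=0)$ on $\fV$. Substituting $y'_i = \zeta\, y_i$ into the ideal of $Z$ gives the ideal $(\zeta, \zeta y_1,\ldots,\zeta y_m) = (\zeta)$, so $\pi^{-1}(Z)\cap \fV = (\zeta = 0)$. Since $Z$ has codimension $m+1\geq 2$ in $\fV'$ (in the nontrivial cases), this coincides with the exceptional divisor $E\cap \fV$, proving (1) and identifying $\zeta$ as the exceptional parameter.

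For (2), the total transform of the divisor $(y'_i=0)$ in $\fV$ is defined by $y'_i = \zeta\, y_i = 0$, which factors as $E\cap \fV = (\zeta=0)$ union $(y_i=0)$. The proper transform, being the closure of $\pi^{-1}(y'_i=0)\setminus E$, is therefore $(y_i=0)$, as claimed. For (3), when $y'\in \var_{\fV'}\setminus\{y'_0,\ldots,y'_m\}$, the divisor $(y'=0)$ is not contained in $Z$, so its total transform equals its proper transform on $\fV$, and under the identification $y=y'$ this is $(y=0)$.

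No step here is a genuine obstacle; the proposition is a textbook calculation in the affine model of a blowup along a coordinate subspace, included as a reference tool to be applied inductively in later sections. The only bookkeeping care required is to note that in the degenerate cases $m=0$ (when the center is a divisor) the exceptional divisor coincides with the proper transform of $(y'_0=0)$ and the blowup is an isomorphism, so the statement still holds with $\zeta$ simultaneously playing both roles; I would include a brief remark to that effect if needed for later citations.
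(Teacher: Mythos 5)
Your proposal is correct and follows essentially the same route as the paper, whose entire proof is the single line ``It is straightforward from \eqref{general-blowup-formulas}''; you have simply written out that coordinate computation in full, together with a sensible remark on the degenerate case $m=0$. One small wording slip in your part (3): the relevant condition for the total transform of $(y'=0)$ to equal its proper transform is that the center $Z$ is \emph{not contained in} the divisor $(y'=0)$ (so the pullback acquires no component along $E$), not that $(y'=0)$ is not contained in $Z$ --- but since $Z$ is cut out by the other coordinates this holds, and your conclusion is unaffected.
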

\begin{proof}
It is straightforward from \eqref{general-blowup-formulas}.
\end{proof}
 
Let  $\bf m$ be a monomial in $\var_{\fV}$. Then, for every variable $x \in \var_{\fV}$,
we let $\deg_x {\bf m}$ be the degree of $x$ in $\bf m$. 
 
 \begin{defn}\label{general-proper-transforms} 
 Keep the notation and assumption as in Proposition \ref{generalmeaning-of-variables}.
In addition, we let 
$$\phi=\{y'_0, \cdots, y'_m\} \subset \var_{\fV'}.$$

 Let $B_{\fV'}=T^0_{\fV'}- T^1_{\fV'}$ be 
 a binomial with variables in $\var_{\fV'}$.
We let $$m_{\phi, T^i_{\fV'}} = \sum_{j=0}^m \deg_{y'_j} (T^i_{\fV'}), \;\; i =0, 1, $$ 
$$l_{\phi, B_{\fV'}} = \min \{m_{\phi, T^0_{\fV'}}, m_{\phi, T^1_{\fV'}}\}.$$ 

Applying \eqref{general-blowup-formulas}, we substitute $y'_i$ by $y'_0 \xi_i$, for all $i \in [m]$,
into $B_{\fV'}$ and switch $y_0'$ by $\zeta$ and $\xi_i$ by $y_i$ with $i \in [m]$ to obtain
the pullback $\pi_{\fV,\fV'}^* B_{\fV'}$
where $\pi_{\fV,\fV'}: \fV \lra \fV'$ is the induced projection.
We then let 
\begin{equation}\label{define-proper-t}
B_\fV = (\pi_{\fV,\fV'}^* B_{\fV'}) / \zeta^{l_{\phi, B_{\fV'}}}.
\end{equation}
We call $B_\fV$, a binomial in $\var_\fV$, the proper transform of $B_{\fV'}$.

In general, for any polynomial $f_{\fV'}$ in $\var_{\fV'}$ such that 
$f_{\fV'}$ does not vanish identically along 
$Z= (y'_0= \cdots= y'_m=0)$, we let
$f_\fV = \pi_{\fV,\fV'}^* f_{\fV'}$. This is the pullback, but for convenience, we also call
$f_\fV$ the proper transform of $f_{\fV'}$. 

Moreover, suppose $\zeta$ appears in $B_\fV =(\pi_{\fV,\fV'}^* B_{\fV'}) / \zeta^{l_{\psi, B_{\fV'}}}$ or 
in $f_\fV= \pi_{\fV,\fV'}^* f_{\fV'}$,  and is obtained through the substitution $y'_i$ by $y'_0 \xi_i$
(note here that $\zeta:=y'_0$ and $i$ needs not to be unique), 
then we say that the exceptional parameter $\zeta$
is acquired by $y_i'$. In general, for  sequential blowups, if $\zeta$ is acquired by $y'$ and $y'$ is 
acquired by $y''$, then we also say $\zeta$ is acquired by $y''$.
\end{defn}

\begin{lemma}\label{same-degree}
We keep the same assumption and notation as in Definition \ref{general-proper-transforms}.

We let $T_{\fV', B}$ (resp. $T_{\fV, B}$) be any fixed term of $B_{\fV'}$ (resp.
$B_\fV$).  Consider any $y \in \var_\fV \- \zeta$ and  let 
$y' \in \var_{\fV'}$ be such that $y$ is the proper transform of $y'$. 
Then,  $y^b \mid  T_{\fV, B}$ if and only if 
$y'^b \mid  T_{\fV', B}$ for all integers $b \ge 0$.
\end{lemma}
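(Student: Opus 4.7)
The plan is to proceed by a direct calculation using the explicit substitution rules recorded in Definition \ref{general-proper-transforms}. Under the pullback $\pi_{\fV,\fV'}^*$ one has $y_0' \mapsto \zeta$, $y_i' \mapsto \zeta\, y_i$ for each $i \in [m]$, and every other variable $y' \in \var_{\fV'} \setminus \{y_0',\ldots,y_m'\}$ is sent to itself. Since $B_{\fV'} = T^0_{\fV'} - T^1_{\fV'}$ has exactly two terms, their pullbacks are two monomials in $\var_\fV$ whose $\zeta$-exponents are $m_{\phi, T^0_{\fV'}}$ and $m_{\phi, T^1_{\fV'}}$, respectively; dividing by $\zeta^{l_{\phi, B_{\fV'}}}$ with $l_{\phi, B_{\fV'}} = \min\{m_{\phi, T^0_{\fV'}}, m_{\phi, T^1_{\fV'}}\}$ yields $B_\fV$ as a genuine binomial whose two terms are in canonical bijection with those of $B_{\fV'}$. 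I would take $T_{\fV, B}$ to be the term corresponding to the fixed $T_{\fV',B}$ under this bijection.

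Next, I would split into two cases according to which block $y'$ belongs to. If $y' \in \var_{\fV'} \setminus \{y_0',\ldots,y_m'\}$, then $y = y'$ is fixed by the substitution, so $\deg_y(T_{\fV, B}) = \deg_{y'}(T_{\fV', B})$ holds trivially. If instead $y' = y_i'$ for some $i \in [m]$ and thus $y = y_i = \xi_i$, then each occurrence of $y_i'$ in the monomial $T_{\fV', B}$ contributes exactly one factor of $y_i$ (and one of $\zeta$) in $\pi_{\fV,\fV'}^* T_{\fV', B}$, giving $\deg_{y_i}(\pi_{\fV,\fV'}^* T_{\fV', B}) = \deg_{y_i'}(T_{\fV', B})$. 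Since the subsequent division by $\zeta^{l_{\phi, B_{\fV'}}}$ alters only the $\zeta$-degree, we conclude $\deg_{y_i}(T_{\fV, B}) = \deg_{y_i'}(T_{\fV', B})$. In both cases, $y^b \mid T_{\fV, B}$ if and only if $y'^b \mid T_{\fV', B}$, as claimed.

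No real obstacle is expected: the argument is pure bookkeeping of degrees under an explicit monomial substitution followed by division by a pure power of $\zeta$. The hypothesis $y \neq \zeta$ is essential and sharp, since the only variable whose degree is genuinely affected by the $\zeta^{l_{\phi, B_{\fV'}}}$-division step is $\zeta$ itself; excluding it is exactly what makes the conclusion valid.
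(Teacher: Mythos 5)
Your proof is correct and follows the same route as the paper, which simply declares the lemma clear from the defining formula \eqref{define-proper-t}; you have merely written out the degree bookkeeping (the substitution $y_i' \mapsto \zeta y_i$, invariance of the other variables, and the fact that dividing by $\zeta^{l_{\phi,B_{\fV'}}}$ changes only the $\zeta$-degree) that the paper leaves implicit.
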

\begin{proof}
This is clear from \eqref{define-proper-t}.
\end{proof}

\begin{defn}\label{general-termination} 
We keep the same assumption and notation as in Definition \ref{general-proper-transforms}.

Consider an arbitrary binomial  $B_{\fV'}$
(resp. $B_\fV$) with variables in $\var_{\fV'}$ (resp. $\var_\fV$).
Let $\bz' \in \fV'$  (resp. $\bz \in \fV$)  be any fixed closed point of the chart. 
We say $B_{\fV'}$ (resp. $B_\fV$)  terminates at $\bz'$ (resp. $\bz$)
 if (at least) one of the monomial terms of $B_{\fV'}$
(resp. $B_\fV$), say, $T_{\fV', B}$
(resp. $T_{\fV,B}$),
does not vanish at  $\bz'$ (resp. $\bz$).  In such a case, we also say 
$T_{\fV', B}$ (resp. $T_{\fV,B}$) terminates  at $\bz'$ (resp. $\bz$).
 \end{defn}

\subsection{Main binomial equations: revisited} $\ $


Recall that we have  chosen and fix the total order $``<"$ on $\sfm$ and we have listed it as
$$\sfm=\{\bF_1 < \cdots < \bF_\Upsilon\}.$$

Fix  and consider $F_k$ for any $k \in [\up]$.
We express $F_k=\sum_{s \in S_{F_k}} \sgn (s) p_{\uu_s} p_{\uv_s}$.
 Its corresponding linearized $\pl$ equation can
be expressed as $\sum_{s \in S_{F_k}} \sgn(s) x_{(\uu_s,\uv_s)}$, denoted by $L_{F_k}$. 
We let $s_{F_k} \in S_{F_k}$ be the index for the leading term of $F_k$, written as 
$ \sgn (s_{F_k}) p_\um p_{\uu_{F_k}}$.
Correspondingly, the leading term of the de-homogenization $\bF_k$ of $F_k$,
and  the leading term of the linearized $\pl$ equation $L_{F_k}$, 
 are defined to be $\sgn (s_{F_k})x_{\uu_{F_k}}$, and  $\sgn (s_{F_k}) x_{(\um,\uu_{F_k})}$,
 respectively.

We are to provide a total ordering on the set $\cB^\mn$.

Recall that the set of all $\pl$-variables is also totally ordered, compatible with that
on $\sfm$.
(It is neither lexicographical nor inverse-lexicographical.)

Throughout the remaining part of this article, when we use $<_\lex$, we mean
the lexicographical order induced on the power set $2^K$ for a given totally ordered
set $(K,<)$ (see Definition \ref{gen-order}).
 
 \begin{defn}\label{ordering-cBF} First,
 for any fixed $\bF \in \sfm$, we  provide a total ordering on the set $S_F \- s_F$ 
 by induction on $\rk F$ as follows.

 $\bullet$ Suppose $\rk F=0$.  Then, $S_F \- s_F$ consists of two elements $\{s, t\}$.
 We say $s<t$ if $(\uu_s, \uv_s) <_\lex (\uu_t, \uv_t)$ where each pair is listed
 lexicographically according to the order on the set of all $\pl$ variables.
  
 Suppose the set $S_F \- s_F$  is totally order when $\rk F=r -1$ for some $r >0$.
 
 $\bullet$ Suppose $\rk F=r>0$. Then, for any $s\in  S_F \- s_F$,
 one of $p_{\uu_s}$ and $p_{\uv_s}$ is a basic variable, the other is of rank equal to
 $\rk F -1$. Without loss of generality, we suppose $\rk p_{\uu_s}=r-1$ and let
 $F_{\uu_s}$ be its corresponding $\pl$ relation. Then, we say $s <t$ if
 $F_{\uu_s} < F_{\uu_t}$, that is if $\uu_s < \uu_t$.

Let $B' \in \cB^\mn_{F'}$ and $B \in \cB^\mn_{F}$ with $F' \ne F$.
 We say $B'<B$ if $F' < F$.

 Consider two elements in $ \cB^\mn_{F}$.
 By Definition \ref{defn:main-res}, we have
$\cB^\mn_F=\{B_{F,(s_F,s)} \mid s \in S_F \setminus s_F  \}.$
For any two distinct $s, t \in  S_F \- s_F$, we say $B_{F,(s_F,s)}<B_{F,(s_F,t)}$
if $s<t$.

Then, by Definition \ref{p-t}, the above provides a total ordering on the set of all main binomials
$\cB^\mn=\sqcup_{\bF \in \sfm} \cB^\mn_F.$
 \end{defn} 
 This ordering is important for our purpose.
 
 Recall that we have $$\fG=\bigsqcup_{\bF \in \sfm} \fG_F,$$
 where $\fG_F=\{\cB^\mn_F, L_F\}$.
 We can endow a total order on $\fG$ as follows. We say
 $\fG_{F'} < \fG_F$ if $F'< F$. 
This order is compatible with the order on $\sfm$ as well as on $\cB^\mn$.
Put it equivalently, the order on $\cB^\mn$ is induced by
 the order on $\fG$ and the order on each and every $S_F \- s_F$.

As in Definition \ref{ftF},
we let $(\ft_{F_k}+1)$ be the number of terms in ${F_k}$. Then, we can  list $S_{F_k}$ as
$$S_{F_k}=\{s_{F_k}; \; s_1 < \cdots < s_{\ft_{F_k}}\}.$$

Then, by Corollary \ref{eq-tA-for-sV}, the scheme $\sV$ as a closed subscheme of
$\sR_\sF= \rU_\um \times  \prod_{\bF \in \sF_\um} \PP_F $
is defined by the following relations
\begin{eqnarray}
  \cB^\res, \;\;  \cB^\pq,   \\ 
\;\;\;\;\;\;\;\;\; B_{(k\tau)}: \; x_{(\uu_{s_\tau}, \uv_{s_\tau})}x_{\uu_{F_k}} - x_{(\um,\uu_{F_k})}   x_{\uu_{s_\tau}} x_{\uv_{s_\tau}}, \;\;
\forall \;\; {s_\tau} \in S_{F_k} \- s_{F_k},  \; 1\le \tau\le \ft_{F_k}, \label{eq-B-ktau'} \\ 
L_{F_k}: \;\; \sum_{s \in S_{F_k}} \sgn (s) x_{(\uu_s,\uv_s)}, \label{linear-pl-ktau'} \\
\bF_k=\sum_{s \in S_{F_k}} \sgn (s) x_{\uu_s} x_{\uv_s}
\end{eqnarray}
for all $k \in [\up].$

\begin{defn}\label{pm-term} Given any binomial equation
$B_{(k\tau)}$ as in \eqref{eq-B-ktau'},
we let $T^+_{(k\tau)}= \; x_{(\uu_{s_\tau}, \uv_{s_\tau})}x_{\uu_{F_k}}$, called the plus-term of $B_{(k\tau)}$,
and $T^-_{(k\tau)}= x_{(\um,\uu_{F_k})}   x_{\uu_{s_\tau}} x_{\uv_{s_\tau}},$ 
called the minus-term of $B_{(k\tau)}$.
Then, we have $B_{(k\tau)}=T^+_{(k\tau)}-T^-_{(k\tau)}$.
\end{defn}

We do not name any term of a binomial of $\cB^\res \cup \cB^\pq$ a plus-term or a minus-term since
the two terms of such a  binomial  are indistinguishable.  

In addition, we let $\cB^\mn_{F_k}=\{B_{(k\tau)} \mid  \tau \in [\ft_{F_k}]  \}$ for any $k \in [\up]$.
Then, we have
 $$\cB^\mn=\bigsqcup_{k\in [\up]} \cB^\mn_{F_k}=\{B_{(k\tau)} \mid k \in [\up], \;  \tau \in [\ft_{F_k}]  \}.$$ 
We let 
\begin{equation}\label{indexing-Bmn}
\Index_{\cB^\mn}=\{ (k\tau) \mid k \in [\up], \; \tau \in [\ft_{F_k}] \}
\end{equation}
be the index set of $\cB^\mn$. 
Observe that the order $`` < "$ on the set $\cB^\mn$ now coincides with
the lexicographic order on $\Index_{\cB^\mn}$, that is, 
$$B_{(k\tau)} < B_{(k'\tau')} \iff (k,\tau) <_\lex (k',\tau').$$
Further, because $\cB^\mn_F=\{B_{F,(s_F,s)} \mid s \in S_F \setminus s_F  \}$,
we have a natural bijection between
$\cB^\mn$ and $\sqcup_{F} (S_F\- s_F)$. Hence, $\sqcup_{F} (S_F\- s_F)$
admits an induced total order.

\subsection{$\vt$-centers and $\vt$-blowups}\label{vr-centers} $\ $

{\it Besides serving as a part of   the process of $``removing"$   zero factors of
 the main binomial relations, the reason to perform $\vt$-blowups first 
 is  to eliminate all residual binomial relations
by making them dependent on the main binomial relations.  
}

Recall that the scheme $\tsR_{\vt_{[0]}}:=\sR_\sF$ comes equipped with two kinds of divisors:
$\vp$-divisors $X_\uw$ for all $\uw \in \II_{d,n}\- \um$
and $\vr$-divisors $X_{(\uu,\uv)}$ for all $(\uu,\uv) \in \La_\sfm$. 

\begin{defn}\label{defn:vr-centers}
Fix any $\uu \in \II_{d,n}^\um$. 
We let
$$\vt_\uu=(X_\uu, X_{(\um,\uu)}).$$
We call it the  $\vt$-set with respect to $\uu$.
We then call the scheme-theoretic intersection
 $$Z_{\vt_\uu}=X_\uu \cap X_{(\um,\uu)}$$
the $\vt$-center with respect to $\uu$.
\end{defn}

We let
$$\Theta=\{\vt_\uu \mid \uu \in \II_{d,n}^\um\},\;\;\;
\cZ_\Theta=\{Z_{\vt_\uu} \mid \uu \in \II_{d,n}^\um\}.$$
We let $\Theta$, respectively,  $\cZ_\Theta$,
inherit the total order from $\II_{d,n}^\um$. 
Thus,  if we write
$$\II_{d,n}^\um=\{\uu_1 < \cdots <\uu_{\up}\}$$
and also write $\vt_{\uu_k}=\vt_{[k]}$, $Z_{\vt_{\uu_k}}=Z_{\vt_{[k]}}$, then, we can express
$$\cZ_\Theta=\{Z_{\vt_{[1]}} < \cdots < Z_{\vt_{[\up]}} \}.$$

We then blow up $\sR_\sF$ along 
$Z_{\vt_{[k]}},\; k \in [\up]$, in the above order.  More precisely,
we start by setting $\tsR_{\vt_{[0]}}:=\sR_\sF$.  Suppose 
$\tsR_{\vt_{[k-1]}}$ has been constructed for some $k \in [\up]$. We then let
$$\tsR_{\vt_{[k]}} \lra \tsR_{\vt_{[k-1]}}$$
be the blowup of $\tsR_{\vt_{[k-1]}}$ along the proper transform of $Z_{\vt_{[k]}}$,
and we call it  the $\vt$-blowup in ($\vt_{[k]}$).

 The above gives rise to the following sequential $\vt$-blowups
\begin{equation}\label{vt-sequence}
\tsR_{\vt_{[\up]}} \to \cdots \to \tsR_{\vt_{[1]}} \to \tsR_{\vt_{[0]}}:=\sR_\sF,
\end{equation}

Every blowup $\tsR_{\vt_{[j]}} \lra \tsR_{\vt_{[j-1]}}$
comes equipped with an exceptional divisor, denoted by $E_{\vt_{[j]}}$.
Fix $k \in [\up]$. For any $j < k$, we let $E_{\vt_{[k]},j }$ be the proper transform
of $E_{\vt_{[j]}}$ in $\tsR_{\vt_{[k]}}$.  For notational consistency, we set
$E_{\vt_{[k]}}=E_{\vt_{[k]},k }$. We call the divisors $E_{\vt_{[k]},j }$, $j\le k$, the exceptional divisors
on $\tsR_{\vt_{[k]}}$.  

For every 
$\uw \in \II_{d,n} \setminus \um$, we let
$X_{\vt_{[k]}, \uw}$ be the proper transform of $X_\uw$ in $\tsR_{\vt_{[k]}}$,
still called $\vp$-divisor;
for every $(\uu,\uv) \in \La_\sfm$,
we let  $X_{\vt_{[k]}, (\uu,\uv)}\cap \fV$ be 
 the proper transform of $X_{(\uu,\uv)}$ in $\tsR_{\vt_{[k]}}$,
still called $\vr$-divisor.
for every $\bF \in \sfm$,
we let  $D_{\vt_{[k]}, L}$ be the proper transform of $D_{L_F}$ in $\tsR_{\vt_{[k]}}$,
still called the $\fL$-divisor.


\subsection{Properties of $\vt$-blowups}\label{prop-vt-blowups} $\ $

By Definition \ref{general-standard-chart}, the scheme $\tsR_{\vt_{[k]}}$
is covered by a set of standard charts.

\begin{prop}\label{meaning-of-var-vtk}
Consider any standard chart $\fV$ of $\tsR_{\vt_{[k]}}$, 
 lying over a unique chart $ \fV_{[0]}$ of $\tsR_{\vt_{[0]}}=\sR_\sF$.
 We suppose that the chart $ \fV_{[0]}$ is indexed by
$\La_\sfm^o=\{(\uu_{s_{F,o}},\uv_{s_{F,o}}) \mid \bF \in \sfm \}$
(cf. \eqref{index-sR}).
We let $\II_{d,n}^*=\II_{d,n}\- \um$ and $\La_\sfm^\star=\La_\sfm \- \La_\sfm^o$.

Then, the standard chart $\fV$ comes equipped with 
$$\hbox{a subset}\;\; \fe_\fV  \subset \II_{d,n}^* \;\;
 \hbox{and a subset} \;\; \fd_\fV  \subset \La_{\sfm}^\star$$
such that every exceptional divisor 
of $\tsR_{\vt_{[k]}}$
with $E_{\vt_{[k]}} \cap \fV \ne \emptyset$ is 
either labeled by a unique element $\uw \in \fe_\fV$
or labeled by a unique element $(\uu,\uv) \in \fd_\fV$. 
We let $E_{\vt_{[k]}, \uw}$ be the unique exceptional divisor 
on the chart $\fV$ labeled by $\uw \in \fe_\fV$; we call it an $\vp$-exceptional divisor.
We let $E_{\vt_{[k]}, (\uu,\uv)}$ be the unique exceptional divisor 
on the chart $\fV$ labeled by $(\uu,\uv) \in \fd_\fV$;  we call it an $\vr$-exceptional divisor.
(We note here that being $\vp$-exceptional or $\vr$-exceptional is strictly relative to the given
standard chart.)

Further, the standard chart $\fV$  comes equipped with the set of free variables
\begin{equation}\label{variables-vtk} 
\var_{\fV}:=\left\{ \begin{array}{ccccccc}
\ve_{\fV, \uw} , \;\; \de_{\fV, (\uu,\uv) }\\
x_{\fV, \uw} , \;\; x_{\fV, (\uu,\uv)}
\end{array}
  \; \Bigg| \;
\begin{array}{ccccc}
 \uw \in  \fe_\fV,  \;\; (\uu,\uv)  \in \fd_\fV  \\ 
\uw \in  \II_{d,n}^* \- \fe_\fV,  \;\; (\uu, \uv) \in \La_\sfm^\star \-  \fd_\fV  \\
\end{array} \right \}
\end{equation} such that 
on the standard chart $\fV$, we have
\begin{enumerate}
\item the divisor  $X_{\vt_{[k]}, \uw}\cap \fV$ 
 is defined by $(x_{\fV,\uw}=0)$ for every 
$\uw \in \II_{d,n}^* \- \fe_\fV$;
\item the divisor  $X_{\vt_{[k]}, (\uu,\uv)}\cap \fV$ is defined by $(x_{\fV,(\uu,\uv)}=0)$ for every 
$(\uu,\uv) \in \La^\star_\sfm\- \fd_\fV$;
\item the divisor  $D_{\vt_{[k]}, L}\cap \fV$ is defined by $(L_{\fV,F}=0)$ for every
$\bF \in \sfm$ where $L_{\fV, F}$ is the proper transform of $L_F$; 
\item the divisor  $X_{\vt_{[k]}, \uw}\cap \fV$ does not intersect the chart for all $\uw \in \fe_\fV$;
\item the divisor  $X_{\vt_{[k]}, (\uu, \uv)}$ does not intersect the chart for all $(\uu, \uv) \in \fd_\fV$;
\item the $\vp$-exceptional divisor 
$E_{\vt_{[k]}, \uw} \;\! \cap  \fV$  labeled by an element $\uw \in \fe_\fV$
is define by  $(\ve_{\fV,  \uw}=0)$ for all $ \uw \in \fe_\fV$;
\item the $\vr$-exceptional divisor 
$E_{\vt_{[k]},  (\uu, \uv)}\cap \fV$ labeled by  an element $(\uu, \uv) \in \fd_\fV$
is define by  $(\de_{\fV,  (\uu, \uv)}=0)$ for all $ (\uu, \uv) \in \fd_\fV$;
\item  any of the remaining exceptional divisor of $\tsR_{\vt_{[k]}}$
other than those that are labelled by some  $\uw \in \fe_\fV$ or $(\uu,\uv) \in \fd_\fV$ 
 does not intersect the chart.
\end{enumerate}
\end{prop}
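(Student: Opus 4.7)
The plan is to proceed by induction on $k \in \{0,1,\ldots,\up\}$, using Proposition \ref{meaning-of-var-p-k=0} as the base case and Proposition \ref{generalmeaning-of-variables} to execute the inductive step. In the base case $k=0$, we take $\fe_\fV = \fd_\fV = \emptyset$: all divisors in question are $\vp$- and $\vr$-divisors on $\sR_\sF$, and the description follows directly from Proposition \ref{meaning-of-var-p-k=0}. For the inductive step, suppose the proposition has been established for $\tsR_{\vt_{[k-1]}}$, and let $\fV$ be a standard chart of $\tsR_{\vt_{[k]}}$, lying over a unique standard chart $\fV'$ of $\tsR_{\vt_{[k-1]}}$. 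By the inductive hypothesis, $\fV'$ comes equipped with $\fe_{\fV'} \subset \II_{d,n}^*$, $\fd_{\fV'} \subset \La_\sfm^\star$, and free variables satisfying the stated conditions.

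Next, I would analyze the local geometry of the blowup $\tsR_{\vt_{[k]}} \to \tsR_{\vt_{[k-1]}}$ restricted to $\fV'$. The center of this blowup is the proper transform of $Z_{\vt_{[k]}} = X_{\uu_k} \cap X_{(\um, \uu_k)}$. There are three cases to consider:
\begin{itemize}
\item[(a)] Either $\uu_k \in \fe_{\fV'}$, or $x_{\fV',\uu_k}$ is undefined because $\uu_k \in \II_{d,n}^* \- \fe_{\fV'}$ has already been made exceptional in some prior step, etc., so $X_{\vt_{[k-1]},\uu_k} \cap \fV' = \emptyset$; similarly, $(\um,\uu_k)$ may lie in $\fd_{\fV'} \cup \La_\sfm^o$, forcing $X_{\vt_{[k-1]},(\um,\uu_k)} \cap \fV' = \emptyset$. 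In all such situations the proper transform of the blowup center does not meet $\fV'$, the morphism $\fV \to \fV'$ is an isomorphism, and we set $\fe_\fV = \fe_{\fV'}$, $\fd_\fV = \fd_{\fV'}$.
\item[(b)] Both divisors meet $\fV'$, defined respectively by $(x_{\fV',\uu_k}=0)$ and $(x_{\fV',(\um,\uu_k)}=0)$. Then the proper transform of $Z_{\vt_{[k]}}$ on $\fV'$ is the smooth coordinate subscheme cut out by these two variables, and the blowup along it is covered by two standard charts corresponding to the factor $\mathbb{P}^1$ with coordinates $[\xi_0:\xi_1]$.
\end{itemize}
In case (b), Proposition \ref{generalmeaning-of-variables} applied to these two coordinate-subspace blowups gives an explicit description of each standard chart $\fV$: in the first chart, $\xi_0 \equiv 1$, so $x_{\fV',\uu_k}$ becomes the exceptional parameter $\ve_{\fV,\uu_k}$, and the proper transform $x_{\fV,(\um,\uu_k)}$ replaces the old $x_{\fV',(\um,\uu_k)}$; we then update $\fe_\fV = \fe_{\fV'} \sqcup \{\uu_k\}$ and $\fd_\fV = \fd_{\fV'}$. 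In the second chart, symmetrically, $x_{\fV',(\um,\uu_k)}$ becomes the exceptional parameter $\de_{\fV,(\um,\uu_k)}$, and we update $\fd_\fV = \fd_{\fV'} \sqcup \{(\um,\uu_k)\}$, $\fe_\fV = \fe_{\fV'}$. All other coordinate variables and their associated divisors are untouched, giving conclusions (1)--(8) of the proposition on the new chart.

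The one genuinely delicate point — which will be the main obstacle — is verifying items (4), (5), and (8): that once a label $\uu_k$ (resp.\ $(\um,\uu_k)$) enters $\fe_\fV$ (resp.\ $\fd_\fV$), the corresponding proper transform $X_{\vt_{[k]},\uu_k}$ (resp.\ $X_{\vt_{[k]},(\um,\uu_k)}$) really does miss all subsequent charts lying over this one, and that no two prior exceptional divisors on $\fV'$ become entangled with the new one under further blowups. This requires checking that the centers of later $\vt$-blowups, being intersections of (proper transforms of) $\vp$- and $\vr$-divisors indexed by distinct $\uu_{k'} \in \II_{d,n}^\um$, do not create new incidences among the divisors already labeled in $\fe_{\fV'} \cup \fd_{\fV'}$. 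This follows because each $\vt$-center $Z_{\vt_{[k']}}$ depends only on the pair $(X_{\uu_{k'}}, X_{(\um,\uu_{k'})})$, distinct pairs involve disjoint pairs of labels, and Lemma \ref{same-degree} guarantees that a variable $y \in \var_\fV$ not involved in a blowup center continues to serve as the proper transform of its predecessor with the same vanishing behavior. Finally, item (3) on the $\fL$-divisor follows from Definition \ref{general-proper-transforms} applied to each $L_F$: since the $\vt$-blowup centers are coordinate subschemes whose defining functions appear at most linearly in each $L_F$, the pullback-and-divide prescription yields a well-defined $L_{\fV,F}$ whose zero locus is exactly $D_{\vt_{[k]},L_F} \cap \fV$.
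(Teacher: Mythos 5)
Your proposal is correct and follows essentially the same route as the paper's own proof: induction on $k$ with Proposition \ref{meaning-of-var-p-k=0} as the base case, the trivial case when the proper transform of the $\vt$-center misses the chart, and then the two standard charts $(\xi_0\equiv 1)$ and $(\xi_1\equiv 1)$ of the blowup, adding $\uu_k$ to $\fe_\fV$ in the first and $(\um,\uu_k)$ to $\fd_\fV$ in the second, exactly as in the paper. Your extra discussion of why later $\vt$-centers cannot entangle previously labeled divisors (distinct pairs of labels for distinct $\uu_{k'}$) is a point the paper leaves implicit in its "one verifies directly" step, but it does not constitute a different approach.
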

\begin{proof}
When $k=0$, we have $\tsR_{\vt_{[0]}}=\sR_\sF$. 
In this case, we set $$ \fe_\fV = \fd_\fV =\emptyset.$$
Then, the statement follows from 
Proposition \ref{meaning-of-var-p-k=0} with $k=\up$.

We now suppose that the statement holds for $\tsR_{\vt_{[k-1]}}$
for some $k \in [\up]$. 

We consider $\tsR_{\vt_{[k]}}$.

As in the statement, we let $\fV$ be a standard chart 
of $\tsR_{\vt_{[k]}}$, lying over a (necessarily unique) 
standard chart  $\fV'$ of $\tsR_{\vt_{[k-1]}}$.

If $(\um,\uu_k) \in  \La_{\sF_{[k]}}^o$
(cf. \eqref{index-sR}), 
then $\fV'$
does not intersect the proper transform of the blowup center $Z_{\vt_k}$ and
$\fV \to \fV'$ is an isomorphism. In this case, we let $\var_\fV=\var_{\fV'}$,
$ \fe_{\fV'}=\fe_\fV, $ and $\fd_{\fV} =\fd_{\fV'}.$ Then, the statements on $\fV'$
carry over to $\fV$.

In what follows, we assume $(\um,\uu_k) \notin  \La_{\sF_{[k]}}^o$.

Consider the embedding $$\tsR_{\vt_{[k]}} \lra  \tsR_{\vt_{[k-1]}} \times \PP_{\vt_{[k]}}$$
where $\PP_{\vt_{[k]}}$ is the factor projective space with homogeneous coordinates $[\xi_0,\xi_1]$
corresponding to $(X_{\uu_k}, X_{(\um,\uu_k)})$.
We let $E_{\vt_{[k]}}$ be the exceptional divisor created by 
the blowup $\tsR_{\vt_{[k]}} \to  \tsR_{\vt_{[k-1]}}$.

First, we consider the case when 
$$\fV = \tsR_{\vt_{[k]}} \cap (\fV' \times (\xi_0 \equiv 1).$$
We let $Z'_{\vt_k}$ be the proper transform of the $\vt$-center $Z_{\vt_k}$
in  $\tsR_{\vt_{[k-1]}}$. Then,
in this case, on the chart $\fV'$, we have
 $$Z'_{\vt_k} \cap \fV' = \{ x_{\fV', \uu_k} = x_{\fV', (\um,\uu_k)}=0\}$$
 where   $x_{\fV', \uu_k}$ (resp. $x_{\fV', (\um,\uu_k)}$) is the proper transform of $x_\uu$ (resp.
 $x_{ (\um,\uu_k)}$)
 on the chart $\fV'$.
 Then, $\fV$ as a closed subset of  $\fV' \times (\xi_0 \equiv 1)$ is defined by
 $$x_{\fV', (\um,\uu_k)} = x_{\fV', \uu_k} \xi_1.$$
 We let 
 $$\fe_\fV= \uu_k \sqcup \fe_{\fV'},\; \fd_\fV=  \fd_{\fV'},\;\; \hbox{and}$$
 $$\ve_{\fV, \uu_k}=x_{\fV', \uu_k}, \;  x_{\fV, (\um,\uu_k)}=\xi_1; \;
 y_\fV = y_{\fV'}, \; \forall \; y_{\fV'} \in \var_{\fV'}\- \{x_{\fV', \uu_k}, x_{\fV', (\um,\uu_k)}\}.$$
 Observe that  
 $E_{\vt_{[k]}} \cap \fV = (\ve_{\fV, \uu_k}=0)$ and
 $x_{\fV, (\um,\uu_k)}=\xi_1$ is the proper transform of  $x_{\fV', (\um,\uu_k)}$.
 By the inductive assumption on the chart $\fV'$, one verifies directly that
 (1) - (7) of the proposition hold (cf. Proposition \ref{generalmeaning-of-variables}).
 
Next, we consider the case when 
$$\fV = \tsR_{\vt_{[k]}} \cap (\fV' \times (\xi_1 \equiv 1).$$
 Then, $\fV$ as a closed subset of  $\fV' \times (\xi_1 \equiv 1)$ is defined by
 $$ x_{\fV', \uu_k} =x_{\fV', (\um,\uu_k)} \xi_0.$$
 We let 
 $$\fe_\fV=  \fe_{\fV'},\; \fd_\fV=  \{(\um,\uu_k)\} \sqcup \fd_{\fV'},\;\; \hbox{and}$$
 $$\de_{\fV,  (\um,\uu_k)}=x_{\fV',  (\um,\uu_k)}, \;  x_{\fV, \uu_k}=\xi_0; \;
 y_\fV = y_{\fV'}, \; \forall \; y_{\fV'} \in \var_{\fV'}\- \{x_{\fV', \uu_k}, x_{\fV', (\um,\uu_k)}\}.$$
 Observe that  
 $E_{\vt_{[k]}} \cap \fV = (\de_{\fV,  (\um,\uu_k)}=0)$ and
 $x_{\fV, \uu_k}=\xi_0$ is the proper transform of  $x_{\fV', \uu_k}$.
 By the inductive assumption on the chart $\fV'$, like in the above case,
 one checks directly that
 (1) - (7) of the proposition hold.

 This proves the proposition.
 \end{proof}

Observe here that $x_{\fV, \uu}$ with $\uu \in \fe_{\fV}$ and
 $x_{\fV, (\uu,\uv)}$ with  $(\uu,\uv) \in \de_{\fV}$ are not variables in $\var_\fV$.
For notational convenience, to be used throughout, we make a convention:  
\begin{equation}\label{conv:=1}
\hbox{$\bullet$ $x_{\fV, \uu} = 1$ if  $\uu \in \fe_{\fV}$; 
\;\; $\bullet$ $x_{\fV, (\uu,\uv)} = 1$ if  $(\uu,\uv) \in \fd_{\fV}$.}
\end{equation}

\smallskip
For any $k \in [\up]$, the $\vt$-blowup in ($\vt_{[k]}$)
gives rise to \begin{equation}\label{tsV-vt-k} \xymatrix{
\tsV_{\vt_{[k]}} \ar[d] \ar @{^{(}->}[r]  &\tsR_{\vt_{[k]}} \ar[d] \\
\sV \ar @{^{(}->}[r]  & \sR_\sF,
}
\end{equation}
where $\tsV_{\vt_{[k]}}$ is the proper transform of $\sV$ 
in  $\tsR_{\vt_{[k]}}$. 

Alternatively, we can set $\tsV_{\vt_{[0]}}:=\sV_\sF$.  Suppose 
$\tsV_{\vt_{[k-1]}}$ has been constructed for some $k \in [\up]$. We then let
 $\tsV_{\vt_{[k]}} \subset \tsR_{\vt_{[k]}}$ be the proper transform of $\tsV_{\vt_{[k-1]}}$.

\begin{defn} Fix any standard chart $\fV$ of $\tsR_{\vt_{[k]}}$ lying over
a unique standard chart $\fV'$ of $\tsR_{\vt_{[k-1]}}$ for any $k \in [\up]$. 
When $k=0$, we let $B_\fV$ and $L_{\fV, F}$ be as in 
 Proposition \ref{equas-p-k=0} for any
$B \in \cB^\mn \cup \cB^\res  \cup \cB^\q$ and $\bF \in \sfm$. Consider any fixed general $k \in [\up]$.
Suppose $B_{\fV'}$ and $L_{\fV', F}$ have been constructed over $\fV'$.
Applying Definition \ref{general-proper-transforms}, we obtain the proper transforms on the chart $\fV$
$$B_{\fV}, \;\; \forall \; B \in \cB^\mn \cup \cB^\res  \cup \cB^\q; \;\; L_{\fV, F}, \;\;
\forall \; \bF \in \sfm.$$ 
\end{defn}

We need the following notations.

Fix any $k \in [\up]$.
We let $\cB^\mn_{< k}$ (resp. $\cB^\res_{< k}$ or $L_{ <k}$)
be the set of all main (resp. residual or linear $\pl$) relations corresponding to
$F<F_k$. Similarly, we let $\cB^\mn_{ > k}$ (resp. $\cB^\res_{ > k}$, $\cL_{ >k}$)
be the set of all main (resp. residual or linear $\pl$) relations corresponding to
$F>F_k$.  Likewise, replacing $<$ by $\le$ or $>$ by $\ge$,
we can introduce 
$\cB^\mn_{\le k}$, $\cB^\res_{\le k}$, and $L_{\le k}$ or
$\cB^\mn_{ \ge k}$,  $\cB^\res_{ \ge k}$, and $\cL_{\ge k}$.
Then, upon restricting the above to a fixed standard chart $\fV$, we obtain
$\cB^\mn_{\fV, < k}$, $\cB^\res_{\fV, < k}$, $L_{\fV, <k}$, etc..

Recall from the above proof, we have 
$$\tsR_{\vt_{[k]}} \subset \tsR_{\vt_{[k-1]}} \times \PP_{\vt_k}$$
where $\PP_{\vt_k}$ be the factor projective space of the blowup
$\tsR_{\vt_{[k]}} \lra \tsR_{\vt_{[k-1]}}$. 
We write $\PP_{\vt_k}=\PP_{[\xi_0,\xi_1]}$ such that 
$[\xi_0,\xi_1]$ corresponds to $(X_{\uu_k}, X_{(\um,\uu_k)})$.

\begin{defn} \label{vp-vr-chart}
Let $\fV'$ be any standard chart on $\tsR_{\vt_{[k-1]}}$. Then, 
we call $$\fV=\tsR_{\vt_{[k]}} \cap (\fV' \times (\xi_0 \equiv 1))$$
a $\vp$-standard chart of $\tsR_{\vt_{[k]}}$; we call $$\fV=\tsR_{\vt_{[k]}} \cap (\fV' \times (\xi_1 \equiv 1))$$
a $\vr$-standard chart of $\tsR_{\vt_{[k]}}$.
\end{defn}

\begin{prop}\label{eq-for-sV-vtk}
We keep the notation and assumptions in Proposition \ref{meaning-of-var-vtk}.

Suppose $(\um,\uu_k) \in \La_{\sF_{[k]}}^o$ or
 $\fV$ is a $\vr$-standard chart.  Then, we have
that the scheme $\tsV_{\vt_{[k]}}  \cap \fV$, as a closed subscheme of
the chart $\fV$ of $\tsR_{\vt_{[k]}} $,  is defined by 
\begin{eqnarray} 
\cB^\q_\fV,  \;\; \cB^\mn_{\fV, < k} ,\;\; \cL_{\fV, <k} , \;\;\;\;\;\;\;\;\;\;  \\
B_{ \fV,(s_{F_k},s)}: \;\;\;\;\;\; x_{\fV, (\uu_s, \uv_s)}  x_{\fV, \uu_k} 
  - \tilde{x}_{\fV, \uu_s} \tilde{x}_{\fV, \uv_s}, \;\;
\forall \;\; s \in S_{F_k} \- s_{F_k},  \label{eq-B-vt-lek=0} \\ 
L_{\fV, F_k}: \;\; \sgn (s_F) \de_{\fV, (\um,\uu_k)} +
\sum_{s \in S_F \- s_F} \sgn (s) x_{\fV,(\uu_s,\uv_s)},   \label{linear-pl-vtk=0} \\
\cB^\mn_{\fV, > k},\;\; \cB^\res_{\fV, > k}, \;\; \cL_{\fV, >k},\;\;\; \;\;\; \;\; \;\; \;\; \;\; \;\; \;\; \;\; \;\; \;\; \;\; \;\; \;\; \label{eq-hq-vtk=0}
\end{eqnarray}
where $\tilde{x}_{\fV, \uu_s}$ and $ \tilde{x}_{\fV, \uv_s}$ are some monomials in $\var_\fV$.

 Suppose  $(\um,\uu_k) \notin \La_{\sF_{[k]}}^o$ and $\fV$ is a $\vp$-standard chart.
Then, we have
that the scheme $\tsV_{\vt_{[k]}}  \cap \fV$, as a closed subscheme of
the chart $\fV$ of $\tsR_{\vt_{[k]}} $,  is defined by 
\begin{eqnarray} 
\cB^\q_\fV,  \;\; \cB^\mn_{\fV, < k} ,\;\; \cL_{\fV, <k}, \;\;\;\;\;\;\;\;\;\;  \\
B_{ \fV,(s_{F_k},s)}: \;\;\;\;\;\; x_{\fV, (\uu_s, \uv_s)} - x_{\fV, (\um,\uu_k)}  
 \tilde{x}_{\fV, \uu_s} \tilde{x}_{\fV, \uv_s}, \;\;
\forall \;\; s \in S_{F_k} \- s_{F_k},  \label{eq-B-vt-lek=0-00} \\ 
L_{\fV, F_k}: \;\; \sgn (s_F) \ve_{\fV, \uu_k} x_{\fV,(\um,\uu_k)}+
\sum_{s \in S_F \- s_F} \sgn (s) x_{\fV,(\uu_s,\uv_s)},   \label{linear-pl-vtk=0-00} \\
\cB^\mn_{\fV, > k},\;\; \cB^\res_{\fV, > k}, \;\; \cL_{\fV, >k},\;\;\; \;\;\; \;\; \;\; \;\; \;\; \;\; \;\; \;\; \;\; \;\; \;\; \;\; \;\; \label{eq-hq-vtk=0-00}
\end{eqnarray}
where $\tilde{x}_{\fV, \uu_s}$ and $ \tilde{x}_{\fV, \uv_s}$ are some monomials in $\var_\fV$.


Moreover, for any binomial 
$B \in \cB^\mn \sqcup \cB^\res_{>k}$, $B_\fV$ is $\vr$-linear and square-free.

Furthermore, consider  an arbitrary binomial $B \in \cB^\q$ and its proper transform $B_{\fV}$ on the chart 
$\fV$. Let $T_{\fV, B}$ be any fixed term of $B_\fV$.
Then, $T_{\fV, B}$  is $\vr$-linear and admits
at most one $\vt$-exceptional parameter in the form of 
$\delta_{(\um, \uu)}$ for some $(\um, \uu) \in \fd_\fV$ 
or $\ve_\uu x_{(\um, \uu)}$ for some $\uu \in \fe_\fV$.
In particular, it is  square-free.
\end{prop}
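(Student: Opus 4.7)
The plan is to proceed by induction on $k \in \{0, 1, \ldots, \up\}$. For the base case $k=0$, one has $\tsR_{\vt_{[0]}} = \sR_\sF$ with $\fe_\fV = \fd_\fV = \emptyset$ and no exceptional divisors yet present, so the statement reduces directly to Proposition \ref{equas-p-k=0}; the square-freeness and $\vr$-linearity claims hold by inspection of \eqref{eq-Bres-pk=0}--\eqref{eq-hq-k=0}, and the $\cB^\q$ assertion is vacuous since no $\vt$-exceptional parameters have been introduced.

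For the inductive step from $k-1$ to $k$, I would fix a standard chart $\fV$ of $\tsR_{\vt_{[k]}}$ lying over a unique chart $\fV'$ of $\tsR_{\vt_{[k-1]}}$ (Definition \ref{general-standard-chart}). If $(\um,\uu_k) \in \La_{\sF_{[k]}}^o$, then $\fV'$ is disjoint from the proper transform of $Z_{\vt_{[k]}}$ and $\fV \to \fV'$ is an isomorphism, so the inductive form at level $k-1$ lifts verbatim. Otherwise, by the proof of Proposition \ref{meaning-of-var-vtk}, either $\fV$ is a $\vr$-chart, in which case the only change is the substitution $x_{\fV',\uu_k} = \de_{\fV,(\um,\uu_k)} x_{\fV,\uu_k}$ where $\de_{\fV,(\um,\uu_k)} := x_{\fV',(\um,\uu_k)}$ is the newly created exceptional parameter; or $\fV$ is a $\vp$-chart with $x_{\fV',(\um,\uu_k)} = \ve_{\fV,\uu_k} x_{\fV,(\um,\uu_k)}$ where $\ve_{\fV,\uu_k} := x_{\fV',\uu_k}$.

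Next I would apply Definition \ref{general-proper-transforms} to each inherited equation. The crucial computation is in the $(k\tau)$-block, where the level-$(k{-}1)$ form of $B_{(k\tau)}$ reads $x_{\fV',(\uu_s,\uv_s)} x_{\fV',\uu_k} - x_{\fV',(\um,\uu_k)}\tilde x_{\fV',\uu_s}\tilde x_{\fV',\uv_s}$; substituting according to the chart type produces a common factor of $\de_{\fV,(\um,\uu_k)}$ (in the $\vr$-chart) or $\ve_{\fV,\uu_k}$ (in the $\vp$-chart) between the two monomial terms, and dividing by this single factor yields exactly \eqref{eq-B-vt-lek=0} or \eqref{eq-B-vt-lek=0-00} respectively. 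The linearized relation $L_{F_k}$ inherits no common factor, so its pullback produces \eqref{linear-pl-vtk=0} or \eqref{linear-pl-vtk=0-00}, with the leading term $\sgn(s_F) x_{(\um,\uu_k)}$ becoming $\sgn(s_F)\de_{\fV,(\um,\uu_k)}$ or $\sgn(s_F)\ve_{\fV,\uu_k} x_{\fV,(\um,\uu_k)}$. For $j \ne k$, the equations $B_{(j\tau)}, L_{F_j}, \bF_j$ and the $\cB^\q$ binomials do not have $x_{\uu_k}$ or $x_{(\um,\uu_k)}$ as a leading pair, so the substitutions merely insert $\de$ or $\ve$ factors at non-leading occurrences, which are absorbed into the auxiliary monomials $\tilde x_{\fV,\uu_s}, \tilde x_{\fV,\uv_s}$ of \eqref{eq-B-vt-lek=0} without disturbing the overall structure.

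The main obstacle is the last two assertions. The $\vr$-linearity and square-freeness for $B \in \cB^\mn \sqcup \cB^\res_{>k}$ follow term-by-term from Lemma \ref{same-degree}, since each substitution replaces a single $\vp$-variable by a square-free monomial involving a new exceptional variable multiplied by a proper-transform variable (and symmetrically for $\vr$-variables). The delicate point is the $\cB^\q$ statement: by Lemma \ref{ker-phi-k}, each term of $B \in \cB^\q$ is a $\vr$-linear, square-free product of $\vr$-variables, and for every $\bF \in \sfm$ the only $\vr$-variable involving $\um$ in $\PP_F$ is $x_{(\um,\uu_F)}$, which occurs in a term by virtue of the multi-homogeneity of $B$ in $\PP_F$. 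The argument then tracks, across the full sequence of $\vt$-blowups, that each substitution $x_{(\um,\uu)} \mapsto \de_{\fV,(\um,\uu)}$ or $x_{(\um,\uu)} \mapsto \ve_{\fV,\uu} x_{\fV,(\um,\uu)}$ applies to at most one factor of each term; combined with the proper-transform division which removes exactly one exceptional factor in the $(k\tau)$-block, this yields the stated bound. Rigorously pinning down the combinatorics of which $\vt$-exceptional parameters can simultaneously appear in a single term—especially across the interaction of different $\PP_{F_j}$ factors and the iterated chart-type choices—is the step that requires the most care.
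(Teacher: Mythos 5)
Your overall framework (induction on $k$, base case via Proposition \ref{equas-p-k=0}, case analysis on whether $(\um,\uu_k)\in\La_{\sF_{[k]}}^o$ and on the chart type, substitution and division by the exceptional factor) matches the paper's. But there is a genuine gap at the heart of the inductive step: you never explain why the residual binomials of $F_k$ disappear from the list of defining equations. At level $k-1$ the inductive hypothesis supplies $\cB^\res_{\fV',\ge k}$ (which contains $\cB^\res_{\fV',k}$), while the claimed conclusion at level $k$ lists only $\cB^\res_{\fV,>k}$. The entire point of the $k$-th $\vt$-blowup --- as the paper emphasizes, the ``reason to perform $\vt$-blowups first is to eliminate all residual binomial relations by making them dependent on the main binomial relations'' --- is that on the chart $\fV$ one of $x_{\fV,\uu_k}$ or $x_{\fV,(\um,\uu_k)}$ becomes identically $1$ (convention \eqref{conv:=1}), after which one has the explicit identity
\begin{equation}\nonumber
B_{\fV,(s,t)}\;=\;x_{\fV,(\uu_s,\uv_s)}\,B_{\fV,(s_{F_k},t)}\;-\;x_{\fV,(\uu_t,\uv_t)}\,B_{\fV,(s_{F_k},s)}
\end{equation}
in the $\vr$-chart case, and
\begin{equation}\nonumber
B_{\fV,(s,t)}\;=\;\tilde x_{\fV,\uu_t}\tilde x_{\fV,\uv_t}\,B_{\fV,(s_{F_k},s)}\;-\;\tilde x_{\fV,\uu_s}\tilde x_{\fV,\uv_s}\,B_{\fV,(s_{F_k},t)}
\end{equation}
in the $\vp$-chart case. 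Note this genuinely requires the blowup: before it one only gets $x_{\uu_k}\,B_{(s,t)}$ in the ideal of the main binomials, not $B_{(s,t)}$ itself. Your account of the $(k\tau)$-block only tracks how the \emph{form} of each equation transforms; it does not shorten the list, so as written your induction would still carry $\cB^\res_{\fV,k}$ forward and would not prove the stated conclusion.

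Secondarily, your treatment of the ``Furthermore'' assertion on $\cB^\q$ is pointed in the right direction (Lemma \ref{ker-phi-k}~(2) applied to $p_\um$ gives at most one factor of the form $x_{(\um,\uu)}$ per term, and the two chart types send $x_{\fV',(\um,\uu_k)}$ to $\de_{\fV,(\um,\uu_k)}$ or to $\ve_{\fV,\uu_k}x_{\fV,(\um,\uu_k)}$ respectively), which is exactly the paper's argument; but you explicitly defer the verification, so that part is also incomplete rather than wrong. The missing residual-binomial reduction is the substantive defect.
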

\begin{proof} We follow the notation as in the proof of 
Proposition \ref{meaning-of-var-vtk}.

When $k=0$,   we have $(\tsV_{\vt_{[0]}} \subset \tsR_{\vt_{[0]}})=  (\sV_\sF \subset \sR_\sF)$. 
Then, the statement follows from
Proposition \ref{equas-p-k=0}.

Suppose that the statement holds for $(\tsV_{\vt_{[k-1]}} \subset \tsR_{\vt_{[k-1]}})$
for some $k \in [\up]$. 

We now consider $(\tsV_{\vt_{[k]}} \subset \tsR_{\vt_{[k]}})$.

As in  the proof of 
Proposition \ref{meaning-of-var-vtk}, we let $\fV$ be a standard chart 
of $\tsR_{\vt_{[k]}}$ lying over a (necessarily unique) 
standard chart  $\fV'$ of $\tsR_{\vt_{[k-1]}}$.
Also, $\fV$ lies over a unique standard chart $\fV_{[0]}$ of 
of $\tsR_{\vt_{[0]}}$. We let $\pi_{\fV, \fV_{[0]}}: \fV \to \fV_{[0]}$ be the induced projection.
 
 To prove the statement about the defining equations of $\tsV_{\vt_{[k]}} \cap \fV$ in $\fV$, 
 by applying the inductive assumption to  $\fV'$,
 it suffices to prove that the proper transform
 of any residual binomial of $F_k$ depends on the main binomials
 on the chart $\fV$.
 
 For that purpose,  we take any two $s, t \in S_{F_k} \- s_{F_k}$ and consider
 the residual binomial  $B_{F_k,(s,t)}$ (cf. \eqref{eq-Bres}).
 
Suppose $(\um,\uu_k) \in \La_{\sF_{[k]}}^o$, hence $x_{\fV, (\um,\uu_k)}\equiv 1$ 
 on the chart $\fV$. In this case, the blowup along (the proper transform of) 
 $Z_{\vt_{[k]}}$ does not affect the chart $\fV'$ of $\tsR_{\vt_{[k-1]}}$. 
 Likewise, suppose $\fV$ is a $\vr$-standard chart. Then, $(\um,\uu_k) \in \fd_\fV$, hence
  $x_{\fV, (\um,\uu_k)}= 1$ 
 on the chart $\fV$ by \eqref{conv:=1}.
In any case, one calculates and finds that we have the following two main binomials 
 $$B_{ \fV,(s_{F_k},s)}: \;\;\;\;\;\; x_{\fV, (\uu_s, \uv_s)}  x_{\fV, \uu_k}   - 
 \tilde x_{\fV, \uu_s} \tilde x_{\fV, \uv_s},$$
 $$B_{ \fV,(s_{F_k},t)}: \;\;\;\;\;\; x_{\fV, (\uu_t, \uv_t)}  x_{\fV, \uu_k}   - \tilde x_{\fV, \uu_t} \tilde x_{\fV, \uv_t},$$
 where  $ \tilde x_{\fV, \uw}=\pi_{\fV, \fV_{[0]}}^* x_{\fV_{[0]},\uw}$ denoted the pullback
 for any $\uw \in \II_{d,n} \- \um$.
  Similarly, one calculates and finds  that 
 we have 
$$ B_{ \fV,(s,t)}: \;\; x_{\fV,(\uu_s, \uv_s)}\tilde x_{\fV,\uu_t} \tilde x_{\fV, \uv_t}-
x_{\fV,(\uu_t, \uv_t)} \tilde x_{\fV,\uu_s} \tilde x_{ \fV,\uv_s}.$$ 
Then, one verifies directly that we have
$$ B_{ \fV,(s,t)}=x_{\fV, (\uu_s, \uv_s)} B_{ \fV,(s_{F_k},t)} -x_{\fV, (\uu_t, \uv_t)} B_{ \fV,(s_{F_k},s)}.$$
This proves  the statement about the defining equations of $\tsV_{\vt_{[k]}} \cap \fV$ in $\fV$.

Moreover,  consider any $B \in \cB^\mn$ with respect to $F_j$.
Observe that  $x_{(\um, \uu_k)}$ uniquely appears in the main binomials 
with respect to $F_k$;  $x_{\uu_k}$ only appears in the main binomials 
with respect to $F_k$ and   the minus terms of certain main binomials 
of $F_{j}$ with $j>k$. It follows that $B_\fV$ is $\vr$-linear and square-free.
 
 Likewise, consider any $B \in \cB^\res$ with respect to $F_j$ with $j >k$.
It is of the form
$$ B_{(s,t)}: \;\; x_{(\uu_s, \uv_s)}x_{\uu_t}  x_{\uv_t}-
x_{(\uu_t, \uv_t)} x_{\uu_s}  x_{ \uv_s}$$ 
for some $s \ne t \in S_{F_j}$.
Observe here that $B$ does not contain any $\vr$-variable of the form $x_{(\um, \uu)}$ and
the $\vp$-variables in $B$ are identical to
those of the minus terms of the corresponding main binomials.
Hence, the same line of the proof above for  main binomials
 implies that $B_\fV$ is $\vr$-linear and square-free.

Further, consider any $B \in \cB^q$. If $B_{\fV'}$ does not contain $x_{\fV',(\um, \uu_k)}$
or $(\um,\uu_k) \in \La_{\sF_{[k]}}^o$,
then the form of $B_{\fV'}$ remains unchanged (except for the meanings of its variables).
Suppose next that $B_{\fV'}$  contains $x_{\fV',(\um, \uu_k)}$ and $(\um,\uu_k) \notin \La_{\sF_{[k]}}^o$.
Note that the proper transform of the $\vt$-center 
$\vt_{[k]}$ on the chart $\fV'$ equals to $(x_{\fV', \uu_k}, x_{\fV', (\um, \uu_k)})$. Thus,
from the chart $\fV'$ to the $\vr$-standard chart $\fV$, we have that  $x_{\fV',(\um, \uu_k)}$ becomes 
$\de_{\fV,(\um, \uu_k)}$ in $B_\fV$. 
 By Lemma \ref{ker-phi-k} (2), applied to the variable $p_\um$ (before 
 de-homogenization), we see that any fixed term $T_B$ of $B$ contains
 at most one $\vr$-variables of the form $x_{(\um, \uu)}$ with
 $\uu \in \II_{d,n}^\um$. Hence, one sees that 
the last statement on  $B_\fV$ holds, in this case. 

Thus,  this proves the  statement of the proposition when 
$(\um,\uu_k) \in  \La_{\sF_{[k]}}^o$ or when $\fV$ is a $\vr$-standard chart.

Next, we consider the case when $(\um,\uu_k) \notin \La_{\sF_{[k]}}^o$
and  $\fV$ is a $\vp$-standard chart.

 Again, to prove the statement about the defining equations of $\tsV_{\vt_{[k]}} \cap \fV$ in $\fV$, 
  it suffices to prove that the proper transform
 of any residual binomial of $F_k$ depends on the main binomials on the chart $\fV$.
 
 To show this, we again take any two $s, t \in S_{F_k} \- s_{F_k}$.
 
 On the chart $\fV$, we have the following two the main binomials 
 $$B_{ \fV,(s_{F_k},s)}: \;\;\;\;\;\; x_{\fV, (\uu_s, \uv_s)}    - x_{\fV, (\um,\uu_k)}
 \tilde x_{\fV, \uu_s} \tilde x_{\fV, \uv_s},$$
 $$B_{ \fV,(s_{F_k},t)}: \;\;\;\;\;\; x_{\fV, (\uu_t, \uv_t)}    - x_{\fV, (\um,\uu_k)}
 \tilde x_{\fV, \uu_t} \tilde x_{\fV, \uv_t}.$$
We also have  the following residual binomial
$$ B_{ \fV,(s,t)}: \;\; x_{\fV,(\uu_s, \uv_s)}\tilde x_{\fV,\uu_t} \tilde x_{\fV, \uv_t}-
x_{\fV,(\uu_t, \uv_t)} \tilde x_{\fV,\uu_s} \tilde x_{ \fV,\uv_s}.$$ 
 Then, we have
 $$B_{ \fV,(s,t)}=  \tilde x_{\fV, \uu_t} \tilde x_{\fV, \uv_t} B_{ \fV,(s_{F_k},s)}
 -\tilde x_{\fV, \uu_s} \tilde x_{\fV, \uv_s} B_{ \fV,(s_{F_k},t)}.$$
 Thus, the  statement of the proposition about the equations of  $\tsV_{\vt_{[k]}}  \cap \fV$ follows.
 
 Next, consider any $B \in \cB^\mn$. The fact
 that $B_\fV$ is $\vr$-linear and 
 square-free follows from the same line of proof in the previous case.
 
Finally, consider any $B \in \cB^q$. If $B_{\fV'}$ does not contain $x_{\fV',(\um, \uu_k)}$,
then the form of $B_{\fV'}$ remains unchanged.
Suppose next that $B_{\fV'}$  contains $x_{\fV',(\um, \uu_k)}$.
Again, the proper transform of the $\vt$-center 
$\vt_{[k]}$ on the chart $\fV'$ equals to $(x_{\fV', \uu_k}, x_{\fV', (\um, \uu_k)})$. Hence,
from the chart $\fV'$ to the $\vp$-standard chart $\fV$, we have that 
 $x_{\fV',(\um, \uu_k)}$ turns into $\ve_{\fV, \uu_k} x_{\fV,(\um, \uu_k)}$  in $B_\fV$.
 Then, again, by applying Lemma \ref{ker-phi-k} (2), applied to the variable $p_\um$ (before 
 de-homogenization), we have that
 any fixed term $T_B$ of $B$ contains
 at most one $\vr$-variables of the form $x_{(\um, \uu)}$ with
 $\uu \in \II_{d,n}^\um$.   Hence, one sees that 
the last statement on  $B_\fV$ holds. 

This completes the proof of the proposition.
 \end{proof}

We need the final case of $\vt$-blowups.

We set $\tsR_{\vt}:=\tsR_{\vt_{[\up]}}$. $\tsV_{\vt}:=\tsV_{\vt_{[\up]}}$.

\begin{cor}\label{eq-for-sV-vr} 
Let the notation be as in Proposition \ref{equas-fV[k]} for $k=\up$.  
Then, the scheme $\tsV_{\vt} \cap \fV$, as a closed subscheme of
the chart $\fV$ of $\tsR_{\vt}=\tsR_{\vt_{[\up]}}$,  is defined by 
\begin{eqnarray} 
\cB^\q_\fV,  \;\; \cB^\mn_{\fV} ,\;\; L_{\fV, \sfm}.
\end{eqnarray}
Further, for any binomial $B_\fV \in \cB^\mn_\fV \cup \cB^\q_\fV$, it is  $\vr$-linear and square-free.
\end{cor}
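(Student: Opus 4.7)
The plan is to derive this as the terminal case ($k = \up$) of Proposition \ref{eq-for-sV-vtk}, reading off the three clauses of its conclusion once the upper-index sets become empty. Concretely, set $k = \up$ in that proposition and note that by definition $\sfm = \{F_1 < \cdots < F_\up\}$, so $\cB^\mn_{\fV,>\up}$, $\cB^\res_{\fV,>\up}$, and $L_{\fV,>\up}$ are all empty. The remaining equations listed in Proposition \ref{eq-for-sV-vtk}, namely $\cB^\q_\fV$, $\cB^\mn_{\fV,<\up}$, the main block $B_{\fV,(s_{F_\up},s)}$ from \eqref{eq-B-vt-lek=0} or \eqref{eq-B-vt-lek=0-00}, and the linearized relations $L_{\fV,<\up}$ together with $L_{\fV,F_\up}$ from \eqref{linear-pl-vtk=0} or \eqref{linear-pl-vtk=0-00}, regroup into exactly $\cB^\q_\fV$, $\cB^\mn_\fV$, and $L_{\fV,\sfm}$ in the notation of Definition \ref{all-binomials-10}. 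This gives the first assertion of the corollary.

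For the second assertion, I will treat the two cases separately but in both I simply invoke the last two paragraphs of Proposition \ref{eq-for-sV-vtk}. If $B \in \cB^\mn$, the $\vr$-linearity and square-freeness of $B_\fV$ is the explicit statement in the \emph{Moreover} clause of Proposition \ref{eq-for-sV-vtk}, applied inductively through each stage $k = 1, \ldots, \up$: once a proper transform is shown to be $\vr$-linear and square-free at stage $k$, the subsequent $\vt$-blowups preserve this property since at each later stage the chart-description only substitutes variables by proper-transform variables (possibly multiplied by an exceptional parameter for a $\vp$-chart or replaced by an exceptional parameter for a $\vr$-chart), and these substitutions never increase the $\vr$-degree nor create squared $\vr$-factors in a term that was previously $\vr$-linear.

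If $B \in \cB^\q$, then the \emph{Furthermore} clause of Proposition \ref{eq-for-sV-vtk} states that each term $T_{\fV,B}$ of $B_\fV$ is $\vr$-linear, carries at most one $\vt$-exceptional parameter (of one of the two specified forms $\delta_{(\um,\uu)}$ or $\ve_\uu x_{(\um,\uu)}$), and is square-free. Since both terms of the binomial $B_\fV$ have this property, $B_\fV$ itself is $\vr$-linear and square-free. Iterating this across the whole sequence \eqref{vt-sequence} from $k=1$ to $k=\up$ gives the assertion at the terminal stage.

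Nothing beyond what is already established in Proposition \ref{eq-for-sV-vtk} is needed; the only mild bookkeeping is to verify that at the final stage the index sets collapse as described and that no residual binomial relation survives. The latter holds because each $\vt$-blowup at stage $k$ was shown, in the proof of Proposition \ref{eq-for-sV-vtk}, to write every $B_{\fV,(s,t)} \in \cB^\res_{F_k}$ as an $\cO_\fV$-linear combination of the two main binomials $B_{\fV,(s_{F_k},s)}$ and $B_{\fV,(s_{F_k},t)}$, so by the time $k = \up$ all of $\cB^\res$ has been absorbed. No step here is genuinely difficult; the only point that requires care is tracking, across the inductive application, that the $\vr$-linearity established at stage $k$ for $B_\fV$ with $B \in \cB^\mn_{F_j}$, $j \le k$, is not spoilt by the later $\vt$-blowups with respect to $F_{k+1}, \ldots, F_\up$, which is immediate from the explicit substitution rules recorded in the proof of Proposition \ref{meaning-of-var-vtk}.
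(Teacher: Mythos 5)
Your proposal is correct and matches the paper's intent: the paper states Corollary \ref{eq-for-sV-vr} without proof, treating it as the immediate specialization of Proposition \ref{eq-for-sV-vtk} at $k=\up$, where the sets $\cB^\mn_{\fV,>\up}$, $\cB^\res_{\fV,>\up}$, $\cL_{\fV,>\up}$ are empty and the \emph{Moreover}/\emph{Furthermore} clauses supply the $\vr$-linearity and square-freeness. Your additional remarks about re-running the induction across stages are harmless but unnecessary, since Proposition \ref{eq-for-sV-vtk} is already stated and proved for every $k$, so citing its $k=\up$ instance suffices.
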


\begin{cor}\label{no-(um,uu)} Fix any $k \in [\up]$. 
Let $X_{\vt, (\um,\uu_k)}$ be the proper transform of
$X_{(\um,\uu_k)}$ in $\tsR_{\vt}$. Then 
$$\tsV_{\vt} \cap X_{\vt, (\um,\uu_k)} =\emptyset. $$ 
 In particular, $\tsV_{\vt}$ is covered by the standard charts that either lie over  the
chart $(x_{(\um,\uu_k)} \equiv 1)$ of $\sR_\sF$ or
the $\vr$-standard chart of $\tsR_{\vt_{[k]}}$. 
\end{cor}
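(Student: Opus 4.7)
The plan is to verify emptiness chart by chart, using a single well-chosen main binomial equation to derive a contradiction on every chart where $X_{\vt,(\um,\uu_k)}$ could possibly meet $\tsV_\vt$. A standard chart $\fV$ of $\tsR_\vt$ lies over a standard chart of $\sR_\sF$ specified by a choice $s_{F_i,o}\in S_{F_i}$ for every $i\in[\up]$, with $x_{(\uu_{s_{F_i,o}},\uv_{s_{F_i,o}})}\equiv 1$. Focus on the index $k$. If $s_{F_k,o}=s_{F_k}$, then $(\um,\uu_k)\in\La_{\sF_{[k]}}^o$, hence $x_{\fV,(\um,\uu_k)}\equiv 1$ on $\fV$ and $X_{\vt,(\um,\uu_k)}\cap\fV=\emptyset$ trivially. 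Otherwise $s_{F_k,o}\in S_{F_k}\setminus s_{F_k}$, and the $k$-th $\vt$-blowup produces two subcharts: on the $\vr$-standard subchart we have $(\um,\uu_k)\in\fd_\fV$, whence $x_{\fV,(\um,\uu_k)}=1$ by \eqref{conv:=1} and again $X_{\vt,(\um,\uu_k)}\cap\fV=\emptyset$. The only nontrivial case is the $\vp$-standard subchart.

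For this $\vp$-standard subchart (and for every chart of $\tsR_\vt$ lying over it), I would invoke the second displayed case of Proposition \ref{eq-for-sV-vtk}, applied to the index $s=s_{F_k,o}$, to read off the main binomial
\[
B_{\fV,(s_{F_k},s_{F_k,o})}:\; x_{\fV,(\uu_{s_{F_k,o}},\uv_{s_{F_k,o}})}-x_{\fV,(\um,\uu_k)}\,\tilde x_{\fV,\uu_{s_{F_k,o}}}\tilde x_{\fV,\uv_{s_{F_k,o}}}.
\]
Since $x_{\fV,(\uu_{s_{F_k,o}},\uv_{s_{F_k,o}})}\equiv 1$ on the chart by construction, restricting to $X_{\vt,(\um,\uu_k)}=(x_{\fV,(\um,\uu_k)}=0)$ collapses this relation to $1=0$, a contradiction. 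Hence $\tsV_\vt\cap X_{\vt,(\um,\uu_k)}\cap\fV=\emptyset$.

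The one point that needs checking is that the above form of $B_{\fV,(s_{F_k},s_{F_k,o})}$ survives intact through the subsequent $\vt$-blowups (for $\uu_j$ with $j\ne k$). Those centers are $X_{\uu_j}\cap X_{(\um,\uu_j)}$ and involve only the variables $x_{\uu_j}$ and $x_{(\um,\uu_j)}$; neither coincides with $x_{(\um,\uu_k)}$ nor with $x_{(\uu_{s_{F_k,o}},\uv_{s_{F_k,o}})}$, because the non-leading pairs $(\uu_s,\uv_s)\in\La_{F_k}$ with $s\ne s_{F_k}$ do not involve $\um$ (this is read off from the explicit form \eqref{equ:localized-uu} of an $\um$-primary relation). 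Consequently, at every subsequent pullback-and-divide step the plus-term remains identically $1$ and the minus-term retains $x_{\fV,(\um,\uu_k)}$ as a factor, so the same contradiction argument applies uniformly to all charts of $\tsR_\vt$. This establishes $\tsV_\vt\cap X_{\vt,(\um,\uu_k)}=\emptyset$.

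The ``in particular'' statement is then immediate: in the nontrivial $\vp$-standard subchart case, every point of $\tsV_\vt$ lying on $\fV$ must satisfy $x_{\fV,(\um,\uu_k)}\ne 0$, so it also lies in the companion $\vr$-standard chart of $\tsR_{\vt_{[k]}}$ (namely the overlap region of the two charts of the $k$-th blowup). Hence every point of $\tsV_\vt$ is covered by a chart that either lies over the chart $(x_{(\um,\uu_k)}\equiv 1)\subset\sR_\sF$ or over a $\vr$-standard chart of $\tsR_{\vt_{[k]}}$. There is no serious obstacle; the main conceptual step is simply recognizing that the plus-term of the chosen main binomial is identically $1$ on every chart where the proper transform $X_{\vt,(\um,\uu_k)}$ is even visible.
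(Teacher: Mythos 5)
Your proof is correct and follows essentially the same route as the paper's: the trivial dispositions of the $(\um,\uu_k)\in\La^o_{\sF_{[k]}}$ case and the $\vr$-standard chart case (via the convention \eqref{conv:=1} and Proposition \ref{meaning-of-var-vtk}), followed by reading off the main binomial $1 - x_{\fV,(\um,\uu_k)}\,\tilde x_{\fV,\uu_{s_{F_k,o}}}\tilde x_{\fV,\uv_{s_{F_k,o}}}$ on the $\vp$-standard chart to force invertibility of $x_{\fV,(\um,\uu_k)}$ along the proper transform, exactly as in \eqref{invertible-umuu}. Your explicit check that the plus-term remains identically $1$ under the subsequent $\vt$-blowups is a point the paper leaves implicit, but it is the same argument.
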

\begin{proof} Fix any standard chart $\fV$. 

If $\fV$ lies over the chart $(x_{(\um,\uu_k)} \equiv 1)$ of $\sR_\sF$, 
that is, $(\um,\uu_k) \in  \La_{\sF_{[k]}}^o$,
then the first statement $\tsV_{\vt} \cap X_{\vt, (\um,\uu_k)} =\emptyset$
follows from the definition. 

If $\fV$ lies over a $\vr$-standard chart of $\tsR_{\vt_{[k]}}$, 
then the fact that $\tsV_{\vt} \cap X_{\vt, (\um,\uu_k)} =\emptyset$ follows from 
Proposition \ref{meaning-of-var-vtk}  (4);  $x_{(\um,\uu_k)} = 1$ by the convention \eqref{conv:=1}.

Suppose $\fV$ lies over a $\vp$-standard chart 
$\fV_{\vt_{[k]}}$ of $\tsR_{\vt_{[k]}}$. Then, in this case,
we have the following main binomial relation on the chart $\fV_{\vt_{[k]}}$
\begin{equation}\label{invertible-umuu} B_{ \fV,(s_{F_k}, s_{F, o})}: \;\;\;\;\;\; 1   - 
x_{\fV_{\vt_{[k]}}, (\um,\uu_k)}x_{\fV_{\vt_{[k]}}, \uu_{s_{F, o}}} x_{\fV_{\vt_{[k]}}, \uv_{s_{F, o}}}
\end{equation}
because $x_{\fV_{\vt_{[k]}}, (\uu_{s_{F,o}},\uu_{s_{F,o}})} \equiv 1$ 
with $(\uu_{s_{F,o}},\uu_{s_{F,o}}) \in  \La_{\sF_{[k]}}^o$
and $x_{\fV_{\vt_{[k]}}, \uu_k} = 1$ by \eqref{conv:=1}.  
Thus $x_{\fV_{\vt_{[k]}}, (\um,\uu_k)}$ is nowhere vanishing along
  $\tsV_{\vt_{[k]}} \cap \fV$.  This implies the statement.
  \end{proof}

\begin{defn}\label{preferred-chart-vt}
We call any standard chart of $\tsR_\vt$ as described by Corollary \ref{no-(um,uu)} 
a preferred standard chart.
\end{defn}

\section{$\wp$- and $\ell$-Blowups}\label{sect:wp/ell-blowups} 

\subsection{ The initial setup: $\wp_0$-blowups and $\ell_0$-blowup} $\ $

Our initial scheme is  $\tsR_{\ell_{-1}}:=\tsR_\vt$.

We let 
$$\tsR_{\ell_0} \lra \tsR_{\wp_0} \lra \tsR_{\ell_{-1}}:= \tsR_\vt$$
be the trivial blowups along the empty set. 
In the sequel, a blowup is called trivial if it is a blowup along the emptyset.
We make
the identifications
$$\tsR_{\ell_0} = \tsR_{\wp_0} =\tsR_{\ell_{-1}}= \tsR_\vt.$$

For any $k \in [\up]$, we are to construct 
$$\tsR_{\ell_k} \lra \tsR_{\wp_k} \lra \tsR_{\ell_{k-1}}.$$ 
The morphism $\tsR_{\wp_k} \to \tsR_{\ell_{k-1}}$ is decomposed as
a sequential  blowups based on all relations in $\cB^\mn_{F_k}$,
and, $\tsR_{\ell_k} \to \tsR_{\wp_k} $ is a single  blowup
based on $L_{F_k}$.

We do it by induction on the set $[\up]$.

As a part of the initial data on
the scheme $\tsR_{\ell_{-1}}=\tsR_{\vt}$, we have equipped it with the following sets of divisors,

 $\bullet$  The set  $\cD_{\vt, \vp}$ of
  the proper transforms $X_{\vt, \uw}$ of $\vp$-divisors $X_\uw$ for all $\uw \in \II_{d,n}\- \um$.
  These are still called $\vp$-divisors. 

$\bullet$  The set  $\cD_{\vt, \vr}$ of the proper transforms $X_{\vt, (\uu,\uv)}$
of $\vr$-divisors $X_{(\uu,\uv)}$ for all $(\uu,\uv) \in \La_\sfm$.
These are still called $\vr$-divisors. 

 $\bullet$    The set $\cE_{\vt}$ of the proper transforms
 $E_{\vt, k} \subset \tsR_{\vt}$ of the $\vt$-exceptional divisors 
 $E_{\vt_{[k]}} \subset \tsR_{\vt_{[k]}}$ for all $k \in [\up]$.


$\bullet$    The set  $\cD_{\vt, \fL}$ of the proper transforms $D_{\vt, L_F}$
of $\fL$-divisors $D_{L_F}$ for all $\bF \in \sfm$.
These are still called $\fL$-divisors. 

On the intial scheme $\tsR_{\vt}$, we have the set of divisors
$$\cD_\vt=\cD_{\vt, \vr}\sqcup \cD_{\vt, \vp} \sqcup \cE_{\vt}.$$
In addition, we have $\cD_{\vt, \fL}$. The set $\cD_{\vt, \fL}$ will 
not be used until the $\ell$-blowup.

As the further initial data, we need to introduce the instrumental notion: $``$association$"$
with multiplicity  as follows. 

\begin{defn} Consider any main binomial relation $B \in \cB^\mn$ written as
$$B=B_{ \fV,(s_{F_k},s)}=T^+_B-T^-_B: \;\;\;\;\;\; x_{\fV, (\uu_s, \uv_s)}  x_{\fV, \uu_k}   - 
x_{(\um, \uu_k)} x_{\fV, \uu_s} x_{\fV, \uv_s}$$
for some $k \in [\up]$ and $s \in S_{F_k}$, where $\uu_k=\uu_{F_k}$.
Consider any $\vp$-divisor, $\vr$-divisor, or  exceptional divisor $Y$ on $\tsR_{\vt}$.

Let  $Y=X_{\vt,\uu}$ be any $\vp$ divisor for some $\uu \in \II_{d,n}$. 
We set
$$m_{Y, T^+_B}=\left\{
\begin{array}{rcl}
1, & \;\; \hbox{if  $\uu=\uu_k$} \\
0 ,& \;\; \hbox{otherwise.}\\ 
\end{array} \right. $$ 
$$m_{Y, T^-_B}=\left\{
\begin{array}{rcl}
1, & \;\; \hbox{if  $\uu=\uu_s$ or $\uu=\uv_s$,} \\
0 ,& \;\; \hbox{otherwise.} \\ 
\end{array} \right. $$ 

Let  $Y=X_{\vt, (\uu,\uv)}$ be any $\vr$ divisor. We set
$$m_{Y, T^+_B}=\left\{
\begin{array}{rcl}
1, & \;\; \hbox{if  $(\uu,\uv)=(\uu_s,\uv_s)$} \\
0 ,& \;\; \hbox{otherwise.}\\ 
\end{array} \right. $$

Due to Corollary \ref{no-(um,uu)}, we do not  associate
$X_{(\um,\uu_k)}$ with $T^-_B$.
Hence, we set
$$m_{Y, T^-_B}=0.$$


Let  $Y=E_{\vt, j}$ be any exceptional-divisor for some $j \in [\up]$. If $k=j$,
we set  $$m_{Y, T^+_B}=m_{Y, T^-_B}=0.$$
Suppose now $k \ne j$. We  set
$$m_{Y, T^+_B}=0, $$ 
$$m_{Y, T^-_B}=m_{X_{\vt, \uu_j}, T^-_B}. $$ 

We call the number $m_{Y, T^\pm_B}$ the multiplicity of $Y$ associated with the term $T^\pm_B$.
We say $Y$ is associated with $T^\pm_B$ if $m_{Y, T^\pm_B}$ is positive. 
We do not say $Y$ is associated with $T^\pm_B$
if the multiplicity $m_{Y, T^\pm_B}$ is zero.
\end{defn}

\begin{defn} Consider any linearized $\pl$ relation
$$L_F=\sum_{s \in S_F} \sgn (s) x_{(\uu_s, \uv_s)}.$$
for some $F\in \sfm$.  Fix any $s \in S_F$.
Consider any $\vp$-divisor, $\vr$-divisor, or  exceptional-divisor $Y$ on $\tsR_{\vt}$.

Let  $Y=X_{\vt,\uw}$ be any $\vp$ divisor for some $\uw \in \II_{d,n}$. We set
$m_{Y, s}=0.$

Let  $Y=X_{\vt, (\uu,\uv)}$ be any $\vr$ divisor. We set
$$m_{Y, s}=\left\{
\begin{array}{rcl}
1, & \;\; \hbox{if  $(\uu,\uv)=(\uu_s,\uv_s)$} \\
0 ,& \;\; \hbox{otherwise.}\\ 
\end{array} \right. $$

Let  $Y=E_{\vt, j}$ be any exceptional-divisor for some $k \in [\up]$. We let
$$m_{Y, s}=m_{X_{\vt, (\um, \uu_k)}, s} \;. $$

We call the number $m_{Y, s}$ the multiplicity of $Y$ associated with $s \in S_F$.
We say $Y$ is associated with $s$ if $m_{Y,s}$ is positive. We do not say $Y$ is associated with $s$
if the multiplicity $m_{Y,s}$ is zero.
\end{defn}

Now, take any $k \in [\up]$. 

We  suppose that all the blowups
$$\tsR_{\ell_{j}} \lra \tsR_{\wp_{j}} \lra \tsR_{\ell_{j-1}}$$  
have been constructed for all the blocks in 
$$\fG_{ [k-1]}=\bigsqcup_{j \in [k-1]} \fG_j$$
 such that for all $j \in [k-1]$:

$\bullet$ $\tsR_{\wp_{j}}\lra\tsR_{\ell_{j-1}}$ is a sequential $\wp$-blowups
with respect to  the main binomial relations of the block $\cB^\mn_{F_{j}}$.

$\bullet$ $\tsR_{\ell_{j}}\lra\tsR_{\wp_{j}}$ is a single $\ell$-blowup
with respect to  $L_{F_{j}}$.

We  are to construct 
$$\tsR_{\ell_{k}} \lra \tsR_{\wp_{k}} \lra \tsR_{\ell_{k-1}}$$  
in the next subsection.

\subsection{$\wp$-blowups and $\ell$-blowup in $(\fG_k)$}\label{wp/ell-centers} $\ $

\medskip
\centerline{\bf  $\wp$-blowups in $(\fG_k)$}
\smallskip

We proceed by applying  induction on 
the set $$\{(k\tau)\mu \mid k \in [\up], \tau \in [\ft_{F_k}],
\mu \in [\rho_{(k\tau)}]\},$$ ordered lexicographically on $(k,\tau, \mu)$, 
where $\rho_{(k\tau)}$ is a  to-be-defined 
finite positive integer depending on $(k\tau) \in \Index_{\cB^\mn}$
(cf. \eqref{indexing-Bmn}).


The initial case for $\wp$-blowups with respect to
the block $\fG_k$  is $\wp_{(k1)}\fr_0$ and the initial scheme is 
$\sR_{({\wp}_{(k1)}\fr_{0})}:=\tsR_{\ell_{k-1}}$. 
When $k=0$, we get $\sR_{({\wp}_{(11)}\fr_{0})}:=\tsR_{\ell_{-1}}=\tsR_\vt$. 

We suppose that the scheme $\tsR_{({\wp}_{(k\tau)}\fr_{\mu-1})}$ has been constructed and
the following package in $({\wp}_{(k\tau)}\fr_{\mu-1})$ has been introduced
for some  integer $\mu \in [\rho_{(k\tau)}]$, where $1\le \rho_{(k\tau)}\le \infty$ is 
 an integer depending on $(k\tau) \in \Index_{\cB^\mn}$. (It will be proved to be finite.)
 Here, to reconcile notations, we make the convention:
$$({\wp}_{(k\tau)}\fr_0):=({\wp}_{(k(\tau -1))}\fr_{\rho_{(k(\tau-1))}}), \; \forall \;\; 1\le k\le \up, \;
2\le \tau\le \ft_{F_k},$$
$$({\wp}_{(k1)}\fr_0):=({\wp}_{((k-1)\ft_{F_{k-1}})}\fr_{\rho_{((k-1)\ft_{F_{k-1}})}}), \; \forall \;\; 2\le k\le \up, $$
provided that $\rho_{(k(\tau-1))}$ and $\rho_{((k-1)\ft_{F_{k-1}})}$
are (proved to be) finite.

\medskip\noindent
$\bullet$  {\sl The inductive assumption.} 
{\it The scheme $\tsR_{({\wp}_{(k\tau)}\fr_{\mu-1})}$ has been constructed.
It comes equipped with the following.

The set of $\vp$-divisors, 
$$\cD_{({\wp}_{(k\tau)}\fr_{\mu-1}),\vp}: \;\; X_{({\wp}_{(k\tau)}\fr_{\mu-1}), \uw}, \;\; \uw \in \II_{d,n} \- \um.$$

The set of $\vr$-divisors
$$\cD_{({\wp}_{(k\tau)}\fr_{\mu-1}), \vr}: \;\; X_{({\wp}_{(k\tau)}\fr_{\mu-1}), (\uu,\uv)}, \;\; (\uu, \uv) \in \La_\sfm.$$


The set of $ \fL$-divisors 
$$\cD_{({\wp}_{(k\tau)}\fr_{\mu-1}),  \fL}: 
{ D_{({\wp}_{(k\tau)}\fr_{\mu-1}), L_{F_j},} \;\; j \in [k-1].}$$

 The set of the exceptional divisors 
 $$\cE_{({\wp}_{(k\tau)}\fr_{\mu-1})}: \;\; E_{({\wp}_{(k\tau)}\fr_{\mu-1}), (k'\tau') \mu' h'},\;\; 
 \hbox{${ (11)0}\le (k'\tau') \mu' \le (k\tau)(\mu -1), \; h' \in [\si_{(k'\tau') \mu'}]$} $$
for some finite positive integer $\si_{(k'\tau')\mu'}$ depending on $(k'\tau')\mu'$.
We set $\si_{(11)0}=\up$. This counts the number of exceptional divisors on 
$\tsR_{(\wp_1\fr_1\fs_0)}=\tsR_\vt$.


We let $$\cD_{({\wp}_{(k\tau)}\fr_{\mu-1})}=\cD_{({\wp}_{(k\tau)}\fr_{\mu-1}),\vr}
 \sqcup \cD_{({\wp}_{(k\tau)}\fr_{\mu-1}),\vp}
\sqcup \cE_{({\wp}_{(k\tau)}\fr_{\mu-1})}$$ be the set of all  the aforelisted divisors. 
The set $\cD_{({\wp}_{(k\tau)}\fr_{\mu-1}),  \fL}$ will not be used until the $\ell$-blowup.

Fix  any $Y \in \cD_{({\wp}_{(k\tau)}\fr_{\mu-1})}$.
 Consider any $B \in \cB^\q \cup \cB^\mn$ and let $T_B$ be any fixed  term of $B$.
Then, we have that $Y$ is associated with $T_B$ with the multiplicity  $m_{Y,T_B}$, 
a nonnegative integer.  
In the sequel, we say $Y$ is associated with $T_B$  if $m_{Y,T_B}>0$; 
we do not say $Y$ is associated with $T_B$  if $m_{Y,T_B}=0$.

Likewise, for any term of $T_s=\sgn (s) x_{(\uu_s,\uv_s)}$ of $L_F
=\sum_{s \in S_F} \sgn (s) x_{(\uu_s,\uv_s)}$, $Y$ 
is associated with $T_s$ with the multiplicity  $m_{Y,s}$, a nonnegative integer. 
We say $Y$ is associated with $T_s$  if $m_{Y,s}>0$; 
we do not say $Y$ is associated with $T_s$  if $m_{Y,s}=0$.}

\smallskip
We are now  to construct the scheme $\tsR_{({\wp}_{(k\tau)}\fr_\mu)}$.
 The process consists of a finite steps of blowups; the scheme $\tsR_{({\wp}_{(k\tau)}\fr_\mu)}$
is the one obtained in the final step. 




As before, fix any $k \in [\up]$,  we write $\cB^\mn_{F_k}=\{B_{(k\tau)} \mid \tau \in [\ft_{F_k}]\}.$
For every $B_{(k\tau)} \in \cB_{F_k}^\mn$, we have the expression  
$$B_{(k\tau)}=T_{(k\tau)}^+ -T_{(k\tau)}^- =x_{(\uu_s,\uv_s)}x_{\uu_k} -
x_{\uu_s}x_{\uv_s}x_{(\um,\uu_k)} $$  where $ s\in S_{F_k} \- s_{F_k}$ corresponds to $\tau$
and
$x_{\uu_k}$ is the leading variable of $\bF_k$ for some $\uu_k \in \II_{d,n}^\um$.
We can write $s=(k\tau)$, and, use $B_s$ and $B_{(k\tau)}$ interchangeblly. 

\begin{defn}\label{wp-sets-kmu} 
A pre-${\wp}$-set $\phi$ in $({\wp}_{(k\tau)}\fr_\mu)$, written as
$$\phi=\{Y^+, \; Y^- \},$$
 consists of exactly two divisors 
 of the scheme $\tsR_{({\wp}_{(k\tau)}\fr_{\mu-1})}$ such that  
$Y^\pm$ is  associated with $T_{(k\tau)}^\pm$. 
  
  Given the above pre-${\wp}$-set $\phi$, we let 
$$Z_\phi = Y^+ \cap Y^-$$
 be the scheme-theoretic intersection.
The pre-$\wp$-set $\phi$ (resp. $Z_\phi $) is called a $\wp$-set (resp. $\wp$-center) 
in $({\wp}_{(k\tau)}\fr_\mu)$ if $$Z_\phi \cap \tsV_{({\wp}_{(k\tau)}\fr_{\mu-1})} \ne \emptyset.$$
In such a case, we also call $\phi$ (resp. $Z_\phi $) a $\wp_k$-set (resp. $\wp_k$-center). 
\end{defn}
Recall that due to Corollary \ref{no-(um,uu)}, we do not  associate
$X_{(\um,\uu_k)}$ with $T_{(k\tau)}^-$.
Hence $Y^- \ne X_{({\wp}_{(k\tau)}\fr_{\mu-1}), (\um,\uu_k)}$.
Had we associated $X_{(\um,\uu_k)}$ with $T_{(k\tau)}^-$,  the condition,
$Z_\phi \cap \tsV_{({\wp}_{(k\tau)}\fr_{\mu-1})} \ne \emptyset$, would also exclude it.



As there are only finitely many $\vp$-, $\vr$-, and exceptional 
divisors on the scheme $\tsR_{({\wp}_{(k\tau)}\fr_{\mu-1})}$, that is,
the set $\cD_{({\wp}_{(k\tau)}\fr_{\mu-1})}$ is finite,
one sees that there are only finitely many ${\wp}$-sets in $({\wp}_{(k\tau)}\fr_\mu)$. 
We let $\Phi_{{\wp}_{(k\tau)}\fr_\mu}$ be the finite set of all ${\wp}$-sets
in $({\wp}_{(k\tau)}\fr_\mu)$; we let $\cZ_{{\wp}_{(k\tau)}\fr_\mu}$ be the finite set of all 
corresponding ${\wp}$-centers in $({\wp}_{(k\tau)}\fr_\mu)$.
We need  a total ordering on the set
$\Phi_{{\wp}_{(k\tau)}\fr_\mu}$, hence on the set $\cZ_{{\wp}_{(k\tau)}\fr_\mu}$,
to produce a canonical sequential blowups.

\begin{defn}\label{order-phi} Let $\cD^\pm_{({\wp}_{(k\tau)}\fr_{\mu-1})}$ be the set of all 
divisors associated with $T_{(k\tau)}^\pm$. 

We order the set  $\cD^+_{({\wp}_{(k\tau)}\fr_{\mu-1})}$  as follows.
We let $X_{({\wp}_{(k\tau)}\fr_{\mu-1}),(\uu_s,\uv_s)}$ be the largest and
$X_{({\wp}_{(k\tau)}\fr_{\mu-1}),\uu_k}$ be the second largest. The rest are
exceptional divisors. We order them by reversing the order of occurrence
of the exceptional divisors. 
By Definition \ref{p-t}, $\cD^+_{({\wp}_{(k\tau)}\fr_{\mu-1})}$ is totally ordered.

We order the set  $\cD^-_{({\wp}_{(k\tau)}\fr_{\mu-1})}$  as follows.
We let $\cD^-_{({\wp}_{(k\tau)}\fr_{\mu-1}),\vp}$ be the subset 
of $\vp$-divisors with the order 
induced from that  on the set of all $\pl$-variables.
We let $\cE^-_{({\wp}_{(k\tau)}\fr_{\mu-1})}$  be the subset of exceptional divisors
by reversing the order of occurrence.
We then declare
$$
\cE^-_{({\wp}_{(k\tau)}\fr_{\mu-1})}<  \cD^-_{({\wp}_{(k\tau)}\fr_{\mu-1}),\vp}.$$
By Definition \ref{p-t}, $\cD^-_{({\wp}_{(k\tau)}\fr_{\mu-1})}$ is totally ordered.

Now, let $\phi_1, \phi_2 \in \Phi_{{\wp}_{(k\tau)}\fr_\mu}$ be any two distinct elements.
Write $\phi=\{Y_i^+, Y_i^-\}, i=1, 2$.
We $\phi_1 < \phi_2$ if $Y_1^+<Y_2^+$ or $Y_1^+=Y_2^+$ and $Y_1^-<Y_2^-$. 

This endows $\Phi_{{\wp}_{(k\tau)}\fr_\mu}$ a total order $`` < "$.
\end{defn}

Thus, we can list $\Phi_{{\wp}_{(k\tau)}\fr_\mu}$ as
$$\Phi_{ {\wp}_{(k\tau)}\fr_\mu}=\{\phi_{(k\tau)\mu 1} < \cdots < \phi_{(k\tau)\mu \si_{(k\tau)\mu}}\}$$
for some finite positive integer $\si_{(k\tau)\mu}$ depending on $(k\tau)\mu$. We then let the set 
$\cZ_{\wp_{(k\tau)}\fr_\mu}$ of the corresponding $\wp$-centers
inherit the total order from that of $\Phi_{{\wp}_{(k\tau)}\fr_\mu}$. 
Then, we can express
$$\cZ_{\wp_{(k\tau)}\fr_\mu}=\{Z_{\phi_{(k\tau)\mu1}} < \cdots < Z_{\phi_{(k\tau)\mu\si_{(k\tau)\mu}}}\}.$$


We let $\tsR_{({\wp}_{(k\tau)}\fr_\mu \fs_1)} \lra \tsR_{({\wp}_{(k\tau)}\fr_{\mu-1})}$ 
be the blowup of $\tsR_{({\wp}_{(k\tau)}\fr_{\mu-1})}$
along the {$\wp$-}center $Z_{\phi_{(k\tau)\mu1}}$. 
Inductively, we assume that 
$\tsR_{({\wp}_{(k\tau)}\fr_\mu\fs_{(h-1)})}$ has been constructed for some 
$ h \in [\si_{(k\tau)\mu}]$.
We then let  $$\tsR_{({\wp}_{(k\tau)}\fr_\mu\fs_h)} \lra \tsR_{({\wp}_{(k\tau)}\fr_\mu\fs_{h-1})}$$ be the blowup of
$ \tsR_{({\wp}_{(k\tau)}\fr_\mu\fs_{h-1})}$ along (the proper transform of) the  $\wp$-center 
$Z_{\phi_{(k\tau)\mu h}}$. We call it a $\wp$-blowup or a $\wp_k$-blowup.

Here, to reconcile notation, we set 
$$\tsR_{({\wp}_{(k\tau)}\fr_\mu\fs_{0})}:=\tsR_{({\wp}_{(k\tau)}\fr_{\mu-1})}
:=\tsR_{({\wp}_{(k\tau)}\fr_{\mu-1}\fs_{\si_{(k\tau)(\mu-1)}})}.$$

All of these can be summarized as the sequence 
$$\tsR_{({\wp}_{(k\tau)}\fr_\mu)}:= \tsR_{({\wp}_{(k\tau)}\fr_\mu\fs_{\si_{(k\tau)\mu}})}
 \lra \cdots \lra \tsR_{({\wp}_{(k\tau)}\fr_\mu\fs_1)} \lra \tsR_{({\wp}_{(k\tau)}\fr_{\mu-1})}.$$

 Given $h \in [\si_{(k\tau)\mu}]$,
consider the induced morphism
$\tsR_{({\wp}_{(k\tau)}\fr_\mu\fs_h)} \lra 
\tsR_{({\wp}_{(k\tau)}\fr_{\mu-1})}$. 

$\bullet$ We let $X_{({\wp}_{(k\tau)}\fr_\mu\fs_h), \uw}$ be the proper transform
of $X_{(\wp_{(k\tau)}\fr_{\mu-1}), \uw}$ in $\tsR_{({\wp}_{(k\tau)}\fr_\mu\fs_h)}$,
for all $\uw \in \II_{d,n} \- \um$. These are still called $\vp$-divisors.
We denote the set of all $\vp$-divisors
 on  $\tsR_{({\wp}_{(k\tau)}\fr_\mu\fs_h)}$ by  $\cD_{({\wp}_{(k\tau)}\fr_\mu\fs_h),\vp}$.

$\bullet$ We let $X_{({\wp}_{(k\tau)}\fr_\mu\fs_h), (\uu, \uv)}$ be the proper transform
  of the $\vr$-divisor $X_{({\wp}_{(k\tau)}\fr_{\mu-1}), (\uu, \uv)}$ in $\tsR_{({\wp}_{(k\tau)}\fr_\mu\fs_h)}$,
  for all $(\uu, \uv) \in \La_\sfm$. These are still called $\vr$-divisors.
We denote the set of all $\vr$-divisors
 on  $\tsR_{({\wp}_{(k\tau)}\fr_\mu\fs_h)}$ by  $\cD_{({\wp}_{(k\tau)}\fr_\mu\fs_h),\vr}$.
 
 
 $\bullet$ We let $D_{({\wp}_{(k\tau)}\fr_\mu\fs_h), L_F}$ be the proper transform
  of the $\fL$-divisor $D_{({\wp}_{(k\tau)}\fr_{\mu-1}), L_F}$ in $\tsR_{({\wp}_{(k\tau)}\fr_\mu\fs_h)}$,
  for all $\bF \in \sfm$. These are still called $\fL$-divisors.
We denote the set of all $\fL$-divisors
 on  $\tsR_{({\wp}_{(k\tau)}\fr_\mu\fs_h)}$ by  $\cD_{({\wp}_{(k\tau)}\fr_\mu\fs_h),\fL}$.

 $\bullet$ We let
 $E_{({\wp}_{(k\tau)}\fr_\mu\fs_h),  (k'\tau')\mu' h'}$ be the proper transform 
 of $E_{({\wp}_{(k\tau)}\fr_{\mu-1}),  (k'\tau')\mu' h'}$ in $\tsR_{({\wp}_{(k\tau)}\fr_\mu \fs_h)}$,
 for all   $(11)0 \le (k'\tau')\mu'\le (k\tau)(\mu-1)$
 with $h' \in [\si_{(k'\tau')\mu'}]$.
We denote the set of these exceptional divisors on  
  $\tsR_{({\wp}_{(k\tau)}\fr_\mu\fs_h)}$ by $\cE_{({\wp}_{(k\tau)}\fr_\mu \fs_h),{\rm old}}$.
 
  We let  $$\bar\cD_{({\wp}_{(k\tau)}\fr_\mu\fs_h)}= \cD_{({\wp}_{(k\tau)}\fr_\mu\fs_h),\vp}
  \sqcup \cD_{({\wp}_{(k\tau)}\fr_\mu\fs_h),\vr} \sqcup \cE_{({\wp}_{(k\tau)}\fr_\mu\fs_h),{\rm old}}$$ 
  be the set of  all of the aforementioned divisors on $\tsR_{({\wp}_{(k\tau)}\fr_\mu \fs_h)}$.
 The set $\cD_{({\wp}_{(k\tau)}\fr_\mu\fs_h),\fL}$ will not be used until the $\ell$-blowup.

 In addition to the proper transforms of the divisors
  from $\tsR_{({\wp}_{(k\tau)}\fr_{\mu-1})}$, there are
the following {\it new} exceptional divisors.

For any $ h \in [\si_{(k\tau)\mu}]$, we let $E_{({\wp}_{(k\tau)}\fr_\mu\fs_h)}$ be the exceptional divisor of 
the blowup $\tsR_{({\wp}_{(k\tau)}\fr_\mu\fs_h)} \lra \tsR_{({\wp}_{(k\tau)}\fr_\mu\fs_{h-1})}$.
 For any $1\le h'< h\le \si_{(k\tau)\mu}$,
we let $E_{({\wp}_{(k\tau)}\fr_\mu\fs_h), (k\tau)\mu h'}$ 
be the proper transform in $\tsR_{({\wp}_{(k\tau)}\fr_\mu \fs_h)} $
of the exceptional divisor $E_{({\wp}_{(k\tau)}\fr_\mu \fs_{h'})}$. 
To reconcile notation, we also set 
$E_{({\wp}_{(k\tau)}\fr_\mu\fs_h), (k\tau)\mu h}:=E_{({\wp}_{(k\tau)}\fr_\mu \fs_h)}$.
We  set $$\cE_{({\wp}_{(k\tau)}\fr_\mu \fs_h),{\rm new}}=\{ E_{({\wp}_{(k\tau)}\fr_\mu\fs_h), 
(k\tau)\mu h'} \mid 1\le h'\le h\le \si_{(k\tau)\mu} \} .$$
We then order the exceptional divisors of $ \cE_{({\wp}_{(k\tau)}\fr_\mu\fs_h),{\rm new}}$
by  reversing the order of occurrence, that is, 
$E_{({\wp}_{(k\tau)}\fr_\mu\fs_h), (k\tau)\mu h''}\le E_{({\wp}_{(k\tau)}\fr_\mu\fs_h), (k\tau)\mu h'}$
 if $h'' \ge h'$.
 
 We then let 
 $$\cD_{({\wp}_{(k\tau)}\fr_\mu \fs_h)} =\bar\cD_{({\wp}_{(k\tau)}\fr_\mu \fs_h)} 
\sqcup  \cE_{({\wp}_{(k\tau)}\fr_\mu \fs_h),{\rm new}}.$$

Finally, we set
 $\tsR_{({\wp}_{(k\tau)})\fr_\mu}:= \tsR_{({\wp}_{(k\tau)}\fr_\mu\fs_{\si_{(k\tau)\mu}})}$, and let
 $$\cD_{({\wp}_{(k\tau)}\fr_\mu),\vp}=\cD_{({\wp}_{(k\tau)}\fr_\mu\fs_{\si_{(k\tau)\mu}}),\vp}, \;
 \cD_{({\wp}_{(k\tau)}\fr_\mu),\vr}=\cD_{({\wp}_{(k\tau)}\fr_\mu\fs_{\si_{(k\tau)\mu}}),\vr},$$
 $$\cE_{{\wp}_{(k\tau)}\fr_\mu}=\cE_{({\wp}_{(k\tau)}\fr_\mu\fs_{\si_{(k\tau)\mu}}), {\rm old}}  
 \sqcup \cE_{({\wp}_{(k\tau)}\fr_\mu\fs_{\si_{(k\tau)\mu}}), {\rm new}} .$$
 This can be  summarized as 
$$\cD_{({\wp}_{(k\tau)}\fr_\mu)}:=\cD_{({\wp}_{(k\tau)}\fr_\mu),\vp}  
\sqcup \cD_{({\wp}_{(k\tau)}\fr_\mu),\vr}
\sqcup \cE_{({\wp}_{(k\tau)}\fr_\mu)}.$$
 This way,  we have equipped 
  the scheme  $\tsR_{({\wp}_{(k\tau)}\fr_\mu)}$ 
  with the set $\cD_{({\wp}_{(k\tau)}\fr_\mu),\vp}$ of $\vp$-divisors, 
 the set $\cD_{({\wp}_{(k\tau)}\fr_\mu),\vr}$ of $\vr$-divisors, 
 the set  $\cE_{{\wp}_{(k\tau)}\fr_\mu}$ of exceptional divisors,
 together with the set $\cD_{({\wp}_{(k\tau)}\fr_\mu),\fL}$ of $\fL$-divisors
 (which will not be used until the $\ell$-blowup).

Now, we are ready to introduce the notion of $``$association$"$ in  $({\wp}_{(k\tau)}\fr_\mu)$,
as required to carry on the process of induction. 

We do it inductively  on  the set $ [ \si_{(k\tau)\mu}]$. 



\begin{defn} \label{association-vs} 
Fix any $B \in \cB^\q \cup \cB^\mn$. We let $T_B$ be any fixed term of the binomial $B$.
Meanwhile, we also consider any $\bF \in \sfm$ and let $T_s$ be the term of $L_F$ corresponding to
any fixed $s \in S_F$.

We assume that the notion of $``$association$"$ in  
$({\wp}_{(k\tau)}\fr_\mu\fs_{h-1})$ has been introduced. That is, for every divisor 
$Y' \in \cD_{({\wp}_{(k\tau)}\fr_\mu\fs_{h-1})}$, the multiplicities
$m_{Y', T_B}$ and $m_{Y',s}$ have been defined.

Consider an arbitrary divisor $Y \in \cD_{({\wp}_{(k\tau)}\fr_\mu\fs_h)}$.



First, suppose $Y \ne E_{({\wp}_{(k\tau)}\fr_\mu\fs_h)}$.
Then, it is the proper transform of a (unique) divisor $Y' \in \cD_{({\wp}_{(k\tau)}\fr_\mu\fs_{h-1})}$.
In this case, we set 
$$m_{Y, T_B}=m_{Y', T_B},\;\;\; m_{Y,s}=m_{Y',s}.$$

Next, we consider the exceptional $Y=E_{({\wp}_{(k\tau)}\fr_\mu\fs_h)}$.

We let $\phi= \phi_{(k\tau)\mu h}$. 
We have that 
$$\phi=\{ Y^+,  Y^-  \} \subset \cD_{(\wp_{(k\tau)}\fr_{\mu-1})}.$$

For any $B \in \cB^\mn \cup \cB^\q$, we write $B=T_B^0-T_B^1$. We let
$$m_{\phi, T_B^i}=m_{Y^+, T_B^i}+ m_{Y^-, T_B^i}, \; i=0,1,$$ 
 $$l_{\phi, B}=\min \{m_{\phi, T_B^0}, m_{\phi, T_B^1}\}.$$ 
 (For instance, by definition, $l_{\phi, B}>0$ when $B=B_{(k\tau)}$. In general, it can be zero.)
 Then, we let
  $$m_{E_{({\wp}_{(k\tau)}\fr_\mu\fs_h)},T_B^i}= m_{\phi, T_B^i}-  l_{\phi, T_B^i}.$$

Likewise, 
for $s \in S_F$ with $F\in \sfm$,
we let
$$m_{E_{(\wp_{(k\tau)}\fr_\mu\fs_h)},s}=m_{Y^+, s} +  m_{Y^-, s}.$$

We say $Y$ is associated with $T_B$ (resp. $T_s$)  if its multiplicity $m_{Y, T_B}$ 
(resp. $m_{Y, s}$) is positive.
We do not say $Y$ is associated with $T_B$ (resp. $T_s$)   
if its multiplicity $m_{Y,T_B}$ (resp. $m_{Y, s}$) equals to zero.
\end{defn}

When $h = \si_{(k\tau)\mu}$, we obtain all the desired data on 
$\tsR_{({\wp}_{(k\tau)}\fr_\mu)}=\tsR_{({\wp}_{(k\tau)}\fr_\mu)\fs_{\si_{(k\tau)\mu}}}$.

Now, with all the aforedescribed data equipped for $\tsR_{({\wp}_{(k\tau)}\fr_\mu)}$, 
we obtain our inductive package in 
 $({\wp}_{(k\tau)}\fr_\mu)$. This allows us to introduce the set 
 $\Phi_{\wp_{(k\tau)}\fr_{\mu+1}}$ of ${\wp}$-sets 
 and the set  $\cZ_{\wp_{(k\tau)}\fr_{\mu+1}}$ of ${\wp}$-centers
 in  $({\wp}_{(k\tau)}\fr_{\mu+1})$ as in Definition \ref{wp-sets-kmu}, 
 endow a total order on $\Phi_{\wp_{(k\tau)}\fr_{\mu+1}}$
 and  $\cZ_{\wp_{(k\tau)}\fr_{\mu+1}}$ as in 
 Definition \ref{order-phi}, 
 and then  advance to the next 
round of  the $\wp$-blowups.  Here, 
 to reconcile notations, we set
 $$({\wp}_{(k\tau)}\fr_{\rho_{(k\tau)+1}}):=({\wp}_{((k(\tau +1))}\fr_{1}), \;\; 1\le \tau <\ft_{F_k};$$
$$({\wp}_{(k\ft_{F_k})}\fr_{\rho_{(k\ft_{F_k})+1}}):=({\wp}_{((k+1)1)}\fr_{1}), \;\;  1\le k < \up,$$
provided that $\rho_{(k\tau)}$ and $\rho_{(k\ft_{F_k})}$ are (proved to be) finite.

Given any  $({\wp}_{(k\tau)}\fr_\mu \fs_h)$, the ${\wp}$-blowup in (${\wp}_{(k\tau)}\fr_\mu\fs_h$)
gives rise to \begin{equation}\label{tsv-ktauh} \xymatrix{
\tsV_{({\wp}_{(k\tau)}\fr_\mu\fs_h)} \ar[d] \ar @{^{(}->}[r]  & \tsR_{({\wp}_{(k\tau)}\fr_\mu\fs_h)} \ar[d] \\
\sV \ar @{^{(}->}[r]  & \sR_\sF,
}
\end{equation}
where $\tsV_{({\wp}_{(k\tau)}\fr_\mu\fs_h)} $ is the proper transform of $\sV$ 
in  $\tsR_{({\wp}_{(k\tau)}\fr_\mu\fs_h)}$. 

We let $\tsV_{({\wp}_{(k\tau)}\fr_\mu)}=\tsV_{({\wp}_{(k\tau)}\fr_\mu\fs_{\si_{(k\tau)\mu}})}$.

\begin{defn}\label{defn:rhoktau}
Fix any $k \in [\up], \tau \in [\ft_{F_k}]$. Suppose there exists a finite integer $\mu$ such that
for any pre-$\wp$-set $\phi$  
in $({\wp}_{(k\tau)}\fr_{\mu +1})$ (cf. Definition \ref{wp-sets-kmu}), 
we have 
$$Z_\phi  \cap \tsV_{({\wp}_{(k\tau)}\fr_\mu)} = \emptyset.$$
Then, we let $\rho_{(k\tau)}$ be the smallest integer such that the above holds.
Otherwise, we let $\rho_{(k\tau)}=\infty$.
\end{defn}
{\it It will be shown soon 
that $\rho_{(k\tau)}$ is finite for all  $k \in [\up]$,
$\tau \in [\ft_{F_k}]$.}

For later use, we let
\begin{equation}\label{indexing-Phi}
\Phi_k=\{ \phi_{(k\tau)\mu h} \mid                    
\tau \in [\ft_{F_k}],
\mu [\rho_{(k\tau)}],
 h \in [\si_{(k\tau)\mu}]\},  \end{equation}
$$\Index_{\Phi_k}=\{ (k\tau)\mu h \mid     
 \tau \in [\ft_{F_k}],  \mu \in [\rho_{(k\tau)}],
 h \in [\si_{(k\tau)\mu}]\}. $$
 Then, the order of $\wp_k$-blowups coincides with the lexicographical  order 
on $\Index_{\Phi_k}$

Upon proving that $\rho_{(k\tau)}$ is finite  for all $\tau \in [\ft_{F_k}]$,
 we can summarize the process
of ${\wp_k}$-blowups as a single sequence of blowup morphisms:
\begin{equation}\label{grand-sequence-wp}
\tsR_{\wp_k} \lra \cdots \lra 
\tsR_{({\wp}_{(k\tau)}\fr_\mu\fs_h)} \lra \tsR_{({\wp}_{(k\tau)}\fr_\mu\fs_{h-1})}
\lra \cdots \lra \tsR_{({\wp}_{(k1)}\fr_0)}:=\tsR_{\ell_{k-1}},\end{equation}
where $\tsR_{\wp_k}:=\tsR_{({\wp}_{(k \ft_{F_k})}\fr_{\rho_{k \ft_{F_k}}})} 
:=\tsR_{({\wp}_{(k \ft_{F_k})}\fr_{\rho_{k \ft_{F_k}}}\fs_{\si_{(k \ft_{F_k})\rho_{k \ft_{F_k}}}})} $
is the  blowup scheme reached in  the final step 
$({\wp}_{(k \ft_{F_k})}\fr_{\rho_{k \ft_{F_k}}}
\fs_{\si_{(k \ft_{F_k})\rho_{k \ft_{F_k}}}})$ of all $\wp$-blowups in $(\fG_k)$.

Further, the end of all ${\wp}$-blowups in $(\fG_k)$ gives rise to the following induced diagram 
\begin{equation}\label{tsv-final} \xymatrix{
\tsV_{\wp_k} \ar[d] \ar @{^{(}->}[r]  & \tsR_{\wp_k} \ar[d] \\
\sV \ar @{^{(}->}[r]  & \sR_\sF,
}
\end{equation}
where $\tsV_{\wp_k}$ is the proper transform of $\sV$  in  $\tsR_{\wp_k}$.


\bigskip
\centerline{\bf  The $\ell$-blowup in $(\fG_k)$}
\medskip

\begin{defn}\label{ell-set-k} 
Let $D_{\wp_k, L_{F_k}}$ be the $\fL$-divisor defined by
$L_{F_k}$ and $E_{\wp_k, \vt_k}$ be  the proper transform
of the exceptional divisor $E_{\vt, k}$ of $\tsR_\vt$ (or equivalently,
the exceptional divisor  $E_{\vt_{[k]}}$ of $\tsR_{\vt_{[k]}}$) in $\tsR_{\wp_k}$.
We call the set of the two divisors 
$$\chi_k=\{D_{\wp_k, L_{F_k}}, E_{\wp_k, \vt_k} \}$$
the $\ell$-set with respect to $L_{F_k}$ or just $\ell_k$-set.
We let $Z_{\chi_k}$ be the scheme-theoretic intersection
$$Z_{\chi_k} = D_{ \wp_k, L_{F_k}} \cap Y_{ \wp_k, \vt_k}$$
and call it $\ell$-center with respect to $L_{F_k}$ or just $\ell_k$-center.
\end{defn}

We then let
$$\tsR_{\ell_k} \lra \tsR_{\wp_k}$$
be the blowup of $\tsR_{\wp_k}$ along $Z_{\chi_k}$.
The is called the $\ell$-blowup with respect to $L_{F_k}$ or just $\ell_k$-blowup.

Recall that $\tsR_{\wp_k}$ comes equipped with the sets of
$\vr$-, $\vp$-, $\fL$-, and exceptional divisors.
For any such a divisor, we take its proper transform in $\tsR_{\ell_k}$ and let
it inherit its original name.

There is only one new divisor on $\tsR_{\ell_k}$:
we let $E_{\ell_k}$ be the exceptional divisor of the blowup 
$\tsR_{\ell_k} \lra \tsR_{\wp_k}$,  and call it the $\ell_k$-exceptional divisor.
(We comment here that this is not to be confused with the $\fL$-divisor
$D_{\ell_k, L_k}$.)

Now, we are ready to introduce the notion of $``$association$"$ in  $(\ell_k)$,
as required to carry on the process of induction.

\begin{defn} \label{ell-association} 
Fix any binomial $B \in \cB^\mn_G \sqcup B^\q$ with $G>F_k$, written as $B=T_B^+ -T_B^-$.
For any $\bF \in \sfm$, let $T_s$ be the term of $L_F$ corresponding to
some fixed $s \in S_F$.

As assumed, the notion of $``$association$"$ in  
$ (\wp_k)$ has been introduced. That is, for any divisor 
$Y'$ on $\tsR_{\wp_k}$ , the multiplicities
 $m_{Y', T^\pm_B}$, $m_{Y',s}$ have been defined.

Suppose a divisor $Y$ of  $\tsR_{\ell_k}$ is the proper transform 
of $Y'$ of  $\tsR_{\wp_k}$.
We define
$$ m_{Y, T^\pm_{B}} =  m_{Y', T^\pm_{B}}, \;\; m_{Y,s}= m_{Y', s}.$$
We define 
$$  m_{E_{\ell_k}, T^\pm_{B}} =  m_{E_{\wp_k, \vt_k}, T^\pm_{B}}, \;\;
 m_{E_{\ell_k},s}= m_{E_{\wp_k, \vt_k}, s}.$$

We say $Y$ is associated with  $T_B^\pm$ or $T_s$ if its multiplicity  
$m_{Y, T^\pm_B}$ or $m_{Y,s}$  is positive.
\end{defn}

Finally, we have the following diagram \begin{equation}\label{ell_k-schemes} \xymatrix{
\tsV_{\ell_k} \ar[d] \ar @{^{(}->}[r]  & \tsR_{\ell_k} \ar[d] \\
\sV_{\wp_k} \ar @{^{(}->}[r]  & \sR_{\wp_k},  }
\end{equation}
where $\tsV_{\ell_k} $ is the proper transform of $\sV$ 
in  $\tsR_{\ell_k} $.

When $k=\up$, we write $\tsR_{\ell}:= \tsR_{\ell_\up}$
and $\tsV_{\ell}:=\tsV_{\ell_\up}$. We obtain
\begin{equation}\label{final-schemes} \xymatrix{
\tsV_{\ell} \ar[d] \ar @{^{(}->}[r]  & \tsR_{\ell} \ar[d] \\
\sV \ar @{^{(}->}[r]  & \sR.  }
\end{equation}
The schemes $(\tsV_{\ell} \subset \tsR_{\ell})$ are the ones we aim to construct.

\medskip

For the use in induction in the next subsection, we set
$\tsR_{({\wp}_{(k1)}\fr_1\fs_{0})}:=\tsR_{\ell_{k-1}}$.
When $k=0$, we have 
$\tsR_{({\wp}_{(11)}\fr_1\fs_{0})}:=\tsR_{\ell_{-1}}:=\tsR_{\vt}$.

In the next two sections, we will state and prove  certain properties about 
$$\tsV_{\wp_{(k\tau)}\fr_\mu\fs_h} \subset \tsR_{\wp_{(k\tau)}\fr_\mu\fs_h},$$
 using induction on the indexes $(k\tau)\mu h$.
To include $(\tsV_{\ell_k} \subset \tsR_{\ell_k})$ in the statements and proofs,
 we let ${\ell_k}$ correspond to one more step after the last step of $\wp_k$-blowups,
namely, $\si_{(k \ft_{F_k})\rho_{k \ft_{F_k}}}+1$. In full writing, we set
$$\tsR_{({\wp}_{(k \ft_{F_k})}\fr_{\rho_{k \ft_{F_k}}}\fs_{(\si_{(k \ft_{F_k})\rho_{k \ft_{F_k}}}+1)})}
:=\tsR_{\ell_k}, \;\; 
\tsV_{({\wp}_{(k \ft_{F_k})}\fr_{\rho_{k \ft_{F_k}}}\fs_{(\si_{(k \ft_{F_k})\rho_{k \ft_{F_k}}}+1)})}
:=\tsV_{\ell_k}, $$
We also write 
$$\ell_k:=({\wp}_{(k \ft_{F_k})}\fr_{\rho_{k \ft_{F_k}}}
\fs_{(\si_{(k \ft_{F_k})\rho_{k \ft_{F_k}}}+1)}).$$

\subsection{Properties of $\wp$-blowups and $\ell$-blowup in $(\fG_k)$}\label{subs:prop-vs-blowups} 
$\ $

Fix and consider any 
$$(k\tau)\mu h \in
 \Index_{\Phi_{k}} \sqcup \{ ((k \ft_{F_k})\rho_{k \ft_{F_k}}
(\si_{(k \ft_{F_k})\rho_{k \ft_{F_k}}}+1))\}$$
 (cf. \eqref{indexing-Phi} for $\Index_{\Phi_{k}}$).

\begin{prop}\label{meaning-of-var-wp/ell} 
Suppose that  the scheme $\tsR_{({\wp}_{(k\tau)}\fr_\mu\fs_h)}$ has been constructed,
covered by a finite set of open subsets, called standard charts (see Definition \ref{general-standard-chart}).

Consider any standard chart $\fV$ of $\tsR_{({\wp}_{(k\tau)}\fr_\mu\fs_h)}$, 
 lying over a unique chart $\fV'$ of $\tsR_{({\wp}_{(k\tau)}\fr_\mu\fs_{h-1})}$,
 and a unique chart $ \fV_{[0]}$ of $\sR_\sF$.
 We suppose that the chart $ \fV_{[0]}$ is indexed by
$\La_\sfm^o=\{(\uv_{s_{F,o}},\uv_{s_{F,o}}) \mid \bF \in \sfm \}.$  
We set 
$$\hbox{$\II_{d,n}^*=\II_{d,n}\- \um$, $\La_\sfm^\star=\La_\sfm \- \La_\sfm^o$,
and $L_{\sfm, [k]}=\{L_{F_j} \mid j \in [k]\}$.}$$

Then, the scheme $\tsR_{({\wp}_{(k\tau)}\fr_\mu\fs_h)}$ has
a smooth open subset $ \tsR^\circ_{({\wp}_{(k\tau)}\fr_\mu\fs_h)}$
containing the subscheme $\tsV_{({\wp}_{(k\tau)}\fr_\mu\fs_h)}$.
By shrinking the open subsets, if necessary, we can assume that the blowup morphism
sends the open subset 
$ \tsR^\circ_{({\wp}_{(k\tau)}\fr_\mu\fs_h)}$
onto the open subset $\tsR^\circ_{({\wp}_{(k\tau)}\fr_\mu\fs_{h-1})}$.
In the sequel, by a standard chart of $ \tsR^\circ_{({\wp}_{(k\tau)}\fr_\mu\fs_h)}$,
we mean the intersection $\tsR^\circ_{({\wp}_{(k\tau)}\fr_\mu\fs_h)}\cap \fV$
where $\fV$ is a standard chart of $ \tsR_{({\wp}_{(k\tau)}\fr_\mu\fs_h)}$.

The chart $\fV$ of $ \tsR^\circ_{({\wp}_{(k\tau)}\fr_\mu\fs_h)}$ comes equipped with 
$$\hbox{a subset}\;\; \fe_\fV  \subset \II_{d,n}^* , \;\;
 \hbox{a subset} \;\; \fd_\fV  \subset \La_{\sfm}^\star, \;\;
 \hbox{and a subset} \;\;  \fl_\fV \subset  {L_{\sfm,[k]}}
 $$ and also
 $$\hbox{a subset} \;\; \fd_\fV^\lt=\{(\um,\uu_F) \mid L_F \in \fl_\fV\} \subset \fd_\fV$$ 
 such that every exceptional divisor $E$ 
(i.e., not a $\vp$- nor a $\vr$-divisor)  of $\tsR_{{\wp}_{(k\tau)}}$
with $E \cap \fV \ne \emptyset$ is 
either labeled by a unique element $\uw \in \fe_\fV$
 or labeled by a unique element $(\uu,\uv) \in \fd_\fV$ or
 labeled by a unique element $ L \in \fl_\fV$. 
We let $E_{ \ell_k, \uw}$ be the unique exceptional divisor 
on the chart $\fV$ labeled by $\uw \in \fe_\fV$; we call it an $\vp$-exceptional divisor.
We let $E_{\ell_k , (\uu,\uv)}$ be the unique exceptional divisor 
on the chart $\fV$ labeled by $(\uu,\uv) \in \fd_\fV$;  we call it an $\vr$-exceptional divisor.
 We let $E_{\ell_k, L}$ be the unique exceptional divisor 
on the chart $\fV$ labeled by $L \in \fl_\fV$; we call it an $\fl$-exceptional divisor.
(We note here that being $\vp$-exceptional or $\vr$-exceptional or $\fl$-exceptional is strictly relative to the given standard chart. Again, $E_{\ell_k, L}$ is not to be confused with
$D_{\ell_k, L}$, the $\fL$-divisor of $\tsR_{({\wp}_{(k\tau)}\fr_\mu\fs_h)}$.)

Further, the chart $\fV$ admits 
 a set of free variables
\begin{equation}\label{variables-wp/ell} 
\var_{\fV}:=\left\{ \begin{array}{ccccccc}
\ve_{\fV, \uw} , \;\; \de_{\fV, (\uu,\uv)} \\ 
y_{\fV,(\um, \uu_F)}        \\
x_{\fV, \uw} , \;\; x_{\fV, (\uu,\uv)} 
\end{array}
  \; \Bigg| \;
\begin{array}{ccccc}
 \uw \in  \fe_\fV,  \;\; (\uu,\uv)  \in \fd_\fV { \- \fd^\lt_\fV} \\
 (\um, \uu_F) { \in \fd^\lt_\fV}  \\    
\uw \in  \II_{d,n}^* \- \fe_\fV,  \;\; (\uu, \uv) \in \La_\sfm^\star \-  \fd_\fV  
\end{array} \right \}.
\end{equation}
such that { $y_{\fV,(\um, \uu_F)} $ are invertible on the chart}, 
and a set of exceptional variables for $\ell$-exceptional divisors
$$ \var_{\fl_\fV}=\{\de_{\fV,(\um, \uu_F)}  
\mid L_F \in \fl_\fV,
\; i.e., \; (\um, \uu_F) \in \fd^\lt_\fV\}.$$
 Furthermore, we also have a set of free variables
$$\var_\fV^\vee=(\{ y \in \var_\fV \} \- \{y_{(\um,\uu_F)} \mid L_F \in \fl_\fV \})
\sqcup \{ \de_{\fV,(\um, \uu_F)}  
 \mid L_F \in \fl_\fV  \}.$$

 We set
$$ \var_\fV^+=\var_\fV \sqcup \var_{\fl_\fV}.$$
Then, all the relations in $\cB_\fV^\mn,  \cB_\fV^\q,  \{L_{\fV, F} \mid  F \in \sfm\}$
are polynomials in $\var_\fV^+$.

 Moreover,  on the standard chart $\fV$, we have
\begin{enumerate}
\item the divisor  $X_{({\wp}_{(k\tau)}\fr_\mu\fs_h), \uw}\cap \fV$ 
 is defined by $(x_{\fV,\uw}=0)$ for every 
$\uw \in \II_{d,n}^* \- \fe_\fV$;
\item the divisor  $X_{({\wp}_{(k\tau)}\fr_\mu\fs_h), (\uu,\uv)}\cap \fV$ is defined by $(x_{\fV,(\uu,\uv)}=0)$ for every 
$(\uu,\uv) \in \La^\star_\sfm\- \fd_\fV$;
\item the divisor  $X_{({\wp}_{(k\tau)}\fr_\mu\fs_h), \uw}$ does not intersect the chart for all $\uw \in \fe_\fV$;
\item the divisor  $X_{({\wp}_{(k\tau)}\fr_\mu\fs_h), (\uu, \uv)}$ does not intersect the chart for all $ (\uu, \uv) \in \fd_\fV$;
\item the $\vp$-exceptional divisor 
$E_{({\wp}_{(k\tau)}\fr_\mu\fs_h), \uw} \;\! \cap  \fV$  labeled by an element $\uw \in \fe_\fV$
is define by  $(\ve_{\fV,  \uw}=0)$ 
for all $ \uw \in \fe_\fV$;
\item the $\vr$-exceptional divisor 
$E_{({\wp}_{(k\tau)}\fr_\mu\fs_h),  (\uu, \uv)}\cap \fV$ labeled by  an element $(\uu, \uv) \in \fd_\fV$
is define by  $(\de_{\fV,  (\uu, \uv)}=0)$ 
for all $ (\uu, \uv) \in \fd_\fV$;  
 \item the $\fl$-exceptional divisor 
$ E_{{  {  \ell_k }  },  L_F}\cap \fV$ labeled by  an element ${ L_F \in} \fl_\fV$
is define by  $({ \de_{\fV, (\um, \uu_F)}} =0)$ 
for all ${ L_F \in} \fl_\fV$;
\item any of the remaining {\rm exceptional} divisors
of $\tsR_{({\wp}_{(k\tau)}\fr_\mu\fs_h)}$ 
other than those that are labelled by some  $\uw \in \fe_\fV$ or $(\uu,\uv) \in \fd_\fV$ 
or $L \in \fl_\fV$ does not intersect the chart.
 \item Assume $({\wp}_{(k\tau)}\fr_\mu\fs_h)=\ell_k$. 
  
Suppose $\fV$ lies over the standard chart $(x_{(\um,\uu_{F_k})} \equiv 1)$ 
  of $\sR_\sF$. Then, $ L_{F_k} \notin \fl_\fV$, and  the $ \ell_k$-blowup is trivial. 
  
Suppose $\fV$ lies over the $\vr$-standard chart of 
$\tsR_{\vt_{[k]}}$. Then, $ L_{F_k} \in \fl_\fV$, and 
we can choose {\rm those} standard charts $\fV$ of
$\tsR^\circ_{\ell_k}$ such that they cover $\tsV_{\ell_k}$, and on any such chart $\fV$,
 we can express
\begin{equation} \nonumber 
L_{\fV ,F_k} 
=1+ \sgn(s_{F_k}) { y}_{\fV, (\um, \uu_{F_k})}
\end{equation} 
where ${ y}_{\fV, (\um, \uu_{F_k})} \in \var_\fV$ is  an invertible variable  in $\var_\fV$, 
labeled by  $(\um, \uu_{F_k})$.
\end{enumerate}
\end{prop}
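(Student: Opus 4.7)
The proof will proceed by induction on the index $(k\tau)\mu h$, ordered lexicographically, beginning from the base case $({\wp}_{(11)}\fr_1\fs_0)=\tsR_{\vt}$. For the base, all assertions (1)--(8) are supplied by Proposition \ref{meaning-of-var-vtk} upon declaring $\fl_\fV=\emptyset$ and $\fd^\lt_\fV=\emptyset$, while (9) is vacuous at this stage; the smooth open subset $\tsR^\circ_\vt$ may be taken to be the union of the preferred standard charts of Corollary \ref{no-(um,uu)}, which are affine spaces hence smooth, and by that corollary already cover $\tsV_\vt$.

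For the inductive step at a $\wp$-blowup, fix a standard chart $\fV'$ of $\tsR^\circ_{({\wp}_{(k\tau)}\fr_\mu\fs_{h-1})}$ equipped with the data $\var_{\fV'},\fe_{\fV'},\fd_{\fV'},\fl_{\fV'},\fd^\lt_{\fV'}$ supplied by the inductive hypothesis. The $\wp$-center $Z_{\phi_{(k\tau)\mu h}}=Y^+\cap Y^-$ is, on $\fV'$, the intersection of exactly two of the divisors described in items (1)--(7); by that description, each of $Y^+\cap\fV'$ and $Y^-\cap\fV'$ is cut out by a single element of $\var_{\fV'}^+$, so $Z_{\phi_{(k\tau)\mu h}}\cap\fV'$ is a smooth codimension-two coordinate subscheme. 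Consequently, the blowup is smooth over the smooth locus, and I will apply Proposition \ref{generalmeaning-of-variables} to the two standard charts lying over $\fV'$: on the chart where the defining variable $y^-$ of $Y^-$ becomes $y^+\xi_1$, the proper transform of $Y^-$ is cut out by the new coordinate $\xi_1$, the proper transform of $Y^+$ disappears, and the new exceptional divisor is $(y^+=0)$. I then update the index sets by transferring the label of the vanishing $Y^+$ to $\fe_\fV$, $\fd_\fV$, or $\fl_\fV$ (according to whether $Y^+$ was a $\vp$-, $\vr$-, or $\fl$-exceptional divisor), and identifying the new exceptional variable with $y^+$; the mirror analysis applies to the other chart. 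Items (1)--(8) for the new chart then follow directly from \eqref{general-blowup-formulas} combined with the inductive hypothesis, and $\tsR^\circ_{({\wp}_{(k\tau)}\fr_\mu\fs_h)}$ is taken to be the preimage of $\tsR^\circ_{({\wp}_{(k\tau)}\fr_\mu\fs_{h-1})}$ restricted to those charts that meet $\tsV_{({\wp}_{(k\tau)}\fr_\mu\fs_h)}$.

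To verify assertion (9) at the step $({\wp}_{(k\tau)}\fr_\mu\fs_h)=\ell_k$, I split into two cases. If $\fV$ lies over the standard chart $(x_{(\um,\uu_{F_k})}\equiv 1)$ of $\sR_\sF$, then the $\vt$-blowup in $(\vt_{[k]})$ was trivial on the chart (by Proposition \ref{meaning-of-var-vtk}), hence $E_{\wp_k,\vt_k}\cap\fV=\emptyset$, the $\ell_k$-center does not meet $\fV$, and the blowup is trivial, so $L_{F_k}\notin\fl_\fV$ as claimed. If instead $\fV$ lies over the $\vr$-standard chart of $\tsR_{\vt_{[k]}}$, then by Proposition \ref{eq-for-sV-vtk} and \eqref{linear-pl-vtk=0} the linearized $\pl$ relation takes the form $\sgn(s_{F_k})\de_{(\um,\uu_{F_k})}+\sum_{s\ne s_{F_k}}\sgn(s)x_{(\uu_s,\uv_s)}$ on the $\vr$-chart of $\tsR_{\vt_{[k]}}$; the $\wp_k$-blowups, by construction and by Definition \ref{defn:rhoktau} of $\rho_{(k\tau)}$, render each of the non-leading $\vr$-variables invertible along $\tsV_{\wp_k}$, whereupon the sum $\sum_{s\ne s_{F_k}}\sgn(s)x_{(\uu_s,\uv_s)}$ factors (on the charts covering $\tsV_{\wp_k}$) as a single invertible monomial. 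Blowing up $D_{\wp_k,L_{F_k}}\cap E_{\wp_k,\vt_k}$ on the chart where $D_{\wp_k,L_{F_k}}$ is absorbed into the new exceptional divisor and dividing the pulled-back $L_{F_k}$ by the exceptional variable yields exactly the normal form $1+\sgn(s_{F_k})y_{\fV,(\um,\uu_{F_k})}$.

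The principal obstacle will be the bookkeeping of labels: verifying that at every stage each $Y^\pm\in\phi$ is cut out, on any standard chart it meets, by a single element of $\var^+_{\fV'}$, so that the blowup is truly along a smooth complete intersection and \eqref{general-blowup-formulas} applies. This reduces, by the inductive hypothesis (1)--(7), to the assertion that every previously created exceptional divisor remains labeled by a single coordinate, which is precisely what the induction itself maintains; the mirror-chart analysis then produces the correct updated index sets. A secondary obstacle is assertion (9): one must certify not merely that the $\ell_k$-blowup separates the two divisors of the $\ell_k$-set, but that the resulting normal form coincides with $1+\sgn(s_{F_k})y_{\fV,(\um,\uu_{F_k})}$; this depends essentially on the finiteness of $\rho_{(k\tau)}$ and the explicit shape of the main binomials in $\cB^\mn_{F_k}$, which collectively force the non-leading part of $L_{F_k}$ to become a single invertible term along $\tsV_{\wp_k}$.
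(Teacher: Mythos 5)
Your overall architecture coincides with the paper's: induction on the lexicographically ordered index starting from $\tsR_{({\wp}_{(11)}\fr_1\fs_0)}=\tsR_\vt$ (where the statement is Proposition \ref{meaning-of-var-vtk} with $\fl_\fV=\emptyset$), smoothness of each blowup because the center is a coordinate subscheme on every chart it meets, the same case analysis transferring labels into $\fe_\fV$, $\fd_\fV$, $\fl_\fV$, and the same two-case split for (9). One minor imprecision in the $\wp$-step: you assert the two divisors of the $\wp$-set are cut out by elements of $\var^+_{\fV'}$, but $\var^+_{\fV'}$ is \emph{not} a free coordinate system (it contains both $y_{\fV',(\um,\uu_F)}$ and $\de_{\fV',(\um,\uu_F)}$, which are dependent). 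To conclude smoothness of the center you need the defining elements to lie in $\var^\vee_{\fV'}$; the paper gets this for free by observing that the $y_{\fV',(\um,\uu_F)}$ are invertible on the chart and hence cannot cut out a divisor meeting the center.

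The genuine gap is in your proof of (9) on the $\vr$-chart. You claim that after the $\wp_k$-blowups the non-leading part $L^\star_{F_k}=\sum_{s\in S_{F_k}\setminus s_{F_k}}\sgn(s)x_{(\uu_s,\uv_s)}$ ``factors as a single invertible monomial'' along $\tsV_{\wp_k}$, and you derive the normal form from that. This is false on two counts. First, the $\wp_k$-blowups make the main binomials \emph{terminate} (one term nonvanishing), which does not make every non-leading $\vr$-variable invertible; indeed the set $S_F^{\vr,\inc}$ of indices with $x_{(\uu_s,\uv_s)}(\bz_0)=0$ in Lemma \ref{max-minor} is generally nonempty. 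Second, and decisively, along $\tsV_{\wp_k}$ the relation $L_{F_k}=0$ forces $L^\star_{F_k}=-\sgn(s_{F_k})\,\de_{(\um,\uu_{F_k})}$, so $L^\star_{F_k}$ vanishes exactly on $E_{\wp_k,\vt_k}\cap\tsV_{\wp_k}$, which is the locus being blown up --- it cannot be invertible there. The correct mechanism is purely formal and does not require any invertibility of $L^\star_{F_k}$: one takes $(L^\star_{\fV',F_k},\,\de_{\fV',(\um,\uu_{F_k})})$ as generators of the $\ell_k$-ideal, substitutes $\de_{\fV',(\um,\uu_{F_k})}=L^\star_{\fV',F_k}\,\xi_1$ on the chart $(\xi_0\equiv 1)$, factors
$$L_{\fV',F_k}=L^\star_{\fV',F_k}\bigl(1+\sgn(s_{F_k})\,\xi_1\bigr),$$
divides by the exceptional factor $L^\star_{\fV',F_k}$ to obtain $L_{\fV,F_k}=1+\sgn(s_{F_k})\,\xi_1$, and only \emph{then} reads off the invertibility of $\xi_1=y_{\fV,(\um,\uu_{F_k})}$ along $\tsV_{\ell_k}$ from the relation $L_{\fV,F_k}=0$ itself (the other chart $(\xi_1\equiv 1)$ is discarded after checking $\xi_0$ is invertible there). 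You should replace your monomial-factorization argument with this computation.
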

\begin{proof} 
 We prove by induction on 
 $(k\tau)\mu h \in \{(11)10 \} \sqcup \Index_{\Phi_k} \sqcup \{\ell_k\}$,
 where $(11)10$ is the smallest element, $\ell_k$ is the largest,
 and the elements of $\Index_{\Phi_k}$ are ordered lexicographically
 (which coincides with the order of $\wp_k$-blowups).

For the initial case, the scheme is $\tsR_{({\wp}_{(11)}\fr_1\fs_0)}=\tsR_{\vt}$.
Then, this proposition is the same as Proposition \ref{meaning-of-var-vtk} with $k=\up$.
In this case, we set $\fl_\fV=\emptyset$. Then, one checks that the proposition holds.

We suppose that the statement holds over $\tsR_{({\wp}_{(k\tau)}\fr_\mu\fs_{h-1})}$
for some $(k\tau)\mu h \in \Index_{\Phi_k}  \sqcup \{\ell_k\}$.
(Recall that for the largest element of $\Index_{\Phi_k}$, if we add 1 to the index of the step, then, by convention, it corresponds to $\ell_k$.)

We now consider $\tsR_{({\wp}_{(k\tau)}\fr_\mu\fs_h)}$.

We have the embedding
$$\xymatrix{
\tsR_{({\wp}_{(k\tau)}\fr_\mu\fs_h)}\ar @{^{(}->}[r]  &
 \tsR_{({\wp}_{(k\tau)}\fr_\mu\fs_{h-1})} \times \PP_{\phi_{(k\tau)\mu h}},  }
$$ where $\PP_{\phi_{(k\tau)\mu h}}$ is the factor projective space. 
We let $\phi'_{(k\tau)\mu h}=\{Y'_0, Y'_1\}$ 
where $Y'_0, Y'_1$ are, respectively,
 the proper transforms in $\tsR_{({\wp}_{(k\tau)}\fr_\mu\fs_{h-1})}$ 
 of the two  divisors of the ${\wp}$-set $\phi_{(k\tau)\mu h}=\{Y^+,Y^-\}$
with $Y^\pm$ being associated with $T^\pm_{(k\tau)}$, or, in the case when
$({\wp}_{(k\tau)}\fr_\mu\fs_{h-1})=\wp_k$, $(Y'_0, Y'_1)=(D_{\wp_k, L_{F_k}}, E_{\wp_k,\vt_k})$.
In addition, we let $[\xi_0, \xi_1]$ 
 be the  homogenous coordinates of $\PP_{\phi_{(k\tau)\mu h}}$
 corresponding to $\{Y'_0, Y'_1\}$.

Let $\fV$ be any standard chart  of $\tsR_{({\wp}_{(k\tau)}\fr_\mu\fs_h)}$ that lies over a
unique standard chart $\fV'$ of  $\tsR_{({\wp}_{(k\tau)}\fr_\mu\fs_{h-1})}$ such that 
$\fV=(\fV' \times (\xi_i \equiv 1)) \cap \tsR_{({\wp}_{(k\tau)}\fr_\mu\fs_h)},$ for $i=0$ or 1. 

By assumption, the chart $\fV'$ comes equipped with
a subset $\fe_{\fV'} \subset \II_{d,n}\- \um$, a subset  $\fd_{\fV'} \subset \La^\star_\sfm$,
and admits a set of free variables
\begin{equation}\label{variables-wp/ell} 
\var_{{\fV'}}:=\left\{ \begin{array}{ccccccc}
\ve_{{\fV'}, \uw} , \;\; \de_{{\fV'}, (\uu,\uv)} \\ 
y_{{\fV'},(\um, \uu_F)}        \\
x_{{\fV'}, \uw} , \;\; x_{{\fV'}, (\uu,\uv)} 
\end{array}
  \; \Bigg| \;
\begin{array}{ccccc}
 \uw \in  \fe_{\fV'},  \;\; (\uu,\uv)  \in \fd_{\fV'} { \- \fd^\lt_{\fV'}} \\
 (\um, \uu_F) { \in \fd^\lt_{\fV'}}  \\    
\uw \in  \II_{d,n}^* \- \fe_{\fV'},  \;\; (\uu, \uv) \in \La_\sfm^\star \-  \fd_{\fV'}  
\end{array} \right \},
\end{equation}
such that { $y_{{\fV'},(\um, \uu_F)} $ are invertible on the chart}, 
a set of exceptional variables for $\ell$-exceptional divisors
$$ \var_{\fl_{\fV'}}=\{\de_{{\fV'},(\um, \uu_F)}                 
\mid L_F \in \fl_{\fV'},
\; i.e., \; (\um, \uu_F) \in \fd^\lt_{\fV'}\},$$
and a set of free variables
$$\var_{\fV'}^\vee=(\{ y \in \var_{\fV'} \} \- \{y_{(\um,\uu_F)} \mid L_F \in \fl_{\fV'} \})
\sqcup \{ \de_{{\fV'},(\um, \uu_F)}                        
\mid L_F \in \fl_{\fV'}  \},$$
all together verifying the properties (1)-(9) as in the proposition.


First, we prove that $\tsR_{({\wp}_{(k\tau)}\fr_\mu\fs_h)}$ is smooth along 
$\tsV_{({\wp}_{(k\tau)}\fr_\mu\fs_h)}$.

We only need to focus on the situation when
$Z'_{\phi_{(k\tau) \mu h}}$
meets $\tsV_{({\wp}_{(k\tau)}\fr_\mu\fs_h)}\cap \fV'$ along a nonempty closed subset.

On the chart $\fV'$, by  assumption, we have
\begin{equation}\label{YY01-wp}
Y'_0 \cap \fV' =(y'_0 =0), \; Y'_1 \cap \fV' =(y'_1 =0), 
 \;\; \hbox{for some $y'_0, y'_1 \in  \var^+_{\fV'}$.} 
 \end{equation} 
 
 Now, because ${ y}_{\fV, (\um, \uu_{F_j})} \in \var_\fV$ is  an invertible variable  in $\var_{\fV'}$
 for all $L_{F_j} \in \fl_{\fV'}$,  we see that 
 we must have $y'_0, y'_1 \in  \var^\vee_{\fV'}$. Then, 
it is immediate that the blowup  center 
on the chart $\fV'$ is smooth, hence,
 $\pi^{-1}(\fV')$, as the blowup of  $ \fV'$ along the smooth center,  is smooth, so is $\fV$.

We now prove the remaining statements for the chart $\fV$ of $\tsR_{({\wp}_{(k\tau)}\fr_\mu\fs_h)}$.

First, we suppose that the proper transform $Z'_{\phi_{(k\tau) \mu h}}$ 
in $\tsR_{({\wp}_{(k\tau)}\fr_\mu\fs_{h-1})}$
of the $\wp$- or the $\ell_k$-center $Z_{\phi_{(k\tau) \mu h}}$ does not
meet the chart $\fV'$.  
(If $(k\tau) \mu h$ corresponds to $\ell_k$, we let $\phi_{(k\tau) \mu h}:=\chi_k$.)
Then, we let $\fV$ inherit all the data from those of $\fV'$, that is,
we set $\fe_{\fV}=\fe_{\fV'}$, $\fd_{\fV}=\fd_{\fV'}$, $\fl_{\fV}=\fl_{\fV'}$, $\var_\fV= \var_{\fV'}$
$\var_{\fl_{\fV'}}= \var_{\fl_{\fV'}}$, and $\var_\fV^+= \var_{\fV'}^+$:
changing the subindex $``\ \fV' \ "$ for all the variables in $\var_{\fV'}^+$
to $``\ \fV \ "$.
As the $\wp$-blowup  along the proper transform of $Z_{({\wp}_{(k\tau)}\fr_\mu\fs_h)}$
does not affect the chart $\fV'$, one sees that 
the statements of the proposition hold for $\fV$.

Next, we suppose that 
$Z'_{\phi_{(k\tau) \mu h}}$
meets the chart $\fV'$ along a nonempty closed subset. 

Then, the chart  $\fV=(\fV' \times (\xi_i \equiv 1)) \cap  \tsR^\circ_{({\wp}_{(k\tau)}\fr_\mu\fs_h)}$ 
of the scheme $ \tsR^\circ_{({\wp}_{(k\tau)}\fr_\mu\fs_{h})}$, as a closed subscheme of  
$\fV' \times (\xi_i \equiv 1),$ is defined by
\begin{equation}\label{blowup-relation:wp/ell}
y'_j = y'_i \xi_j, \;  \hbox{ with $j  \in \{0, 1\} \- i$}.             
\end{equation}

There are { six} possibilities for
$Y'_i \cap \fV'$ according to 
the types of the variable $y_i'$.
Based on every of such possibilities, 
we set 
\begin{equation}\label{proof:de-fv-vskmu}
\left\{ 
\begin{array}{lccr}
\fe_{\fV}=\fe_{\fV'} \sqcup \uw, \; \fd_{\fV}= \fd_{\fV'}, \; \fl_\fV=\fl_{\fV'},
&  \hbox{if} \; y'_i=x_{\fV', \uw}\; \hbox{for some} \;\; \uw \in \II_{d,n}^* \- \fe_{\fV'}\\
\fe_{\fV}=\fe_{\fV'}, \; \fd_{\fV}= \fd_{\fV'} ,\; \fl_\fV=\fl_{\fV'}, &\hbox{if} \;    y'_i=\ve_{\fV', \uw} \; \hbox{for some} \;\; \uw \in  \fe_{\fV'}\\
\fd_{\fV}=\fd_{\fV'}\sqcup (\uu,\uv), \; \fe_{\fV}= \fe_{\fV'}, \; \fl_\fV=\fl_{\fV'},& \hbox{if} \; 
y'_i=x_{\fV', (\uu,\uv)}\; \hbox{for some}  \;\; (\uu,\uv) \in  \La_{d,n}^*  \\
\fd_{\fV}=\fd_{\fV'}, \; \fe_{\fV}= \fe_{\fV'}, \; \fl_\fV=\fl_{\fV'},& \hbox{if} \; 
y'_i=\de_{\fV', (\uu,\uv)}\; \hbox{for some}  \;\; (\uu,\uv) \in  \fd_{\fV'} \-\fd^\lt_{\fV'} \\
\end{array} \right.
\end{equation}
For the fifth possibility, we let
$$ \fl_\fV=\fl_{\fV'}, \; \fd_{\fV}=\fd_{\fV'}, \; \fe_{\fV}= \fe_{\fV'} , \;\; \hbox{if} \; 
y'_i=\de_{\fV', (\um,\uu_F)}\; \hbox{for some}  \;\; L_F \in  \fl_{\fV'} .$$
For the sixth possibility, 
we let 
$$\hbox{$\fl_\fV=\fl_{\fV'} \sqcup L_{F_k}, \; \fd_{\fV}=\fd_{\fV'}, \; \fe_{\fV}= \fe_{\fV'} $, \;\;
if $(y_i'=0)$ defines  $D_{\wp_k, L_{F_k}}\cap \fV'$.}$$
(This last case corresponds the the case of the $\ell$-blowup with respect to $L_{F_k}$.)

Accordingly, we introduce 
\begin{equation}\label{proof:new-ex-fv-vskmu}
\left\{ 
\begin{array}{lccr}
\ve_{\fV, \uw}=y'_i, \; 
&  \hbox{if} \; y'_i=x_{\fV', \uw}\; \hbox{for some} \;\; \uw \in \fe_\fV \- \fe_{\fV'}\\
\ve_{\fV, \uw}=y'_i  , &\hbox{if} \;    y'_i=\ve_{\fV', \uw} \; \hbox{for some} \;\; \uw \in  \fe_{\fV'}=\fe_{\fV}\\
\de_{\fV, (\uu,\uv)}=y'_i,& \hbox{if} \; 
y'_i=x_{\fV', (\uu,\uv)}\; \hbox{for some}  \;\; (\uu,\uv) \in  \La_{d,n}^*  \\
\de_{\fV, (\uu,\uv)}=y'_i, & \hbox{if} \; 
y'_i=\de_{\fV', (\uu,\uv)}\; \hbox{for some}  \;\; (\uu,\uv) \in  \fd_{\fV'} \- \fd^\lt_{\fV'} \\
\end{array} \right.
\end{equation}
$$\hbox{$\de_{\fV, (\um, \uu_F)}=y'_i,$  \; if $y'_i=\de_{\fV', (\um, \uu_F)}$, 
for some  $L_F \in  \fl_{\fV'}$}$$ 
$$\hbox{$\de_{\fV, (\um, \uu_{F_k})}=y_i'$, \;\; if $(y_i'=0)$ defines  $D_{\wp_k, L_{F_k}}\cap \fV'$.}$$
(The last case corresponds  the $\ell$-blowup with respect to $L_{F_k}$.)

This defines the exceptional variable for the blowup.


To introduce the variable corresponding to $j  \in \{0, 1\} \- i$,
we then set
\begin{equation}\label{proof:var-xi-fv-vskmu}
\left\{ 
\begin{array}{lcr}
x_{\fV,\ua}=\xi_j, &   \hbox{if $y'_j=x_{\fV', \ua} $}\\
\ve_{\fV, \ua}=\xi_j, &  \hbox{if $y'_j= \ve_{\fV', \ua}$} \\
x_{\fV, (\ua, \ub)}= \xi_j, & \;\;\;\; \hbox{if $y'_j= x_{\fV', (\ua, \ub)}$} \\
\de_{\fV, (\ua, \ub)}= \xi_j, & \;\;\;\; \hbox{if $y'_j= \de_{\fV', (\ua, \ub)}$}. 
\end{array} \right.
\end{equation}
Also, we let $\de_{\fV, (\um,\uu_F)}=\xi_j,$ 
if $y'_j= \de_{\fV', (\um,\uu_F)}$ for some $L_F \in \fl_{\fV'}$.

Thus, we have introduced $y'_i,  \xi_j \in \var^+_\fV$ where $y'_i$, and  $\xi_j$
are endowed with the new names as in \eqref{proof:new-ex-fv-vskmu}, and in 
\eqref{proof:var-xi-fv-vskmu},  respectively.

Next, we define the set 
$\var_\fV \- \{y'_i, \xi_j\}$ 
to consist of the following variables:
\begin{equation}\label{proof:var-fv-vskmu}
\left\{ 
\begin{array}{lccr}
x_{\fV,\uw}=x_{\fV', \uw}, &  \forall \;\; \uw \in \II_{d,n}^*\- \fe_\fV \;\; \hbox{and $x_{\fV', \uw} \ne y_i',y'_j$}\\
x_{\fV, (\uu, \uv)}=x_{\fV', (\uu, \uv)}, & \;\; \forall \;\; 
(\uu, \uv) \in \La_\sfm^\star \- \fd_{\fV} \;\; \hbox{and $x_{\fV', (\uu,\uv)} \ne y_i', y'_j$} \\
\ve_{\fV, \uw}= \ve_{\fV', \uw}, & \forall \;\; \uw \in \fe_\fV \;\; \hbox{and $\ve_{\fV', \uw} \ne y'_i$},  y'_j \\
\de_{\fV, (\uu, \uv)}= \de_{\fV', (\uu, \uv)}, &  \;\; \forall \;\; (\uu, \uv) \in \fd_{\fV} \;\; \hbox{and $\de_{\fV', (\uu, \uv)} \ne y'_i$}, y'_j \\
y_{\fV, (\uu, \uu_F)}= y_{\fV', (\uu, \uu_F)}, &  \;\; \forall \;\; L_F \in \fl_{\fV}. 
\end{array} \right.
\end{equation}

We let $\var_\fV$ be the set of
 the variables in  \eqref{proof:new-ex-fv-vskmu},
 \eqref{proof:var-xi-fv-vskmu}, and \eqref{proof:var-fv-vskmu}.
Substituting \eqref{blowup-relation:wp/ell},
 one sees that $\var_\fV$ is a set of free variables on the open chart $\fV$.
 This describes \eqref{variables-wp/ell} in the proposition.
 
 We then let $\var_{\fl_\fV}=\{\de_{\fV, (\um,\uu_F) } \mid L_F \in \fl_\fV\},$
 and obtain 
 $$\var_\fV^\vee=(\{ y \in \var_\fV \} \- \{y_{(\um,\uu_F)} \mid L_F \in \fl_\fV \})
\sqcup \{ \de_{\fV,(\um, \uu_F)}  
 \mid L_F \in \fl_\fV  \}.$$
 One sees that $\var_\fV^\vee$ is also a set of free variables on the open chart $\fV$.

 We set $\var_\fV^+=\var_\fV \sqcup \var_{\fl_\fV}.$
By substituting \eqref{blowup-relation:wp/ell} and taking proper transforms,
 one sees that all the relations in $\cB_\fV^\mn,  \cB_\fV^\q, L_{\fV, \sfm}$
are polynomials in $\var_\fV^+$.

 Now, it remains to verity (1)-(9) of the proposition on the chart $\fV$.

First, consider  the unique new exceptional divisor 
$E_{({\wp}_{(k\tau)}\fr_\mu\fs_h)}$
created by the blowup
$\tsR_{({\wp}_{(k\tau)}\fr_\mu\fs_h)} \lra \tsR_{({\wp}_{(k\tau)}\fr_\mu\fs_{h-1})}. $ 
Then, we have
$$ E_{({\wp}_{(k\tau)}\fr_\mu\fs_h)} \cap \fV = (y'_i=0)$$
where $y'_i$ is renamed as in \eqref{proof:new-ex-fv-vskmu} and 
in the  sentence immediately following it.
This way, the new exceptional divisor $E_{({\wp}_{(k\tau)}\fr_\mu\fs_h)}$
 is labelled on the chart $\fV$.
Further, we have that the proper transform of $Y'_i$ in
 $\tsR_{({\wp}_{(k\tau)}\fr_\mu\fs_h)}$ 
 does not meet the chart $\fV$, and
if $Y'_i$ is an exceptional parameter labeled by some element of 
$\fe_{\fV'} \sqcup \fd_{\fV'} \sqcup \fl_{\fV'}$, 
 then, on the chart $\fV$, its proper transform is no longer labelled by that element.
 This verifies the cases of (3)-(7) {\it whenever the statement therein involves
  the newly created exceptional divisor 
 $E_{({\wp}_{(k\tau)}\fr_\mu\fs_h)}$.}
  
For any of the remaining $\vp$-, $\vr$-, 
and exceptional divisors on $\tsR_{({\wp}_{(k\tau)}\fr_\mu\fs_h)}$,
it is the proper transform of a unique corresponding
 $\vp$-, $\vr$-, and exceptional divisor on $\tsR_{({\wp}_{(k\tau)}\fr_\mu\fs_{h-1})}$. 
 Hence,  by applying   the inductive assumption on $\fV'$ accordingly, 
 we conclude that
every of  (1)-(8) of the proposition hold on $\fV$.
 
 (9)

 Assume ${\wp}_{(k\tau)}\fr_\mu\fs_h=\ell_k$.

By the inductive assumption, the statement holds for all $j<k$.
We only need to consider $L_{F_k}$.

Assume $\fV'$ lies over $(x_{(\um,\uu_{F_k})}\equiv 1)$, then 
$Z_{\chi_k} \cap \fV = \emptyset$ because $E_{\wp_k, \vt_k} \cap \fV = \emptyset$, 
where $Z_{\chi_k}$ is  the $  \ell_k$-center. Hence, the statement holds.

Assume $\fV'$ lies over the $\vr$-standard chart of $\tsR_{\vt_{[k]}}$ (hence,
so does $\fV$). Notice that the variable $\de_{\fV', (\um, \uu_F)}$ never appears
in any relation of the blocks $\fG_{\fV', F_j}$ for all $j \le k$, except $L_{\fV', F_k}$.
Hence, by  Proposition \ref{eq-for-sV-vtk} 
  and the inductive assumption, $L_{\fV', F_k}$ takes the following form
$$L_{\fV', F_k}=\sgn(s_{F_k})\de_{\fV', (\um, \uu_F)} 
+ \sum_{s \in  S_{F_k} \- s_{F_k}} \sgn(s) 
\pi_{\fV', \fV_{[0]}}^* x_{\fV', (\uu_s, \uv_s)} .$$

Then, one sees from the definition that the $\ell_k$-blowup ideal on the chart $\fV'$ is
$$\langle L_{\fV', F_k}^\star, \; \de_{\fV', (\um, \uu_F)}  \rangle $$
where $L_{\fV', F_k}^\star=\sum_{s \in  S_{F_k} \- s_{F_k}} \sgn(s) 
\pi_{\fV', \fV_{[0]}}^* x_{\fV', (\uu_s, \uv_s)}$.
We let $\PP_{[\xi_0, \xi_1]}$ be the factor projective space of the $ \ell_k$-blowup
such that $(\xi_0, \xi_1)$ corresponds to $(L_{\fV', F_k}^\star, \; \de_{\fV', (\um, \uu_F)})$.

 First, we consider the chart $(\xi_ 0\equiv 1)$.

Then,   $\fV=(\fV' \times (\xi_0 \equiv 1)) \cap \tsR_{\ell_k}$, 
as a closed subscheme of  
$\fV' \times (\xi_0 \equiv 1),$ is defined by
\begin{equation}\label{proof:x0-chart-ell}
\de_{\fV', (\um, \uu_{F_k})} = L_{\fV', F_k}^\star \cdot \xi_1.
\end{equation}
Notice that on this chart, we have
$$E_{\ell_k} \cap \fV =  (L_{\fV', F_k}^\star  =0)$$
where $E_{\ell_k}$ is the exceptional divisor created by the blowup $\tsR_{\ell_k} \to \tsR_{\wp_k}$.
Upon substituting \eqref{proof:x0-chart-ell}, we obtain
$$L_{\fV', {F_k}}=L_{\fV', F_k}^\star (1 +  \sgn(s_{F_k}) \xi_1),$$ 
and 
\begin{equation}\label{for-isom}
L_{\fV, {F_k}}= 1 + \sgn(s_{F_k}) \xi_1 .
\end{equation}
This implies that $\xi_1$ is invertible along $\tsV_{\ell_k}$.
Thus, if necessary, we can shrink the chart $\fV$ such that it still contains
 $\tsV_{\ell_k}$, and assume that $\xi_1$ is invertible on $\fV$.
Now, as $\xi_1$ is the proper transform 
$y_{\fV, (\um, \uu_{F_k})}$  of $\de_{\fV', (\um, \uu_{F_k})}$,
we can also write
$$L_{\fV, {F_k}}= 1 + \sgn(s_{F_k}) y_{\fV, (\um, \uu_{F_k})} $$ 
with  $y_{\fV, (\um, \uu_{F_k})}$ being invertible on the chart.
Thus, we obtain the desired form of $L_{\fV, F_k}$ as stated in (9).

Finally, observe that by \eqref{proof:x0-chart-ell}, 
as $\xi_1$ is invertible on the chart, we have
$$E_{\ell_k}  \cap \fV=(L_{\fV', F_k}^\star=0)  = (\de_{\fV', (\um, \uu_{F_k})} =0)$$
with $L_{F_k} \in \fl_\fV$.

 \medskip
Next, we  consider 
 the chart $(\xi_1 \equiv 1)$. 
 
 Then, the chart  $\fV=(\fV' \times (\xi_1 \equiv 1)) \cap \tsR_{\ell_k}$ 
of the scheme $\tsR_{\ell_k}$, as a closed subscheme of  
$\fV' \times (\xi_1\equiv 1),$ is defined by
\begin{equation} \label{proof:wp/ell-t0}
L_{\fV', F_k}^\star = \de_{\fV', (\um, \uu_{F_k})} \xi_0.
\end{equation}
By substitution, we obtain 
$$L_{\fV', F_k}=\de_{\fV', (\um, \uu_{F_k})} (\xi_0 +  \sgn(s_{F_k}) )$$
and
\begin{equation} 
\nonumber
L_{\fV, F_k}=\xi_0 +   \sgn(s_{F_k}).
\end{equation}
Then, this implies that $\xi_0$ is invertible along $\fV \cap \tsV_{\ell_k}$.
Hence, again, if necessary, by shrinking the chart $\fV$ 
such that it still contains $\fV \cap \tsV_{\ell_k}$, 
we can assume that $\xi_0$ is invertible on the chart.
Therefore, we can discard the current chart and switch back to the chart lying
over $(\xi_0 \equiv 1)$.  This sends us back to the previous case
where the statement is proved.


By Corollary \ref{no-(um,uu)} and the above discussion,
 the charts chosen in (9) together cover  the scheme $\tsV_{\ell_k}$.

 This completes the proof.
\end{proof}

\begin{defn}\label{preferred-chart-ell}
A standard chart of $\tsR_{\ell_{k}}$ as characterized by
Proposition \ref{meaning-of-var-wp/ell}  (9) 
is called a preferred standard chart.
\end{defn}

\begin{cor}\label{ell-isom}
We let $$\rho_{\ell_k, \wp_k}: \tsV_{\ell_{k}} \lra \tsV_{\wp_{k}}$$
be the morphism
 induced from the blowup morphism $\rho_{\ell_k, \wp_k}: \tsR_{\ell_{k}} \lra \tsR_{\wp_{k}}$.
 Then, $\rho_{\ell_k, \wp_k}$ is an isomorphism.
\end{cor}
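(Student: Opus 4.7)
The plan is to verify the isomorphism locally on $\tsR_{\wp_k}$, using the explicit chart description in Proposition \ref{meaning-of-var-wp/ell} (9). Because $\rho_{\ell_k,\wp_k}:\tsR_{\ell_k}\to\tsR_{\wp_k}$ is proper and birational, it suffices to show that its restriction to $\tsV_{\wp_k}$ is a local isomorphism on a covering family of standard charts of $\tsR_{\wp_k}$.

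First, I would dispose of the trivial case. If $\fV'$ is a standard chart of $\tsR_{\wp_k}$ lying over the chart $(x_{(\um,\uu_{F_k})}\equiv 1)$ of $\sR_\sF$, then by Proposition \ref{meaning-of-var-wp/ell} (9) one has $L_{F_k}\notin\fl_{\fV'}$ and the $\ell_k$-center $Z_{\chi_k}$ does not meet $\fV'$ (since $E_{\wp_k,\vt_k}\cap\fV'=\emptyset$). Hence $\rho_{\ell_k,\wp_k}$ is an isomorphism over $\fV'$ already at the level of ambient schemes. By Corollary \ref{no-(um,uu)}, it remains to consider a preferred standard chart $\fV'$ lying over the $\vr$-standard chart of $\tsR_{\vt_{[k]}}$.

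On such a chart, Proposition \ref{meaning-of-var-wp/ell} (9) gives the decomposition
\[
L_{\fV',F_k}=\sgn(s_{F_k})\,\de_{\fV',(\um,\uu_{F_k})}+L^{\star}_{\fV',F_k},
\]
and the $\ell_k$-blowup ideal on $\fV'$ is $\langle L^{\star}_{\fV',F_k},\,\de_{\fV',(\um,\uu_{F_k})}\rangle$. The key point is that $L_{F_k}$ is one of the defining equations of $\sV$ (Corollary \ref{eq-tA-for-sV}), so $L_{\fV',F_k}$ vanishes identically on $\tsV_{\wp_k}\cap\fV'$. Therefore, on $\tsV_{\wp_k}\cap\fV'$ the relation
\[
\de_{\fV',(\um,\uu_{F_k})}\equiv -\sgn(s_{F_k})\,L^{\star}_{\fV',F_k}
\]
holds, and the pullback of the blowup ideal $\langle L^{\star},\de\rangle$ to $\tsV_{\wp_k}\cap\fV'$ is principal (generated by $L^{\star}_{\fV',F_k}$, equivalently by $\de_{\fV',(\um,\uu_{F_k})}$). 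Since the blowup of a scheme along a locally principal ideal is an isomorphism, the strict transform $\tsV_{\ell_k}$ maps isomorphically to $\tsV_{\wp_k}$ over $\fV'$.

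As a cross-check (and to reconcile with the chart-level statement in (9)), I would look at the two charts of $\tsR_{\ell_k}$ above $\fV'$: the chart $(\xi_0\equiv 1)$ gives $L_{\fV,F_k}=1+\sgn(s_{F_k})\,y_{\fV,(\um,\uu_{F_k})}$, forcing $y_{\fV,(\um,\uu_{F_k})}=-\sgn(s_{F_k})$ along $\tsV_{\ell_k}$; the chart $(\xi_1\equiv 1)$ gives $L_{\fV,F_k}=\xi_0+\sgn(s_{F_k})$, forcing $\xi_0=-\sgn(s_{F_k})$ along $\tsV_{\ell_k}$. In either chart the extra blowup parameter is fixed to a constant along $\tsV_{\ell_k}$, confirming that $\tsV_{\ell_k}$ is, locally, the graph of the function $\de=-\sgn(s_{F_k})L^{\star}$ over $\tsV_{\wp_k}$. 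The main obstacle, which is really just a bookkeeping point, is to check that the charts of $\tsR_{\ell_k}$ chosen in Proposition \ref{meaning-of-var-wp/ell}(9) actually cover $\tsV_{\ell_k}$; but this is built into the statement of (9) itself, so the local isomorphisms glue to the global isomorphism $\rho_{\ell_k,\wp_k}:\tsV_{\ell_k}\xrightarrow{\sim}\tsV_{\wp_k}$.
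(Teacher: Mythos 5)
Your proof is correct and takes essentially the same route as the paper: both reduce to the chart computation of Proposition \ref{meaning-of-var-wp/ell}(9), where $L_{\fV,F_k}=1+\sgn(s_{F_k})\xi_1$ forces the new blowup coordinate to be the constant $-\sgn(s_{F_k})$ along the proper transform, so that $\tsV_{\ell_k}\to\tsV_{\wp_k}$ is an isomorphism chart by chart. Your repackaging of this via the observation that $L_{F_k}\equiv 0$ on $\sV$ makes the restricted blowup ideal $\langle L^\star,\de\rangle|_{\tsV_{\wp_k}}$ principal (hence invertible, since $\tsV_{\wp_k}$ is integral and not contained in $E_{\wp_k,\vt_k}$) is a clean gloss on the same computation, not a different argument.
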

\begin{proof} 
We continue  to use the notation of  the proof of Proposition \ref{meaning-of-var-wp/ell}  (9).
We can cover  $\tsV_{\ell_{k}}$ by preferred standard charts.
Let $\fV$ be any such a chart such that $Z_{\chi_k} \cap \fV' \ne \emptyset$.
Then, by \eqref{for-isom}, we have
$$L_{\fV, {F_k}}= 1 + \sgn(s_{F_k}) \xi_1 .$$
Then, $\xi_1=-\sgn(s_{F_k})$. This implies that 
$$\rho_{\ell_k, \wp_k}^{-1}(\tsV_{\wp_{k}}) =  \tsV_{\wp_{k}} \times [1, -\sgn(s_{F_k})].$$
Hence, the morphisms 
$\rho_{\ell_k, \wp_k}$ is an isomorphism.
\end{proof}

We remark here that we obtain our final scheme $\tsV_\ell$ from $\sV$ by 
suquentially blowing up the embient space $\sR_\sF$, and then take the induced
sequential blowups of $\sV$. If we do not
perform  $\ell$-blowups, we will obtain different final schemes and would not suffice
for our purpose, in general.

\subsection{Proper transforms of 
defining relations in $(\wp_{(k\tau)}\fr_\mu\fs_h)$ and in $(\ell_k)$}   $\ $

  Consider any fixed $B \in \cB^\mn  \cup \cB^\q$ and $\bF \in \sfm$.
Suppose $B_{\fV'}$ and $L_{\fV', F}$ have been constructed over $\fV'$.
Applying Definition \ref{general-proper-transforms}, we obtain the proper transforms on the chart $\fV$
$$B_{\fV}, \; B \in \cB^\mn  \cup \cB^\q; \;\; L_{\fV, F}, \; \bF \in \sfm.$$

\begin{defn}\label{termninatingB-wp} 
{\rm  (cf. Definition \ref{general-termination})} 
Consider any main binomial relation $B \in \cB^\mn$. 
Let $\fV$ be a standard chart of $\tsR_{({\wp}_{(k\tau)}\fr_\mu\fs_h)}$ (including $\tsR_{\ell_k}$)
and $\bz \in \fV$ be a closed point.
We say that $B$ terminates at $\bz$ if (at least) one of its two terms 
of $B_{\fV}$ does not vanish at $\bz$.
 We say $B$ terminates on the chart $\fV$ 
 if it terminates at all closed points of $ \tsV_{({\wp}_{(k\tau)}\fr_\mu\fs_h)} \cap \fV$.
 We say $B$ terminates on  $\tsR_{({\wp}_{(k\tau)}\fr_\mu\fs_h)}$ if it terminates on
 all standard charts $\fV$ of  $\tsR_{({\wp}_{(k\tau)}\fr_\mu\fs_h)}$.
 \end{defn}

In the sequel, for any $B =T^+_B - T^-_B \in \cB^\mn$, we express
$B_\fV= T^+_{\fV, B} - T^-_{\fV, B}$. 
If $B=B_{(k\tau)}$ for some $k \in [\up]$ and $\tau \in [\ft_{F_k}]$, we also write 
$$B_\fV= T^+_{\fV, (k\tau)} - T^-_{\fV, (k\tau)}.$$

Below, we follow the notations of Proposition \ref{meaning-of-var-wp/ell} as well as those in its proof. 

In particular, we have that $\fV$ is a standard chart of $\tsR_{({\wp}_{(k\tau)}\fr_\mu\fs_h)}$
(including $\tsR_{\ell_k}$),
 lying over a standard chart $\fV'$ of $\tsR_{({\wp}_{(k\tau)}\fr_\mu\fs_{h-1})}$. We have that
$\phi'_{(k\tau)\mu h}=\{Y'_0, \;Y'_1\}$  is the proper transforms of $\phi_{(k\tau)\mu h}=\{Y^+, Y^-\}$
 in $\tsR_{({\wp}_{(k\tau)}\fr_\mu\fs_{h-1})}$ with $Y^\pm$ being associated with $T^\pm_{(k\tau)}$
 or $\phi_{(k\tau)\mu h}=\chi_k$ in which case $Y_0'$ is the $\fL$-divisor $D_{\wp, L_{F_k}}$.
 Likewise, $Z'_{\phi_{(k\tau)\mu h}}$ is the proper transforms of the ${\wp}$-center
 $Z_{\phi_{(k\tau)\mu h}}$ or is the $\ell_k$-center 
   in $\tsR_{({\wp}_{(k\tau)}\fr_\mu\fs_{h-1})}$.
 Also, assuming that $Z'_{\phi_{(k\tau)\mu h}} \cap \fV'  \ne \emptyset$,
then, as in \eqref{YY01-wp}, we have
 $$Y'_0 \cap \fV' =(y'_0 =0), \; Y'_1 \cap \fV' =(y'_1 =0), 
 \;\; \hbox{with $y'_0, y'_1 \in \var_{\fV'}^+$} $$
Further, we have $\PP_{\phi_{(k\tau)\mu h}}=\PP_{[\xi_0,\xi_1]}$ with 
 the homogeneous coordinates  $[\xi_0,\xi_1]$ corresponding to $(y'_0,y'_1)$.

\begin{prop}\label{equas-wp/ell-kmuh} 
Let the notation be as in Proposition \ref{meaning-of-var-wp/ell} and be as in above.

Let $\fV$ be any standard chart of $\tsR_{(\wp_{(k\tau)}\fr_\mu\fs_h)}$. Then, the scheme 
$\tsV_{({\wp}_{(k\tau)}\fr_\mu\fs_h)}\cap \fV$, as a closed subscheme of the chart $\fV$ 
 is defined by $$\cB_\fV^\mn, \; \cB_\fV^\q, \; L_{\fV, \sfm}.$$

Assume $Z'_{\phi_{(k\tau)\mu h}} \cap \fV'  \ne \emptyset$.
We let $\zeta=\zeta_{\fV, (k\tau)\mu h}$ be the exceptional parameter in $\var_\fV^+$ such that
$$E_{({\wp}_{(k\tau)}\fr_\mu\fs_h)} \cap \fV = (\zeta=0).$$

In the sequel, when $\tsR_{(\wp_{(k\tau)}\fr_\mu\fs_h)}=\tsR_{\ell_k}$, we assume that
$\fV$ is the preferred chart on $\tsR_{\ell_k}$ (cf. Definition \ref{preferred-chart-ell}).

Then, we have that the following hold.
\begin{enumerate}
\item Suppose $\fV= (\fV' \times (\xi_0 \equiv 1)) \cap \tsR_{(\wp_{(k\tau)}\fr_\mu\fs_h)}$.
We let $y_1 \in \var_\fV^+$  be the proper transform of $y_1'$. 
Then, we have
\begin{itemize}
\item[(1a)]   $T^+_{\fV, (k\tau)}$
 is square-free, $y_1 \nmid T^+_{\fV, (k\tau)}$, 
 and $\deg (T^+_{\fV, (k\tau)}) =\deg (T^+_{\fV', (k\tau)})-1$.
Suppose $\deg_{y_1'} T^-_{\fV', (k\tau)}=b$ for some 
integer $b$, positive by definition, then
we have $\deg_{\zeta} T^-_{\fV, (k\tau)}=b-1$.
Consequently,
either $T^-_{\fV, (k\tau)}$ is linear in $y_1$ or else $\zeta \mid T^-_{\fV,  (k\tau)}$.
\item[(1b)]  Let $B \in   \cB^\mn$ with $B > B_{(k\tau)}$.   Then,
$T^+_{\fV, B}$ is square-free and $y_1 \nmid T^+_{\fV, B}$.
Suppose $B \in  \cB^\mn_{F_k}$ and  $y_1 \mid T^-_{\fV, B}$,
 then either $T^-_{\fV, B}$ is linear in $y_1$ or $\zeta \mid T^-_{\fV, B}$.
Suppose $B \notin  \cB^\mn_{F_k}$ and  $y_1 \mid T^-_{\fV, B}$, then $\zeta \mid T^-_{\fV, B}$.
\end{itemize} 
\item Suppose $\fV= (\fV' \times (\xi_1 \equiv 1)) \cap \tsR_{(\wp_{(k\tau)}\fr_\mu\fs_h)}$.
We let $y_0  \in \var_\fV^+$  be the proper transform of $y_0'$. 
 Then, we have
\begin{itemize}
\item[(2a)]  $T^+_{\fV, (k\tau)}$ is square-free,
 $y_0 \nmid T^-_{\fV, (k\tau)}$, and
 $ \deg (T^-_{\fV, (k\tau)}) =\deg (T^-_{\fV', (k\tau)})-1$.
\item[(2b)] Let $B \in   \cB^\mn$ with $B > B_{(k\tau)}$.  
Then, $T^+_{\fV, B}$ is square-free.
Suppose  $B \in   \cB^\mn_{F_k}$, then  $y_0 \nmid T^-_{\fV, B}$.
Suppose $B \notin  \cB^\mn_{F_k}$ and $y_0 \mid T^-_{\fV, B}$, then $\zeta \mid T^-_{\fV, B}$.
\end{itemize}
\item  $\rho_{(k\tau)}< \infty$.   Moreover, 
for every $B \in \cB^\mn$ 
with $B\le B_{(k\tau)}$, we have that $B$ 
terminates on $\tsR_{\wp_k}$. 

\item 
 Consider any fixed term $T_B$ of any given $B \in \cB^\q$. 
 We can assume  $y_i'$ turns into $\zeta$ for some $i \in \{0, 1\}$ and
$y_j$ is the proper transform of  $y_j'$ with $j=\{0,1\}\-\{i\}$ 
 Suppose $y_j \mid T_{\fV, B}$, then either $T_{\fV, B}$ is linear in $y_j$ or 
$\zeta \mid T_{\fV, B}$.
\end{enumerate}      
\end{prop}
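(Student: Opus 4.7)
The plan is to prove (1)--(4) simultaneously by induction on $(k\tau)\mu h$ in the totally ordered set $\{(11)10\} \sqcup \Index_{\Phi_k} \sqcup \{\ell_k\}$. The base case is $(11)10$, corresponding to $\tsR_\vt$: Corollary \ref{eq-for-sV-vr} identifies the defining ideal of $\tsV_\vt \cap \fV$ as generated by $\cB^\mn_\fV \cup \cB^\q_\fV \cup L_{\fV,\sfm}$, and the last paragraph of Proposition \ref{eq-for-sV-vtk} supplies the square-free, $\vr$-linear, at-most-one-exceptional structure needed to launch (1), (2), (4), while (3) is vacuous at this stage. For the inductive step I assume Proposition \ref{equas-wp/ell-kmuh} over $\tsR_{(\wp_{(k\tau)}\fr_\mu\fs_{h-1})}$ and pass to $\tsR_{(\wp_{(k\tau)}\fr_\mu\fs_h)}$ along the local blowup \eqref{blowup-relation:wp/ell}.

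For the defining-equations claim, if $Z'_{\phi_{(k\tau)\mu h}} \cap \fV' = \emptyset$ the morphism $\fV \to \fV'$ is an isomorphism and there is nothing to do. Otherwise I substitute $y'_j = y'_i \xi_j$ (or the analogous substitution on the other chart) into every equation of the inductive defining set, divide each binomial by $\zeta^{l_{\phi, B_{\fV'}}}$ as in Definition \ref{general-proper-transforms}, and relabel $y'_i \to \zeta$ and $\xi_j \to y_j$ following the recipe in Proposition \ref{meaning-of-var-wp/ell}. Because the blowup center is cut out scheme-theoretically by $(y'_0, y'_1)$ (a regular sequence on the smooth open $\tsR^\circ_{(\wp_{(k\tau)}\fr_\mu\fs_{h-1})}$ by Proposition \ref{meaning-of-var-wp/ell}), the resulting polynomials generate the ideal of $\tsV_{(\wp_{(k\tau)}\fr_\mu\fs_h)} \cap \fV$ in $\fV$.

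Parts (1), (2), (4) are then direct degree calculations using Lemma \ref{same-degree}. For (1a), on the chart $(\xi_0 \equiv 1)$ the divisor $Y^+$ is associated with $T^+_{(k\tau)}$, so by the inductive square-freeness $y'_0 \mid T^+_{\fV',(k\tau)}$ exactly once; hence $m_{\phi,T^+}=1$ and $l_{\phi,B_{(k\tau)}}=1$; the substitution and division by $\zeta$ strips a single $y'_0$ from $T^+_{\fV',(k\tau)}$, yielding the degree drop, persistent square-freeness, and $y_1 \nmid T^+_{\fV,(k\tau)}$; the $T^-$ side becomes $\zeta^{b-1}y_1^b\cdot(\text{rest})$ with $b = \deg_{y'_1}T^-_{\fV',(k\tau)} \ge 1$, producing the stated dichotomy. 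Part (1b) applies the same substitution to $B > B_{(k\tau)}$; the crucial observation is that, by the order chosen on $\cB^\mn$ in Definition \ref{ordering-cBF} together with the association bookkeeping of Definition \ref{association-vs}, the divisor $Y^-$ cannot be associated with $T^+_B$ for $B > B_{(k\tau)}$, forcing $y'_1 \nmid T^+_{\fV',B}$ and hence $y_1 \nmid T^+_{\fV,B}$; the remaining $\zeta$-divisibility or linearity claims are immediate from the degree arithmetic. Part (2) is symmetric, and part (4) uses that a term of $B_{\fV'}\in\cB^\q$ is square-free and carries at most one exceptional factor by the inductive version of Proposition \ref{eq-for-sV-vtk}'s last clause, so the substitution introduces at most one new $\zeta$, giving the same dichotomy via Lemma \ref{same-degree}.

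The main obstacle is part (3). I plan to show finiteness of $\rho_{(k\tau)}$ by exhibiting a strictly decreasing nonnegative integer invariant across rounds, namely $N_\mu := \sum_{Y} m_{Y, T^-_{\fV,(k\tau)}}$ where the sum runs over divisors of $\tsR_{(\wp_{(k\tau)}\fr_\mu)}$ associated with $T^-_{(k\tau)}$, summed over all standard charts $\fV$ meeting $\tsV_{(\wp_{(k\tau)}\fr_\mu)}$. Each $\wp$-blowup in round $\mu$, by the multiplicity formula $m_{E,T^\pm_B} = m_{\phi,T^\pm_B} - l_{\phi,B}$ of Definition \ref{association-vs} combined with parts (1a), (2a) just proved, replaces the two divisors of $\phi$ in the association count by a single exceptional $E$ carrying a strictly smaller total $T^-$-multiplicity; since only finitely many $\wp$-sets exist per round and $N_\mu$ is a nonnegative integer, the process halts, so $\rho_{(k\tau)} < \infty$. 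When no pre-$\wp$-set with respect to $B_{(k\tau)}$ meets $\tsV_{(\wp_{(k\tau)}\fr_{\rho_{(k\tau)}})}$, on every chart at least one of $T^+_{\fV,B_{(k\tau)}}$ or $T^-_{\fV,B_{(k\tau)}}$ becomes a product of units along $\tsV \cap \fV$ (otherwise the vanishing of both would recreate a $\wp$-set), so $B_{(k\tau)}$ terminates. The termination of each $B < B_{(k\tau)}$ is preserved under subsequent $\wp$- and $\ell$-blowups because later blowup centers lie inside divisors that, on any chart where $B$ had already terminated, contain a term certified to be a unit, so later proper transforms only multiply that unit by further units, giving the full assertion on $\tsR_{\wp_k}$.
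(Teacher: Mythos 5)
Your overall skeleton (induction over $\{(11)10\}\sqcup\Index_{\Phi_k}\sqcup\{\ell_k\}$, base case at $\tsR_\vt$ via Corollary \ref{eq-for-sV-vr}, substitution-and-division for the defining equations, and degree bookkeeping for (1), (2), (4)) matches the paper's proof, and those parts are essentially correct. The genuine problem is your proof of part (3).

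Your termination invariant $N_\mu=\sum_Y m_{Y,T^-_{(k\tau)}}$ does not strictly decrease, because the bookkeeping of Definition \ref{association-vs} does not ``replace'' the two divisors of a $\wp$-set by the new exceptional divisor: the proper transforms of $Y^+$ and $Y^-$ are still divisors of the blown-up scheme and, by that very definition, retain exactly their old multiplicities $m_{Y^\pm,T^-_{(k\tau)}}$. The blowup only \emph{adds} the divisor $E_{(\wp_{(k\tau)}\fr_\mu\fs_h)}$ with multiplicity $m_{\phi,T^-}-l_{\phi,B}\ge 0$, so your sum is nondecreasing (and typically increases when $m_{Y^-,T^-_{(k\tau)}}>l_{\phi,B}$). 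Even restricting the sum to divisors meeting a given chart does not fix this, since on the chart $(\xi_0\equiv 1)$ the surviving divisors are $Y^-$ and $E$, contributing $m_{Y^-,T^-}+(m_{\phi,T^-}-l_{\phi,B})$, which again need not be smaller than $m_{Y^+,T^-}+m_{Y^-,T^-}$. The paper's argument avoids multiplicities altogether: by (1a) and (2a), which you have already established at that point of the induction, each $\wp$-blowup step strictly drops either $\deg T^+_{\fV,(k\tau)}$ (on a chart of type (1)) or $\deg T^-_{\fV,(k\tau)}$ (on a chart of type (2)) by exactly one, and neither degree can drop below zero; hence along any chain of charts only finitely many blowups with respect to $B_{(k\tau)}$ can occur before one of the two terms of $B_{\fV,(k\tau)}$ becomes a monomial in invertible variables, i.e.\ before $B_{(k\tau)}$ terminates. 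You should replace your $N_\mu$ argument with this local degree count. Your closing observations in (3) --- that once no pre-$\wp$-set meets $\tsV_{(\wp_{(k\tau)}\fr_{\rho_{(k\tau)}})}$ some term must be a unit at every point, and that termination of earlier binomials persists under later blowups --- are fine and agree with the paper.

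One smaller point: in (1b) you assert that the ordering of Definition \ref{ordering-cBF} forces $Y^-$ not to be associated with $T^+_B$ for $B>B_{(k\tau)}$. That conclusion is true, but the paper derives $y_1\nmid T^+_{\fV,B}$ and the square-freeness of $T^+_{\fV,B}$ from the more concrete observation that the variables $x_{(\uu_{s_{\tau'}},\uv_{s_{\tau'}})}$ and $x_{\uu_{F_{k'}}}$ of $T^+_B$ occur in no earlier block, so $T^+_B$ can acquire exceptional parameters only through the leading variable; if you keep your formulation you should verify the claim for exceptional divisors associated with $T^-_{(k\tau)}$ as well, not just for $\vp$- and $\vr$-divisors.
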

\begin{proof} 
We continue to follow  the notation in the proof of Proposition \ref{meaning-of-var-wp/ell}.

We prove the  proposition by applying induction on
$(k\tau) \mu h  \in \{((11)1 0)\}  \sqcup \Index_{\Phi_k}  \sqcup \{\ell_k\}$.


The initial case is $(11)1 0$ with $\tsR_{({\wp}_{(11)}\fr_1 \fs_0)}=\tsR_\vt$. In this case, 
the  statement about defining equations
 of  $\tsR_{(\wp_{(11)}\fr_1 \fs_0)} \cap \fV$ follows from
Proposition \ref{eq-for-sV-vtk} with $k=\up$; the remainder statements (1) - (4) are void.

Assume that the proposition holds  for $({\wp}_{(k\tau)}\fr_\mu \fs_{h-1})$
with $(k\tau)\mu h  \in \Index_{\Phi_k}  \sqcup \{\ell_k\}$.

Consider $({\wp}_{(k\tau)}\fr_\mu \fs_h)$. 

Consider any standard chart $\fV$ of $\tsR_{({\wp}_{(k\tau)}\fr_\mu \fs_{h})}$,
lying over a  standard chart of $\fV'$ of  $\tsR_{{\wp}_{(k\tau)}\fr_\mu \fs_{h-1})}$.
By assumption, all the desired statements of the proposition hold over the chart $\fV'$.



The  statement of the proposition on the defining equations
 of $\tsV_{(\wp_{(k\tau)}\fr_\mu \fs_{h})} \cap \fV$
follows straightforwardly from the inductive assumption.

For the statements of (1)-(4), we structure our proofs as follows.
Because the $\ell_k$-blowup occurs after all $\wp$-blowups are performed,
we will prove (1), (2), and (3) for $\wp$-blowups first, and then, we will return to
prove (1) and (2) for the $\ell_k$-blowup. We prove (4) at the end.

(1)

($1_\wp$) We first consider the case when the blowup is $\wp_k$ blowup (not the $\ell_k$-blowup).

 We may express 
$$B_{(k\tau)}= x_{(\uu_{s_\tau},\uv_{s_\tau})} x_{\uu_k}
 -x_{\uu_{s_\tau}} x_{\uv_{s_\tau}} x_{(\um,\uu_k)}$$
where  $x_{\uu_k}$ is the leading variable of $\bF_k$, and $s_\tau \in S_{F_k} \- s_{F_k}$ corresponds
to $\tau \in [\ft_{F_k}]$. 

(1a)

Observe  that the variables $x_{(\uu_{s_\tau},\uv_{s_\tau})}$ and $x_{\uu_k}$ do not appear in any 
$B \in \cB^\mn_{F_j}$ with $j <k$. Thus, the fact that the plus-term 
$T^+_{\fV, (k\tau)}$ is square-free is immediate if $\tau=1$. 
(This serves as checking the initial case.)
            
For a general $\tau \in [\ft_{F_k}]$, 
 it follows from the inductive assumption on $T^+_{\fV', (k\tau)}$.
The remainder statements follow from straightforward calculations. We omit the obvious details.

(1b). Let $B > B_{(k\tau)}$. 

Suppose $B \in \cB^\mn_{F_k}$.
We  we can write $B=B_{(k\tau')}$ with $\tau' \in [\ft_{F_k}]$ and $\tau' > \tau$.
We can express 
$$B=B_{(k\tau')}= x_{(\uu_{s_{\tau'}},\uv_{s_{\tau'}})} x_{\uu_{F_k}} -
x_{\uu_{s_{\tau'}}} x_{\uv_{s_{\tau'}}} x_{(\um,\uu_{F_k})}.$$
We have $T^+_B=x_{(\uu_{s_{\tau'}},\uv_{s_{\tau'}})} x_{\uu_{F_k}}$,  and it
{\it retains} this form prior to the $\wp$-blowups with respect to binomials of $\cB_{F_k}$
because $x_{\uu_{F_k}}$ and $x_{(\uu_{s_{\tau'}},\uv_{s_{\tau'}})}$ do not appear in any 
relation $\fG_{F_j}$ with $j <k$. 
(Recall here the convention: $x_{\fV, \uu} = 1$ if  $\uu \in \fe_{\fV}$;
 $x_{\fV, (\uu,\uv)} = 1$ if  $(\uu,\uv) \in \fd_{\fV}$.)

Starting the $\wp$-blowups with respect to the first binomial relation 
$B_{(k1)}$ of $\cB_{F_k}$, 
$T^+_B$ can only acquire exceptional parameters through the 
leading variable $x_{\uu_{F_k}}$.  
 From here, one sees directly  that $T^+_{\fV, (k\tau')}$ is square-free.


Now, suppose $y_1 \mid T^-_{\fV, B}$. We can assume $\deg_{y'_1}  (T^-_{\fV', B})=b$ for some 
integer $b>0$. 
Since  $T^+_B$ is square-free, we have two possibilities: 
(1) $\deg_{y_1}  (T^-_{\fV, B})=b$ and $\deg_{\zeta}  (T^-_{\fV, B})=b-1$, if $y_0' \mid  T^+_{\fV, B}$.
(2) $\deg_{y_1}  (T^-_{\fV, B})=b$ and $\deg_{\zeta}  (T^-_{\fV, B})=b$, if $y_0' \nmid  T^+_{\fV, B}$.
Hence, in either case, either $T^-_{\fV, B}$ is linear in $y_1$ when $b=1$ in the first case, 
 or else,   $\zeta \mid T^-_{\fV, B}$ when $b>1$ in the first case or in  any situation of  the second case.

Suppose $B \notin \cB^\mn_{F_k}$.
We  we can write $B=B_{(k'\tau')}$ with $\tau' \in [\ft_{F_{k'}}]$ and $k' > k$.
We can express 
$$B=B_{(k'\tau')}= x_{(\uu_{s_{\tau'}},\uv_{s_{\tau'}})} x_{\uu_{F_{k'}}} -
x_{\uu_{s_{\tau'}}} x_{\uv_{s_{\tau'}}} x_{(\um,\uu_{F_{k'}})}.$$
Since $x_{(\uu_{s_{\tau'}},\uv_{s_{\tau'}})}$ and $x_{\uu_{F_{k'}}}$ do not appear
in any relation in $\fG_{F_j}$ with $j <k'$ and $k <k'$, we see that
$T^+_B=x_{(\uu_{s_{\tau'}},\uv_{s_{\tau'}})} x_{\uu_{F_{k'}}}$
{\it retains} this form under the current blowup, in particular, $T^+_{\fV, B}$ is square-free.  
Furthermore, if
$y_1 \mid T^-_{\fV, B}$, then $\zeta \mid T^-_{\fV, B}$.

This proves ($1_\wp$).

(2)

($2_\wp$) We continue to consider the case when the blowup is $\wp_k$ blowup (not the $\ell_k$-blowup).

(2a).  

The proof of the fact that the plus-term 
$T^+_{\fV, (k\tau)}$ is square-free is totally analogous to the corresponding part of (1a). 
The remainder statements follow from straightforward calculations. We omit the obvious details.

(2b) Let $B > B_{(k\tau)}$. 

 The fact that $T^+_{\fV, (k\tau)}$ is square-free, again, follows from the same line
of arguments as  in the corresponding part of (1b).

If $B \in \cB^\mn_{F_k}$, 
one sees that $y_0' \nmid  T^-_{\fV, B}$.

Suppose $B \in \cB^\mn_{F_k}$. Then, by the same line of argument of 
the correpsonding part of (1b), we again obtain that 
if $y_0 \mid T^-_{\fV, B}$, then $\zeta \mid T^-_{\fV, B}$.

This proves ($2_\wp$).

(3) (The statement is exclusively about $\wp$-blowups.)

From $\tsR_{({\wp}_{(k\tau)}\fr_\mu\fs_{h-1})}$ to $\tsR_{({\wp}_{(k\tau)}\fr_\mu\fs_{h})}$,
over any chart $\fV$ of $\tsR_{({\wp}_{(k\tau)}\fr_\mu\fs_{h})}$, by (1a) and (1b), we have
$$\deg (T^+_{\fV, (k\tau)}) =\deg (T^+_{\fV', (k\tau)})-1$$ or
 $$\deg (T^-_{\fV, (k\tau)}) =\deg (T^-_{\fV', (k\tau)})-1.$$
 Hence, after finitely many steps, over any chart $\fV$, either all variables in
 $B_{\fV, (k\tau)}$ are invertible along the proper transform of $\tsV$,
 or else, one of the two terms of $B_{\fV, (k\tau)}$ must  become a constant.


This implies that the process of $\wp$-blowups in ($\wp_{(k\tau)}$)
must terminate after finitely many rounds.
That is, $\rho_{(k\tau)} < \infty$. The remaining statements follows from $\rho_{(k\tau)} < \infty$.

($1_\ell$) Now we return to consider the $\ell_k$-blowup.

In this case, we have $y_0'$, defining the $\fL$-divisor $D_{\wp_k, L_{F_k}}$ on the chart
$\fV'$, does not appear in any relation $B$ of $\cB^\q$.

(1a) By (3) (already proved), all $B_{(k\tau)}$ of $\cB^\mn_{F_k}$ terminate. Thus,
it follows that $T_{\fV, (k\tau)}^+$ is square-free. The remaining statements are void.

(1b) Then, the same line of aruments  applied to $x_{\uu_{F_{k'}}}$ as in (1a) of 
($1_\wp$) can be reused to obtain the desried statement.

($2_\ell$) We still consider the $\ell_k$-blowup.

Because $\fV$ is a preferred chart (by assumption), the statement is void.

(4) 

($4_\wp$)  We first consider the case when the blowup is $\wp_k$ blowup (not the $\ell_k$-blowup).

  When we begin with the block $\cB^\mn_{F_k}$, we have
 $$B_{(k\tau)}: x_{\fV', (\uu_{s_\tau},\uu_{s_\tau})} x_{\fV', \uu_F} -a_\tau, \; \tau \in [\ft_{F_k}]$$
 where $a_\tau$ are some monomials.
 
 Fix and consider any $B^\q \in \cB^\q$. We can write $B_{\fV'}^\q=T_{\fV',0} -T_{\fV',1}$.

 Consider the $\wp$-blowups with respect to the first relation $B_{(k1)}$.

 First, assume $y_0'= x_{\fV', \uu_F} $. Note that
 the variable $x_{\fV', \uu_F}  $ does not appear in $B^\q$.
  If $y_1'$ does not appear in  $B^\q$, there is nothing to prove.
If $y_1'$ appears in one of the two terms of $B^\q=B_{\fV'}^\q=T_{\fV',0} -T_{\fV',1}$,
  say $T_{\fV', 0}$, then $y_1'$ can  become the exceptional parameter $\zeta$
        or brings $\zeta$ with it into $T_{\fV, 0}$.  
        Hence, in either case, we obtain $\zeta \mid T_0$.
     
  Next,    assume  $y_0'=x_{\fV', (\uu_{s_1},\uu_{s_1})}$.
  Suppose $x_{\fV', (\uu_{s_1},\uu_{s_1})}$ does not appear in $B^\q$,
  then the same line of arguments in the previous case implies the desired statement.
   Suppose  $x_{\fV', (\uu_{s_1},\uu_{s_1})}$ appears in $B^\q$,
   w.l.o.g., say, in $ T_{\fV',0}$. Then it is linear in $T_0$ by  Lemma \ref{ker-phi-k} (2). 
  If $y_1'$ does not appear in  $B^\q$, the statement follows immediately.
   If $y_1'$ also appears in $T_{\fV',0}$, then 
  $\zeta \mid T_{\fV,0}$.  Suppose $y_1'$  appears in $T_{\fV',1}$ with degree $b>0$.
  If $y_1'$ becomes $\zeta$, then $x_{\fV, (\uu_{s_1},\uu_{s_1})}$ 
  is the proper transform of $x_{\fV', (\uu_{s_1},\uu_{s_1})}$, 
  then $B_{\fV}^\q$ remains linear in $x_{\fV, (\uu_{s_1},\uu_{s_1})}$ .
   If $x_{\fV, (\uu_{s_1},\uu_{s_1})}$  becomes $\zeta$, then we have
 $\zeta^{b-1}y_1^b \in T_{\fV',1}$ where $y_1$ is the proper transform of $y_1'$.
   Thus, the statement follows immediately.
  

 Then, we move on to $B_{(k2)}$. Notice that 
 during the previous blowups with respect  $B_{(k1)}$
 the variable       $ x_{\fV', \uu_F}$ does not bring any exceptional
 variable $\ve$ into $B^\q$.  
  Hence, any exceptional variable in the plus term of   $B_{(k2)}$,
  just like   $ x_{\fV', \uu_F}  $ in the previous case, does not appear in $B^\q$ on the chart.       
  When such an exceptional variable belongs to the local blowup center,
   then, by the same arguments for $ x_{\fV', \uu_F}  $ in the previous case,
   we conclude that the desired statement of (4)
      holds when the exceptional variables in the plus term of
   $B_{(k2)}$  belong to the local $\wp$- or $\ell$-set.
        
When  $ x_{\fV', (\uu_{s_2},\uu_{s_2})} $ belongs to the local blowup center,
then the exactly same argument applied to $x_{\fV', (\uu_{s_1},\uu_{s_1})}$
can be reused for      $ x_{\fV', (\uu_{s_2},\uu_{s_2})} $ to obtain (4).

   We can then move to the next and the remaining binomial relations, one by one, 
    repeat  exactly the same argument to obtain the desired statement.

($4_\ell$)  We consider the case when the blowup is  $\ell_k$-blowup.
    
    In this case, note that $y_0'$, locally defining $D_{\wp_k, L_{F_k}}$,
    does not appear in $B \in \cB^\q$. 
    Also, since $\fV$ is a  preferred chart, we have $\zeta=\de_{\fV, (\um, \uu_{F_k)}}$
    with $L_{F_k} \in \fl_\fV$.
    Hence, if $y_1 \mid T_{\fV,B}$, a term of $B_\fV$, then $\zeta \mid T_{\fV, B}$.

    This proves (4).

By induction,    Proposition \ref{equas-wp/ell-kmuh} is proved.  
\end{proof}

\section{$\Ga$-schemes and Their Transforms}\label{Gamma-schemes}

\subsection{$\Ga$-schemes} $\ $

Here, we return to the initial affine chart $\rU_\um \subset \PP(\wedge^d E)$.

\begin{defn}\label{ZGa} 
Let $\Ga$ be an arbitrary subset of $\var_{\rU_\um}=\{x_\uu \mid \uu \in \II_{d,n} \-\um \}$. 
We let $I_\Ga$ be the ideal of $\kk[x_\uu]_{\uu \in \II_{d,n}\-\um}$ generated by all the elements 
$x_{\uu}$ in $\Ga$, 
 and, 
 $$I_{\wp,\Ga}=\langle x_\uu, \; \bF \mid x_\uu \in \Ga, \; \bF \in \sfm \rangle$$
  be the ideal of $\kk[x_\uu]_{\uu \in \II_{d,n}\-\um}$ generated by $I_\Ga$ together with 
all the de-homogenized $\um$-primary $\pl$ relations of $\Gr^{d,E}$. We let
 $Z_\Ga$ $(\subset \Gr^{d,E} \cap \rU_\um)$ be the closed subscheme of 
 the affine space $\rU_\um$ defined by the ideal $I_{\wp,\Ga}$.
The subscheme $Z_\Ga$ is called  the $\Ga$-scheme of $\rU_\um$. 
Note that  $Z_\Ga \ne \emptyset$ since $0 \in Z_\Ga$.
\end{defn}

(Thus, a $\Ga$-scheme is  an intersection of certain Schubert divisors with the chart $\rU_\um$.
 But, in this article, we do not investigate $\Ga$-schemes in any {\it Schubert} way.)
 

Take $\Gamma =\emptyset$. Then, $I_{\wp,\emptyset}$ is the ideal generated by 
all the de-homogenized $\um$-primary $\pl$ relations.
Thus,  $Z_\emptyset =\rU_\um \cap \Gr^{d,E}$.  

Let $\Ga$ be any fixed subset of $\var_{\rU_\um}$.
We let  $\rU_{\um,\Ga}$ be the coordinate  subspace of $\rU_\um$ defined by $I_\Ga$.
That is,
$$\rU_{\um,\Ga}=\{(x_{\uu} =0)_{ x_\uu \in \Ga}\} \subset \rU_{\um}.$$
This is a coordinate subspace of dimension 
${n \choose d}-1 - |\Ga|$ where $|\Ga|$ is the cardinality of $\Ga$. Then,
$Z_\Ga$ is the scheme-theoretic intersection of $\Gr^{d,E}$ with
the coordinate  subspace $\rU_{\um, \Ga}$.
For any $\um$-primary $\pl$ equation $\bF \in \sfm$, we let $\bF|_\Ga$ be the induced 
polynomial obtained from  the de-homogeneous polynomial $\bF$
by setting $x_{\uu} =0$  for all $x_\uu \in \Gamma$. 
Then, $\bF|_\Ga$ becomes a polynomial on the affine subspace $\rU_{\um,\Ga}$.
We point out that $\bF|_\Ga$ can be identically zero on $\rU_{\um,\Ga}$.

\begin{defn}\label{rel-irrel}
 Let $\Ga$ be any fixed subset of $\var_{\rU_\um}$.
 Let ($\bF$) $F$ be any fixed (de-homogenized)  $\um$-primary $\pl$ relation. 
 We say ($\bF$) $F$  is $\Ga$-irrelevant if  every term of 
 $\bF$ belongs to  the ideal $I_\Ga$.   Otherwise, we say ($\bF$) $F$ is $\Ga$-relevant.
 We let $\sfmgr$ be the set of all $\Ga$-relevant de-homogenized $\um$-primary $\pl$ relations.
 We let $\sfmgir$ be the set of all $\Ga$-irrelevant  de-homogenized $\um$-primary $\pl$ relations.
\end{defn}

If  $\bF$ is $\Ga$-irrelevant,  then $\bF|_\Ga$ is identically zero along $\rU_{\um,\Ga}$. 
Indeed,  $\bF$ is $\Ga$-irrelevant if and only if every term of $\bF$ contains a member of $\Ga$.
 The sufficiency direction is clear. To see the necessary direction,
 we suppose a term $x_\uu x_\uv \in I_\Ga$, then as $I_\Ga$ is prime 
 (the coordinate  subspace $\rU_{\um, \Ga}$ is integral), we have
 $x_\uu$ or  $x_\uv \in \Ga$.

\subsection{$\sF$-transforms of  $\Ga$-schemes in $\sV_{\sF_{[k]}}$}
\label{subsection:wp-transform-sfk}   $\ $





In what follows, we keep  notation of Proposition \ref{equas-fV[k]}. 

Recall that for any $\bF \in \sfm$,
$\La_F=\{(\uu_s, \uv_s) \mid s \in S_F\}.$


\begin{lemma}\label{wp-transform-sVk-Ga} 
Fix any  subset $\Ga$ of $\rU_\um$.  Assume that $Z_\Ga$ is integral. 

Consider $F_k \in \sfm$ for any fixed $k \in [\up]$.

Then, we have the following:
\begin{itemize}
\item there exists a closed subscheme $Z_{\sF_{  [k]},\Ga}$ of $\sV_{\sF_{[k]}}$
with an induced morphism  $Z_{\sF_{  [k]},\Ga}
 \to Z_\Ga$;
\item $Z_{\sF_{  [k]},\Ga}$ comes equipped with an irreducible component  
$Z^\dagger_{\sF_{  [k]},\Ga}$ with the induced morphism 
$Z^\dagger_{\sF_{  [k]},\Ga} 
 \to Z_\Ga$;
 \item  for any standard chart $\fV$ of $\sR_{\sF_{[k]}}$ such that
$Z_{\sF_{[k]},\Ga} \cap \fV \ne \emptyset$, there  exists a subset, 
possibly empty,
$$ \tGa^\zero_{\fV} \; \subset \;  \var_\fV. $$
\end{itemize}

Further,  consider any given standard chart $\fV$ of $\sR_{\sF_{[k]}}$ with
$Z_{\sF_{[k]},\Ga} \cap \fV \ne \emptyset$. Then,
the following hold.
\begin{enumerate} 
\item 
The scheme $Z_{\sF_{[k]},\Ga} \cap \fV$, as a closed subscheme of the chart $\fV$,
is defined by the following relations
\begin{eqnarray} 
\;\;\;\;\; y , \; \; \; y \in  \tGa^\zero_\fV ,   \label{Ga-rel-sVk=0} \\
\cB^\pq_{\fV, [k]},    \nonumber \\ 
B_{\fV,(s,t)}: \;\;\;  x_{\fV, (\uu_s, \uv_s)}x_{\fV,\uu_t} x_{\fV,\uv_t} - x_{\fV, (\uu_t,\uv_t)}  
 x_{\fV,\uu_s} x_{\fV,\uv_s},  \;\; s, t \in S_{F_i}, 
  \;  i \in [k], \nonumber\\
L_{\fV, F_i}: \;\; \sum_{s \in S_{F_i}} \sgn (s) x_{\fV, (\uu_s,\uv_s)}, \; \;  
 i \in [k], \nonumber \\
\bF_{\fV,j}: \;\; \sum_{s \in S_{F_j}} \sgn (s) x_{\fV, \uu_s}x_{\fV,\uv_s}, \; \; k < j\le \up.  \nonumber
\end{eqnarray} 
 Further, we take $\tGa^\zero_\fV \subset \var_\fV$
 to be the maximal subset (under inclusion)
among all those subsets that satisfy the above.
\item The induced morphism  $Z^\dagger_{\sF_{  [k]},\Ga}  
\to Z_\Ga$ is birational. 
\item Fix any variable $y=x_{\fV, \uu}$ or $y=x_{\fV, (\uu,\uv)} \in \var_\fV$,
$Z^\dagger_{\sF_{[k]},\Ga} \cap \fV \subset (y=0)$ 
if and only if $Z_{\sF_{[k]},\Ga} \cap \fV \subset (y=0)$.
(We remark here that this property is not used within this lemma, but 
will be used as the initial case of Lemma \ref{vt-transform-k}.)
\end{enumerate}
\end{lemma}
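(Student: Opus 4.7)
The plan is to proceed by induction on $k \in \{0\} \cup [\up]$. For the base case $k=0$, we identify $\sR_{\sF_{[0]}} = \rU_\um$ and $\sV_{\sF_{[0]}} = \rU_\um \cap \Gr^{d,E}$, with the unique standard chart being $\rU_\um$ itself. We set $Z_{\sF_{[0]},\Ga} := Z_\Ga$ (integral by hypothesis), $Z^\dagger_{\sF_{[0]},\Ga} := Z_\Ga$, and $\tGa^\zero_{\rU_\um} := \Ga \subset \var_{\rU_\um}$. Then the relations displayed in \eqref{Ga-rel-sVk=0} reduce at $k=0$ to $\{x_\uu = 0 : x_\uu \in \Ga\}$ together with all primary $\pl$ relations $\bF_j$, $j \in [\up]$, matching Definition \ref{ZGa}; maximality of $\tGa^\zero_{\rU_\um}$ together with (2) and (3) are immediate from integrality of $Z_\Ga$.

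For the inductive step, assume all assertions hold for $k-1$. A standard chart $\fV$ of $\sR_{\sF_{[k]}}$ lies over a unique standard chart $\fV'$ of $\sR_{\sF_{[k-1]}}$, with $\var_\fV = \var_{\fV'} \sqcup \{x_{\fV, (\uu_s, \uv_s)} : s \in S_{F_k} \setminus s_{F_k, o}\}$ by Proposition \ref{meaning-of-var-p-k=0}. Comparing Proposition \ref{equas-fV[k]} at $k-1$ and $k$ shows that the defining ideals of $\sV_{\sF_{[k-1]}} \cap \fV'$ and $\sV_{\sF_{[k]}} \cap \fV$ differ only in the $F_k$-block: the single equation $\bF_{\fV', k}$ is replaced by $\cB^\res_{\fV, F_k} \cup \cB^\mn_{\fV, F_k} \cup \{L_{\fV, F_k}\}$ together with the new binomials in $\cB^\pq_{\fV, [k]} \setminus \cB^\pq_{\fV, [k-1]}$. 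I will define $Z_{\sF_{[k]},\Ga} \cap \fV$ by the pullback of the defining ideal of $Z_{\sF_{[k-1]},\Ga} \cap \fV'$ under the forgetful projection $\sR_{\sF_{[k]}} \to \sR_{\sF_{[k-1]}}$ (with $\bF_{\fV', k}$ dropped), augmented by this new $F_k$-block, and then take $\tGa^\zero_\fV$ to be the maximal subset of $\var_\fV$ vanishing identically on the resulting scheme. Coherent gluing across the natural chart transitions of $\sR_{\sF_{[k]}}$ yields a well-defined closed subscheme $Z_{\sF_{[k]},\Ga} \subset \sV_{\sF_{[k]}}$ with a projective morphism to $Z_{\sF_{[k-1]},\Ga}$, which composed with the inductive map gives the required morphism to $Z_\Ga$.

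The component $Z^\dagger_{\sF_{[k]},\Ga}$ is singled out according to the behavior of $F_k$ on $Z^\dagger_{\sF_{[k-1]},\Ga}$. Let $\eta$ denote the generic point of $Z^\dagger_{\sF_{[k-1]},\Ga}$; by the inductive hypothesis the latter is birational to $Z_\Ga$. In the \emph{generic case}, some monomial $x_{\fV', \uu_s} x_{\fV', \uv_s}$ of $\bF_{\fV', k}$ does not vanish at $\eta$, so the rational section $\bar\Theta_{[k]}|_{Z^\dagger_{\sF_{[k-1]},\Ga}}$ of $\rho_{\sF_{[k]}}$ of \eqref{rho-sFk} is well-defined over a nonempty open subset; I let $Z^\dagger_{\sF_{[k]},\Ga}$ be the closure of its image, which is birational to $Z^\dagger_{\sF_{[k-1]},\Ga}$ and hence to $Z_\Ga$. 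In the \emph{degenerate case}, every monomial $x_{\fV', \uu_s} x_{\fV', \uv_s}$ of $\bF_{\fV', k}$ vanishes on $Z^\dagger_{\sF_{[k-1]},\Ga}$; the main binomials $B_{\fV, (s_{F_k}, s)}$ then force vanishing of the entire set $\{x_{\fV, (\uu_s, \uv_s)} : s \in S_{F_k}\}$ on the corresponding component, and these extra $\vr$-variables get absorbed into $\tGa^\zero_\fV$, with $Z^\dagger_{\sF_{[k]},\Ga}$ realized as the canonical slice cut out by $L_{\fV, F_k}$ in the induced linear subspace. Property (3) then follows from the maximality of $\tGa^\zero_\fV$ combined with the fact that $Z^\dagger_{\sF_{[k]},\Ga}$ is the unique component dominating $Z_\Ga$.

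The principal obstacle will lie in the degenerate case: one must verify that the new $F_k$-block of relations indeed cuts out a unique canonical irreducible component birational to $Z^\dagger_{\sF_{[k-1]},\Ga}$, rather than several, and that the prescription for $\tGa^\zero_\fV$ is compatible across overlapping standard charts. Compatibility is enforced automatically by the maximality clause, but the irreducibility argument requires careful use of Lemma \ref{ker-phi-k}, the $\vr$-linear structure of the binomials in $\cB^\pq$ and $\cB^\mn$ established in Proposition \ref{equas-p-k=0}, and the observation that on the slice where all $\vr$-coordinates indexed by $\La_{F_k}$ vanish, the relation $L_{\fV, F_k}$ becomes identically zero, so that the canonical component is actually determined by $Z^\dagger_{\sF_{[k-1]},\Ga}$ alone.
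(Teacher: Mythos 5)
There are two genuine gaps, both concentrated in how you handle the fiber of $\rho_{\sF_{[k]}}$ over points where the terms of $\bF_k$ degenerate.

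First, your definition of $Z_{\sF_{[k]},\Ga}$ as the scheme-theoretic preimage (augmented by the new $F_k$-block) followed by "take $\tGa^\zero_\fV$ to be the maximal vanishing set" does not produce the scheme the lemma asserts. The paper \emph{cuts down} the preimage $\rho_{\sF_{[k]}}^{-1}(Z_{\sF_{[k-1]},\Ga})$ by explicit additional hyperplanes $x_{(\uu,\uv)}=0$ for $(\uu,\uv)\in\La^\zero_{F_k,\Ga}$ (in the relevant case, those with $x_\uu$ or $x_\uv\in\Ga$), and these do \emph{not} vanish identically on the full preimage: over a point of $Z_{\sF_{[k-1]},\Ga}$ at which every term $x_{\uu_s}x_{\uv_s}$ of $\bF_k$ vanishes, the binomials $B_{F_k,(s,t)}$ and $B_{F_k,(s_{F_k},s)}$ are identically zero and the fiber is a positive-dimensional linear section of $\PP_{F_k}$ on which $x_{(\uu,\uv)}$ is unconstrained. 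Consequently your maximal vanishing set omits exactly the variables that must go into $\tGa^\zero_\fV$, your scheme is strictly larger than the paper's, and property (3) — that $Z^\dagger_{\sF_{[k]},\Ga}\cap\fV\subset(y=0)$ implies $Z_{\sF_{[k]},\Ga}\cap\fV\subset(y=0)$, which is the whole point of the construction and is used as the base case of Lemma \ref{vt-transform-k} — fails for it.

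Second, your degenerate case rests on an impossibility: the set $\{x_{(\uu_s,\uv_s)}\mid s\in S_{F_k}\}$ consists of the homogeneous coordinates of $\PP_{F_k}$, one of which is $\equiv 1$ on every standard chart, so they cannot all vanish; and as noted above, when every term of $\bF_k$ vanishes on $Z^\dagger_{\sF_{[k-1]},\Ga}$ the main binomials force nothing at all. The fiber there is cut out only by the linear system $\{L_{F_k}\}\cup\cB^\pq_{[k]}$ in the $\vr$-variables of $F_k$, which is in general positive-dimensional, so the single equation $L_{\fV,F_k}$ does not select a canonical slice and the preimage has no component birational to $Z_\Ga$ without a further choice. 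The paper's resolution of this is the essential missing idea: compute the rank of that linear system at the generic point of $Z^\dagger_{\sF_{[k-1]},\Ga}$, choose a maximal-rank set of columns $\La^{\rm det}_{F_k,\Ga}$, set the complementary $\vr$-variables to zero, solve the resulting full-rank system over an open subset, and let $\La^\zero_{F_k,\Ga}$ record which coordinates come out identically zero; this produces both the canonical irreducible component $Z^\dagger_{\sF_{[k]},\Ga}$ (birational to $Z_\Ga$) and the extra hyperplane cuts defining $Z_{\sF_{[k]},\Ga}$. Your proposal correctly anticipates that the degenerate case is the principal obstacle, but the mechanism you offer for it does not work.
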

\begin{proof}
We prove the statement by induction on $k$ with $k \in \{0\} \cup [\up]$.

 When $k=0$,  we have $\sR_{\sF_{[0]}}:=\rU_\um$, $\sV_{\sF_{[0]}}:=\rU_\um \cap \Gr^{d,E}$.
 There exists a unique chart $\fV=\rU_\um$. In this case,  we set 
 $$Z_{\sF_{[0]},\Ga}=Z^\dagger_{\sF_{[0]},\Ga}:=Z_\Ga$$
 Further, we let
 $$ \tGa^\zero_\fV=\Ga.$$ 
Then,  the statement holds trivially.

Inductively, we suppose that Lemma \ref{wp-transform-sVk-Ga}
 holds for $\sV_{\sF_{[k-1]}} \subset \sR_{\sF_{[k-1]}} $.

 We now consider $\sV_{\sF_{[k]}} \subset \sR_{\sF_{[k]}}$.
 
Recall from  \eqref{rho-sFk},   we have the natural birational morphsim
$$\rho_{\sF_{[k]}}: \sV_{\sF_{[k]}} \lra \sV_{\sF_{[k-1]}},$$
induced from the forgetful map  $\sR_{\sF_{[k]}} \lra \sR_{\sF_{[k-1]}}$.

First, we suppose $F_k$ is $\Ga$-relevant.

In this case, we set 
\begin{equation}\label{construction-ZkLa} \La^\zero_{F_k, \Ga} 
:=\{ x_{(\uu,\uv)}\in \La_{F_k} \mid \hbox{$x_\uu$ or $x_\uv \in \Ga$}\}.
\end{equation}
(Here, recall the convention of \eqref{uv=vu}: $x_{(\uu,\uv)}= x_{(\uv,\uu)}$.) 

We then let $\rho_{\sF_{[k]}}^{-1}(Z_{\sF_{ [k-1]},\Ga})$ be the scheme-theoretic pre-image and define
$Z_{\sF_{[ k]},\Ga}$ to be
  the scheme-theoretic intersection
\begin{equation}\label{construction-ZkGa}
Z_{\sF_{[ k]},\Ga}=\rho_{\sF_{[k]}}^{-1}(Z_{\sF_{ [k-1]},\Ga}) \cap (x_{(\uu,\uv)} =0 \mid (\uu,\uv) \in  \La^\zero_{F_k, \Ga}),
\end{equation}

Next, because $F_k$ is $\Ga$-relevant
 and $Z^\dagger_{\sF_{ [k-1]},\Ga}$ is birational to $Z_\Ga$,
one checks that  $Z^\dagger_{\sF_{ [k-1]},\Ga}$ 
is not contained in the exceptional locus of
the birational morphism $\rho_{\sF_{[k]}}$. 
Thus, there exists  a Zariski open subset $Z^{\dagger\circ}_{\sF_{ [k-1]},\Ga}$
 of $Z^\dagger_{\sF_{ [k-1]},\Ga}$ such that 
 $$\rho_{\sF_{[k]}}^{-1}(Z^{\dagger\circ}_{\sF_{ [k-1]},\Ga})  \lra Z^{\dagger\circ}_{\sF_{ [k-1]},\Ga}$$
is an isomorphism.

We claim 
\begin{equation}\label{inclusion-dagger}
 \rho_{\sF_{[k]}}^{-1}(Z^{\dagger\circ}_{\sF_{ [k-1]},\Ga})   \subset Z_{\sF_{[ k]},\Ga}=
\rho_{\sF_{[k]}}^{-1}(Z_{\sF_{ [k-1]},\Ga}) \cap (x_{(\uu,\uv)}=0 \mid (\uu,\uv) \in  \La^\zero_{F_k, \Ga}).
\end{equation}
To see this, note that since $\bF_k$ is $\Ga$-relevant, 
there exists a term $x_{\uu_s}x_{\uv_s}$ of $\bF_k$ 
for some $s \in S_{F_k}$ such that it does not vanish
generically along $Z^{\dagger}_{\sF_{ [k-1]},\Ga}$ (which is birational to $Z_\Ga$).
Then, we consider the binomial relation of $\sV_{\sF_{[k]}}$ in $\sR_{\sF_{[k]}}$
  \begin{equation}\label{Buv-s}
  x_{(\uu, \uv)}x_{\uu_s} x_{\uv_s} - x_{\uu} x_{\uv} x_{(\uu_s,\uv_s)},
  \end{equation}
  for any $(\uu, \uv) \in \La_{F_k}$.
 It follows that $x_{(\uu, \uv)}$  vanishes identically 
along $\rho_{\sF_{[k]}}^{-1}(Z^{\dagger\circ}_{\sF_{ [k-1]},\Ga}) \cong Z^{\dagger\circ}_{\sF_{ [k-1]},\Ga}$ 
 if $x_\uu$ or $x_\uv \in \Ga$.  Hence, \eqref{inclusion-dagger} holds.

 We then let $Z^\dagger_{\sF_{ [k]},\Ga}$ be the closure of 
 $\rho_{[k]}^{-1}(Z^{\dagger\circ}_{\sF_{ [k-1]},\Ga})$ in $Z_{\sF_{ [k]},\Ga}$.
  Since $Z^\dagger_{\sF_{ [k]},\Ga}$  is closed in $Z_{\sF_{ [k]},\Ga}$
 and contains the Zariski  open subset $\rho_{[k]}^{-1}(Z^{\dagger\circ}_{\sF_{ [k-1]},\Ga})$
 of $Z_{\sF_{[k]},\Ga}$, it is an irreducible component of  $Z_{\sF_{ [k]},\Ga}$.

Further, consider any standard chart $\fV$ of $\sR_{\sF_{[k]}}$,
lying over a unique standard chart $\fV'$ of $\sR_{\sF_{[k-1]}}$,
such that $Z_{\sF_{[k]},\Ga}\cap \fV \ne \emptyset$.
We set 
\begin{equation}\label{zero-one-fV-sFk} 
\tGa^\zero_\fV= \tGa^\zero_{\fV'} \sqcup 
\{ x_{(\uu,\uv)}\in \La_{F_k} \mid \hbox{$x_\uu$ or $x_\uv \in \Ga$}\}.  
\end{equation}

We are now ready to 
prove Lemma \ref{wp-transform-sVk-Ga} (1), (2) and (3) in the case of $\sR_{\sF_{[k]}}$.

(1).  Note that scheme-theoretically, we have
$$\rho_{\sF_{[k]}}^{-1}(Z_{\sF_{ [k-1]},\Ga}) \cap \fV=
 \pi_{\sF_{[k]}, \sF_{[k-1]}}^{-1}(Z_{\sF_{ [k-1]},\Ga}) \cap \sV_{\sF_{[k]}} \cap \fV$$
 where $\pi_{\sF_{[k]},  \sF_{[k-1]}}: \sR_{\sF_{[k]}} \to \sR_{\sF_{[k-1]}}$ is the projection.
 We can apply Lemma \ref{wp-transform-sVk-Ga} (1)  in the case of $\sR_{\sF_{[k-1]}}$
 to  $Z_{\sF_{ [k-1]},\Ga}$ and $\pi_{\sF_{[k]}, \sF_{[k-1]}}^{-1}(Z_{\sF_{ [k-1]},\Ga})$, 
 apply Proposition \ref{equas-fV[k]}  to $\sV_{\sF_{[k]}} \cap \fV$, and 
 use the construction \eqref{construction-ZkGa} of $Z_{\sF_{[ k]},\Ga}$
 (cf. \eqref{construction-ZkLa} and \eqref{zero-one-fV-sFk}), 
 we then obtain that $Z_{\sF_{[ k]},\Ga} \cap \fV$, as a closed subscheme of $\fV$, is defined by 
$$ y, \;\; y \in  \tGa^\zero_\fV;\;\; \cB^\pq_{[k]}; $$
$$  B_{\fV, (s,t)},\;\; s, t \in S_{F_i} \; \hbox{ with all $i \in [k]$}$$
 $$  L_{\fV,F_i}, \; i \in [k]; \;  \bF_{\fV,j}, \; k <j\le \up.$$ 
 Then, the above implies Lemma \ref{wp-transform-sVk-Ga} (1) in the case of $\sR_{\sF_{[k]}}$.

(2). By construction, we have that the composition
$\tZ^\dagger_{\sF_{[k]},\Ga} \to \tZ^\dagger_{\sF_{[k-1]},\Ga} \to Z_\Ga$
is birational. 
This proves Lemma \ref{wp-transform-sVk-Ga} (2) in the case of $\sR_{\sF_{[k]}}$.

  (3). It suffices to prove that if $Z^\dagger_{\sF_{[ k]},\Ga}\cap \fV \subset (y=0)$,
 then $Z_{\sF_{[ k]},\Ga} \cap \fV \subset (y=0).$
 
 If $y=x_{\fV,\uu}$ ($=x_\uu$, cf. the proof of Proposition \ref{meaning-of-var-p-k=0}), 
 then 
$x_\uu \in \Ga$ because $Z^\dagger_{\sF_{[ k]},\Ga}$ is birational to $Z_\Ga$.
Therefore, $Z_{\sF_{[ k]},\Ga} \cap \fV \subset (y=0)$ by \eqref{Ga-rel-sVk=0},
which holds by (the just proved) Lemma \ref{wp-transform-sVk-Ga} (1) for  $\sR_{\sF_{[k]}}$.

Now assume $y=x_{\fV, (\uu, \uv)}$.
Here, $x_{\fV, (\uu, \uv)}$ is the de-homogenization of $x_{(\uu,\uv)}$
(cf. the proof of Proposition \ref{meaning-of-var-p-k=0}). Below,
upon setting $x_{(\uu_{s_{F_i,o}}, \uv_{s_{F_i,o}})} \equiv 1$ for all $i \in [k]$ (cf. Definition \ref{fv-k=0}),
 we can write $x_{\fV, (\uu, \uv)}=x_{\fV', (\uu, \uv)}=x_{(\uu,\uv)}$.

 Suppose $(\uu, \uv) \in \La_{F_i}$ with $i \in [k-1]$.
By taking the images of $Z^\dagger_{\sF_{[ k]},\Ga}\cap \fV \subset (y=0)$
under $\rho_{\sF_{[k]}}$, we obtain 
$Z^\dagger_{\sF_{[ k-1]},\Ga} \cap \fV' \subset (x_{(\uu, \uv)}=0)$.
Hence, we have
$Z_{\sF_{[k-1]},\Ga} \cap \fV' \subset (x_{(\uu, \uv)}=0)$ by 
Lemma \ref{wp-transform-sVk-Ga} (3)  for 
$ \sR_{\sF_{[k-1]}}$.
Therefore,  $x_{(\uu, \uv)} \in \tGa^\zero_{\fV'}$ by the maximality of the subset $ \tGa^\zero_{\fV'}$.
Then, by  \eqref{zero-one-fV-sFk},
$Z_{\sF_{[ k]},\Ga} \cap \fV \subset   (x_{(\uu, \uv)}=0)$.

Now suppose $(\uu, \uv) \in \La_{F_k}$.
 Consider the relations  $$x_{\fV,\uu} x_{\fV,\uv} -x_{\fV, (\uu, \uv)}x_{\uu_{s_{F_k,o}}}x_{\uv_{s_{F_k,o}}} .$$ 
 Here, we have used $x_{(\uu_{s_{F_k,o}}, \uv_{s_{F_k,o}})} \equiv 1$. 
  Then, we have  $x_{\fV,\uu} x_{\fV,\uv}$ vanishes identically along $Z^\dagger_{\sF_{[ k]},\Ga}$,
  hence,  so does
  one of $x_{\fV, \uu}$ and $x_{\fV, \uv}$, that is, $x_\uu$ or $x_\uv \in \Ga$,
  since $Z^\dagger_{\sF_{[ k]},\Ga}$ (birational to $Z_\Ga$) is integral. In either case, it implies that 
  $Z_{\sF_{[ k]},\Ga} \subset (x_{(\uu, \uv)}=0)$ by \eqref{construction-ZkLa} and  
  \eqref{construction-ZkGa}. 

This proves the lemma when $F_k$ is $\Ga$-relevant.

\smallskip

 Next, we suppose $F_k$ is $\Ga$-irrelevant. 
 
In this case, we have that
$$ ( \rho_{\sF_{[k]}}^{-1}(Z_{\sF_{  [k-1]},\Ga}) ) / (Z_{\sF_{[k-1]},\Ga})$$ 
is defined by the set of
equations of $L_{F_k}$ and $\cB^\pq_{ [k]}$, all regarded as
relations in $\vr$-variables of $F_k$. All these relations are  
 linear in $\vr$-variables of $F_k$, according to Lemma \ref{ker-phi-k}.
 Putting together, we call $\{L_{F_k}, \cB^\pq_{ [k]}\}$ a linear system 
  in $\vr$-variables of $F_k$. 
 
 We can let 
 $\La^{\rm det}_{F_k, \Ga}$ be the subset of $\La_{F_k}$ such that 
  the minor corresponding to  variables
 $$\{x_{\fV,(\uu,\uv)} \mid (\uu,\uv) \in \La^{\rm det}_{F_k,\Ga}\}$$ 
 achieves the maximal rank of the linear system $ \{L_{F_k}, \cB^\pq_{[k]}|\}$,
  regarded as relations in $\vr$-variables of $F_k$,
 at any point of some fixed Zariski open subset $Z_{\sF_{[k-1]},\Ga}^{\dagger\circ}$ of 
 $Z^\dagger_{\sF_{[k-1]},\Ga}$. 

 We then set and plug 
 \begin{equation}\label{a-ne-0}
  x_{(\uu,\uv)} = 0, \; \forall \; (\uu,\uv)  \notin \La^{\rm det}_{F_k,\Ga}
 \end{equation}
   into the linear system $\{L_{F_k}, \cB^\pq_{ [k]}\}$
   to obtain an induced linear system of full rank 
   over $Z_{\sF_{[k-1]},\Ga}^{\dagger\circ}$. This induced linear system can be solved
  over the Zariski open subset $Z^{\dagger\circ}_{\sF_{[k-1]},\Ga}$ such that
  all variables $$\{x_{(\uu,\uv)} \mid (\uu,\uv) \in \La^{\rm det}_{F_k, \Ga}\}$$ 
 are explicitly  determined by the coefficients of the induced linear system.
    
     We then  let 
   \begin{equation}\label{det=0}
   \La^\zero_{F_k, \Ga} \subset \La_{F_k}
   \end{equation}
   be the subset consisting of    $ (\uu,\uv) \notin \La^{\rm det}_{F_k ,\Ga}$ 
and $(\uu,\uv) \in \La^{\rm det}_{F_k ,\Ga}$ 
   such that $x_{(\uu,\uv)} \equiv 0$ over
   $Z^{\dagger\circ}_{\sF_{[k-1]},\Ga}$.
    Observe here that we immediately obtain that  for any $(\uu,\uv) \in \La_{F_k}$,
\begin{equation}\label{who-in-dagger}
\hbox{$x_{(\uu,\uv)}$  vanishes identically over $Z^{\dagger\circ}_{\sF_{[k-1]},\Ga}$
   if and only if  $(\uu,\uv) \in \La^\zero_{F_k,\Ga}$.}
   \end{equation}

We let $Z_{\sF_{[ k]},\Ga}$ be
  the scheme-theoretic intersection
   \begin{equation}\label{ZkGa-irr}
   \rho_{\sF_{[k]}}^{-1}(Z_{\sF_{ [k-1]},\Ga}) \cap ( x_{(\uu,\uv)}=0 , \; (\uu,\uv) \in  \La^\zero_{F_k,\Ga}) . \end{equation}

Now, 
fix and consider any standard chart $\fV$ of $\sR_{\sF_{[k]}}$, lying over a
  standard chart $\fV'$  of $\sR_{\sF_{[k-1]}}$ with $\tZ_{\sF_{[k]},\Ga}\cap \fV \ne \emptyset$, 
  equivalently, $Z_{\sF_{[k-1]}} \cap \fV' \ne \emptyset$. We set 
\begin{eqnarray}\label{zero-one-fV-sFk-irr}
\tGa^\zero_\fV= \tGa^\zero_{\fV'} \sqcup    \{ x_{\fV, (\uu,\uv)} \mid 
(\uu, \uv) \in \La^\zero_{F_k,\Ga}\}. 
\end{eqnarray}

We are now ready to 
prove Lemma \ref{wp-transform-sVk-Ga} (1), (2) and (3) in the case of $\sR_{\sF_{[k]}}$.

  Similar to the proof of Lemma \ref{wp-transform-sVk-Ga} (1) for the previous case when
  $\bF_k$ is $\Ga$-relevant, 
  by Lemma \ref{wp-transform-sVk-Ga} (1)  in the case of $\sR_{\sF_{[k-1]}}$
  applied to $Z_{\sF_{ [k-1]},\Ga}$ and $\rho_{\sF_{[k]}}^{-1}(Z_{\sF_{ [k-1]},\Ga})$,
 applying Proposition \ref{equas-fV[k]} to $\sV_{F_{[k]}} \cap \fV$, and using 
 \eqref{ZkGa-irr} and \eqref{zero-one-fV-sFk-irr}, 
 we obtain that $Z_{\sF_{[ k]},\Ga} \cap \fV$, as a closed subscheme of $\fV$, is defined by 
$$y, \;\; y \in  \tGa^\zero_\fV; \;\; \cB^\pq_{[k]}; $$
$$  B_{\fV, (s,t)},\;\; s, t \in S_{F_i} \; \hbox{ with all $i \in [k]$}$$
$$ L_{\fV,F_i}, \;  \; i \in [k]; \; 
 \bF_{\fV,j}, \; k <j\le \up.$$ 
 Then, the above implies that Lemma \ref{wp-transform-sVk-Ga} (1) holds  on $\sR_{\sF_{[k]}}$.



  Next, by construction, the induced morphism
  $$ \rho_{\sF_{[k]}}^{-1} (Z^{\dagger\circ}_{\sF_{[k]},\Ga}) 
  \cap (x_{(\uu,\uv)}=0 , \; (\uu,\uv) \in  \La^\zero_{F_k,\Ga}) 
   \lra Z^{\dagger\circ}_{\sF_{[k-1]}}$$
  is an isomorphism. We let 
$Z^\dagger_{\sF_{ [k]},\Ga}$ be the closure of 
$$ \rho_{\sF_{[k]}}^{-1} (Z^{\dagger\circ}_{\sF_{[k]},\Ga}) 
  \cap (x_{(\uu,\uv)}=0 , \; (\uu,\uv) \in  \La^\zero_{F_k,\Ga} ) $$ in $Z_{\sF_{[ k]},\Ga}$.
  Then, it is closed in $Z_{\sF_{[ k]},\Ga}$ and contains 
  an open subset of $Z_{\sF_{[ k]},\Ga}$, hence, is an irreducible
  component of $Z_{\sF_{[ k]},\Ga}$.  It follows that the composition
  $$Z^\dagger_{\sF_{ [k]},\Ga}\to Z^{\dagger}_{\sF_{[k-1]}} \to Z_\Ga$$  is birational.
  This proves    Lemma \ref{wp-transform-sVk-Ga} (2) on $\sR_{\sF_{[k]}}$.
  

Finally,  we are to prove  Lemma \ref{wp-transform-sVk-Ga} (3) on $\sR_{\sF_{[k]}}$.
Suppose $Z^\dagger_{\sF_{ [k]},\Ga} \cap \fV \subset (y=0)$ for some $y \in \var_\fV$.
If $y=x_{\fV, \uu}$ or $y=x_{\fV, (\uu, \uv)}$ with  $(\uu, \uv) \in \La_{F_i}$ with $i \in [k-1]$,
then the identical  proof in the previous case carries over here without changes.
We now suppose $Z^\dagger_{\sF_{ [k]},\Ga}\cap \fV \subset (x_{\fV,(\uu,\uv)}=0)$
with $(\uu, \uv) \in \La_{F_k}$, then by \eqref{who-in-dagger},
$(\uu,\uv) \in \La^\zero_{F_k,\Ga}$. Thus,  by \eqref{ZkGa-irr}, 
$Z_{\sF_{ [k]},\Ga} \subset (x_{(\uu,\uv)}=0)$.
This proves    Lemma \ref{wp-transform-sVk-Ga} (3) on $\sR_{\sF_{[k]}}$.

By induction, Lemma \ref{wp-transform-sVk-Ga}  is proved.
\end{proof}

We call $Z_{\sF_{[k]},\Ga}$ the $\sF$-transform of $Z_\Ga$
in $\sV_{\sF_{[k]}}$ for any $k\in [\up]$.

\subsection{$\vt$-transforms of  $\Ga$-schemes in  $\tsV_{\vt_{[k]}}$}
\label{subsection:wp-transform-vtk}  $\ $

We now construct the $\vt$-transform of $Z_\Ga$ in $\tsV_{\vt_{[k]}}
\subset \tsR_{\vt_{[k]}}$. 

\begin{lemma}\label{vt-transform-k} 
 Fix any subset $\Ga$ of $\rU_\um$.  Assume that $Z_\Ga$ is integral. 

Fix any $k \in [\up]$.

Then, we have the following:
\begin{itemize}
\item  there exists a closed subscheme $\tZ_{\vt_{[k]},\Ga}$ of
$\tsV_{\vt_{[k]}}$ with an induced morphism  
$\tZ_{\vt_{[k]},\Ga} 
\to Z_\Ga$;
\item   $\tZ_{\vt_{[k]},\Ga}$ comes equipped with an irreducible component  
$\tZ^\dagger_{\vt_{[k]},\Ga}$ with the induced morphism 
$\tZ^\dagger_{\vt_{[k]},\Ga}  
\to Z_\Ga$;
\item  for any standard chart $\fV$ of $\tsR_{\vt_{[k]}}$ such that
$\tZ_{\vt_{[k]},\Ga} \cap \fV \ne \emptyset$, there are two subsets, possibly empty,
$$ \tGa^\zero_{\fV} \; \subset \;  \var_\fV, \;\;\;
\tGa^\one_{\fV} \; \subset \;  \var_\fV.$$ 
\end{itemize}

Further,  consider any given standard chart $\fV$ of $\tsR_{\vt_{[k]}}$ with
$\tZ_{\vt_{[k]},\Ga} \cap \fV \ne \emptyset$. Then,  the following hold:
\begin{enumerate}
\item the scheme $\tZ_{\vt_{[k]},\Ga} \cap \fV$,
 as a closed subscheme of the chart $\fV$,
is defined by the following relations
\begin{eqnarray} 
\;\;\;\;\; y , \; \; \; y \in  \tGa^\zero_\fV , \label{Ga-rel-wp-ktauh-00}\\
\;\;\;  y -1, \; \; \; y \in  \tGa^\one_\fV,  \nonumber \\
\cB_\fV^\mn, \; \cB^\res_{\fV, >k}, \;  \cB^\q_\fV, \; L_{\fV,{ \sF_\um}};\nonumber
\end{eqnarray}
 further, we take $\tGa^\zero_\fV \subset \var_\fV$
 to be the maximal subset (under inclusion)
among all those subsets that satisfy the above;
\item the induced morphism $\tZ^\dagger_{ \vt_{[k]},\Ga} \to Z_\Ga$ is birational;
\item for any variable $y \in \var_\fV$, $\tZ^\dagger_{\vt_{[k]},\Ga} \cap \fV \subset (y=0)$ if and only 
if  $\tZ_{\vt_{[k]},\Ga} \cap \fV \subset (y=0)$. Consequently, 
 $\tZ^\dagger_{\vt_{[k]},\Ga} \cap \fV \subset \tZ_{\vt_{[k+1]}} \cap \fV$ if and only 
if  $\tZ_{\vt_{[k]},\Ga} \cap \fV \subset \tZ_{\vt_{[k+1]}}\cap \fV$ 
where $\tZ_{\vt_{[k]}}$ is the proper transform of 
 the $\vt$-center $Z_{\vt_{[k+1]}}$ in  $\tsR_{\vt_{[k]}}$. 
\end{enumerate}
\end{lemma}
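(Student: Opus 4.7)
The proof will proceed by induction on $k \in \{0\} \cup [\up]$. For the base case $k = 0$, identify $\tsR_{\vt_{[0]}}$ with $\sR_\sF$ and $\tsV_{\vt_{[0]}}$ with $\sV = \sV_{\sF_{[\up]}}$, so set $\tZ_{\vt_{[0]},\Ga} := Z_{\sF_{[\up]},\Ga}$ and $\tZ^\dagger_{\vt_{[0]},\Ga} := Z^\dagger_{\sF_{[\up]},\Ga}$ from Lemma \ref{wp-transform-sVk-Ga} taken at $k = \up$. Applying Proposition \ref{equas-p-k=0}, the pre-quotient binomials $\cB^\pq_{\fV,[\up]}$ appearing in Lemma \ref{wp-transform-sVk-Ga}(1) reduce to the quotient-type set $\cB^\q_\fV$, so the defining equations assume the stated form $\tGa^\zero_\fV$, $\cB^\mn_\fV$, $\cB^\res_{\fV, >0} = \cB^\res_\fV$, $\cB^\q_\fV$, $L_{\fV, \sfm}$. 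Set $\tGa^\one_\fV := \emptyset$ and inherit $\tGa^\zero_\fV$ from Lemma \ref{wp-transform-sVk-Ga}; properties (1)--(3) then follow immediately.

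For the inductive step, assume the lemma holds at level $k - 1$, and consider the blowup $\pi_{[k]}: \tsR_{\vt_{[k]}} \to \tsR_{\vt_{[k-1]}}$ along the proper transform $\tZ'_{\vt_{[k]}}$ of $Z_{\vt_{[k]}} = X_{\uu_k} \cap X_{(\um, \uu_k)}$. Using inductive property (3) applied to $y = x_{\uu_k}$ and $y = x_{(\um, \uu_k)}$, split into Case A ($\tZ_{\vt_{[k-1]},\Ga} \not\subset \tZ'_{\vt_{[k]}}$) and Case B ($\tZ_{\vt_{[k-1]},\Ga} \subset \tZ'_{\vt_{[k]}}$). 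In Case A, define $\tZ_{\vt_{[k]},\Ga}$ and $\tZ^\dagger_{\vt_{[k]},\Ga}$ as strict transforms: by (3) at level $k - 1$, the component $\tZ^\dagger_{\vt_{[k-1]},\Ga}$ is likewise not contained in the center, so $\pi_{[k]}$ restricts to an isomorphism on a dense open, yielding birationality of $\tZ^\dagger_{\vt_{[k]},\Ga} \to Z_\Ga$. In Case B, apply Corollary \ref{no-(um,uu)} (at level $k$, via the analog of equation \eqref{invertible-umuu}) to restrict attention to preferred charts, and define $\tZ_{\vt_{[k]},\Ga} := \pi_{[k]}^{-1}(\tZ_{\vt_{[k-1]},\Ga}) \cap \tsV_{\vt_{[k]}}$; although $\pi_{[k]}^{-1}$ carries an exceptional $\Po$-fiber over the target, intersection with $\tsV_{\vt_{[k]}}$ via the main binomials \eqref{eq-B-vt-lek=0} and the linearized Pl\"ucker relation \eqref{linear-pl-vtk=0} from Proposition \ref{eq-for-sV-vtk} cuts out a canonical rational slice of that fiber. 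Define $\tZ^\dagger_{\vt_{[k]},\Ga}$ as the closure of this slice over a Zariski open $\tZ^{\dagger\circ}_{\vt_{[k-1]},\Ga}$ on which the slice is an isomorphism onto its image, yielding an irreducible component birational to $\tZ^\dagger_{\vt_{[k-1]},\Ga}$ and hence to $Z_\Ga$.

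Next, update $\tGa^\zero$ and $\tGa^\one$ chart by chart using Proposition \ref{meaning-of-var-vtk} for new variables and Proposition \ref{eq-for-sV-vtk} for transformed equations. Where $\pi_{[k]}$ is a local isomorphism, inherit the sets unchanged. On an affected $\vp$- or $\vr$-chart, translate elements of $\tGa^\zero_{\fV'}$ into their proper transforms or exceptional parameters; in Case B, enlarge $\tGa^\one_{\fV'}$ by normalization variables witnessing the slice (for instance $x_{\fV, \uu_k}$ on the $\vr$-chart when appropriate). Then take $\tGa^\zero_\fV$ to be the maximal subset forcing the required vanishings. Property (1) follows by direct calculation; (2) by composition of birational maps; and (3) by maximality combined with the explicit equations in (1): if $\tZ^\dagger_{\vt_{[k]},\Ga} \cap \fV \subset (y = 0)$, then either $y \in \tGa^\zero_\fV$ already, or integrality of $\tZ^\dagger_{\vt_{[k]},\Ga}$ combined with the binomial and Pl\"ucker equations forces $\tZ_{\vt_{[k]},\Ga} \cap \fV \subset (y = 0)$ as well, paralleling the proof of Lemma \ref{wp-transform-sVk-Ga}(3).

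The main obstacle will be Case B: one must verify that the slice cut by $\tsV_{\vt_{[k]}}$ in the $\Po$-fiber glues across charts to a well-defined closed subscheme carrying an irreducible component birational to $Z_\Ga$, and that the chart-level updates to $\tGa^\zero$ and $\tGa^\one$ preserve the maximality needed for (3). This will require a careful use of the last two statements of Proposition \ref{eq-for-sV-vtk} (the $\vr$-linear, square-free nature of the transformed binomials and the explicit form of $L_{\fV, F_k}$ on the exceptional fiber), together with the convention $x_{\fV, (\um, \uu_k)} \equiv 1$ on preferred charts, so that property (3) propagates cleanly to level $k + 1$ and feeds the Case A/B split at the next step.
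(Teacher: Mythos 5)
Your overall strategy coincides with the paper's: induction on $k\in\{0\}\cup[\up]$, base case imported from Lemma \ref{wp-transform-sVk-Ga} at $k=\up$ (with the reduction of $\cB^\pq_\fV$ to $\cB^\q_\fV$ via Proposition \ref{equas-p-k=0}), the Case A/Case B split governed by inductive property (3), proper transforms in Case A, and a slice of the exceptional $\PP^1$-fiber in Case B, with birationality obtained by composition. The substance of the lemma, however, lives in Case B, and there your argument has a genuine gap.

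You define $\tZ_{\vt_{[k]},\Ga}:=\pi_{[k]}^{-1}(\tZ_{\vt_{[k-1]},\Ga})\cap\tsV_{\vt_{[k]}}$ and assert that the main binomials and linearized Pl\"ucker relations cut a canonical rational slice of the exceptional fiber. That is only true when the system of equations induced on the fiber coordinate $y_1$ (which is linear in $y_1$ by the $\vr$-linear, square-free statements of Proposition \ref{eq-for-sV-vtk}) has rank one at general points of $\tZ^\dagger_{\vt_{[k-1]},\Ga}$. The paper isolates precisely this dichotomy. In the rank-one case $(\star a)$ one solves for $y_1$ over a dense open and must still decide whether $y_1$ vanishes identically on the resulting slice; if it does, one further intersects the preimage with $D_{y_1}$ and places $y_1$ in $\tGa^\zero_\fV$ --- without that extra intersection the bare preimage need not lie in $(y_1=0)$ even though the distinguished component does, and property (3) fails. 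In the rank-zero case $(\star b)$ the preimage of $\tZ^\dagger_{\vt_{[k-1]},\Ga}$ is the full trivial $\PP^1$-bundle, so intersecting with $\tsV_{\vt_{[k]}}$ cuts out nothing: with your definition $\tZ_{\vt_{[k]},\Ga}$ would contain a $\PP^1$-bundle over a scheme birational to $Z_\Ga$, and no irreducible component of it could map birationally onto $Z_\Ga$. The paper repairs this by imposing the section $(\xi_0,\xi_1)=(1,1)$ by hand and recording $y_1\in\tGa^\one_\fV$; you gesture at enlarging $\tGa^\one_\fV$ by ``normalization variables,'' but the scheme itself must be redefined accordingly or (1)--(3) do not hold. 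Separately, restricting to preferred charts via Corollary \ref{no-(um,uu)} does not suffice, since the lemma asserts the chart description on every standard chart of $\tsR_{\vt_{[k]}}$ meeting $\tZ_{\vt_{[k]},\Ga}$; the paper instead works on an arbitrary chart lying over $\fV'$.
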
  
\begin{proof} We prove by induction on 
$k \in \{0\} \cup [\up]$. 


The initial case is $k=0$. In this case, we have
 $$\tsR_{\vt_{[0]}}:=\sR_{\sF}, \;\;
 \tsV_{\vt_{[0]}}:=\sV_{\sF}, \;\;
  \tZ_{\vt_{[0]},\Ga}:=Z_{\sF_{ [\up]},\Ga}, \;\;
  \tZ^\dagger_{\vt_{[0]},\Ga}:=Z^\dagger_{\sF_{ [\up]},\Ga}.$$
Then, in this case,  Lemma \ref{vt-transform-k}
 is Lemma  \ref{wp-transform-sVk-Ga} for $k=\up$, where
  we  set $\tGa^\one_\fV=\emptyset$.

We now suppose that Lemma \ref{vt-transform-k} holds
over $\tsR_{\vt_{[k-1]}}$ for some $k\in[\up]$.

We then consider the case of  $\tsR_{\vt_{[k]}}$.

Suppose that $\tZ_{\vt_{[k-1]}, \Ga}$,  or equivalently $\tZ^\dagger_{\vt_{[k-1]},\Ga}$,
by Lemma \ref{vt-transform-k} (3) in the case of $\tsR_{\vt_{[k-1]}}$,
 is not contained in 
$Z'_{\vt_{[k]}}$ where $Z'_{\vt_{[k]}}$ is the proper transform in $\tsR_{\vt_{[k-1]}}$
 of the $\vt$-center $Z_{\vt_{[k]}}$ (of $\tsR_{\vt_{[0]}}$).
We then let $\tZ_{\vt_{[k]},\Ga}$ (respectively, $\tZ^\dagger_{\vt_{[k]},\Ga}$)
 be the proper transform of $\tZ_{\vt_{[k-1]},\Ga}$ (respectively,
  $\tZ^\dagger_{\vt_{[k-1]},\Ga}$) in $\tsV_{\vt_{[k]}}$.
As $\tZ^\dagger_{\vt_{[k]},\Ga}$ is closed in $\tZ_{\vt_{[k]},\Ga}$
and contains a Zariski open subset of $\tZ_{\vt_{[k]},\Ga}$, it is an irreducible
component of $\tZ_{\vt_{[k]},\Ga}$.

Further, consider any standard chart $\fV$ of $\tsR_{\vt_{[k]}}$,
lying over a unique standard chart $\fV'$ of $\tsR_{\vt_{[k-1]}}$,
such that $\tZ_{\vt_{[k]},\Ga}\cap \fV \ne \emptyset$.
We set 
$$\tGa^\zero_\fV=\{y_\fV \mid y_\fV  \hbox{ is the proper transform of some $y_{\fV'} \in \tGa^\zero_{\fV'}$}\};$$
$$\tGa^\one_\fV=\{y_\fV \mid y_\fV  \hbox{ is the proper transform of some $y_{\fV'} \in \tGa^\one_{\fV'}$}\}.$$

We now prove Lemma \ref{vt-transform-k} (1), (2) and (3) in the case of $\tsR_{\vt_{[k]}}$.

 We can apply
 Lemma \ref{vt-transform-k} (1) in the case of $\tsR_{\vt_{[k-1]}}$
 to $\tZ_{\vt_{[k-1]},\Ga}$ to obtain the defining equations of $\tZ_{\vt_{[k-1]},\Ga}\cap \fV'$
as stated in the lemma; we note here that these equations include $\cB^\res_{\fV', \ge k}$.
We then  take the proper transforms of these equations in $\fV'$ to obtain the corresponding
equations in $\fV$,  and then apply (the proof of) 
 Proposition \ref{eq-for-sV-vtk} to reduce $\cB^\res_{\fV, \ge k}$ to $\cB^\res_{\fV, > k}$.
 Because $\tZ_{\vt_{[k]}\Ga}$  is the proper transform of $\tZ_{\vt_{[k-1]},\Ga}$,
this implies Lemma \ref{vt-transform-k} (1)  in the case of $\tsR_{\vt_{[k]}}$.

By construction, we have that the composition
$\tZ^\dagger_{\vt_{[k]},\Ga} \to \tZ^\dagger_{\vt_{[k-1]},\Ga} \to Z_\Ga$
is birational. 
This proves
Lemma \ref{vt-transform-k} (2) in the case of $\tsR_{\vt_{[k]}}$.

To show Lemma \ref{vt-transform-k} (3) in $\tsR_{\vt_{[k]}}$, 
we fix any $y \in \var_\fV$. It suffices to show that
if $\tZ^\dagger_{\vt_{[k]},\Ga}\cap \fV
\subset (y=0)$, then $\tZ_{\vt_{[k]},\Ga}\cap \fV
\subset (y=0)$.  By construction, $y \ne \zeta_{\fV,\vt_{[k]}}$, the exceptional variable
in $\var_\fV$ corresponding to the $\vt$-center $Z_{\vt_{[k]}}$. Hence, $y$ is the proper transform
of some $y' \in \var_{\fV'}$. Then, by taking the images of $\tZ^\dagger_{\vt_{[k]},\Ga}\cap \fV
\subset (y=0)$ under the morphism $\rho_{\vt_{[k]}}: \tsV_{\vt_{[k]}} \to \tsV_{\vt_{[k-1]}}$ 
(which is  induced from the blowup morphism $\pi_{\vt_{[k]}}: \tsR_{\vt_{[k]}} \to \tsR_{\vt_{[k-1]}}$), 
we obtain
$\tZ^\dagger_{\vt_{[k-1]},\Ga}\cap \fV'
\subset (y'=0)$,  hence, $\tZ_{\vt_{[k-1]},\Ga}\cap \fV'
\subset (y'=0)$ by the inductive assumption.
 Then, as $\tZ_{\vt_{[k]},\Ga}$ is the proper transform of $\tZ_{\vt_{[k-1]},\Ga}$,
 we obtain $\tZ_{\vt_{[k]},\Ga}\cap \fV \subset (y=0)$.
 
 The last statement Lemma \ref{vt-transform-k} (3)  follows from the above because
 $\tZ_{\vt_{[k+1]}}\cap \fV=(y_0=y_1=0)$ for some $y_0, y_1 \in \var_\fV$.

\smallskip

We now suppose that $\tZ_{\vt_{[k-1]},\Ga}$, or equivalently $\tZ^\dagger_{\vt_{[k-1]},\Ga}$,
 by  Lemma \ref{vt-transform-k} (3) in $\tsR_{\vt_{[k-1]}}$,
 is contained in the proper transform $Z'_{\vt_{[k]}}$ 
 of the $\vt$-center $Z_{\vt_{[k]}}$. 

Consider  any standard chart $\fV$ of $\tsR_{\vt_{[k]}}$,
lying over a unique standard chart $\fV'$ of $\tsR_{\vt_{[k-1]}}$,
such that $\tZ_{\vt_{[k]},\Ga}\cap \fV \ne \emptyset$.

We let $\vt'_{[k]}$
 be the proper transform in  the chart $\fV'$ of the $\vt$-set ${\vt_{[k]}}$.
Then, $\vt'_{[k]}$ consists of two  variables 
 $$\vt'_{[k]}=\{y'_0,y'_1\} \subset \var_{\fV'}.$$

 We let $\PP^1_{[\xi_0,\xi_1]}$ be the factor projective space for the
$\vt$-blowup $\tsR_{\vt_{[k]}} \to \tsR_{\vt_{[k-1]}}$ with $[\xi_0,\xi_1]$ corresponding to $(y'_0,y'_1)$.
Without loss of generality,  we can assume that the open chart $\fV$ is given by 
  $$(\fV' \times (\xi_0 \equiv 1) ) \cap \tsR_{\vt_{[k]}} \subset \fV' \times \PP^1_{[\xi_0,\xi_1]}.$$

 We let $\zeta_\fV:=\zeta_{\fV, \vt_{[k]}} \in \var_\fV$ be such that 
$E_{\vt_{[k]}} \cap \fV=(\zeta_\fV =0)$ where $E_{\vt_{[k]}}$ is the exceptional divisor
of the blowup $\tsR_{\vt_{[k]}} \to \tsR_{\vt_{[k-1]}}$. 
Note here that according to the proof of Proposition \ref{meaning-of-var-vtk},
the variable $y'_0$ 
corresponds to (or turns into) the exceptional $\zeta_\fV$ on the chart $\fV$.
 We then let $y_1 (=\xi_1) \in \var_\fV$ be 
 the proper transform of $y'_1 \in \var_{\fV'}$ on the chart $\fV$.

  
In addition, we  observe that 
 $$\vt'_{[k]}=\{y'_0,  y'_1\} \subset \tGa^\zero_{\fV'}$$ 
because $\tZ_{\vt_{[k]},\Ga}$ is contained in the proper transform
$Z'_{\vt_{[k]}}$ of the $\vt$-center $Z_{\vt_{[k]}}$.

We set, 
\begin{equation}\label{Gazero-ktah-contained-in-bar-vtk} 
\overline{\Ga}^\zero_\fV= \{\zeta_\fV, \; y_\fV \mid y_\fV  
\hbox{ is the proper transform of some $ y_{\fV'} \in \tGa^\zero_{\fV'} \- \vt'_{[k]}$}\},
\end{equation}
\begin{equation}\label{Gaone-ktah-contained-in-bar-vtk}
 \overline{\Ga}^\one_{\fV}=\{\ y_\fV \mid y_\fV  
\hbox{ is the proper transform of some $y_{\fV'} \in \tGa^\one_{\fV'}$} \}.
\end{equation}

Consider the scheme-theoretic pre-image  
$\rho_{\vt_{[k]}}^{-1}(\tZ_{\vt_{[k-1]},\Ga})$
where $\rho_{\vt_{[k]}}: \tsV_{\vt_{[k]}} \to \tsV_{\vt_{[k-1]}}$ is  induced 
from the blowup morphism $\pi_{\vt_{[k]}}: \tsR_{\vt_{[k]}} \to \tsR_{\vt_{[k-1]}}$.

Note that  scheme-theoretically, we have,
$$\rho_{\vt_{[k]}}^{-1}(\tZ_{\vt_{[k-1]},\Ga})  \cap \fV=
\pi_{\vt_{[k]}}^{-1}(\tZ_{\vt_{[k-1]},\Ga}) \cap \tsV_{\vt_{[k]}} \cap \fV .$$
Applying Lemma \ref{vt-transform-k} (1) in $\tsR_{\vt_{[k-1]}}$ to $\tZ_{\vt_{[k-1]},\Ga}$ 
and $\rho_{\vt_{[k]}}^{-1}(\tZ_{\vt_{[k-1]},\Ga})$, 
and applying Proposition \ref{eq-for-sV-vtk} to 
$\tsV_{\vt_{[k]}} \cap \fV$, we obtain that 
$\rho_{\vt_{[k]}}^{-1}(\tZ_{\vt_{[k-1]},\Ga})  \cap \fV$,
as a closed subscheme of $\fV$,  is defined by 
\begin{equation}\label{vt-pre-image-defined-by}
y_\fV \in \overline{\Ga}^\zero_\fV; \;\;  y_\fV-1, \; y_\fV \in \overline{\Ga}^\one_{\fV};\;\;
\cB^\mn_\fV; \;\; \cB^\res_{\fV, >k}; \;\; \cB^\q_{\fV}; \;\; L_{\fV, \sF_\um}.
\end{equation} 
(Observe here that $\zeta_\fV \in  \overline{\Ga}^\zero_\fV$.)

Thus, by setting $y_\fV=0$ for all $y_\fV \in  \overline{\Ga}^\zero_\fV$ and
 $y_\fV=1$ for all $y_\fV \in \overline{\Ga}^\one_{\fV}$ in 
 $\cB^\mn_\fV, \cB^\res_{\fV, >k}, \cB^\q_{\fV}, L_{\fV, \sF_\um}$ of  the above,
 we obtain 
 \begin{equation}\label{vt-lin-xi-pre}
 \tilde{\cB}^\mn_\fV, \tilde\cB^\res_{\fV,>k}, \tilde\cB^\q_{\fV}, \tilde{L}_{\fV,\sF_\um}.
 \end{equation}
 Note that  for any $\bF \in \sfm$, if $L_{\fV, F}$ contains $y_1$,
 then it contains $\zeta_\fV$, hence $\tilde L_{\fV, F}$ does not contain $y_1$.
 We keep those equations of \eqref{vt-lin-xi-pre} that  contain the variable
 $y_1$ and obtain
 \begin{equation}\label{lin-xi-vtk}
 \hat{\cB}^\mn_\fV, \hat\cB^\res_{\fV,>k}, \hat\cB^\q_{\fV}, 
 \end{equation}
 viewed as a  system of equations in $y_1$.
 By Proposition \ref{eq-for-sV-vtk} (the last two statements), 
one sees that \eqref{lin-xi-vtk} is  a  {\it linear} system of equations in $y_1$.
Furthermore, we have that 
 $$(\rho_{\vt_{[k]}}^{-1}(\tZ_{\vt_{[k-1]},\Ga})\cap \fV)/(\tZ_{\vt_{[k-1]},\Ga}\cap \fV')$$
 is defined by the linear system \eqref{lin-xi-vtk}.

There are the following two cases for \eqref{lin-xi-vtk}:

$(\star a)$  the rank of the linear system  $\eqref{lin-xi-vtk}$ equals one 
over general points of  $\tZ^\dagger_{\vt_{[k-1]},\Ga}$.

$(\star b)$  the rank of the linear system  $\eqref{lin-xi-vtk}$ equals zero
at general points of  $\tZ^\dagger_{\vt_{[k-1]},\Ga}$, hence at all points
of $\tZ^\dagger_{\vt_{[k-1]},\Ga}$.

\smallskip\noindent
{\it Proof of Lemma \ref{vt-transform-k} for $\tsR_{\vt_{[k]}}$ under the condition $(\star a)$.}
\smallskip

By the condition $(\star a)$,
 there exists a Zariski open subset $\tZ^{\dagger\circ}_{\vt_{[k-1]},\Ga}$ 
of $\tZ^\dagger_{\vt_{[k-1]},\Ga}$ such that the rank of the linear system
 \eqref{lin-xi-vtk} equals one at any point of  $\tZ^{\dagger\circ}_{\vt_{[k-1]},\Ga}$. 
By solving $y_1$ from
 the linear system \eqref{lin-xi-vtk} over $\tZ^{\dagger\circ}_{\vt_{[k-1]},\Ga}$, we obtain 
that the induced morphism
$$
\rho_{\vt_{[k]}}^{-1}(\tZ^{\dagger\circ}_{\vt_{[k-1]},\Ga}) 
 \lra \tZ^\circ_{\vt_{[k-1]},\Ga}$$ is an isomorphism. 

First, we suppose  $y_1$ is identically zero along 
$\rho_{\vt_{[k]}}^{-1}(\tZ^{\dagger\circ}_{\vt_{[k-1]},\Ga})$.
We then set,  
\begin{equation}\label{Gazero-ktah-contained-in-a-vtk} 
\tGa^\zero_\fV=\{y_1 \} \cup \overline{\Ga}^\zero_\fV
\end{equation}
where $\overline{\Ga}^\zero_\fV$ is as in \eqref{Gazero-ktah-contained-in-bar-vtk}.
In this case, we let
\begin{equation}\label{ZktahGa-with-xi-vtk}
\tZ_{\vt_{[k]},\Ga}=\rho_{\vt_{[k]}}^{-1}(\tZ_{\vt_{[k-1]},\Ga})\cap D_{y_1}
\end{equation}
scheme-theoretically, where $D_{y_1}$ is the closure of $(y_1=0)$ in $\tsR_{\vt_{[k]}}$.
We remark here that $D_{y_1}$ does not depend on the choice of the chart $\fV$.

Next, suppose  $y_1$ is not identically zero along 
$\rho_{\vt_{[k]}}^{-1}(\tZ^{\dagger\circ}_{\vt_{[k-1]},\Ga})$.
We then set,  
\begin{equation}\label{Gazero-ktah-contained-in-a-vtk'} 
\tGa^\zero_\fV= \overline{\Ga}^\zero_\fV
\end{equation}
where $\overline{\Ga}^\zero_\fV$ is as in \eqref{Gazero-ktah-contained-in-bar-vtk}.
In this case, we let
\begin{equation}\label{ZktahGa-without-xi-vtk}
\tZ_{\vt_{[k]},\Ga}=\rho_{\vt_{[k]}}^{-1}(\tZ_{\vt_{[k-1]},\Ga}).
\end{equation}
We always set (under the condition $(\star a)$)
\begin{equation}\label{Gaone-ktah-contained-in-a-vtk} \tGa^\one_{\fV}= \overline{\Ga}^\one_{\fV}
\end{equation}
where $\overline{\Ga}^\one_{\fV}$ is as in  \eqref{Gaone-ktah-contained-in-bar-vtk}.

In each case, by construction, we have 
$$\rho_{\vt_{[k]}}^{-1}(\tZ^{\dagger\circ}_{\vt_{[k-1]},\Ga})  
\subset \tZ_{\vt_{[k]}\Ga},$$ and we let $\tZ^\dagger_{\vt_{[k]},\Ga}$ be the closure of 
$\rho_{(\vt_{[k]}}^{-1}(\tZ^{\dagger\circ}_{\vt_{[k-1]},\Ga}) $ in $\tZ_{\vt_{[k]},\Ga}$.
It is an irreducible component of $\tZ_{\vt_{[k]},\Ga}$ because
$\tZ^\dagger_{\vt_{[k]},\Ga}$ is closed in $\tZ_{\vt_{[k]},\Ga}$ and contains
the Zariski open subset $\rho_{\vt_{[k]}}^{-1}(\tZ^{\dagger\circ}_{\vt_{[k-1]},\Ga}) $
of $\tZ_{\vt_{[k]},\Ga}$.
Then, we obtain that the composition
 $$\tZ^\dagger_{\vt_{[k]},\Ga} \lra \tZ^\dagger_{\vt_{[k-1]},\Ga}
 \lra Z_\Ga$$ is birational.
 This proves Lemma \ref{vt-transform-k} (2)  over $\tsR_{\vt_{[k]}}$.


In each case of the above (i.e., \eqref{Gazero-ktah-contained-in-a-vtk} and
\eqref{Gazero-ktah-contained-in-a-vtk'}), 
  by the paragraph of \eqref{vt-pre-image-defined-by},
one sees that $\tZ_{\vt_{[k]},\Ga}\cap \fV$,
 as a closed subscheme of $\fV$, is defined by the equations as stated in the Lemma.  
 This proves Lemma \ref{vt-transform-k} (1)  over $\tsR_{\vt_{[k]}}$.

It remains to prove Lemma \ref{vt-transform-k} (3) over $\tsR_{\vt_{[k]}}$.

Fix any $y \in \var_\fV$, it suffices to show that
if $\tZ^\dagger_{\vt_{[k]},\Ga}\cap \fV
\subset (y=0)$, then $\tZ_{(\vt_{[k]},\Ga}\cap \fV
\subset (y=0)$.  
If $y \ne \zeta_\fV,  y_1$, then $y$ is the proper transform of some variable $y' \in \var_{\fV'}$.
Hence, by taking the images under $\rho_{\vt_{[k]}}$, we have $\tZ^\dagger_{\vt_{[k-1]},\Ga}\cap \fV'
\subset (y'=0)$; by Lemma \ref{vt-transform-k} (3) in $\tsR_{\vt_{[k-1]}}$, we obtain
$\tZ_{\vt_{[k-1]},\Ga}\cap \fV'
\subset (y'=0)$,  thus $y' \in \tGa^\zero_{\fV'}$ by the maximality of
the subset $\tGa^\zero_{\fV'}$. Therefore, 
$\tZ_{\vt_{[k]},\Ga}\cap \fV \subset ( y=0)$,  
by (the already-proved) Lemma \ref{vt-transform-k} (1) for $\tsR_{\vt_{[k]}}$
(cf. \eqref{Gazero-ktah-contained-in-bar-vtk} and \eqref{Gazero-ktah-contained-in-a-vtk} 
or \eqref{Gazero-ktah-contained-in-a-vtk'}). 
Next, suppose $y = y_1$ (if it occurs). Then, by construction, 
$\tZ_{\vt_{[k]},\Ga}\cap \fV \subset (y=0)$. 
Finally, we let $y =\zeta_\fV$. 
 Again, by construction, $\tZ_{\vt_{[k]},\Ga}\cap \fV \subset ( \zeta_\fV=0)$. 

As earlier, the last statement Lemma \ref{vt-transform-k} (3)  follows from the above.

\smallskip\noindent
{\it  Proof of Lemma \ref{vt-transform-k} over $\tsR_{\vt_{[k]}}$ under the condition $(\star b)$.}
\smallskip

Under the condition $(\star b)$, we have that $$ 
\rho_{\vt_{[k]}}^{-1}(\tZ^\dagger_{\vt_{[k-1]},\Ga})
 \lra \tZ^\dagger_{\vt_{[k-1]},\Ga}$$
can be canonically identified with the trivial $\PP_{[\xi_0,\xi_1]}$-bundle:
$$\rho_{\vt_{[k]}}^{-1}(\tZ^\dagger_{(\vt_{[k-1]},\Ga}) 
= \tZ^\dagger_{\vt_{[k-1]},\Ga}
\times \PP_{[\xi_0,\xi_1]}.$$
In this case, we define 
$$\tZ_{\vt_{[k]},\Ga}= \rho_{\vt_{[k]}}^{-1}(\tZ_{\vt_{[k-1]},\Ga})\cap ((\xi_0,\xi_1)= (1,1)),$$
 $$\tZ^\dagger_{\vt_{[k]},\Ga}= 
 \rho_{\vt_{[k]}}^{-1}(\tZ^\dagger_{\vt_{[k-1]},\Ga})
 \cap ((\xi_0,\xi_1)= (1,1)),$$
 both scheme-theoretically.
 The induced morphism $\tZ^\dagger_{\vt_{[k]},\Ga} \lra \tZ^\dagger_{\vt_{[k-1]},\Ga}$
 is an isomorphism. 
 Again, one sees that $\tZ^\dagger_{\vt_{[k]},\Ga}$ is 
 an irreducible component of $\tZ_{\vt_{[k]},\Ga}$. Therefore,
 $$\tZ^\dagger_{\vt_{[k]},\Ga} \lra
 \tZ^\dagger_{\vt_{[k-1]},\Ga} \lra  Z_\Ga$$ is  birational.
 This proves Lemma \ref{wp/ell-transform-ktauh} (2) over $\tsR_{\vt_{[k]}}$.

Further, under the condition $(\star b)$, we set 
 \begin{eqnarray}
 \tGa^\zero_\fV= \overline{\Ga}^\zero_\fV,  \;\;\;\;
\tGa^\one_\fV=\{y_1\}\cup \overline{\Ga}^\one_{\fV}. \nonumber \end{eqnarray}
Then, again, by the paragraph of \eqref{vt-pre-image-defined-by},
one sees that $\tZ_{\vt_{[k]},\Ga}\cap \fV$,
 as a closed subscheme of $\fV$, is defined by the equations as stated in the Lemma. 
 This proves Lemma \ref{vt-transform-k} (1) over $\tsR_{\vt_{[k]}}$.

 It remains to prove Lemma \ref{vt-transform-k} (3) in over $\tsR_{\vt_{[k]}}$.

Fix any $y \in \var_\fV$, it suffices to show that
if $\tZ^\dagger_{\vt_{[k]}, \Ga}\cap \fV
\subset (y=0)$, then $\tZ_{\vt_{[k]},\Ga}\cap \fV
\subset (y=0)$.  By construction, $y \ne y_1$.
Then, the corresponding proof of Lemma \ref{vt-transform-k} (3) for $\tsR_{\vt_{[k]}}$
under the condition $(\star a)$ goes through here without change.
 The last statement Lemma \ref{vt-transform-k} (3)  follows from the above.
This proves Lemma \ref{vt-transform-k} (3) in $\tsR_{\vt_{[k]}}$
under the condition $(\star b)$.

This completes the proof of Lemma \ref{vt-transform-k}.
\end{proof}

 We call $\tZ_{\vt_{[k]},\Ga}$ the $\vt$-transform of $Z_\Ga$ 
 in $\tsV_{\vt_{[k]}}$ for any $k \in [\up]$.

  We need the final case of Lemma 
\ref{vt-transform-k}. We set
$$\tZ_{\vt, \Ga}:=\tZ_{\vt_{[\up]},\Ga},
 \;\; \tZ^\dagger_{\vt, \Ga}:=\tZ^\dagger_{\vt_{[\up]},\Ga}.$$

\subsection{$\wp$- and $\ell$-transforms of  
$\Ga$-schemes in   $\tsV_{({\wp}_{(k\tau)}\fr_\mu\fs_h)}$ and in $(\ell_k)$}
\label{subsection:wp-transform-vsk}  $\ $

We now construct the ${\wp}$-transform of $Z_\Ga$ in $\tsV_{({\wp}_{(k\tau)}\fr_\mu\fs_h)}
\subset \tsR_{({\wp}_{(k\tau)}\fr_\mu\fs_h)}$. 
Here, as in Proposition \ref{meaning-of-var-wp/ell},  
we assume that the last of
$\tsR_{({\wp}_{(k\tau)}\fr_\mu\fs_h)}$ is $\tsR_{\ell_k}$.


\begin{lemma}\label{wp/ell-transform-ktauh} 
 Fix any subset $\Ga$ of $\rU_\um$.  Assume that $Z_\Ga$ is integral. 

Consider $(k\tau) \mu h \in  \Index_{\Phi_k} \sqcup \{\ell_k\}$.

Then, we have the following:
\begin{itemize}
\item  there exists a closed subscheme $\tZ_{({\wp}_{(k\tau)}\fr_\mu\fs_h),\Ga}$ of
$\tsV_{({\wp}_{(k\tau)}\fr_\mu\fs_h)}$ with an induced morphism  
$\tZ_{({\wp}_{(k\tau)}\fr_\mu\fs_h),\Ga} 
\to Z_\Ga$;
\item   $\tZ_{({\wp}_{(k\tau)}\fr_\mu\fs_h),\Ga}$ comes equipped with an irreducible component  
$\tZ^\dagger_{({\wp}_{(k\tau)}\fr_\mu\fs_h),\Ga}$ with the induced morphism 
$\tZ^\dagger_{({\wp}_{(k\tau)}\fr_\mu\fs_h),\Ga}  
\to Z_\Ga$;
\item  for any standard chart $\fV$ of $\tsR_{({\wp}_{(k\tau)}\fr_\mu\fs_h)}$ such that
$\tZ_{({\wp}_{(k\tau)}\fr_\mu\fs_h),\Ga} \cap \fV \ne \emptyset$, there come equipped with
two subsets, possibly empty,
$$ \tGa^\zero_{\fV} \; \subset \; \var^\vee_\fV, \;\;\;
\tGa^\one_{\fV} \; \subset \;  \var_\fV.$$ 
\end{itemize}

Further,  consider any given chart $\fV$ of $\tsR_{({\wp}_{(k\tau)}\fr_\mu\fs_h)}$ with
$\tZ_{({\wp}_{(k\tau)}\fr_\mu\fs_h),\Ga} \cap \fV \ne \emptyset$. 
Then,  the following hold:
\begin{enumerate}
\item the scheme $\tZ_{({\wp}_{(k\tau)}\fr_\mu\fs_h),\Ga} \cap \fV$,
 as a closed subscheme of the chart $\fV$,
is defined by the following relations
\begin{eqnarray} 
\;\;\;\;\; y , \; \; \; y \in  \tGa^\zero_\fV , \label{Ga-rel-wp-ktauh}\\
\;\;\;  y -1, \; \; \; y \in  \tGa^\one_\fV,  \nonumber \\
\cB_\fV^\mn, \; \cB^\q_\fV, \; L_{\fV, \sF_\um}; \nonumber
\end{eqnarray}
 further, we take $\tGa^\zero_\fV \subset \var_\fV$
 to be the maximal subset (under inclusion)
among all those subsets that satisfy the above.
\item the induced morphism $\tZ^\dagger_{({\wp}_{(k\tau)}\fr_\mu\fs_h),\Ga} \to Z_\Ga$ is birational; 
\item for any variable $y \in \var_\fV$, $\tZ^\dagger_{({\wp}_{(k\tau)}\fr_\mu\fs_h),\Ga} \cap \fV \subset (y=0)$ if and only 
if  $\tZ_{({\wp}_{(k\tau)}\fr_\mu\fs_h),\Ga} \cap \fV \subset (y=0)$. Consequently, 
 $\tZ^\dagger_{({\wp}_{(k\tau)}\fr_\mu\fs_h),\Ga}\cap \fV \subset \tZ_{\phi_{(k\tau)\mu(h+1)}}\cap \fV$ if and only 
if  $\tZ_{({\wp}_{(k\tau)}\fr_\mu\fs_h),\Ga} \cap \fV \subset \tZ_{\phi_{(k\tau)\mu(h+1)}}\cap \fV$ 
where $\tZ_{\phi_{(k\tau)\mu(h+1)}}$ is the proper transform of the $\wp$-center 
$Z_{\phi_{(k\tau)\mu(h+1)}}$ in  $\tsR_{({\wp}_{(k\tau)}\fr_\mu\fs_h)}$. 
 \end{enumerate}
\end{lemma}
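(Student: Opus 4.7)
The plan is to mimic the structure of Lemma \ref{vt-transform-k}, proceeding by induction on $(k\tau)\mu h \in \{((11)10)\} \sqcup \Index_{\Phi_k} \sqcup \{\ell_k\}$ ordered lexicographically (with the convention that $\ell_k$ succeeds the last $\wp_k$-index). The base case $((11)10)$ corresponds to $\tsR_{({\wp}_{(11)}\fr_1\fs_0)} = \tsR_\vt$, where the statement is exactly the $k = \up$ case of Lemma \ref{vt-transform-k} (after observing that on $\tsR_\vt$ we have $\var_\fV^\vee = \var_\fV$ since $\fl_\fV = \emptyset$, and the residual binomials $\cB^\res_{\fV, > \up}$ are vacuous). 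For the inductive step, let $\fV$ be a standard chart of the next blowup lying over $\fV'$ of the previous one, with blowup center locally cut out by $y'_0, y'_1 \in \var^+_{\fV'}$ (where $(y'_0,y'_1)$ comes from $\phi_{(k\tau)\mu h}$ or $\chi_k$).

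The inductive step splits into the familiar dichotomy. If $\tZ_{\hbar', \Ga}$ is not contained in the proper transform of the blowup center (equivalently, by Lemma \ref{wp/ell-transform-ktauh} (3) applied at $\hbar'$, not both of $y'_0, y'_1$ lie in $\tGa^\zero_{\fV'}$), I define $\tZ_{\hbar, \Ga}$ and $\tZ^\dagger_{\hbar, \Ga}$ as proper transforms of $\tZ_{\hbar', \Ga}$ and $\tZ^\dagger_{\hbar', \Ga}$, and take $\tGa^\zero_\fV$ (respectively $\tGa^\one_\fV$) to be the proper transforms of the variables in $\tGa^\zero_{\fV'}$ (respectively $\tGa^\one_{\fV'}$). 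Birationality and the equation description are then inherited directly from $\fV'$ via the blowup relation $y'_j = y'_i \xi_j$. The second, more delicate case is when both $y'_0, y'_1 \in \tGa^\zero_{\fV'}$, so $\tZ_{\hbar', \Ga}$ lies inside the center. Here, on the chart $\fV = (\fV' \times (\xi_0 \equiv 1)) \cap \tsR_\hbar$, I let $\zeta_\fV$ denote the new exceptional parameter (coming from $y'_0$) and $y_1 = \xi_1$ the proper transform of $y'_1$, and define $\bar\Ga^\zero_\fV$ and $\bar\Ga^\one_\fV$ by including $\zeta_\fV$ and the proper transforms of the remaining old ``zero'' and ``one'' variables (with $y_1$ deliberately omitted for the moment).

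At this stage I consider the scheme-theoretic preimage $\rho_\hbar^{-1}(\tZ_{\hbar',\Ga}) \cap \fV$ and read off, via the inductive equations together with Proposition \ref{equas-wp/ell-kmuh}, a residual system in the variable $y_1$ consisting of those equations in $\cB^\mn_\fV \cup \cB^\q_\fV \cup L_{\fV,\sF_\um}$ that still involve $y_1$ after setting $\bar\Ga^\zero_\fV$ to zero and $\bar\Ga^\one_\fV$ to one. The key input is Proposition \ref{equas-wp/ell-kmuh} (1)(2)(4): up to factors of $\zeta_\fV$, each surviving equation is linear in $y_1$, so the residual system is genuinely a linear system in $y_1$ over the base $\tZ^\dagger_{\hbar', \Ga}$. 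I then split into the two sub-cases exactly as in the proof of Lemma \ref{vt-transform-k}: (a) the linear system has rank one over a dense open $\tZ^{\dagger\circ}_{\hbar', \Ga}$, in which case $y_1$ is uniquely determined — if it is identically zero I place $y_1$ into $\tGa^\zero_\fV$, otherwise I leave $\tGa^\zero_\fV = \bar\Ga^\zero_\fV$ and set $\tZ_{\hbar,\Ga} = \rho_\hbar^{-1}(\tZ_{\hbar',\Ga})$; (b) the system has rank zero, so the fiber is the trivial $\PP^1$-bundle, and I pin it down by adjoining $y_1$ to $\tGa^\one_\fV$, i.e.\ $\tZ_{\hbar,\Ga} = \rho_\hbar^{-1}(\tZ_{\hbar',\Ga}) \cap (y_1 = 1)$. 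In every sub-case, $\tZ^\dagger_{\hbar,\Ga}$ is the closure of the preimage of $\tZ^{\dagger\circ}_{\hbar',\Ga}$ intersected with the appropriate slice, which is an irreducible component mapping birationally to $Z_\Ga$.

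Conclusions (1) and (2) then follow from the inductive assumption applied on $\fV'$ combined with the explicit equations coming from the blowup substitution; conclusion (3) is verified by taking images under $\rho_\hbar$ and invoking the maximality of $\tGa^\zero_{\fV'}$ together with the construction of $\tGa^\zero_\fV$ (the variable $\zeta_\fV$ and, when pertinent, $y_1$ are in $\tGa^\zero_\fV$ by construction). The $\ell_k$ step is handled uniformly inside the induction, using the additional simplification furnished by Corollary \ref{ell-isom}: on a preferred chart, $L_{\fV, F_k} = 1 + \sgn(s_{F_k}) y_{\fV,(\um,\uu_{F_k})}$ with $y_{\fV,(\um,\uu_{F_k})}$ invertible, so the $\ell_k$-blowup is essentially an isomorphism on $\tsV$ and the transform is determined by pulling back. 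The main obstacle I anticipate is the bookkeeping in case (b) of the inductive step: one must simultaneously keep track of which variables in $\var^+_\fV$ end up in $\tGa^\zero_\fV$ versus $\tGa^\one_\fV$, verify that $\tGa^\zero_\fV$ is indeed contained in $\var^\vee_\fV$ (never in the invertible $y_{(\um,\uu_F)}$ variables, which is enforced because those cannot define the local blowup center by Proposition \ref{meaning-of-var-wp/ell}), and confirm that the $\vr$-linearity of Proposition \ref{equas-wp/ell-kmuh} really does force the residual system to be linear in the single variable $y_1$ — this last point is what makes the dichotomy (a)/(b) exhaustive and closes the induction.
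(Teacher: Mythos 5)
Your proposal follows essentially the same route as the paper's own proof: the same induction over $\{((11)10)\}\sqcup\Index_{\Phi_k}\sqcup\{\ell_k\}$ with base case Lemma \ref{vt-transform-k} at $k=\up$, the same dichotomy on containment in the blowup center, the same rank-one/rank-zero split of the residual linear system in $y_1$ (justified by Proposition \ref{equas-wp/ell-kmuh} (1), (2), (4)), and the same use of Corollary \ref{ell-isom} to fold the $\ell_k$-step into the induction. The bookkeeping concerns you flag at the end (placement of $y_1$ in $\tGa^\zero_\fV$ versus $\tGa^\one_\fV$, and $\tGa^\zero_\fV\subset\var^\vee_\fV$) are handled in the paper exactly as you anticipate.
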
  
\begin{proof} We prove by induction on 
 $(k\tau) \mu h \in  \{(11)10)\} \sqcup \Index_{\Phi_k} \sqcup \{\ell_k\}$ (cf. \eqref{indexing-Phi}).


The initial case is $(11)10$. In this case, we have
 $$\tsR_{({\wp}_{(11)}\fr_1\fs_0)}:=\tsR_{\vt}, \;\;
 \tsV_{({\wp}_{(11)}\fr_1\fs_0)}:=\tsV_{\vt} , \;\;
  \tZ_{({\wp}_{(11)}\fr_1\fs_0),\Ga}:=\tZ_{\vt,\Ga}, \;\;
   \tZ^\dagger_{({\wp}_{(11)}\fr_1\fs_0),\Ga}:=\tZ^\dagger_{\vt,\Ga}.$$
Then, in this case,  Lemma \ref{wp/ell-transform-ktauh}
 is Lemma \ref{vt-transform-k} for $k=\up$.

We now suppose that Lemma \ref{wp/ell-transform-ktauh} holds for $(k\tau)\mu(h-1)$
for some $(k\tau)\mu h \in \Index_{\Phi_k}$.

We treat exclusively $\wp$-transforms of $Z_\Ga$ first, in other words, we assume that 
$\tsR_{({\wp}_{(k\tau)}\fr_\mu\fs_{h-1})}\ne \tsR_{\wp_k}$. We treat 
$\ell$-transforms of $Z_\Ga$ in the end.

We then consider the case of $(k\tau)\mu h$.

We let 
\begin{equation}\label{rho-wpktaumuh}
 \rho_{(\wp_{(k\tau)}\fr_\mu\fs_h)}: \tsV_{(\wp_{(k\tau)}\fr_\mu\fs_h)} \lra
\tsV_{(\wp_{(k\tau)}\fr_\mu\fs_{h-1})}\end{equation}
be the morphism induced from
$ \pi_{(\wp_{(k\tau)}\fr_\mu\fs_h)}: \tsR_{(\wp_{(k\tau)}\fr_\mu\fs_h)} \to
\tsR_{(\wp_{(k\tau)}\fr_\mu\fs_{h-1})}$.

Suppose that $\tZ_{({\wp}_{(k\tau)}\fr_\mu\fs_{h-1}),\Ga}$,  or equivalently 
$\tZ^\dagger_{({\wp}_{(k\tau)}\fr_\mu\fs_{h-1}),\Ga}$,
by Lemma \ref{wp/ell-transform-ktauh}  (3) in $({\wp}_{(k\tau)}\fr_\mu\fs_{(h-1)})$,
 is not contained in 
$Z'_{\phi_{(k\tau)\mu h}}$ where $Z'_{\phi_{(k\tau)\mu h}}$ is the proper transform 
in $\tsR_{({\wp}_{(k\tau)}\fr_\mu\fs_{h-1})}$
 of the ${\wp}$-center $Z_{\phi_{(k\tau)\mu h}}$(of $\tsR_{({\wp}_{(k\tau)}\fr_{\mu-1})}$).

We then let $\tZ_{({\wp}_{(k\tau)}\fr_\mu\fs_h),\Ga}$ (resp. $\tZ^\dagger_{({\wp}_{(k\tau)}\fr_\mu\fs_h),\Ga}$) be the
proper transform of $\tZ_{({\wp}_{(k\tau)}\fr_\mu\fs_{h-1}),\Ga}$
 (resp. $\tZ^\dagger_{({\wp}_{(k\tau)}\fr_\mu\fs_{h-1}),\Ga}$)
in $\sV_{({\wp}_{(k\tau)}\fr_\mu\fs_h)}$.
As $\tZ^\dagger_{({\wp}_{(k\tau)}\fr_\mu\fs_h),\Ga}$ is closed in $\tZ_{({\wp}_{(k\tau)}\fr_\mu\fs_h),\Ga}$
and contains a Zariski open subset of $\tZ_{({\wp}_{(k\tau)}\fr_\mu\fs_h),\Ga}$, it is an irreducible
component of $\tZ_{({\wp}_{(k\tau)}\fr_\mu\fs_h),\Ga}$.

Further, consider any standard chart $\fV$ of $\tsR_{({\wp}_{(k\tau)}\fr_\mu\fs_h)}$,
lying over a unique standard chart $\fV'$ of $\tsR_{({\wp}_{(k\tau)}\fr_\mu\fs_{h-1})}$,
such that $\tZ_{({\wp}_{(k\tau)}\fr_\mu\fs_h),\Ga}\cap \fV \ne \emptyset$.
We set 
$$\tGa^\zero_\fV=\{y_\fV \mid y_\fV  
\hbox{ is the proper transform of some $y_{\fV'} \in \tGa^\zero_{\fV'}$}\};$$
$$\tGa^\one_\fV=\{y_\fV \mid y_\fV  
\hbox{ is the proper transform of some $y_{\fV'} \in \tGa^\one_{\fV'}$}\}.$$

We now prove  Lemma \ref{wp/ell-transform-ktauh}  (1), (2) and (3) in $({\wp}_{(k\tau)}\fr_\mu\fs_h)$.

Lemma \ref{wp/ell-transform-ktauh}  (1) in $({\wp}_{(k\tau)}\fr_\mu\fs_h)$ 
follows  from Lemma \ref{wp/ell-transform-ktauh} (1) in $({\wp}_{(k\tau)}\fr_\mu\fs_{h-1})$
because $\tZ_{({\wp}_{(k\tau)}\fr_\mu\fs_h),\Ga}$  is the proper transform of 
$\tZ_{({\wp}_{(k\tau)}\fr_\mu\fs_{h-1}),\Ga}$. 

By construction, we have that 
$\tZ^\dagger_{({\wp}_{(k\tau)}\fr_\mu\fs_h),\Ga} \to \tZ^\dagger_{({\wp}_{(k\tau)}\fr_\mu\fs_{h-1}),\Ga} \to Z_\Ga$
is birational.   
This proves Lemma \ref{wp/ell-transform-ktauh}  (2) in $({\wp}_{(k\tau)}\fr_\mu\fs_h)$.

To show Lemma \ref{wp/ell-transform-ktauh}  (3) in $({\wp}_{(k\tau)}\fr_\mu\fs_h)$, 
we fix any $y \in \var_\fV$. It suffices to show that
if $\tZ^\dagger_{({\wp}_{(k\tau)}\fr_\mu\fs_h),\Ga}\cap \fV
\subset (y=0)$, then $\tZ_{({\wp}_{(k\tau)}\fr_\mu\fs_h),\Ga}\cap \fV
\subset (y=0)$.  By construction, $y \ne \zeta_{\fV,(k\tau)\mu h}$, the exceptional variable
in $\var_\fV$ corresponding to the $\wp$-set $\phi_{(k\tau)\mu h}$. Hence, $y$ is the proper transform
of some $y' \in \var_{\fV'}$. Then, by taking the images under the morphism
$\rho_{(\wp_{(k\tau)}\fr_\mu\fs_h)}$ of \eqref{rho-wpktaumuh}, we obtain
$\tZ^\dagger_{({\wp}_{(k\tau)}\fr_\mu\fs_{h-1}),\Ga}\cap \fV'
\subset (y'=0)$,  hence, $\tZ_{({\wp}_{(k\tau)}\fr_\mu\fs_{h-1}),\Ga}\cap \fV'
\subset (y'=0)$ by the inductive assumption.
 Then,  as $\tZ_{({\wp}_{(k\tau)}\fr_\mu\fs_{h}),\Ga}$ 
 is the proper transform of $\tZ_{({\wp}_{(k\tau)}\fr_\mu\fs_{h-1}),\Ga}$,
 we obtain $\tZ_{({\wp}_{(k\tau)}\fs_{h}),\Ga}\cap \fV \subset (y=0)$.
  The last statement  of Lemma \ref{wp/ell-transform-ktauh}  (3)  follows from the above because
 $\tZ_{\phi_{(k\tau)\mu(h+1)}}\cap \fV=(y_0=y_1=0)$ for some $y_0, y_1 \in \var_\fV$.

\smallskip

We now suppose that $\tZ_{({\wp}_{(k\tau)}\fr_\mu\fs_{h-1}),\Ga}$, or equivalently 
$\tZ^\dagger_{({\wp}_{(k\tau)}\fr_\mu\fs_{h-1}),\Ga}$,
 by  Lemma \ref{wp/ell-transform-ktauh}  (3) in $({\wp}_{(k\tau)}\fr_\mu\fs_{(h-1)})$,
 is contained in the proper transform  $Z'_{\phi_{(k\tau)\mu h}}$ 
 of $Z_{\phi_{(k\tau)\mu h}}$. 

Consider  any standard chart $\fV$ of $\tsR_{({\wp}_{(k\tau)}\fr_\mu\fs_h)}$,
lying over a unique standard chart $\fV'$ of $\tsR_{({\wp}_{(k\tau)}\fr_\mu\fs_{h-1})}$,
such that $\tZ_{({\wp}_{(k\tau)}\fr_\mu\fs_{h-1}),\Ga}\cap \fV' \ne \emptyset$.

We let $\phi'_{(k\tau)\mu h}$
 be the proper transform in  the chart $\fV'$ of the $\wp$-set $\phi_{(k\tau)\mu h}$. Then,  
 $\phi'_{(k\tau)\mu h}$ consists of two  variables such that
 $$\psi'_{(k\tau)\mu h}=\{y'_0,  y'_1\} \subset \var_{\fV'}.$$ 
In addition, we let $\zeta_\fV \in \var_\fV$  be 
 such that  $E_{({\wp}_{(k\tau)}\fr_\mu\fs_{h})} \cap \fV=(\zeta_\fV =0)$ 
 where $E_{({\wp}_{(k\tau)}\fr_\mu\fs_{h})}$
 is the exceptional divisor
of the blowup $\tsR_{({\wp}_{(k\tau)}\fr_\mu\fs_{h})} \to \tsR_{({\wp}_{(k\tau)}\fr_\mu\fs_{h-1})}$. 
  Without loss of generality, we may assume that $y'_0$ 
corresponds to the exceptional variable $\zeta_\fV$ on the chart $\fV$.
 We then let $y_1 \in \var_\fV$ be 
 the proper transform of $y'_1$. 

Now, we observe that 
 $$\phi'_{(k\tau)\mu h}\subset \tGa^\zero_{\fV'}$$
because $\tZ_{({\wp}_{(k\tau)}\fr_\mu\fs_{h-1}),\Ga}$ is contained in the proper transform
$Z'_{\phi_{(k\tau)\mu h}}$.

We set, 
\begin{equation}\label{Gazero-wp-contained-in-bar} 
\overline{\Ga}^\zero_\fV= \{\zeta_\fV, \; y_\fV \mid y_\fV  
\hbox{ is the proper transform of some $ y_{\fV'} \in \tGa^\zero_{\fV'} \-\phi'_{(k\tau)\mu h}$}\},
\end{equation}
\begin{equation}\label{Gaone-wp-contained-in-bar}
 \overline{\Ga}^\one_{\fV}=\{\ y_\fV \mid y_\fV  
\hbox{ is the proper transform of some $y_{\fV'} \in \tGa^\one_{\fV'}$} \}.
\end{equation}

Consider the scheme-theoretic pre-image  
$\rho_{({\wp}_{(k\tau)}\fr_\mu\fs_h)}^{-1}(\tZ_{({\wp}_{(k\tau)}\fr_\mu\fs_{h-1}),\Ga})$.

 Note that scheme-theoretically, we have
$$\rho_{({\wp}_{(k\tau)}\fr_\mu\fs_h)}^{-1}(\tZ_{({\wp}_{(k\tau)}\fr_\mu\fs_{h-1}),\Ga}) \cap \fV=
\pi_{({\wp}_{(k\tau)}\fr_\mu\fs_h)}^{-1}(\tZ_{({\wp}_{(k\tau)}\fr_\mu\fs_{h-1}),\Ga})
 \cap \tsV_{({\wp}_{(k\tau)}\fr_\mu\fs_h)} \cap \fV .$$
Applying Lemma \ref{wp/ell-transform-ktauh} (1)  in $({\wp}_{(k\tau)}\fr_\mu\fs_{h-1})$ to 
$\tZ_{({\wp}_{(k\tau)}\fr_\mu\fs_{h-1}),\Ga}$
and $\rho_{({\wp}_{(k\tau)}\fr_\mu\fs_h)}^{-1}(\tZ_{({\wp}_{(k\tau)}\fr_\mu\fs_{h-1}),\Ga}) $, 
and applying Proposition \ref{equas-wp/ell-kmuh} to 
$\tsV_{({\wp}_{(k\tau)}\fr_\mu\fs_h)} \cap \fV $, we obtain that  the pre-image 
$\rho_{({\wp}_{(k\tau)}\fr_\mu\fs_h)}^{-1}(\tZ_{({\wp}_{(k\tau)}\fr_\mu\fs_{h-1}),\Ga}) \cap \fV$,
as a closed subscheme of $\fV$,  is defined by 
\begin{equation}\label{wp-pre-image-defined-by}
y_\fV \in \overline{\Ga}^\zero_\fV; \;\;  y_\fV-1, \; y_\fV \in \overline{\Ga}^\one_{\fV};\;\;
\cB^\mn_\fV; \;\; \cB^\q_{\fV}; \;\; L_{\fV,  \sF_\um}.
\end{equation} 
(Observe here that $\zeta_\fV \in  \overline{\Ga}^\zero_\fV$.)

Thus, by setting 
$$\hbox{ $y_\fV=0$ for all $y_\fV \in \overline{\Ga}^\zero_\fV$ and
 $y_\fV=1$ for all $y_\fV \in \overline{\Ga}^\one_{\fV}$} $$ in $\cB^\mn_\fV, 
 \cB^\q_{\fV}, L_{\fV,  \sF_\um}$ of the above,
 we obtain 
 \begin{equation}\label{vs-lin-xi-pre} \tilde{\cB}^\mn_\fV, 
  \tilde\cB^\q_{\fV}, \tilde{L}_{\fV,   \sF_\um}.
 \end{equation}
 Note that for any $\bF \in  \sF_\um$,
  if a term of $L_{\fV, F}$ contains $y_1 \in \var_\fV$, 
   then it contains $\zeta_\fV y_1$, hence $\tilde L_{\fV, F}$ does not contain $y_1$.
  We keep those equations of \eqref{vs-lin-xi-pre} such that they contain the variable $y_1 \in \var_\fV$ 
   and obtain
 \begin{equation}\label{vs-lin-xi}
 \hat{\cB}^\mn_\fV, 
 \hat\cB^\q_{\fV}, 
 \end{equation}
 viewed as a  system of equations in $y_1$.
Then, by Proposition \ref{equas-wp/ell-kmuh}  (1), 2), and (4),
\eqref{vs-lin-xi} is  a  {\it linear} system of equations in $y_1$.
(We point out that $y_1$ here can correspond to either $y_0$ or $y_1$ 
as in Proposition \ref{equas-wp/ell-kmuh}.)
Furthermore, one sees that 
 $$(\rho_{({\wp}_{(k\tau)}\fr_\mu\fs_h)}^{-1}(\tZ_{({\wp}_{(k\tau)}\fr_\mu\fs_{h-1}),\Ga})\cap \fV)/
 (\tZ_{({\wp}_{(k\tau)}\fr_\mu\fs_{h-1}),\Ga}\cap \fV')$$
 is defined by the linear system \eqref{vs-lin-xi}.

There are the following two cases for \eqref{vs-lin-xi}:

$(\star a)$  the ranks of the linear system  $\eqref{vs-lin-xi}$ equal one
at  general points of  $\tZ^\dagger_{({\wp}_{(k\tau)}\fr_\mu\fs_{h-1}),\Ga}$.

$(\star b)$  the ranks of the linear system  $\eqref{vs-lin-xi}$ equal zero
at general points of  $\tZ^\dagger_{({\wp}_{(k\tau)}\fr_\mu\fs_{h-1}),\Ga}$, hence at all points
of  $\tZ^\dagger_{({\wp}_{(k\tau)}\fr_\mu\fs_{h-1}),\Ga}$.

\smallskip\noindent
{\it 
Proof of Lemma  \ref{wp/ell-transform-ktauh}   in $({\wp}_{(k\tau)}\fr_\mu \fs_h)$ under the condition $(\star a)$.}
\smallskip

By the condition $(\star a)$,
 there exists a Zariski open subset $\tZ^{\dagger\circ}_{({\wp}_{(k\tau)}\fr_\mu\fs_{h-1}),\Ga}$ 
of $\tZ^\dagger_{({\wp}_{(k\tau)}\fr_\mu\fs_{h-1}),\Ga}$ such that the rank of the linear system
  $\eqref{vs-lin-xi}$ equals one at any point of  $\tZ^{\dagger\circ}_{({\wp}_{(k\tau)}\fr_\mu\fs_{h-1}),\Ga}$. 
By solving  $y_1$ from
 the linear system \eqref{vs-lin-xi} over $\tZ^{\dagger\circ}_{({\wp}_{(k\tau)}\fr_\mu\fs_{h-1}),\Ga}$, we obtain 
that the induced morphism
$$
\rho_{({\wp}_{(k\tau)}\fr_\mu\fs_h)}^{-1}(\tZ^{\dagger\circ}_{({\wp}_{(k\tau)}\fr_\mu\fs_{h-1}),\Ga})  \lra \tZ^\circ_{({\wp}_{(k\tau)}\fr_\mu\fs_{h-1}),\Ga}$$ is an isomorphism. 

Suppose  $y_1$ is identically zero along 
$\rho_{({\wp}_{(k\tau)}\fr_\mu\fs_h)}^{-1}(\tZ^{\dagger\circ}_{({\wp}_{(k\tau)}\fr_\mu\fs_{h-1}),\Ga})$.
We then set,  
\begin{equation}\label{Gazero-ktah-contained-in-a} 
\tGa^\zero_\fV=\{y_1 \} \cup \overline{\Ga}^\zero_\fV
\end{equation}
where $\overline{\Ga}^\zero_\fV$ is as in \eqref{Gazero-wp-contained-in-bar}.
In this case, we let
\begin{equation}\label{ZktahGa-with-xi}
\tZ_{({\wp}_{(k\tau)}\fr_\mu\fs_{h}),\Ga}=\rho_{({\wp}_{(k\tau)}\fr_\mu\fs_h)}^{-1}(\tZ_{({\wp}_{(k\tau)}\fr_\mu\fs_{h-1}),\Ga})\cap D_{y_1}
\end{equation}
scheme-theoretically, where $D_{y_1}$
 is the closure of $(y_1=0)$ in $\tsR_{({\wp}_{(k\tau)}\fr_\mu\fs_h)}$.
We remark here that $D_{y_1}$ does not depend on the choice of the chart $\fV$.

Suppose $y_1$ is not identically zero along 
$\rho_{({\wp}_{(k\tau)}\fr_\mu\fs_h)}^{-1}(\tZ^{\dagger\circ}_{({\wp}_{(k\tau)}\fr_\mu\fs_{h-1}),\Ga})$.
We then set,  
\begin{equation}\label{Gazero-ktah-contained-in-a-2} 
\tGa^\zero_\fV= \overline{\Ga}^\zero_\fV
\end{equation}
where $\overline{\Ga}^\zero_\fV$ is as in \eqref{Gazero-wp-contained-in-bar} .
In this case, we let
\begin{equation}\label{ZktahGa-without-xi}
\tZ_{({\wp}_{(k\tau)}\fr_\mu\fs_{h}),\Ga}=\rho_{({\wp}_{(k\tau)}\fr_\mu\fs_h)}^{-1}(\tZ_{({\wp}_{(k\tau)}\fr_\mu\fs_{h-1}),\Ga}).
\end{equation}
We always set (under the condition $(\star a)$)
\begin{equation}\label{Gaone-ktah-contained-in-a} \tGa^\one_{\fV}= \overline{\Ga}^\one_{\fV}
\end{equation}
where $\overline{\Ga}^\one_{\fV}$ is as in  \eqref{Gaone-wp-contained-in-bar}.

In each case, we have 
$$\rho_{({\wp}_{(k\tau)}\fr_\mu\fs_h)}^{-1}(\tZ^{\dagger\circ}_{({\wp}_{(k\tau)}\fr_\mu\fs_{h-1}),\Ga})  
\subset \tZ_{({\wp}_{(k\tau)}\fr_\mu\fs_h),\Ga},$$ and we let $\tZ^\dagger_{({\wp}_{(k\tau)}\fr_\mu\fs_h),\Ga}$ be the closure of 
$\rho_{({\wp}_{(k\tau)}\fr_\mu\fs_h)}^{-1}(\tZ^{\dagger\circ}_{({\wp}_{(k\tau)}\fr_\mu\fs_{h-1}),\Ga}) $ in $\tZ_{({\wp}_{(k\tau)}\fr_\mu\fs_h),\Ga}$.
It is an irreducible component of $\tZ_{({\wp}_{(k\tau)}\fr_\mu\fs_h),\Ga}$ because
$\tZ^\dagger_{({\wp}_{(k\tau)}\fr_\mu\fs_h),\Ga}$ is closed in $\tZ_{({\wp}_{(k\tau)}\fr_\mu\fs_h),\Ga}$ and contains
the Zariski open subset $\rho_{({\wp}_{(k\tau)}\fr_\mu\fs_h)}^{-1}(\tZ^{\dagger\circ}_{({\wp}_{(k\tau)}\fr_\mu\fs_{h-1}),\Ga}) $
of $\tZ_{({\wp}_{(k\tau)}\fr_\mu\fs_h),\Ga}$.
Then, it follows that the composition
 $$\tZ^\dagger_{({\wp}_{(k\tau)}\fr_\mu\fs_h),\Ga} \lra \tZ^\dagger_{({\wp}_{(k\tau)}\fr_\mu\fs_{h-1}),\Ga}
 \lra Z_\Ga$$ is birational.  
 This proves Lemma \ref{wp/ell-transform-ktauh} (2) in $({\wp}_{(k\tau)}\fs_h)$.


In each case of \eqref{Gazero-ktah-contained-in-a} and \eqref{Gazero-ktah-contained-in-a-2}, 
by the paragraph of \eqref{wp-pre-image-defined-by},
we have that $\tZ_{({\wp}_{(k\tau)}\fr_\mu\fs_h),\Ga}\cap \fV$
 as a closed subscheme of $\fV$ is defined by the equations as stated in the Lemma. 
 This proves Lemma \ref{wp/ell-transform-ktauh} (1) in $({\wp}_{(k\tau)}\fr_\mu\fs_h)$.

It remains to prove Lemma \ref{wp/ell-transform-ktauh} (3) in $({\wp}_{(k\tau)}\fr_\mu\fs_h)$.

Fix any $y \in \var_\fV$, it suffices to show that
if $\tZ^\dagger_{({\wp}_{(k\tau)}\fr_\mu\fs_{h}),\Ga}\cap \fV
\subset (y=0)$, then $\tZ_{({\wp}_{(k\tau)}\fr_\mu\fs_{h}),\Ga}\cap \fV
\subset (y=0)$.  
If $y \ne \zeta_\fV,  y_1$, then $y$ is the proper transform of some variable $y' \in \var_{\fV'}$.
Hence, by taking the images under $\rho_{({\wp}_{(k\tau)}\fr_\mu\fs_h)}$,
 we obtain $\tZ^\dagger_{({\wp}_{(k\tau)}\fr_\mu\fs_{h-1}),\Ga}\cap \fV'
\subset (y'=0)$, and then,  by Lemma \ref{wp/ell-transform-ktauh} (3) in (${\wp}_{(k\tau)}\fr_\mu\fs_{h-1}$),
$\tZ_{({\wp}_{(k\tau)}\fr_\mu\fs_{h-1}),\Ga}\cap \fV'
\subset (y'=0)$,  thus $y' \in \tGa^\zero_{\fV'}$ by the maximality of $\tGa^\zero_{\fV'}$. 
 Therefore, 
$\tZ_{({\wp}_{(k\tau)}\fr_\mu\fs_{h}),\Ga}\cap \fV \subset ( y=0)$,  
by (the already-proved) Lemma \ref{wp/ell-transform-ktauh} (1) in $(\wp_{(k\tau)}\fr_\mu\fs_h)$.
Next, suppose $y = y_1$ (if it occurs). Then, by construction,
 $\tZ_{({\wp}_{(k\tau)}\fs_{h}),\Ga}\cap \fV \subset (y=0)$. 
Finally, we let $y =\zeta_\fV$. 
 Again, by construction, $\tZ_{({\wp}_{(k\tau)}\fr_\mu\fs_{h}),\Ga}\cap \fV \subset (\zeta_\fV=0)$. 
As in the previous case, the last statement  of Lemma \ref{wp/ell-transform-ktauh}  (3)  follows from the above.

\smallskip\noindent
{\it  Proof of Lemma \ref{wp/ell-transform-ktauh} in $({\wp}_{(k\tau)}\fr_\mu\fs_h)$
 under the condition $(\star b)$.}
\smallskip

Under the condition $(\star b)$, we have that $$ 
\rho_{({\wp}_{(k\tau)}\fr_\mu\fs_h)}^{-1}(\tZ^\dagger_{({\wp}_{(k\tau)}\fr_\mu\fs_{h-1}),\Ga})
 \lra \tZ^\dagger_{({\wp}_{(k\tau)}\fr_\mu\fs_{h-1}),\Ga}$$
can be canonically identified with the trivial $\PP_{[\xi_0,\xi_1]}$-bundle:
$$\rho_{({\wp}_{(k\tau)}\fr_\mu\fs_h)}^{-1}(\tZ^\dagger_{({\wp}_{(k\tau)}\fr_\mu\fs_{h-1}),\Ga}) 
= \tZ^\dagger_{({\wp}_{(k\tau)}\fr_\mu\fs_{h-1}),\Ga}
\times \PP_{[\xi_0,\xi_1]}.$$
In this case, we let $\bp=[1,1] \in \PP_{[\xi_0,\xi_1]}$, and define 
$$\tZ_{({\wp}_{(k\tau)}\fr_\mu\fs_h),\Ga} := 
\rho_{({\wp}_{(k\tau)}\fr_\mu\fs_h)}^{-1}(\tZ_{({\wp}_{(k\tau)}\fr_\mu\fs_{h-1}),\Ga}) 
\times_{\tZ_{({\wp}_{(k\tau)}\fr_\mu\fs_{h-1}),\Ga}} \bp$$
 $$\tZ^\dagger_{({\wp}_{(k\tau)}\fr_\mu\fs_h),\Ga}:= 
 \rho_{({\wp}_{(k\tau)}\fr_\mu\fs_h)}^{-1}(\tZ^\dagger_{({\wp}_{(k\tau)}\fr_\mu\fs_{h-1}),\Ga})
 \times_{\tZ^\dagger_{({\wp}_{(k\tau)}\fr_\mu\fs_{h-1}),\Ga}} \bp.$$
 
 The induced morphism $\tZ^\dagger_{({\wp}_{(k\tau)}\fr_\mu\fs_h),\Ga} \lra \tZ^\dagger_{({\wp}_{(k\tau)}\fr_\mu\fs_{h-1}),\Ga}$
 is an isomorphism. 
 Again, one sees that $\tZ^\dagger_{({\wp}_{(k\tau)}\fr_\mu\fs_h),\Ga}$ is 
 an irreducible component of $\tZ_{({\wp}_{(k\tau)}\fr_\mu\fs_h),\Ga}$. Therefore,
 $$\tZ^\dagger_{({\wp}_{(k\tau)}\fr_\mu\fs_h),\Ga} \lra
 \tZ^\dagger_{({\wp}_{(k\tau)}\fr_\mu\fs_{h-1}),\Ga} \lra  Z_\Ga$$ is birational.
 This proves Lemma \ref{wp/ell-transform-ktauh} (2) in $({\wp}_{(k\tau)}\fr_\mu\fs_h)$.

Further, under the condition $(\star b)$, we set 
 \begin{eqnarray}
 \tGa^\zero_\fV= \overline{\Ga}^\zero_\fV,  \;\;\;\;
\tGa^\one_\fV=\{y_1\} \cup \overline{\Ga}^\one_{\fV}. \nonumber \end{eqnarray}

Then, by 
the paragraph of \eqref{wp-pre-image-defined-by},
we have that $\tZ_{({\wp}_{(k\tau)}\fr_\mu\fs_h),\Ga}\cap \fV$,
 as a closed subscheme of $\fV$, is defined by the equations as stated in the Lemma. 
 This proves Lemma \ref{wp/ell-transform-ktauh} (1) in $({\wp}_{(k\tau)}\fr_\mu\fs_h)$.

 It remains to prove Lemma \ref{wp/ell-transform-ktauh} (3) in $({\wp}_{(k\tau)}\fr_\mu\fs_h)$.

Fix any $y \in \var_\fV$, it suffices to show that
if $\tZ^\dagger_{({\wp}_{(k\tau)}\fs_{h}),\Ga}\cap \fV
\subset (y=0)$, then $\tZ_{({\wp}_{(k\tau)}\fs_{h}),\Ga}\cap \fV
\subset (y=0)$.  By construction, $y \ne y_1$.
Then, the corresponding proof of Lemma \ref{wp/ell-transform-ktauh} (3) in $({\wp}_{(k\tau)}\fr_\mu\fs_h)$
under the condition $(\star a)$ goes through here without change. As earlier, 
the last statement  of Lemma \ref{wp/ell-transform-ktauh}  (3)  follows from the above. 
This proves Lemma \ref{wp/ell-transform-ktauh} (3) in $({\wp}_{(k\tau)}\fr_\mu\fs_h)$
 under the condition $(\star b)$.
 
 \medskip
 
 \centerline{$\bullet$ $\ell$-transform}
 
 \medskip
 
 Now, we assume that 
 $\tsR_{({\wp}_{(k\tau)}\fr_\mu\fs_{h-1})}=\tsR_{\wp_k}$
 $\tsR_{({\wp}_{(k\tau)}\fr_\mu\fs_h)}=\tsR_{\ell_k}$.
 By Corollary \ref{ell-isom},
 $\rho_{\ell_k, \wp_k}: \tsR_{\ell_k} \to \tsR_{\wp_k}$ is an isomorphism.
 In this case, we let
 $$ \tZ_{\ell_k, \Ga} =\rho_{\ell_k, \wp_k}^{-1}(\tZ_{\wp_k, \Ga}),$$
 $$ \tZ^\dagger_{\ell_k, \Ga} =\rho_{\ell_k, \wp_k}^{-1}(\tZ^\dagger_{\wp_k, \Ga}).$$

 
 
 


 Consider any standard chart $\fV$ of $\tsR_{\ell_k}$,
lying over a unique standard chart $\fV'$ of $\tsR_{\wp_k}$,
such that $\tZ_{\wp_k,\Ga}\cap \fV' \ne \emptyset$.

First, we suppose that $\tZ_{{ \wp_k},\Ga} \cap \fV'$,  
or equivalently $\tZ^\dagger_{{ \wp_k},\Ga} \cap \fV'$, 
by Lemma \ref{wp/ell-transform-ktauh} (3) in $(\wp_k)$,
 is not contained in   the $\ell$-center $Z_{\chi_k} \cap \fV'$.

We set 
$$\tGa^\zero_\fV=\{y_\fV \mid y_\fV  \hbox{ is the proper transform of some $y_{\fV'} \in \tGa^\zero_{\fV'}$}\};$$
$$\tGa^\one_\fV=\{y_\fV \mid y_\fV  \hbox{ is the proper transform of some $y_{\fV'} \in \tGa^\one_{\fV'}$}\}.$$

Then, Lemma  \ref{wp/ell-transform-ktauh} (1), (2) and (3) follow
from the same  proofs  for the corresponding
cases of $\wp$-blowups.
 We avoid repetation.

 We now suppose that $\tZ_{{\wp_k},\Ga}\cap \fV'$, or equivalently
 $\tZ^\dagger_{{ \wp_k},\Ga} \cap \fV'$,
 by  Lemma \ref{wp/ell-transform-ktauh} (3) in $(\wp_k)$,
 is contained in  $Z_{\chi_k} \cap \fV'$ 
 
This case corresponds to the precious case under the condition $(\star a)$
where $y_1$ there corresponds to $y_{\fV, (\um, \uu_{F_k})}$ here,  and
$y_{\fV, (\um, \uu_{F_k})}$ is not
identically zero along $\tZ_{{ \ell_k},\Ga}$.
So, we follow the proof in that case.

Thus, we set, 
\begin{equation}\label{Gazero-ell-contained-in-bar} 
{\tGa}^\zero_\fV= \{\zeta_\fV, \; y_\fV \mid y_\fV  
\hbox{ is the proper transform of some $ y_{\fV'} \in \tGa^\zero_{\fV'} \-\chi_k$}\},
\end{equation}
\begin{equation}\label{Gaone-ell-contained-in-bar}
{\Ga}^\one_{\fV}= \{\ y_\fV \mid y_\fV  
\hbox{ is the proper transform of some $y_{\fV'} \in \tGa^\one_{\fV'}$} \}.
\end{equation}


Then, following the correponding proofs for the precious case 
{\it under the condition $(\star a)$
where $y_1$ is not identically zero,} 
Lemma  \ref{wp/ell-transform-ktauh} (1), (2) and (3) follow. 
But, we need to point out that here, 
 $ \zeta_\fV=\de_{\fV, (\um,\uu_{F_k})}$ is not a  variable in $\var_\fV$,
 but a free variable in $\var_\fV^\vee$.

Putting all together, this completes the proof of Lemma \ref{wp/ell-transform-ktauh}.    
\end{proof}

 We call $\tZ_{({\wp}_{(k\tau)}\fr_\mu\fs_{h}),\Ga}$ the $\wp$-transform of $Z_\Ga$ 
 in $\tsV_{({\wp}_{(k\tau)}\fr_\mu\fs_{h})}$ for any $(k\tau)\mu h \in \Index_{\Phi_k}$.
 We call $\tZ_{\ell_k}$ he $\wp$-transform of $Z_\Ga$ 
 in $\tsV_{\ell_k}$.

   We need the final case of Lemma 
\ref{wp/ell-transform-ktauh}. We set
$$\tZ_{\ell, \Ga}:=\tZ_{\ell_\up,\Ga}, \;\; \tZ^\dagger_{\ell, \Ga}
:=\tZ^\dagger_{ \ell_\up,\Ga}.$$

\begin{cor} \label{ell-transform-up} 
Fix any standard chart $\fV$ of $\tsR^\circ_\ell$ as described in
Lemma \ref{wp/ell-transform-ktauh}
such that $\tZ_{\ell,\Ga} \cap \fV \ne \emptyset.$
Then, $\tZ_{\ell,\Ga} \cap \fV$,  as a closed subscheme of $\fV$,
is defined by the following relations
\begin{eqnarray} 
\;\;\;\;\; y , \; \; \forall \;\;  y \in   \tGa^\zero_\fV { \subset \var^\vee_\fV}; 
\;\;\;  y -1, \; \; \forall \;\;  y \in  \tGa^\one_\fV; \nonumber \\
\cB_\fV^\mn, \; 
\cB^\q_\fV, \; L_{\fV, \sF_\um}.
\end{eqnarray}
Furthermore, the induced morphism $\tZ^\dagger_{\ell,\Ga} \to Z_\Ga$ is birational.
\end{cor}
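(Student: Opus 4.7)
The plan is essentially to specialize Lemma \ref{wp/ell-transform-ktauh} to the terminal index. By the conventions set immediately before the statement of the corollary, $\tZ_{\ell,\Ga} := \tZ_{\ell_\up,\Ga}$ and $\tZ^\dagger_{\ell,\Ga} := \tZ^\dagger_{\ell_\up,\Ga}$, so everything asserted in the corollary is already contained in the inductive output of that lemma, read at $(k\tau)\mu h = \ell_\up$. There is no additional geometric input required beyond what was proved by induction in \S \ref{Gamma-schemes}.

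First, I would specialize Lemma \ref{wp/ell-transform-ktauh}(1) to the given chart $\fV$ of $\tsR^\circ_{\ell_\up}$. This immediately produces the defining equations of $\tZ_{\ell,\Ga} \cap \fV$ in exactly the form displayed in the corollary: the ideal generated by the variables $y$ with $y \in \tGa^\zero_\fV \subset \var^\vee_\fV$, the shifted variables $y - 1$ with $y \in \tGa^\one_\fV$, together with $\cB^\mn_\fV$, $\cB^\q_\fV$, and the linearized Pl\"ucker relations. The only minor bookkeeping point is that within the induction in Lemma \ref{wp/ell-transform-ktauh} only the relations $L_{\fV, F_i}$ for $i \le k$ appear at stage $\ell_k$; at the terminal stage $k = \up$ this set becomes the full collection $L_{\fV, \sF_\um}$. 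Second, birationality of $\tZ^\dagger_{\ell,\Ga} \to Z_\Ga$ is obtained by invoking Lemma \ref{wp/ell-transform-ktauh}(2) at the same terminal index.

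The one hypothesis that must be verified before applying the lemma is that $\fV$ is a chart of the form covered by its statement. By assumption $\fV$ is a standard chart of $\tsR^\circ_\ell$ with $\tZ_{\ell,\Ga} \cap \fV \neq \emptyset$; in particular, we may take $\fV$ among the preferred standard charts of $\tsR_{\ell_\up}$ in the sense of Definition \ref{preferred-chart-ell}, so that the normalized expression of $L_{\fV, F_\up}$ provided by Proposition \ref{meaning-of-var-wp/ell}(9) is in force. The use of $\var^\vee_\fV$ (rather than $\var_\fV$) to host the subset $\tGa^\zero_\fV$ is exactly the adjustment carried out at the $\ell_k$-stage of the proof of Lemma \ref{wp/ell-transform-ktauh}, where the exceptional parameter $\de_{\fV,(\um,\uu_{F_k})}$ belongs to $\var^\vee_\fV$ but not to $\var_\fV$; tracing this through the induction for each $k \in [\up]$ yields the containment stated in the corollary.

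Since each of these points is a direct specialization of results already established, I do not anticipate any genuine obstacle. The only step that deserves a sentence of care is confirming the shift of the ambient variable set from $\var_\fV$ to $\var^\vee_\fV$ for $\tGa^\zero_\fV$, which I would do by citing the construction in the $\ell$-transform case at the end of the proof of Lemma \ref{wp/ell-transform-ktauh} (the case where $\tZ_{\wp_k,\Ga} \cap \fV'$ is contained in the $\ell$-center $Z_{\chi_k}$) together with Corollary \ref{ell-isom}.
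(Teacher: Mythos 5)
Your proposal is correct and matches the paper's intent exactly: the corollary is stated as the terminal case $(k\tau)\mu h = \ell_\up$ of Lemma \ref{wp/ell-transform-ktauh}, whose parts (1) and (2) give the defining equations and the birationality respectively, and the paper supplies no further argument. The only slight over-caution is your remark about $L_{\fV,F_i}$ for $i\le k$ versus the full set $L_{\fV,\sF_\um}$ — the lemma as stated already lists $L_{\fV,\sF_\um}$ at every stage, so no bookkeeping adjustment is needed at $k=\up$.
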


\section{The Main Theorem on the Final Scheme $\tsV_\ell$}\label{main-statement}

{\it 
 Let $p$ be an arbitrarily fixed prime number.
 Let $\mathbb F$ be either $\QQ$ or a finite field with $p$ elements.
In this entire section, 
every scheme  is defined over $\ZZ$, consequently,
 is defined over $\mathbb F$, and is considered as a scheme over 
 the perfect field $\mathbb F$.
}

\smallskip

  Take any $(k\tau \mu h) \in \Index_{\Phi_k}$ (cf. \ref{indexing-Phi}). 
  Consider  the $\wp$-blowup at $(k\tau \mu h)$
  $$\pi: \tsR_{(\wp_{(k\tau)}\fr_\mu\fs_h)} \lra \tsR_{(\wp_{(k\tau)}\fr_\mu\fs_{h-1})}.$$
   Fix any chart $\fV$ of  $\tsR^\circ_{(\wp_{(k\tau)}\fr_\mu\fs_h)} $ 
   and we let $\fV'$ be a chart of $\tsR^\circ_{(\wp_{(k\tau)}\fr_\mu\fs_{h-1})}$ 
   such that $\fV$ lies over $\fV'$. 
    Then, we can assume that the induced morphism $\pi^{-1}(\fV') \lra \fV'$
   corresponds to the ideal of the form
   $$\langle y_0', y_1' \rangle,$$
   where $y_0'$ is a variable in $T^+_{\fV', (k\tau)}$, 
   $y_1'$ is a variable in $T^-_{\fV', (k\tau)}$, and $B_{\fV',(k\tau)}=T^+_{\fV', (k\tau)}-T^-_{\fV', (k\tau)}.$
  We let $\PP_{[\xi_0,\xi_1]}$ be the corresponding factor projective space
  such that $(\xi_0,\xi_1)$ corresponds to  $(y_0', y_1')$.
  
  Observe here that the set of variables in $T^\pm_{\fV', (k\tau)}$ corresponds a subset of
  divisors associated to $T^\pm_{(k\tau)}$, hence possesses a naturally induced total order.
  
  Given any point $\bz$ on a chart, we say a variable is blowup-relevant at $\bz$
  if it can appear in the local blowup ideal $\langle y_0', y_1' \rangle$ as above 
  such that the corresponding  blowup center contains $\bz$. 
For example, a variable is not blowup-relevant at $\bz$ if it does not vanish at $\bz$.
   
   \begin{lemma}\label{keepLargest} 
      Let $\langle y_0', y_1' \rangle$ be the local blowup ideal as in the above such that 
   $y_0'$ is the largest blowup-relevant variable at the point $\bz$
    among all the variables of $T^+_{\fV', (k\tau)}$. Then,
    the chart $\fV$ containing the point $\bz$ can be chosen to lie over $(\xi_1 \equiv 1)$ so that
    the proper transform $y_0$ of $y_0'$ belongs to the  term $T^+_{\fV, (k\tau)}$.
   \end{lemma}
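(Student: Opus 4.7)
The plan is to take $\fV$ to be the chart lying over $(\xi_1 \equiv 1)$, verify the divisibility claim by direct substitution, and then argue that $\bz$ lies on this chart. For the first two steps, the proof of Proposition \ref{meaning-of-var-wp/ell} on the chart over $(\xi_1 \equiv 1)$ identifies $\zeta := y_1'$ with the new exceptional parameter defining $E_{(\wp_{(k\tau)}\fr_\mu\fs_h)} \cap \fV$ and $y_0 := \xi_0 \in \var_\fV$ with the proper transform of $y_0'$; every other coordinate of $\fV'$ survives as its proper transform on $\fV$. By (the inductive form at the previous step of) Proposition \ref{equas-wp/ell-kmuh}, $T^+_{\fV',(k\tau)}$ is square-free, so I can write
\[
T^+_{\fV',(k\tau)} \,=\, y_0'\, R^+, \qquad T^-_{\fV',(k\tau)} \,=\, (y_1')^{b}\, R^-,
\]
with $R^\pm$ monomials free of $y_0'$ (and, respectively, of $y_1'$) and $b := \deg_{y_1'} T^-_{\fV',(k\tau)} \ge 1$. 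Substituting $y_0' \mapsto \zeta y_0$ and $y_1' \mapsto \zeta$, extracting the common factor $\zeta$, and applying Definition \ref{general-proper-transforms} yield
\[
B_{\fV,(k\tau)} \,=\, y_0\, R^+ \,-\, \zeta^{b-1}\, R^-,
\]
so $T^+_{\fV,(k\tau)} = y_0 R^+$ and $y_0$ divides the plus term, as required.

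It remains to place $\bz$ in $\fV$. Since $y_0'$ is blowup-relevant at $\bz$ and $y_1'$ belongs to the same $\wp$-set $\phi_{(k\tau)\mu h}$, both $y_0'(\bz') = 0$ and $y_1'(\bz') = 0$ with $\bz' := \pi(\bz)$, so $\bz$ sits on the exceptional $\PP^1$-fiber of $\pi$ above $\bz'$. The two charts $(\xi_0 \equiv 1)$ and $(\xi_1 \equiv 1)$ cover $\pi^{-1}(\fV')$; the chart $(\xi_1 \equiv 1)$ omits only the point $[\xi_0:\xi_1]=[1:0]$ of each exceptional fiber, so it is enough to rule out $\bz = [1:0]$. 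On the alternative chart $\fV_0 := (\fV' \times (\xi_0 \equiv 1)) \cap \tsR_{(\wp_{(k\tau)}\fr_\mu\fs_h)}$, where $y_0'$ becomes the exceptional $\zeta_0$ and $\tilde y_1 := \xi_1$ is the proper transform of $y_1'$, the analogous substitution $y_0' \mapsto \zeta_0$, $y_1' \mapsto \zeta_0 \tilde y_1$ followed by dividing out $\zeta_0$ gives
\[
B_{\fV_0,(k\tau)} \,=\, R^+ \,-\, \zeta_0^{b-1}\, \tilde y_1^{b}\, R^-,
\]
which evaluates at $[1:0]$ (where $\zeta_0 = \tilde y_1 = 0$) to $R^+(\bz')$. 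Hence $[1:0] \in \tsV_{(\wp_{(k\tau)}\fr_\mu\fs_h)}$ would force $R^+(\bz') = 0$, meaning some variable $y^* \in T^+_{\fV',(k\tau)}$ with $y^* \ne y_0'$ vanishes at $\bz'$; that is, $y^*$ is blowup-relevant at $\bz$.

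The main obstacle is excluding such a $y^*$. The strategy is to exploit the order on $\Phi_{(\wp_{(k\tau)}\fr_\mu)}$ from Definition \ref{order-phi}: because $y_0'$ is, by hypothesis, the \emph{largest} blowup-relevant variable in $T^+_{\fV',(k\tau)}$ at $\bz$, any such $y^*$ satisfies $y^* < y_0'$, and the $\wp$-set $\phi^* = \{(y^* = 0),\, Y^-_*\}$ for a suitable partner $Y^-_*$ associated with $T^-_{(k\tau)}$ would have $Y^+(\phi^*) < Y^+(\phi_{(k\tau)\mu h})$. By the lex ordering on $(Y^+, Y^-)$, such $\phi^*$ has already been processed at an earlier step of the current round, or in a previous round via induction on $\mu$; the resulting separation of the two divisors of $\phi^*$ on $\tsV_{(\wp_{(k\tau)}\fr_\mu\fs_{h-1})}$ then forbids $\bz'$ from lying on both simultaneously, contradicting $\bz' \in Z_{\phi^*}$. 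Making this last step fully rigorous—tracking the exceptional parameters accumulated in intermediate blowups and invoking the termination statement of Proposition \ref{equas-wp/ell-kmuh}(3) to bound the number of rounds—is the delicate point; once that is in place, the contradiction yields $\bz \in \fV$ and the lemma follows.
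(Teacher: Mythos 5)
Your proof takes essentially the same route as the paper: the paper works first on the chart $(\xi_0\equiv 1)$, asserts that $B_\fV$ terminates there ``by the order of the $\wp$-blowups'' so that $\xi_1$ is invertible along $\tsV_{(\wp_{(k\tau)}\fr_\mu\fs_h)}\cap\fV$, and then switches to the chart $(\xi_1\equiv 1)$ --- which is exactly your exclusion of the point $[\xi_0:\xi_1]=[1:0]$ on the exceptional fiber. The ``delicate point'' you flag is dispatched in the paper by that one-sentence appeal to the ordering of the $\wp$-blowups, and your sketch (a smaller blowup-relevant variable $y^*$ in $T^+_{\fV',(k\tau)}$ would pair with $y_1'$ into an earlier $\wp$-set whose two divisors are already separated) is precisely the content behind that sentence, so relative to the paper's own level of detail there is no substantive gap.
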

   \begin{proof} Using the notation before the statement of the lemma, 
   we can write,
   $$B_{\fV'} = a' y_0' - b' y_1'$$ 
   where $a'$ and $b'$ are some monomials.
   
   Suppose $\fV$  lies over $(\xi_0 \equiv 1)$. Then, by taking proper transforms, we obtain
    $$B_{\fV} = a  - \xi_1 b $$ 
    where $a$ and $b$ are some monomials.
    Because  $y_0'$ is the largest variable in the term $T^+_{\fV, (k\tau)}$, 
    by the order of the $\wp$-blowups, we have that $B_{\fV}$ 
    terminates. Hence, $\xi_1$ is invertible along
    $\tsV_{(\wp_{(k\tau)}\fr_\mu\fs_h)} \cap \fV$. Thus, 
    by shrinking the chart if necessary, we can switch to the chart 
    $(\xi_1 \equiv 1)$.
    
    Now, let $\fV$  lie over $(\xi_1 \equiv 1)$. Then, we obtain
    $$B_{\fV} = a \xi_0 -  b, $$ 
     where $a$ and $b$ are some monomials, and
   $\xi_0=y_0$ is the proper transform of $y_0'$. Hence, the statement follows.
   \end{proof}



We aim to show that the scheme $\tsV_\ell$ is smooth. The question is local.
 In the sequel, we will focus on a fixed closed point $\bz \in \tsV_\ell$ throughout.

Fix any standard chart of $\tsR^\circ_\ell$ containing $\bz$.
We let $\fV_{[0]}$ be the unique standard chart of $\sR_\sF$ 
such that $\fV$ lies over $\fV_{[0]}$ and 
we let $\bz_0 \in \sV_{[0]}$ be the image point of $\bz$.




 \begin{defn}\label{pleasant-v} 
 Consider the ordered set of blocks of relations 
\begin{equation}\label{good-eq-list}\fG=\{\fG_{F_1} < \cdots <\fG_{F_\up}\}.
\end{equation}
 Fix and consider any variable $y \in \var_\fV$ that appears in some relation
 of a block of $\fG$ in the above. 
 We say that $y$ is pleasant if         
  $y$ does not appear in any relation of  any earlier block.
 \end{defn}

  \begin{lemma}\label{max-minor} 
  Fix any closed point $\bz \in \tsV_\ell$. Consider any $\bF \in \sfm $.
  Then, there exists a 
  chart $\fV$ of $\tsR^\circ_\ell$ 
  containing the point $\bz$ such that the Jacobian $J(\fG_{\fV,F})$ 
admits a maximal minor $J^*(\fG_{\fV,F})$ such that
  it is an invertible square matrix at $\bz$,
   and all the variables that are used to compute $J^*(\fG_{\fV,F})$
 are pleasant with respect to the list
$\fG=\{\fG_{F_1} < \cdots <\fG_{F_\up}\}$.  
  \end{lemma}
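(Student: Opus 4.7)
The plan is to pick a chart $\fV$ through $\bz$ and an explicit set of columns making $J^\ast(\fG_{\fV,F})$ block-triangular with unit diagonal after two cofactor expansions. Write $F=F_k$. First I would invoke Proposition~\ref{meaning-of-var-wp/ell}(9) together with Corollary~\ref{no-(um,uu)} to pick a preferred standard chart $\fV$ of $\tsR^\circ_{\ell_k}$ containing $\bz$, so that $L_{\fV,F}=1+\sgn(s_F)\,y_{\fV,(\um,\uu_F)}$ with $y_{\fV,(\um,\uu_F)}$ invertible and $s_{F,o}\ne s_F$. Let $\alpha\in[\ft_F]$ be the unique index with $s_\alpha=s_{F,o}$, so that $x_{\fV,(\uu_{s_\alpha},\uv_{s_\alpha})}\equiv 1$ on $\fV$. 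Next, I would go through the $\wp_k$-blowups and apply Lemma~\ref{keepLargest} at each step: because the order on $\cD^+$ in Definition~\ref{order-phi} places the $\vr$-divisor $X_{(\uu_{s_\tau},\uv_{s_\tau})}$ first and the $\vp$-divisor $X_{\uu_F}$ second, the chart $\fV$ can be arranged so that $x_{\fV,(\uu_{s_\tau},\uv_{s_\tau})}$ remains a factor of $T^+_{\fV,(k,\tau)}$ for every $\tau\ne\alpha$, and $x_{\fV,\uu_F}$ remains a factor of $T^+_{\fV,(k,\alpha)}$.

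Since $\bz\in\tsV_\ell$ we have $B_{\fV,(k,\tau)}(\bz)=0$, so $T^+(\bz)=T^-(\bz)$; combined with the termination given by Proposition~\ref{equas-wp/ell-kmuh}(3), this forces both $T^{\pm}_{\fV,(k,\tau)}(\bz)\ne 0$, so every factor of the square-free monomial $T^+_{\fV,(k,\tau)}$ is a unit at $\bz$. I would then form $J^\ast(\fG_{\fV,F})$ using the columns $y_{\fV,(\um,\uu_F)}$ for the row of $L_F$, $x_{\fV,(\uu_{s_\tau},\uv_{s_\tau})}$ for the row of $B_{(k,\tau)}$ with $\tau\ne\alpha$, and $x_{\fV,\uu_F}$ for the row of $B_{(k,\alpha)}$. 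The $L_F$-row has only the entry $\sgn(s_F)$ nonzero, since on the preferred chart $L_{\fV,F}$ does not involve any other chosen column. The $B_{(k,\alpha)}$-row has a unit in its $x_{\fV,\uu_F}$-column and vanishes in every $\vr$-column, because the only $\vr$-variable of $\PP_F$ occurring in $B_{(k,\alpha)}$ was $x_{\fV,(\uu_{s_\alpha},\uv_{s_\alpha})}\equiv 1$. For $\tau\ne\alpha$, the $B_{(k,\tau)}$-row has a unit on its diagonal $\vr$-column and zero in the $\vr$-column of any other $\tau'\ne\tau,\alpha$, since distinct $\vr$-variables of $\PP_F$ sit in disjoint main binomials. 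Cofactor expansion along the $L_F$-row and then the $B_{(k,\alpha)}$-row collapses $J^\ast(\fG_{\fV,F})$ to a diagonal block of units, so $\det J^\ast(\fG_{\fV,F})$ is a unit at $\bz$.

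Pleasantness of $y_{\fV,(\um,\uu_F)}$ and of each $x_{\fV,(\uu_{s_\tau},\uv_{s_\tau})}$ is immediate: these variables are tied either to the projective factor $\PP_F$ or to the $\ell_k$-blowup, while no relation in a block $\fG_{F_j}$ with $j<k$ contains any $\vr$-variable of $\PP_F$, nor can reference a variable created later than $\wp_j,\ell_j$. For the leading $\vp$-variable $x_{\fV,\uu_F}$, pleasantness follows from the explicit forms \eqref{rk0-1uv}--\eqref{rk1-abc} of $\um$-primary relations combined with the ordering $<_\wp$ of Definition~\ref{invlex} (rank first, then lex on $\uu\setminus\um$): any non-leading $\vp$-factor $x_{\uu_s}$ or $x_{\uv_s}$ appearing in $\fG_{F_j}$ with $F_j<F_k$ has a strictly different distribution of indices between $\um$ and $[n]\setminus\um$ than $\uu_F$ does, and hence cannot coincide with $x_{\uu_F}$. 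The principal obstacle I foresee is coordinating Step~1 (preferred-chart requirement for $L_F$) with Step~2 (iterated applications of Lemma~\ref{keepLargest} for every main binomial of the block) on a single chart through $\bz$; this compatibility hinges on the precise ordering of $\wp_k$-blowups set up in \S\ref{wp/ell-centers} together with the emptiness $\tsV_\ell\cap X_{\vt,(\um,\uu_F)}=\emptyset$ provided by Corollary~\ref{no-(um,uu)}.
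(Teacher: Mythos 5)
Your proposal reproduces, in essence, the paper's \emph{Case $(\beta)$} of the proof (the case $x_{\fV_{[0]},(\um,\uu_F)}(\bz_0)=0$), but it silently assumes that every point $\bz$ admits a chart on which $L_{\fV,F}=1+\sgn(s_F)\,y_{\fV,(\um,\uu_F)}$ and on which some $x_{(\uu_{s_\alpha},\uv_{s_\alpha})}\equiv 1$ with $s_\alpha\ne s_F$. That is exactly what fails in the complementary case $x_{\fV_{[0]},(\um,\uu_F)}(\bz_0)\ne 0$: there the covering chart lies over $(x_{(\um,\uu_F)}\equiv 1)$, Proposition \ref{meaning-of-var-wp/ell}(9) says the $\ell_k$-blowup is \emph{trivial} and $L_{F_k}\notin\fl_\fV$, so the variable $y_{\fV,(\um,\uu_F)}$ you want to use as the column for the $L_F$-row does not exist. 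On such a chart $L_{\fV,F}=\sgn(s_F)+\sum_i\sgn(s_i)x_{(\uu_{s_i},\uv_{s_i})}+\sum_i\sgn(t_i)e_i x_{(\uu_{t_i},\uv_{t_i})}$ (equation \eqref{for-1column}), so the $L_F$-row is \emph{not} supported on a single private column: it has nonzero entries $\sgn(s_i)$ precisely in the $\vr$-columns you assigned as diagonal entries for the rows $B_{(k,s_i)}$, and your proposed column set ($\ft_F$ columns for $\ft_F+1$ rows, since $y_{(\um,\uu_F)}$ is absent and there is no distinguished $\alpha$ because the dehomogenization is at $s_F$ itself) is not even square. Your two cofactor expansions therefore do not go through in this case.

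The missing idea is the paper's Case $(\alpha)$ computation: keep $y_{\uu_F}$ as a factor of the plus term of $B_{(k,s_i)}$ for the indices $s_i$ with $x_{(\uu_{s_i},\uv_{s_i})}(\bz_0)\ne 0$ (so those rows read $a_i x_{(\uu_{s_i},\uv_{s_i})}y_{\uu_F}-c_i$, contributing both a first-column entry $a_i x_{(\uu_{s_i},\uv_{s_i})}$ and a diagonal entry $a_i y_{\uu_F}$), and then perform column operations that reduce the first column using the $L_F$-row; the invertibility ultimately rests on the identity $\sum_{i=1}^{l}\sgn(s_i)\,x_{(\uu_{s_i},\uv_{s_i})}(\bz)=-\sgn(s_F)\ne 0$, which is extracted from $L_{\fV,F}(\bz)=0$ and from the observation $l>0$. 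None of this appears in your argument, and without it the minor in Case $(\alpha)$ cannot be shown invertible. (Your termination argument that both terms of a vanishing, terminated binomial are nonzero at $\bz$, hence all their factors are units, is fine, and your pleasantness discussion is broadly in the right spirit, though the correct reason $y_{\fV,\uu_F}$ avoids earlier blocks is that those blocks have already terminated before the $\wp_k$- and $\ell_k$-blowups create or modify it, not an index-counting argument on $\um$.)
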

  \begin{proof} 
   Consider the main binomial relations
 $$B_{\fV_{[0]}, s}:  x_{\fV_{[0]},(\uu_{s}, \uv_{s})}x_{\fV_{[0]}, \uu_F}-
 x_{\fV_{[0]}, (\um, \uu_F)}x_{\fV_{[0]},\uu_{s}} x_{\fV_{[0]},\uv_{s}}$$
 for all $s \in S_F \- s_F$.
We let 
$$S_F^{\vr,\ori}=\{ s \in S_F \- s_F \mid  x_{\fV_{[0]},(\uu_{s}, \uv_{s})} (\bz_0) \ne 0\}$$
and 
$$S_F^{\vr,\inc}=\{ s \in S_F \- s_F \mid  x_{\fV_{[0]},(\uu_{s}, \uv_{s})} (\bz_0) = 0\}$$
The two subsets $S_F^{\vr,\ori}$ and $S_F^{\vr,\ori}$ depend on the point $\bz$.

\medskip\noindent  
  {\it Case $(\alpha)$.  First, we assume 
  $x_{\fV_{[0]}, (\um,\uu_F)}(\bz_0)\ne 0$. }
  
  As  $x_{\fV_{[0]}, (\um, \uu_F)}(\bz_0)\ne 0$, 
  we can assume that $\fV$ lies over $(x_{(\um, \uu_F)} \equiv 1)$.
 
 We can write $$S_F^{\vr,\ori}=\{s_1, \cdots, s_\ell\} \;\; \hbox{and}\;\; S_F^{\vr,\inc}=\{t_1, \cdots, t_q\}$$
for some integers $l$ and $q$ such that $l + q= |S_F|-1$.

Then, on the chart $\fV_{[0]}$, the set $\cB^{\vr,\ori}_F$ consists of the following relations
    \begin{equation}\label{cB-vr-ori-alpha}  
    B_{\fV_{[0]}, s_i}: \;\;  x_{\fV_{[0]},(\uu_{s_i}, \uv_{s_i})}x_{\fV_{[0]}, \uu_F}-
  x_{\fV_{[0]},\uu_{s_i}} x_{\fV_{[0]},\uv_{s_i}} , \;\; 1 \le i \le l.
\end{equation}
By the relation $L_{\fV_{[0]}, F}(\bz_0)=0$, we see that there must exist $s \in S_F\-s_F$ such that
$x_{\fV_{[0]},(\uu_{s}, \uv_{s})}(\bz_0) \ne 0$, thus,
  the set $\cB^{\vr,\ori}_F$ must not be empty. Hence, $l>0$. This observation 
  will be used later.

  The set $\cB^{\vr, \inc}_F$ consists of the following relations
    \begin{equation}\label{cB-vr-inc-alpha}  
    B_{\fV_{[0]}, t_i}: \;\;  x_{\fV_{[0]},(\uu_{t_i}, \uv_{t_i})}x_{\fV_{[0]}, \uu_F}-
  x_{\fV_{[0]},\uu_{t_i}} x_{\fV_{[0]},\uv_{t_i}}, \;\; 1 \le i \le q
\end{equation}
for some integer $q \ge 0$ with $q=0$ when $\cB^{\vr, \inc}_F =\emptyset$.

We treat the relations of \eqref{cB-vr-ori-alpha} first.

First observe that during any of $\wp$-blowups, if a variable $y$ acquires 
an exceptional parameter $\ve$, then we have $\ve <y$ by Definition \ref{order-phi}.

Consider $B_{s_i}$ for any fixed $i \in [l]$.

We suppose $B_s \in \cB^\mn_{F}$ is the smallest binomial relation.

Assume that $\tsR_{(\wp_{(k\tau)}\fr_\mu\fs_h)} \lra \tsR_{(\wp_{(k\tau)}\fr_\mu\fs_{h-1})}$ 
is the last (non-trivial) $\wp$-blowup that makes $B_{s}$
terminate in $\tsR_{(\wp_{(k\tau)}\fr_\mu\fs_h)}$ for some $(k\tau)\mu h \in \Index_{\Phi_k}$
(cf. \eqref{indexing-Phi}). We let $\hs=(\wp_{(k\tau)}\fr_\mu\fs_h)$
and $\hs'=(k\tau)\fr_\mu\fs_{h-1})$.
Then,  over some chart $\fV_{\hs'}$ of $\tsR_{\hs'}$, the last $\wp$-blowup must correspond
to $(x_{\fV_{\hs'},\uu_F}, y_{\fV_{\hs'}})$ when $x_{\fV_{[0]},(\uu_{s}, \uv_{s})}(\bz_0)\ne0$
or $(x_{\fV_{\hs'},(\uu_{s}, \uv_{s})}, y_{\fV_{\hs'}})$ when $x_{\fV_{[0]},(\uu_{s}, \uv_{s})}(\bz_0)=0$,
where $y_{\fV_{\hs'}}$ is a variable in the minus term of $B_{\fV_{\hs'}, s}$.
We apply Lemma \ref{keepLargest} to $B_{\fV_{\hs'}, s}$.
Then,  either we have the $\vp$-variable $x_{\fV_{\hs},\uu_F}$
terminates and belongs to $B_{\fV_\hs, s}$
(e.g., when $x_{\fV_{[0]},(\uu_{s}, \uv_{s})} (\bz_0) \ne 0$),  or,
 $x_{\fV_{\hs'},\uu_F}$  turns into an exceptional-variable
$\ve_{\fV, \uu_F}$ (e..g, in the case when $x_{\fV_{[0]},(\uu_{s}, \uv_{s})} (\bz_0) = 0$).
Further, $x_{\fV_{\hs},\uu_F}$ or $\ve_{\fV_\hs, \uu_F}$
also  appears in all the remaining binomials that are larger than $B_s$ (in this special
case, it is just all  the remaining binomials since $B_s$ is assumed to be smallest; 
we term it this way so that the same line of arguments can be reused later).

During the $\wp$-blowups with respect to $B_s$, after the variable $x_{\uu_F}$
terminates or becomes exceptional, any further $\wp$-blowup must correspond
 $(x_{\fV_{\hs'},(\uu_{s}, \uv_{s})}, y_{\fV_{\hs'}})$ when $x_{\fV_{[0]},(\uu_{s}, \uv_{s})}(\bz_0)=0$.
 But, such a $\wp$-blowup will not affect {\it the plus term} of $B_{\fV_\hs, t}$ with $t \ne s$. 

In any case, $x_{\fV_{\hs},\uu_F}$ or $\ve_{\fV, \uu_F}$, remains to be second largest variable,
second only the $\vr$-variables
in $B_{\fV_\hs, s_i}$ with $s_i >s$. 
 
 We then move on to the second smallest binomial relation of $\cB^\mn_F$ 
 and repeat all the above arguments, until 
it is the turn to start the process of $\wp$-blowups with respect to $B_{s_i}$.

Then,  because $x_{\fV_{\hs},\uu_F}$ or $\ve_{\fV, \uu_F}$, 
 remains to be the largest blowup-relevant variable in $B_{\fV_\hs, s_i}$
 (since $x_{\fV_{[0]},(\uu_{s_i}, \uv_{s_i})}(\bz_0)\ne0$), we can then
 apply Lemma \ref{keepLargest} to  $B_{\fV_\hs, s_i}$ to obtain that 
 that there exists a chart $\fV$
containing the point $\bz$  such that we have
\begin{equation}\label{cB-ori}  \nonumber 
    B_{\fV, s_i}: \;\; a_ix_{\fV_\vt,(\uu_{s_i}, \uv_{s_i})} y_{\fV,\uu_F} - c_i
\end{equation}
for some monomial $a_i$ and $c_i$,
where $y_{\fV,\uu_F}$ is either the $\vp$-variable $x_{\fV,\uu_F}$
or the proper transform of an exceptional-variable $\ve_{\uu_F}$. 


Hence,  by shrinking the charts if necessary, we conclude that 
 that there exists a chart $\fV$
containing the point $\bz$  such that we have
\begin{equation}\label{cB-vr-ori} 
   B_{\fV, s_i}: \;\; a_i x_{\fV_\vt,(\uu_{s_i}, \uv_{s_i})} y_{\fV,\uu_F} - c_i, \;\; \hbox{for all} \;\; i \in [l]
\end{equation}
where $y_{\fV,\uu_F}$ is either the $\vp$-variable $x_{\fV,\uu_F}$
or the proper transform of an exceptional-variable $\ve_{ \uu_F}$ such that all of these
relations  terminate at $\bz$.

Now consider $B_{\fV_\vt, t_i}$ with $i \in [q]$.


Because $x_{\fV_{[0]},(\uu_{t_i}, \uv_{t_i})}(\bz_0)=0$ and 
$x_{\fV_{[0]},(\uu_{t_i}, \uv_{t_i})}$ is the largest
variable in the plus term of $B_{\fV_{[0]}, t_i}$, 
we can  apply Lemma \ref{keepLargest} directly to  $B_{t_i}$ for all $i \in [q]$ to obtain that 
 there exists a chart $\fV$
containing the point $\bz$  such that we have
$$B_{\fV, t_i}: \;\;  b_i x_{\fV, (\uu_{t_i}, \uv_{t_i})} - d_i , i \in [q].$$
where $b_i$ and $d_i$ are some monomials for all $i \in [q]$.
 
Put all together, shrinking the charts if necessary, we conclude that 
there exists a chart $\fV$ of $\tsR^\circ_\ell$, containing the point $\bz$ such that we have
\begin{eqnarray}\label{cB-vrChart-final}  
   B_{\fV, s_i}: \;\; a_i x_{\fV,(\uu_{s_i}, \uv_{s_i})} y_{\fV,\uu_F} - c_i, \;\; i \in [l]  \\
  B_{\fV, t_i}: \;\;  b_i x_{\fV,(\uu_{t_i}, \uv_{t_i})} - d_i , \;\; i \in [q]. \;\;\;\;\;\;\;\;\nonumber
\end{eqnarray}

Further, because $x_{\fV_{[0]}, (\um,\uu_F)}(\bz_0)\ne 0$,
  the $\ell$-blowups do not affect the (unique) chart $\fV_\vt$ 
  of $\tsR_\vt$ which $\fV$ lies over. Hence, we have
\begin{equation}\label{for-1column}
L_{\fV, F} =\sgn(s_F) + \sum_{i=1}^l \sgn (s_i) x_{\fV, (\uu_{s_i},\uv_{s_i})}
  + \sum_{i=1}^q \sgn (s_i) e_i x_{\fV, (\uu_{t_i},\uv_{t_i})}
  \end{equation}
  where $e_i$ are monomials in exceptional variables such that $e_i(\bz)=0$,
  for all $i \in [q]$.

 {\it As the chart $\fV$ is fixed and is clear from the context, in the sequel, to save space,
we will selectively drop some subindex $``\ \fV \ "$. For instance, we may write
$y_{\uu_F}$ for $y_{\fV,\uu_F}$, $x_{(\uu_{s_1},\uv_{s_1})}$ for $x_{\fV, (\uu_{s_1},\uv_{s_1})}$, etc.
A confusion is unlikely.
}

\smallskip


We  introduce the following maximal minor of the Jacobian 
  $J(\cB^\mn_{\fV,F}|_{\tGa_\fV}, L_{\fV,F}|_{\tGa_\fV})$
$$J^*(\cB^\mn_{\fV,F}|_{\tGa_\fV}, L_{\fV,F}|_{\tGa_\fV})= {{\partial(B_{\fV, s_1}|_{\tGa_\fV} \cdots B_{\fV, s_{\l}}|_{\tGa_\fV}, B_{\fV, t_1}|_{\tGa_\fV} \cdots B_{\fV, t_q}|_{\tGa_\fV},
L_{\fV,F}|_{\tGa_\fV})} \over {{\partial(y_{\uu_F},
x_{(\uu_{s_1},\uv_{s_1})} \cdots x_{(\uu_{s_{\l}},\uv_{s_{\l}})},
x_{(\uu_{t_1},\uv_{t_1})} \cdots x_{(\uu_{t_q},\uv_{t_q})}
 )}}} .$$


Then, one calculates and finds that at the point $\bz$, it is equal to
\begin{eqnarray} \nonumber
{\footnotesize
\left(
\begin{array}{cccccccccc}
a_1x_{(\uu_{s_1}, \uv_{s_1})} & a_1y_{\uu_F}   & \cdots & 0 & 0  & \cdots &0 \\
\vdots \\
a_l x_{(\uu_{s_{\l}}, \uv_{s_{\l}})} & 0 &  \cdots & a_l y_{\uu_F} & 0 &  \cdots & 0\\
* & 0 & \cdots & 0 & b_1 & \cdots & 0 \\
\vdots \\
* & 0 & \cdots & 0 & 0 & \cdots & b_q \\
0 & \sgn (s_1)&  \cdots  & \sgn (s_{\l}) & 0 & \cdots & 0
\end{array}
\right) (\bz).
}
\end{eqnarray}
Recall here that we have $l >0$. 

We can use the last $q$ columns to cancel the entries marked $``* "$ in the first column without
affecting the remaining entries.
  
  Then, multiplying the first column by $-y_{\uu_F}$ ($\ne 0$ at $\bz$), we obtain
\begin{eqnarray} \nonumber
{\footnotesize
\left(
\begin{array}{cccccccccc}
-a_1x_{(\uu_{s_1}, \uv_{s_1})}y_{\uu_F} & a_1y_{\uu_F}   & \cdots & 0 & 0  & \cdots &0 \\
\vdots \\
- a_l x_{(\uu_{s_{\l}}, \uv_{s_{\l}})} y_{\uu_F}& 0 &  \cdots & a_l y_{\uu_F} & 0 &  \cdots & 0\\
0 & 0 & \cdots & 0 & b_1  & \cdots & 0 \\
\vdots \\
0 & 0 & \cdots & 0 & 0 & \cdots & b_q   \\
0 & \sgn (s_1)&  \cdots  & \sgn (s_{\l}) & 0 & \cdots & 0
\end{array}
\right) (\bz).
}
\end{eqnarray}
Multiplying the $(i+1)$-th column by $x_{(\uu_{s_i}, \uv_{s_i})}$ and adding it to the first column 
for all $1\le i\le  {\l}$, 
we obtain
\begin{eqnarray} \nonumber
{\footnotesize
\left(
\begin{array}{cccccccccc}
0 & a_1y_{\uu_F}   & \cdots & 0 & 0  & \cdots &0 \\
\vdots \\
0 & 0 &  \cdots & a_l y_{\uu_F} & 0 &  \cdots & 0\\
0 & 0 & \cdots & 0 & b_1   & \cdots & 0 \\
\vdots \\
0 & 0 & \cdots & 0 & 0 & \cdots & b_q  \\
\sum_{i=1}^l \sgn (s_i) x_{(\uu_{s_i}, \uv_{s_i})}  & \sgn (s_1)&  \cdots  & \sgn (s_{\l}) & 0 & \cdots & 0
\end{array}
\right) (\bz).
}
\end{eqnarray}
But, at the point $\bz$, by \eqref{for-1column}, we have
$$\sum_{i=1}^l \sgn (s_i) x_{(\uu_{s_i}, \uv_{s_i})} (\bz) = - \sgn (s_F) \ne 0.$$
Thus, we conclude that 
$J^*(\cB^\ori_{\fV,F}|_{\tGa_\fV}, L_{\fV,F}|_{\tGa_\fV})$ is a square matrix of full rank at $\bz$,
and one sees that all the variables used to compute it are pleasant with respect to
the list \eqref{good-eq-list}. More precisely, $x_{(\uu_{s_i}, \uv_{s_i})}$ and 
 $x_{(\uu_{t_j}, \uv_{t_j})}$ are pleasant because they uniquely appear in the block
 $\fG_{F}$. The variable $y_{\fV, \uu_F}$ does not appear in the block $\fG_{\fV, F'}$ with $F' <F$,
 because all the relations of $\fG_{F'}$, 
 terminate before $\wp$- and $\ell$-blowups
 with respect to the relations of the block of $\fG_F$ are performed.



 \medskip\noindent  
  {\it Case $(\beta)$.  Next, we assume 
  $x_{\fV_{[0]}, (\um,\uu_F)}(\bz_0)= 0$. }

  As  $x_{\fV_{[0]}, (\um, \uu_F)}(\bz_0)= 0$, 
we can assume  $\fV$ lies over $(x_{(\uv_{s_0}, \uv_{s_0})} \equiv 1)$ for some 
$s_0 \in S_F \- s_F$.

In this case, we can write 
 $$S_F^{\vr,\ori}=\{s_0, s_1, \cdots, s_\ell\} \;\; \hbox{and}\;\; S_F^{\vr,\inc}=\{t_1, \cdots, t_q\}$$
for some integers $l$ and $q$ such that $l + q= |S_F|-2$.

Then, on the chart $\fV_{[0]}$, the set $\cB^{\vr,\ori}_F$ consists of the following relations
    \begin{eqnarray}\label{cB-vr-ori-beta}  
     B_{\fV_{[0]}, s_0}: \;\;  x_{\fV_{[0]},\uu_F} -
  x_{\fV_{[0]}, (\um, \uu_F)}x_{\fV_{[0]},\uu_{s_0}}  x_{\fV_{[0]},\uv_{s_0}} \;\;\;\;\; \\
 \;\;\;\;   B_{\fV_{[0]}, s_i}: \;\;  x_{\fV_{[0]},(\uu_{s_i}, \uv_{s_i})}x_{\fV_{[0]}, \uu_F}-
x_{\fV_{[0]}, (\um, \uu_F)}  x_{\fV_{[0]},\uu_{s_i}} x_{\fV_{[0]},\uv_{s_i}} , \;\; 1 \le i \le l.
\end{eqnarray}

  The set $\cB^{\vr, \inc}_F$ consists of the following relations
    \begin{equation}\label{cB-vr-inc-beta}  
    B_{\fV_{[0]}, t_i}: \;\;  x_{\fV_{[0]},(\uu_{t_i}, \uv_{t_i})}x_{\fV_{[0]}, \uu_F}-
 x_{\fV_{[0]}, (\um, \uu_F)} x_{\fV_{[0]},\uu_{t_i}} x_{\fV_{[0]},\uv_{t_i}}, \;\; 1 \le i \le q
\end{equation}
for some integer $q \ge 0$ with $q=0$ when $\cB^{\vr, \inc}_F =\emptyset$.


 By Corollary \ref{no-(um,uu)}, we can assume that $\fV$ lies over 
 a preferred chart, that is, in this case, the $\vr$-chart with respect to $F$. 
 Then, by Proposition \ref{eq-for-sV-vtk},  we have
  \begin{eqnarray}\label{cB-vrChart}   
     B_{\fV_\vt, s_0}: \;\;  x_{\fV_\vt,\uu_F} -
 \tilde x_{\fV_\vt,\uu_{s_0}} \tilde x_{\fV_\vt,\uv_{s_0}} \;\;\;\;\; \\
 B_{\fV_\vt, s_i}:  x_{\fV_\vt,(\uu_{s_i}, \uv_{s_i})} x_{\fV_\vt,\uu_F}-
 \tilde x_{\fV_\vt,\uu_{s_i}} \tilde x_{\fV_\vt,\uv_{s_i}} , 1 \le i \le l=|S_F| -2 \nonumber \\
 B_{\fV_\vt, t_i}:  x_{\fV_\vt,(\uu_{t_i}, \uv_{t_i})} x_{\fV_\vt,\uu_F}-
 \tilde x_{\fV_\vt,\uu_{t_i}} \tilde x_{\fV_\vt,\uv_{t_i}} ,  \;\;  i \in [q],
\end{eqnarray}
\begin{equation}\label{vrChart-LF-for-lt-inc}
 L_{\fV_\vt, F} =\sgn(s_F) \de_{\fV_\vt, (\um, \uu_F)}
+ \sum_{s \in S_F\- s_F} \sgn (s) x_{\fV_\vt, (\uu_{s},\uv_{s})}
  \end{equation}
where $\fV_\vt$ is the unique chart of $\tsR_\vt$ that $\fV$ lies over.

We treat the relation $B_{\fV_\vt, s_0}$ first. 

Notice that
$x_{\fV_\vt,\uu_F}$ is the largest variable in the plus-term of $B_{\fV_\vt, s_0}$. 
If $B_{s_0}$ is the smallest in $\cB^\mn_F$, then we can  apply
Lemma \ref{keepLargest} directly to $B_{\fV_\vt, s_0}$
 and conclude that there exists a chart $\fV$ containing $\bz$
such that we have
$$B_{\fV, s_0}: \;\; a_0 x_{\fV,\uu_F} - c_0$$
for some monomial $a_0$ and $c_0$.

Suppose $B_{s_0}$ is not the smallest. Then by the same lines of 
 arguments applied for $B_{\fV, s_i}$ with $i \in [l]$ as in {\it Case $(\alpha)$},
we can  obtain   that there exists a chart $\fV$
containing the point $\bz$  such that we have
\begin{equation}\label{cB-vt-ori-beta}  \nonumber 
     B_{\fV, s_0}: \;\; a_0 y_{\fV,\uu_F} - c_0
\end{equation}
where $y_{\fV,\uu_F}$ is either the $\vp$-variable $x_{\fV,\uu_F}$
or the proper transform of an exceptional-variable $\ve_{\fV', \uu_F}$. 

Now consider $B_{\fV_\vt, s_i}$ with $ i \in [l]\}$.
because $x_{\fV_{[0]},(\uu_{s_i}, \uv_{s_i})}(\bz_0) \ne 0$, one sees that 
there exists a chart $\fV$ containing $\bz$
such that we have
$$B_{\fV, s_i}: \;\; a_i x_{\fV,(\uu_{s_i}, \uv_{s_i})} - c_i, \;\; i \in [l]$$
for some monomial $a_i$ and $c_i$.

Next, consider $B_{\fV_\vt, t_i}$ with $i \in [q]$.

Because $x_{\fV_\vt,(\uu_{t_i}, \uv_{t_i})}$ is the largest blowup-relevant 
variable in the plus term of $B_{\fV_\vt, t_i}$, 
we can  apply Lemma \ref{keepLargest} to  $B_{t_i}$ for all $i \in [q]$ to obtain that 
 there exists a chart $\fV$
containing the point $\bz$  such that we have
$$B_{\fV, t_i}: \;\; b_i x_{\fV,(\uu_{t_i}, \uv_{t_i})} - d_i , i \in [q]$$
for some monomial $b_i$ and $d_i$.

Put all together, shrinking the charts if necessary, we conclude that 
there exists a chart $\fV$ containing the point $\bz$ such that we have
\begin{eqnarray}\label{cB-beta-all}  
B_{\fV, s_0}: \;\; a_0 y_{\fV,\uu_F} - c_0        \;\;\;\;\;\;\;\;\;\;\;\; \;\;\;\;\;\;\;\;\;\;\;\;\;\;\;           \\ 
  B_{\fV, s_i}: \;\; a_i x_{\fV,(\uu_{s_i}, \uv_{s_i})} - c_i, \;\; i \in [l]  \;\;\;\;\;\;\;\;\; \nonumber \\
 B_{\fV, t_i}: \;\;  b_i x_{\fV,(\uu_{t_i}, \uv_{t_i})} - d_i , \;\; i \in [q]. \;\;\;\;\;\;\;\;  \nonumber
\end{eqnarray}




Furthermore, by Proposition \ref{meaning-of-var-wp/ell} (9),  
 we can choose the chart $\fV$ such that
 $$L_{\fV, F}= 1 +\sgn (s_F) y_{\fV, (\um,\uu_F)}$$
  where $y_{\fV, (\um,\uu_F)}$ is the variable for the proper transform of
the divisor $E_{\ell, \vt_k}$ and is pleasant with respect to the list \eqref{good-eq-list}.

Now, we  introduce the following maximal minor of the Jacobian $J(\fG_{\fV,F})$                     
$$J^*(\fG_{\fV,F}|_{\tGa_\fV})= {{\partial(B_{\fV, s_0}|_{\tGa_\fV},
B_{\fV, s_1}|_{\tGa_\fV} \cdots B_{\fV, s_{\l}}|_{\tGa_\fV}), B_{\fV, t_1}|_{\tGa_\fV} \cdots 
B_{\fV, t_q}|_{\tGa_\fV}, L_{\fV, F}|_{\tGa_\fV})}
 \over {{\partial(  y_{ \uu_F},
x_{(\uu_{s_1},\uv_{s_1})} \cdots x_{(\uu_{s_{\l}},\uv_{s_l})}}},
x_{(\uu_{t_1},\uv_{t_1})} \cdots x_{(\uu_{t_q},\uv_{t_q})},
 y_{\fV, (\um,\uu_F)})} $$
Then, one calculates and finds that at the point $\bz$, it is equal to
\begin{eqnarray} \nonumber
\left(
\begin{array}{cccccccccc}
 a_0 & 0 & \cdots   & 0  & 0   & \cdots & 0 & 0\\
*  &  a_1  & \cdots &   0  & 0 & \cdots & 0 & 0\\
\vdots \\
*  & 0 & \cdots & a_l   & 0   & \cdots & 0 & 0       \\
*  & 0&  \cdots & 0 & b_1 & \cdots  &0     & 0\\
\vdots \\
*  &  0 & \cdots & 0 & 0 & \cdots & b_l         & 0     \\
 0 & * & \cdots & *&  * & \cdots  & * &   \sgn (s_F) 
\end{array}
\right) (\bz).
\end{eqnarray}
Thus, we conclude that 
$J^*(\cB^\mn_{\fV,F}|_{\tGa_\fV})$ 
is a square matrix of full rank at $\bz$, and all the variables that are used to compute it are pleasant.
  
  This proves the lemma.
   \end{proof}

   \begin{defn}\label{disjoint-smooth} 
A scheme $X$ is smooth if it is a disjoint union of finitely many connected smooth schemes of
possibly  various dimensions.
\end{defn}

\begin{thm} \label{main-thm}
 Let $\Ga$ be any subset $\var_{\rU_\um}$.
Assume that $Z_\Ga$ is integral.
Let $\tZ_{\ell,\Ga}$ be  the $\ell$-transform of $Z_\Ga$ in $ \tsV_{\ell}$.
Then,  $\tZ_{\ell,\Ga}$ is smooth over  $\Spec \mathbb F$.
Consequently,  $\tZ^\dagger_{\ell,\Ga}$ is smooth over  $\Spec \mathbb F$.
 
 In particular,  when $\Ga=\emptyset$, we obtain that
$\tsV_\ell$ is smooth  over $\Spec \mathbb F$.
\end{thm}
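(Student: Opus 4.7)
\medskip

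The plan is to prove smoothness locally at an arbitrarily fixed closed point $\bz \in \tZ_{\ell,\Ga}$ lying on a standard chart $\fV$ of $\tsR^\circ_\ell$, by exhibiting a maximal minor of the Jacobian of the defining equations of $\tZ_{\ell,\Ga} \cap \fV$ that is invertible at $\bz$. By Corollary \ref{ell-transform-up}, the defining equations of $\tZ_{\ell,\Ga}\cap \fV$ in $\fV$ are the linear relations $y$ for $y\in \tGa^\zero_\fV$, the linear relations $y-1$ for $y\in \tGa^\one_\fV$, together with $\cB^\mn_\fV$, $\cB^\q_\fV$ and $L_{\fV,\sfm}$. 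The linear relations immediately let us eliminate the variables in $\tGa^\zero_\fV\cup\tGa^\one_\fV$; after this elimination, we obtain the restricted systems $\cB^\mn_\fV|_{\tGa_\fV}$, $\cB^\q_\fV|_{\tGa_\fV}$, and $L_{\fV,\sfm}|_{\tGa_\fV}$ in the remaining free variables.

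Next, I will organize the restricted defining equations according to the ordered blocks $\fG=\{\fG_{F_1}<\cdots<\fG_{F_\up}\}$, where $\fG_F=\cB^\mn_F\sqcup\{L_F\}$. For each $F\in\sfm$, Lemma \ref{max-minor} (applied with $\Ga$ in place of the empty set, together with the observation that passing to $\tGa_\fV$ only sets certain free variables to $0$ or $1$ and does not alter the displayed entries of the Jacobian minor at $\bz$) supplies a maximal minor $J^*(\fG_{\fV,F}|_{\tGa_\fV})$ of the Jacobian of $\fG_{\fV,F}|_{\tGa_\fV}$ that is invertible at $\bz$ and is computed using only variables pleasant with respect to the list \eqref{good-eq-list}. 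Pleasantness means precisely that the variables used to form $J^*(\fG_{\fV,F}|_{\tGa_\fV})$ do not appear in any earlier block $\fG_{\fV,F'}$ with $F'<F$. Consequently, assembling $J^*(\fG_{\fV,F}|_{\tGa_\fV})$ over all $F\in\sfm$ produces a block-lower-triangular maximal minor $J^*$ of the Jacobian of $\bigsqcup_F\fG_{\fV,F}|_{\tGa_\fV}=\cB^\mn_\fV|_{\tGa_\fV}\sqcup L_{\fV,\sfm}|_{\tGa_\fV}$ whose diagonal blocks are each invertible at $\bz$. Hence $J^*$ itself is invertible at $\bz$.

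This shows that the subscheme $W$ of $\fV$ cut out by the linear $\Ga$-relations together with $\cB^\mn_\fV|_{\tGa_\fV}\sqcup L_{\fV,\sfm}|_{\tGa_\fV}$ is smooth at $\bz$, of the expected codimension (equal to the number of rows of $J^*$ plus $|\tGa^\zero_\fV|+|\tGa^\one_\fV|$). In particular, each relation in $\cB^\q_\fV|_{\tGa_\fV}$ must lie in the ideal generated by the others near $\bz$ (otherwise it would cut out a proper closed subscheme of a smooth scheme of the correct dimension while still containing $\tZ_{\ell,\Ga}\cap \fV$, contradicting the fact that $\tZ_{\ell,\Ga}\cap\fV$ is defined by the full list from Corollary \ref{ell-transform-up}); therefore $\tZ_{\ell,\Ga}\cap\fV=W$ near $\bz$ and is smooth at $\bz$. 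Since $\bz$ was arbitrary, $\tZ_{\ell,\Ga}$ is smooth over $\Spec\mathbb F$, and its irreducible component $\tZ^\dagger_{\ell,\Ga}$, being a connected component in the smooth ambient scheme $\tZ_{\ell,\Ga}$, is smooth as well, in the sense of Definition \ref{disjoint-smooth}. The case $\Ga=\emptyset$ gives $\tsV_\ell$ smooth.

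The main obstacle will be verifying that the maximal minor supplied by Lemma \ref{max-minor} survives the restriction to $\tGa_\fV$ without collapsing: one must check that in Cases $(\alpha)$ and $(\beta)$ of the lemma's proof, the variables $y_{\uu_F}$, $x_{(\uu_{s_i},\uv_{s_i})}$, $x_{(\uu_{t_i},\uv_{t_i})}$ and $y_{\fV,(\um,\uu_F)}$ used to form the minor do not lie in $\tGa^\zero_\fV\cup\tGa^\one_\fV$ (otherwise they would have been eliminated). This is where one invokes Lemma \ref{wp/ell-transform-ktauh}(3) together with Proposition \ref{equas-wp/ell-kmuh}: a variable ends up in $\tGa^\zero_\fV$ precisely when its associated divisor contains $\tZ^\dagger_{\ell,\Ga}$, and the pleasantness choices in Lemma \ref{max-minor} pick variables whose vanishing loci do not contain $\tZ^\dagger_{\ell,\Ga}$ (else the corresponding term of $B_{\fV,s_i}$ or of $L_{\fV,F}$ would vanish identically on $\tZ^\dagger_{\ell,\Ga}$, contradicting the nonvanishing at $\bz$). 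Handling this check case by case, especially for Case $(\beta)$ where $y_{\fV,(\um,\uu_F)}$ is invertible by Proposition \ref{meaning-of-var-wp/ell}(9) so automatically avoids $\tGa^\zero_\fV$, will be the technical core of the argument.
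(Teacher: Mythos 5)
Your overall architecture — localize at a closed point $\bz$ on a preferred chart $\fV$, invoke Corollary \ref{ell-transform-up}, eliminate the $\tGa_\fV$-variables, and assemble the block-lower-triangular minor $J^*(\fG_\fV|_{\tGa_\fV})$ from Lemma \ref{max-minor} using pleasantness — is exactly the paper's. The divergence, and the problem, is in how you dispose of the quotient-type binomials $\cB^\q_\fV|_{\tGa_\fV}$, which are the only defining equations not covered by the minor.

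Your justification for discarding them is not valid as stated. You argue that if some $f\in\cB^\q_\fV|_{\tGa_\fV}$ did not lie in the ideal of the smooth scheme $W=V(\cB^\mn_\fV|_{\tGa_\fV},\,L_{\fV,\sfm}|_{\tGa_\fV})$ near $\bz$, then it would cut out a proper closed subscheme of $W$ still containing $\tZ_{\ell,\Ga}\cap\fV$, ``contradicting the fact that $\tZ_{\ell,\Ga}\cap\fV$ is defined by the full list.'' There is no contradiction there: $\tZ_{\ell,\Ga}\cap\fV$ being defined by the full list is precisely consistent with it being a \emph{proper} closed subscheme of $W$. To turn containment into equality you would need to know a priori that $\dim_\bz\tZ_{\ell,\Ga}=\dim_\bz W$, and for general $\Ga$ this is not available: $\tZ_{\ell,\Ga}$ may be reducible, $\bz$ may lie on a component other than $\tZ^\dagger_{\ell,\Ga}$, and nothing fixes its local dimension in advance. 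This is why the paper runs the argument in two stages. First it treats $\Ga=\emptyset$, where $\tsV_\ell$ is irreducible of known dimension $\dim\rU_\um-\up$ (being birational to $\rU_\um\cap\Gr^{d,E}$), so the tangent-space bound $\dim T_\bz(\tsV_\ell)\le\dim\tsR_\ell-\rk J^*(\bz)=\dim\tsV_\ell$ forces smoothness and, by comparing with $W$, forces $\cB^\q_\fV\subseteq\langle\cB^\mn_\fV,L_{\fV,\sfm}\rangle$ on the chart. Only then does it pass to general $\Ga$, using that this ideal membership is preserved under the specialization $|_{\tGa_\fV}$ (substituting $0$ and $1$ for the $\tGa_\fV$-variables respects the relation $f=\sum g_ih_i$), after which the Jacobian criterion applied to $\cB^\mn_\fV|_{\tGa_\fV}$ and $L_{\fV,\sfm}|_{\tGa_\fV}$ alone gives smoothness without any dimension hypothesis on $\tZ_{\ell,\Ga}$. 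You should restructure your proof to prove the $\Ga=\emptyset$ case first and derive the general case from it; your closing remarks about checking that the minor's variables avoid $\tGa^\zero_\fV\cup\tGa^\one_\fV$ are on target and match the paper's brief verification.
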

\begin{proof}
Let $\Ga$ be any subset $\var_{\rU_\um}$. Assume that $Z_\Ga$ is integral.

We let $\tZ_{\ell,\Ga}$ be  the $\ell$-transform of $Z_\Ga$ in $ \tsV_{\ell}$.
(As mentioned earlier, $Z_\Ga$ and $\tZ_{\ell,\Ga}$ are considered as $\FF$-schemes.)
Recall that  $\tZ_{\ell,\emptyset}=\tsV_\ell$ when $\Ga=\emptyset$.

Fix any closed point $\bz \in  \tZ_{\ell,\Ga} \subset \tsV_\ell.$
We let $\fV$ be a standard chart containing the point $\bz$ 
as chosen in Lemma \ref{max-minor}. In the sequel, we such a chart
a preferred chart for the point $\bz$.

By Corollary \ref{ell-transform-up},
the scheme $\tZ_{\ell, \Ga} \cap \fV$, if nonempty, 
 as a closed subscheme of
the chart $\fV$ of $\tsR_\ell$,  is defined by 
\begin{eqnarray} 
 y, \; \; y \in  \tGa^\zero_\fV; \;\;\;\; y-1, \;\;y \in \tGa^\one_\fV; \;\;\; \label{eq-for-Ga} \\ 
\cB^\mn_\fV, \; \cB^q_\fV, \; L_{\fV, \sfm}. \;\;\;  \nonumber
 \end{eqnarray}
We let $$\tGa_\fV=\tGa^\zero_\fV \sqcup \tGa^\one_\fV.$$
By setting $y=0$ for all $y \in \tGa^\zero_\fV $ and 
$y=1$ for all $y \in \tGa^\one_\fV $, we obtain a smooth open subset $\fV_\Ga$ of $\fV$:
 $$\fV_\Ga =\{ y =0, \; y \in  \tGa^\zero_\fV;  \;  y=1, \; y \in \tGa^\one_\fV\} \subset \fV.$$
The open susbet $\fV_\Ga$ comes equipped with the set of free variables
 $$\{ y \mid y \in \var_\fV \-  \tGa_\fV\}.$$     
 For any polynomial $f \in \kk[y]_{y \in \var_\fV}$, we let $f|_{\tGa_\fV}$ be obtained from
 $f$ by setting all variables in  $\tGa^\zero_\fV$ to be  0
 and setting  all variables in  $\tGa^\zero_\fV$ to be 1.
 This way,  $f|_{\tGa_\fV}$
 becomes a polynomial over $\fV_\Ga$.
  For any subset $P$ of polynomials over $\fV$, we let
$P|_{\tGa_\fV}=\{f|_{\tGa_\fV} \mid f \in P\}.$ This way, we have
$\cB^\mn_\fV|_{\tGa_\fV}, \cB^\q_\fV|_{\tGa_\fV}$,  etc.
 
  Then, $\tZ_{\ell, \Ga}\cap \fV$ 
  can be identified with the closed subscheme of $\fV_\Ga$ defined by
 \begin{eqnarray} 
\cB^\mn_\fV|_{\tGa_\fV}, \; \cB^q_\fV|_{\tGa_\fV}, \; L_{\fV, \sfm}|_{\tGa_\fV}. \;\;\; \label{eq-for-Ga-reduced}
 \end{eqnarray}

Now, we introduce the following maximal minor of the Jacobian $J(\fG_\fV|_{\tGa_\fV})$
 \begin{equation}\label{the-grand-matrix}  
J^*(\fG_\fV|_{\tGa_\fV})=\left(
\begin{array}{cccccccccc}
 J^*(\fG_{\fV,F_1}|_{\tGa_\fV})  & 0 & 0& \cdots & 0  \\
 * & J^*(\fG_{\fV,F_2}|_{\tGa_\fV}) &  0 & \cdots & 0  \\
\vdots &    \\
 * &  * & * & \cdots & J^*(\fG_{\fV, F_\up}|_{\tGa_\fV}) \\
 \end{array}
\right).
\end{equation}
By Lemmas \ref{max-minor},
all the blocks along diagonal are invertible at $\bz$; the entries in the upper right blocks
are due the fact that the variables used to compute the diagonal blocks are all pleasant. 
Therefore,  \eqref{the-grand-matrix}  is a square matrix of full rank at the point $\bz$.
We need to point out here that the terminating variables that we use to compute diagonal blocks
as in Lemma \ref{max-minor} can not belong to $\tGa_\fV^{=1}$ because 
when the variables of $\tGa_\fV^{=1}$ are introduced, the corresponding main binomial relation
must not terminate by consrtuction; they obviously do not belong to $\tGa_\fV^{=0}$.

\smallskip
 Now, we begin to prove that $\tZ_{\ell,\Ga}$ is smooth.

 First, we consider the case when $\Ga=\emptyset$. In this case, we have
  $Z_\emptyset=\rU_\um \cap \Gr^{d,E}$ and
$\tZ_{\ell,\emptyset }=\tsV_\ell$. 

As earlier, we fix and consider an arbitrary closed point $\bz \in \fV \subset \tsV_\ell$
 where $\fV$ is a preferred standard chart of $\tsR_\ell$.

We let $J:=J(\cB^\q, \cB^\mn_\fV, L_{\fV,\sfm})$ be the full Jacobian
 of all the defining equations of $\tsV_\ell \cap \fV$ in $\fV$.
We let  $J^*:=J^*(\fG_\fV)$ 
be the matrix of \eqref{the-grand-matrix} in the case of $\Ga=\emptyset$.
at the given point $\bz \in \tsV_\ell$.  (The maximal minor $J^*$ depends on the point $\bz$.)
Let $T_\bz (\tsV_\ell)$ be the Zariski tangent space of
$\tsV_\ell$ at $\bz$. Then,
we have
$$\dim T_\bz (\tsV_\ell)= \dim \tsR_\ell- \rk J(\bz)\le \dim \tsR_\ell- \rk J^*(\bz)$$
$$= \dim \rU_\um + |\cB^\mn|  - (|\cB^\mn| + \up)
= \dim \rU_\um  - \up = \dim \tsV_\ell,$$
where  $\dim \tsR_\ell=\dim \rU_\um + |\cB^\mn|$ by \eqref{dim} and
$\rk J^*(\bz) =|\cB^\mn| + \up$ by \eqref{the-grand-matrix}.
Hence, $\dim T_\bz (\tsV_\ell)  = \dim \tsV_\ell,$
thus, $\tsV_\ell$ is smooth at $\bz$. Therefore, $\tsV_\ell$ is smooth.

Consequently,  one sees that on any preferred standard chart $\fV$ of
the  scheme $\tsR_\ell$, all the relations of $\cB^q_\fV$
 must lie in the ideal generated by relations of $\cB^\mn_\fV$ and $L_{\fV, \sfm}$,  
 thus,   can be discarded from the chart $\fV$.

 Now, we return to a general  subset $\Ga$ of $\var_{\rU_\um}$
 as stated in the theorem.
 
Again, we fix and consider any closed point $\bz \in \fV \subset \tZ_{\ell,\Ga}$
 where $\fV$ is a preferred standard chart of $\tsR^\circ_\ell$ for the point.

  By the previous paragraph (immediately after proving that $\tsV_\ell$ is smooth),
  over any preferred standard chart $\fV$ of $\tsR_\ell$
  with $\tZ_{\ell,\Ga} \cap \fV \ne \emptyset$, we can discard 
   $\cB^\q_\fV|_{\tGa_\fV}$
   from the defining equations
 of $\tZ_{\ell,\Ga} \cap \fV$ and focus only on
 the equations of $\cB^\mn_\fV|_{\tGa_\fV}$ and 
 $L_{\fV,  \sF_\um}|_{\tGa_\fV}$.
 In other words, 
  $\tZ_{\ell,\Ga} \cap \fV$, if nonempty, as a closed subcheme of $\fV_\Ga$
  (which depends on both $\Ga$ and the point $\bz$), is defined by the equations in 
 $$\cB^\mn_\fV|_{\tGa_\fV}, \;\; L_{\fV,  \sF_\um}|_{\tGa_\fV}.$$

 Then, by  \eqref{the-grand-matrix}, the rank of the full Jacobian of 
 $\cB^\mn_\fV|_{\tGa_\fV}$ and $L_{\fV, \sF_\um}|_{\tGa_\fV}$
 equals to the number of the above defining equations at the closed point $\bz$ 
 of $\tZ_{\ell,\Ga} \cap \fV$.
 Hence, $\tZ_{\ell,\Ga}$ is smooth at $\bz$, thus,  so is $\tZ_{\ell,\Ga}$.

This proves the theorem.
\end{proof}

Let $X$ be an integral scheme.  We say $X$ admits a resolution  if there exists  a smooth
scheme $\tX$ and a projective  birational morphism
from $\tX$ onto $X$.

\begin{thm}\label{cor:main} 
Let $\Ga$ be any subset $\var_{\rU_\um}$.
Assume that $Z_\Ga$ is integral. Then, the morphism
 $\tZ^\dagger_{\ell, \Ga} \to Z_{\Ga}$ can be decomposed as
$$\tZ^\dagger_{\vr, \Ga} \to \cdots 
\to \tZ^\dagger_{\hs,\Ga}  \to \tZ^\dagger_{\hs',\Ga} \to \cdots \to
Z^\dagger_{\sF_{[j]},\Ga}  \to Z^\dagger_{\sF_{ [j-1]},\Ga} \to \cdots \to Z_\Ga$$
such that every morphism $\tZ^\dagger_{\hs,\Ga}  \to \tZ^\dagger_{\hs',\Ga}$
in the sequence is $\tZ^\dagger_{\vt_{[k]},\Ga}  \to \tZ^\dagger_{\vt_{ [k-1]},\Ga}$ for some $k \in [\up]$, or
$ \tZ^\dagger_{(\wp_{(k\tau)}\fr_\mu\fs_{h}),\Ga}
 \to \tZ^\dagger_{(\wp_{(k\tau)}\fr_\mu \fs_{h-1}),\Ga}$ for some $(k\tau) \mu h \in \Index_{\Phi_k}$, or
$ \tZ^\dagger_{\ell_k,\Ga} \to \tZ^\dagger_{\wp_k} $ for some $k \in [\up]$.
Further, every morphism in the sequence is surjective, projective, and  birational.  
In particular, $\tZ^\dagger_{\ell, \Ga} \to Z_\Ga$ 
is a resolution if $Z_\Ga$ is singular.
\end{thm}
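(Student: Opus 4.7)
The plan is to assemble the decomposition by concatenating the sequences produced in Lemmas \ref{wp-transform-sVk-Ga}, \ref{vt-transform-k}, and \ref{wp/ell-transform-ktauh}, and then to verify the three properties (surjective, projective, birational) for each intermediate arrow, using Theorem \ref{main-thm} only at the very end to upgrade the composite to a resolution.

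First, I would record the decomposition itself. From Lemma \ref{wp-transform-sVk-Ga} one extracts, for $j=0,\dots,\up$, the irreducible components $Z^\dagger_{\sF_{[j]},\Ga}\subset \sV_{\sF_{[j]}}$ together with the induced morphisms to $Z^\dagger_{\sF_{[j-1]},\Ga}$ (with $Z^\dagger_{\sF_{[0]},\Ga}=Z_\Ga$); from Lemma \ref{vt-transform-k} one obtains $\tZ^\dagger_{\vt_{[k]},\Ga}\subset \tsV_{\vt_{[k]}}$ for $k\in[\up]$; and from Lemma \ref{wp/ell-transform-ktauh} one obtains $\tZ^\dagger_{(\wp_{(k\tau)}\fr_\mu\fs_h),\Ga}$ and $\tZ^\dagger_{\ell_k,\Ga}$ for every index in $\Index_{\Phi_k}\sqcup\{\ell_k\}$. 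Stringing these together in the order of the constructions gives precisely the claimed sequence ending at $\tZ^\dagger_{\ell,\Ga}=\tZ^\dagger_{\ell_\up,\Ga}$.

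Next, I would verify the three properties arrow by arrow. Birationality is already built into the construction: it is exactly the content of (2) in each of the three lemmas, so each composition $\tZ^\dagger_{\hs,\Ga}\to Z_\Ga$ is birational and hence so is each intermediate step. For projectivity, note that every ambient morphism $\tsR_{\hs}\to\tsR_{\hs'}$ in the first row of diagram \eqref{theDiagram} is either a blowup along a closed center or a forgetful projection $\sR_{\sF_{[j]}}\to \sR_{\sF_{[j-1]}}$ coming from a projection of a product of projective spaces; in either case it is projective. Since $\tZ^\dagger_{\hs,\Ga}$ is defined as a closed subscheme of the proper transform / scheme-theoretic pre-image of $\tZ^\dagger_{\hs',\Ga}$ in the corresponding ambient scheme, the induced morphism $\tZ^\dagger_{\hs,\Ga}\to \tZ^\dagger_{\hs',\Ga}$ is the restriction of a projective morphism to a closed subscheme, hence projective. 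Surjectivity then follows from a standard argument: a projective morphism is proper, so its image is closed; birationality guarantees this closed image contains a dense open subset of the (irreducible) target $\tZ^\dagger_{\hs',\Ga}$; therefore the image is all of $\tZ^\dagger_{\hs',\Ga}$.

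Finally, for the resolution statement, I would invoke Theorem \ref{main-thm}: under the standing hypothesis that $Z_\Ga$ is integral, $\tZ_{\ell,\Ga}$ is smooth over $\Spec \FF$ in the disjoint-union sense of Definition \ref{disjoint-smooth}, so each of its connected components (in particular the irreducible component $\tZ^\dagger_{\ell,\Ga}$) is smooth. Composing all the intermediate morphisms, which are each projective and birational, yields a projective birational morphism $\tZ^\dagger_{\ell,\Ga}\to Z_\Ga$ from a smooth scheme, which is by definition a resolution whenever $Z_\Ga$ is singular. The main obstacle I anticipate is bookkeeping rather than mathematical: one must check that at each of the many intermediate arrows the $\dagger$-component defined by the case analysis (proper transform versus the canonical slice $((\xi_0,\xi_1)=(1,1))$ or $(y_1=0)$ appearing under conditions $(\star a)$ and $(\star b)$ in the proofs of Lemmas \ref{vt-transform-k} and \ref{wp/ell-transform-ktauh}) really does map surjectively onto the previous $\dagger$-component; this follows from the explicit description of $\tZ^\dagger$ as the closure of the isomorphic image of a dense open subset $\tZ^{\dagger\circ}$ of the previous stage, but must be tracked consistently through all three lemmas.
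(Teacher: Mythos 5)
Your proposal is correct and follows essentially the same route as the paper: the paper's own proof simply cites Theorem \ref{main-thm} for smoothness and Lemmas \ref{wp-transform-sVk-Ga}, \ref{vt-transform-k}, and \ref{wp/ell-transform-ktauh} for the decomposition, which is exactly the assembly you describe. Your explicit verification of projectivity (restriction of a blowup or forgetful projection to a closed subscheme) and surjectivity (closed image containing a dense open of an irreducible target) fills in standard details the paper leaves implicit.
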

\begin{proof} The smoothness of $\tZ^\dagger_{\ell, \Ga}$ follows from Theorem \ref{main-thm};
the decomposition of $\tZ^\dagger_{\ell, \Ga} \to Z_{\Ga}$ follows from
Lemmas \ref{wp-transform-sVk-Ga},  \ref{vt-transform-k},
 \ref{wp/ell-transform-ktauh}.
\end{proof}

\section{
Resolution of Singularity}\label{global-resolution} 

\subsection{Lafforgue's version of Mn\"ev's universality}\label{universality} $\ $

We first  review
Lafforgue's presentation of \cite{La03} on Mn\"ev's universality theorem.

As before, suppose we have a set of vector spaces, 
$E_1, \cdots, E_n$ such that 
$E_\alpha$ is of dimension 1 (or, a free module of rank 1 over $\ZZ$).
 We let 
$$E_I = \bigoplus_{\alpha \in I} E_\alpha, \;\; \forall \; I \subset [n],$$
$$E:=E_{[n]}=E_1 \oplus \ldots \oplus E_n.$$ 
(Lafforgues \cite{La03} considers the  more general case by allowing $E_\alpha$ to be
of any  finite dimension.)

For any fixed  integer $1\le d <n$, the Grassmannian
$$\Gr^{d,E}=\{ F \hookrightarrow E \mid \dim F=d\}$$
 decomposes into a disjoint union of locally closed strata
$$\Gr^{d,E}_\ud=\{ F \hookrightarrow E \mid \dim (F\cap E_I)=d_I,  \;\; \forall \; I \subset [n] \}$$
indexed by the family  $\ud=(d_I)_{I \subset [n]}$ of nonnegative integers $d_I \in \NN$ verifying

$\bullet$ $d_\emptyset=0, d_{[n]}=d$,

$\bullet$ $d_I +d_J\le d_{I\cup J} + d_{I \cap J}$, for all $I, J \subset [n]$.

The family $\ud$ is called a matroid of rank $d$ on the set $[n]$.
The stratum $\Gr^{d,E}_\ud$ is called a  matroid Schubert cell.

The Grassmannian $\Gr^{d,E}$ comes equipped with the (lattice) polytope
$$\Delta^{d,n} =\{ (x_1, \cdots, x_n) \in {\mathbb R}^n \mid 0 \le x_\alpha\le 1, \;
\forall \; \alpha; \; x_1 +\cdots + x_n = d\}.$$
For any $\ui=(i_1,\cdots,i_d) \in \II_{d,n}$, we let $\bx_\ui = (x_1, \cdots, x_n)$ be defined by
\begin{equation}\label{eta-L-2}
\left\{ 
\begin{array}{lcr}
x_i=1, &  \hbox{if $i \in \ui$,} \\
x_i=0, & \hbox{otherwise}.
\end{array} \right.
\end{equation}
It is known that  $\Delta^{d,n} \cap \NN^n =\{\bx_\ui \mid \ui \in \II_{d,n}\}$ and
it consists of precisely the vertices of the polytope $\Delta^{d,n}$.

Then, the matroid $\ud=(d_I)_{I \subset [n]}$ above defines the 
following subpolytope of $\Delta^{d,n}$
$$\Delta^{d,n}_\ud =\{ (x_1, \cdots, x_n) \in \Delta^{d,n} \mid  \sum_{\alpha \in I} x_\alpha \ge d_I, \; \forall \; I \subset [n]\}.$$
This is called the matroid subpolytope of $\Delta^{d,n}$ corresponding to $\ud$.

Recall that we have a canonical decomposition
$$\wedge^d E=\bigoplus_{\ui \in \II_{d,n}} E_{i_1}\otimes \cdots \otimes E_{i_d}$$
and it gives rise to the $\pl$ embedding of the Grassmannian
$$\Gr^{d,E} \hookrightarrow \PP(\wedge^d E)=\{(p_\ui)_{\ui \in \II_{d,n}} \in \GG_m 
\backslash (\wedge^d E  \- \{0\} )\}.$$

\begin{prop}\label{to-Ga} {\rm (Proposition, p4, \cite{La03})} 
Let $\ud$ be any matroid of rank $d$ on the set $[n]$ as considered above.
Then, in the Grassmannian 
$$\Gr^{d,E} \hookrightarrow \PP(\wedge^d E)=\{(p_\ui)_{\ui \in \II_{d,n}} \in \GG_m 
\backslash (\wedge^d E \- \{0\} )\},$$
the  matroid Schubert cell $\Gr^{d,E}_\ud$, as a locally closed subscheme, is defined by
$$p_\ui = 0, \;\;\; \forall \; \bx_\ui \notin \Delta^{d,n}_\ud ,$$ 
$$p_\ui \ne 0, \;\;\; \forall \; \bx_\ui \in \Delta^{d,n}_\ud . $$   
\end{prop}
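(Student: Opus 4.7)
The plan is to derive both set-theoretic inclusions from a single Plücker-theoretic criterion. First I would record the standard fact: for $F \in \Gr^{d,E}$ with Plücker image $[p_\ui]$, the coordinate $p_\ui$ is nonzero if and only if the composition $F \hookrightarrow E \twoheadrightarrow E_\ui$ is an isomorphism, equivalently $F \cap E_{[n]\setminus \ui} = 0$.

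Next, for the inclusion $\Gr^{d,E}_\ud \subset \{p_\ui=0 \text{ iff } \bx_\ui \notin \Delta^{d,n}_\ud\}$: assume $F \in \Gr^{d,E}_\ud$, so $\dim(F \cap E_I) = d_I$ for every $I \subset [n]$. If $\bx_\ui \in \Delta^{d,n}_\ud$, specializing $\sum_{\alpha \in I}(\bx_\ui)_\alpha \ge d_I$ to $I = [n]\setminus \ui$ gives $0 \ge d_{[n]\setminus\ui}$, so $F \cap E_{[n]\setminus\ui}=0$ and $p_\ui \ne 0$. Conversely, if $p_\ui \ne 0$, the induced isomorphism $F \xrightarrow{\sim} E_\ui$ sends $F \cap E_I$ into $E_I \cap E_\ui = E_{I \cap \ui}$, whence $d_I = \dim(F \cap E_I) \le |I \cap \ui|$ for all $I$, which is exactly the statement $\bx_\ui \in \Delta^{d,n}_\ud$.

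For the reverse inclusion, suppose $F \in \Gr^{d,E}$ satisfies $p_\ui \ne 0 \iff \bx_\ui \in \Delta^{d,n}_\ud$. I would invoke the general identity
\begin{equation}\nonumber
\dim(F \cap E_I) \;=\; \min\bigl\{\,|\ui \cap I| : p_\ui(F) \ne 0\,\bigr\},
\end{equation}
obtained by dualizing the elementary formula $\rk(F \to E_J) = \max\{|\ui \cap J| : p_\ui \ne 0\}$ at $J = [n]\setminus I$. Under our hypothesis the right-hand side equals $\min\{|\ui \cap I| : \bx_\ui \in \Delta^{d,n}_\ud\}$, and one must identify this with $d_I$ to conclude $F \in \Gr^{d,E}_\ud$.

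The main obstacle will be the last identification $d_I = \min\{|\ui \cap I| : \bx_\ui \in \Delta^{d,n}_\ud\}$. The inequality ``$\ge$'' is immediate from the defining inequalities of the matroid polytope, but ``$\le$'' requires producing, for each $I$, a lattice point $\bx_\ui \in \Delta^{d,n}_\ud$ with $|\ui \cap I|=d_I$. This is the standard description of $\Delta^{d,n}_\ud$ as the convex hull of the indicator vectors of the bases of the matroid $\ud$, together with the matroid-theoretic fact that every maximal independent subset of $I$ extends to a basis; it is here that the submodular inequalities $d_I+d_J \le d_{I\cup J}+d_{I\cap J}$ in the definition of $\ud$ are essential. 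I would cite or reproduce these classical matroid-polytope results (as in \cite{La03}) to close the argument, after which the two inclusions together give the desired scheme-theoretic description of $\Gr^{d,E}_\ud$.
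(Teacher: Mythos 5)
The paper gives no proof of this proposition at all: it is imported verbatim from Lafforgue (Proposition on p.~4 of \cite{La03}), so there is no in-paper argument to compare against. Your proposal is a correct reconstruction of the standard argument. The forward inclusion is complete: the specialization $I=[n]\setminus\ui$ of the polytope inequalities gives $d_{[n]\setminus\ui}=0$, hence $F\cap E_{[n]\setminus\ui}=0$ and $p_\ui\ne 0$; and conversely the injectivity of $F\to E_\ui$ gives $d_I\le |I\cap\ui|$ for all $I$, i.e.\ $\bx_\ui\in\Delta^{d,n}_\ud$. For the reverse inclusion, your identity $\dim(F\cap E_I)=\min\{|\ui\cap I| : p_\ui\ne 0\}$ (dual to the basis description of the rank of $F\to E_J$) is the right tool, and the remaining identification $d_I=\min\{|\ui\cap I| : \bx_\ui\in\Delta^{d,n}_\ud\}$ is lighter than you suggest: the lattice points of $\Delta^{d,n}_\ud$ are, directly from the defining inequalities, exactly the bases of the matroid with rank function $r(J)=d-d_{[n]\setminus J}$, so only the classical fact $r(J)=\max\{|B\cap J|: B \hbox{ a basis}\}$ is needed (this does use the full matroid axioms, which the paper's abbreviated definition of $\ud$ --- listing only $d_\emptyset=0$, $d_{[n]}=d$ and submodularity --- does not quite supply; Lafforgue's actual axiom list is what you should cite). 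Two minor caveats: your argument is at the level of field-valued points, whereas the statement is about the locally closed \emph{subscheme}; since the stratum is defined by the conditions $\dim(F\cap E_I)=d_I$, the scheme structure asserted here is essentially definitional in \cite{La03}, so this costs nothing, but deserves one sentence. Also, the phrase ``sends $F\cap E_I$ into $E_I\cap E_\ui$'' should be ``the projection $\pi_\ui$ maps $E_I$ onto $E_{I\cap\ui}$ and is injective on $F$,'' which is what the dimension count actually uses.
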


Let $\ud=(d_I)_{I \subset [n]}$ be a matroid of rank $d$ on the set $[n]$ as above. 
Assume that $\ud_{[n]\setminus \{\alpha\}} =d-1$ for all $1\le \alpha\le n$.
Then, the configuration space $C^{d,n}_\ud$ defined by the matroid $\ud$ is the classifying scheme
of families of $n$ points
$$P_1, \cdots, P_n$$
on the projective space $\PP^{d-1}$ such that for any nonempty subset $I \subset [n]$,
the projective subspace $P_I$ of $\PP^{d-1}$ generated by the points $P_\alpha, \alpha \in I$, is
of dimension 
$$\dim P_I = d-1 -\ud_I.$$

\begin{thm}\label{Mn-La} {\rm (Mn\"ev, Theorem I. 14, \cite{La03})}
Let $X$ be an affine scheme of finite type over $\Spec \ZZ$.
Then, there exists a matroid $\ud$ of rank 3 on
the set $[n]$ such that $\PGL_3$ acts freely on the configuration space $C^{3,n}_\ud$.
Further, there exists a positive integer $r$ and
 an open subset $U \subset X \times \AA^r$ projecting surjectively
onto $X$ such that $U$ is isomorphic to the quotient space
${\underline C}^{3,n}_\ud :=C^{3,n}_\ud/\PGL_3$.
\end{thm}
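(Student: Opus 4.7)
The plan is to follow the classical von Staudt encoding of arithmetic by incidence, carried out scheme-theoretically over $\Spec \ZZ$ in order to achieve Lafforgue's strengthening. First I would reduce to the case $X \hookrightarrow \AA^m_\ZZ$ cut out by finitely many polynomials $f_1, \ldots, f_s \in \ZZ[t_1, \ldots, t_m]$, and even further (by introducing auxiliary variables for intermediate products and sums) to the case where each $f_i$ is either a linear relation $t_a + t_b = t_c$ or a multiplicative relation $t_a \cdot t_b = t_c$. At the cost of enlarging $m$, this reduction is standard and the cost is recorded as the integer $r$ of auxiliary affine factors.

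Next I would fix a projective frame: choose four distinguished points $P_\infty^1, P_\infty^2, O, I \in \PP^2$ in general position. These four points rigidify the $\PGL_3$-action: the stabilizer of the ordered quadruple is trivial, so on any configuration containing these four points (in the required positions) $\PGL_3$ acts freely. Given such a frame, a fifth point $P$ on the line $\overline{OI}$ (say) is determined by a single scalar coordinate $t \in \AA^1$. The heart of the construction is then von Staudt's classical fact: given two such marked points $P_a, P_b$ of "coordinates" $t_a, t_b$, the point of coordinate $t_a + t_b$ (respectively $t_a \cdot t_b$) is determined by a specific short sequence of line-joinings and intersections that can be phrased as collinearity conditions on a small number of auxiliary points. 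I would record these as combinatorial data: each arithmetic relation $t_a + t_b = t_c$ or $t_a \cdot t_b = t_c$ is traded for a finite list of auxiliary points together with a list of "three points collinear" constraints and a list of "three points not collinear" open conditions.

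Assembling all these auxiliary points and incidence/non-incidence constraints for the frame together with every relation from the reduced presentation of $X$ yields a matroid $\ud$ of rank $3$ on some set $[n]$. By Proposition \ref{to-Ga}, the matroid Schubert cell $\Gr^{3,E}_\ud$ is precisely the locally closed subscheme cut out by the corresponding vanishings $p_{\ui} = 0$ for $\bx_{\ui} \notin \Delta^{3,n}_{\ud}$ and non-vanishings $p_{\ui} \ne 0$ otherwise; under Gelfand--MacPherson duality this matches the configuration space $C^{3,n}_\ud$ of the chosen points in $\PP^2$. The freeness of $\PGL_3$ is immediate from the presence of the frame, so the geometric quotient $\underline{C}^{3,n}_\ud = C^{3,n}_\ud/\PGL_3$ exists as a scheme. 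Setting coordinates using the frame identifies $\underline{C}^{3,n}_\ud$ with the locus in $\AA^{m+r}$ carved out exactly by the relations $t_a + t_b = t_c$ and $t_a \cdot t_b = t_c$ (the collinearity conditions) subject to a finite list of non-degeneracies (the genericity open conditions); by construction this is an open subset $U$ of $X \times \AA^r$ which still maps surjectively onto $X$ (since the non-degeneracies are satisfied on a Zariski-dense open of each fiber over $X$).

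The main obstacle is the scheme-theoretic precision demanded by Lafforgue's refinement over Mn\"ev's original (set-theoretic / analytic-germ) statement: I must verify that the ideal generated by the incidence relations in the Pl\"ucker coordinates, after dehomogenization using the frame, pulls back to exactly the ideal defining the augmented $X \times \AA^r$ inside $\AA^{m+r}$, with no extra nilpotents or spurious components, and uniformly over $\Spec \ZZ$ (including residue characteristic $2$ and $3$, where von Staudt's classical picture can behave delicately). Concretely, this requires checking that each encoding gadget for $+$ and $\cdot$ is (i) set-theoretically universal, (ii) smooth over the relation it encodes, and (iii) integral over $\ZZ$, so that the combined incidence scheme is faithfully flat over $X \times \AA^r$ and carries no extra scheme structure beyond the defining relations. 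Once this is verified for the two basic gadgets, the assembly is formal and the theorem follows.
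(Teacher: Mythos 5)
First, a point of calibration: the paper does not prove Theorem \ref{Mn-La} at all. It is stated as a citation of Lafforgue's book (Theorem I.14 of \cite{La03}), and \S\ref{universality} merely reviews the statement together with Proposition \ref{to-Ga} and Theorem \ref{GM} before invoking it as a black box in \S\ref{global-resolution}. So there is no internal proof to compare your write-up against; what you have sketched is an outline of the proof that lives in the cited reference. That outline is the correct one: reduce the presentation of $X$ over $\ZZ$ to elementary relations $t_a+t_b=t_c$ and $t_a\cdot t_b=t_c$ (recording the auxiliary variables as the $\AA^r$ factor), rigidify $\PGL_3$ by a frame of four points in general position (which does give freeness of the action), and encode each elementary relation by a von Staudt incidence gadget, with the non-degeneracy conditions cutting out the open subset $U$. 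This is the same strategy as in \cite{La03} and in Lee--Vakil \cite{LV12}.

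However, as written your proposal is a plan rather than a proof: the paragraph beginning ``The main obstacle is the scheme-theoretic precision\dots'' names exactly the content that distinguishes Lafforgue's Theorem I.14 from Mn\"ev's original set-theoretic statement, and then defers it (``Once this is verified for the two basic gadgets, the assembly is formal''). Verifying that each gadget's incidence ideal, after dehomogenization by the frame, is \emph{exactly} the ideal of the graph of $+$ or $\cdot$ --- flat over $\ZZ$, with no embedded components or nilpotents, uniformly in residue characteristics $2$ and $3$ --- is the hard part, and it is not carried out. Two smaller points: (i) you also need to encode the integer coefficients of the $f_i$ (built up from $1$ by repeated addition), which your reduction step should state explicitly; (ii) Proposition \ref{to-Ga} and the Gelfand--MacPherson correspondence (Theorem \ref{GM}) are what the paper uses to pass from the configuration-space statement \ref{Mn-La} to the Grassmannian statement \ref{Mn-La-Gr}; they are not needed inside a proof of \ref{Mn-La} itself, so invoking them in the middle of your argument conflates the two versions. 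If your goal is to reprove the universality theorem, the gadget verification must be done; if your goal is to match what the paper does, a citation suffices.
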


\begin{thm}\label{GM} {\rm (Gelfand, MacPherson, Theorem I. 11, \cite{La03})}
Let $\ud$ be any matroid of rank $d$ on the set $[n]$ as considered above.
Then, the action of $\PGL_{d-1}$ on $C^{d,n}_\ud$ is free if and only if
$\dim_{\mathbb R} \Delta^{d,n}_\ud =n-1$. Similarly, 
 the action of $\GG_m^n/\GG_m$ on $\Gr^{d,n}_\ud$ is free if and only if
$\dim_{\mathbb R} \Delta^{d,n}_\ud =n-1$. 
Further, when $\dim_{\mathbb R} \Delta^{d,n}_\ud =n-1$,  the quotient
$C^{d,n}_\ud/\PGL_{d-1}$ can be canonically identified with  the quotient
$\Gr^{d,E}_\ud/(\GG_m^n/\GG_m )$.
\end{thm}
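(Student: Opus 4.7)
The plan is to verify the three assertions by leveraging the classical Gelfand--MacPherson correspondence. A subspace $F\in\Gr^{d,E}$, represented after choosing a basis of $F$ by a $d\times n$ matrix, simultaneously determines (i) the intersections $F\cap E_I$, whose dimensions give the matroid $\ud$, and (ii) via the dualized projections $F\hookrightarrow E\twoheadrightarrow E_\alpha$, an ordered $n$-tuple of points $P_\alpha\in\PP(F^\vee)\cong\PP^{d-1}$, whose spans give (a matroid canonically determined by) $\ud$. The factorwise scalings in $\GG_m^n$ act trivially on the points $P_\alpha$, and the choice of basis of $F$ accounts exactly for the $\PGL_{d-1}$-ambiguity on the configuration side. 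This sets the template: freeness on each side and the isomorphism of quotients are all instances of the same correspondence.

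First I would treat the freeness of $\GG_m^n/\GG_m$ on $\Gr^{d,E}_\ud$. Suppose $\bt=(t_1,\dots,t_n)\in\GG_m^n$ fixes $F$. Grouping $[n]$ according to the distinct values of the $t_\alpha$ into a partition $I_1\sqcup\cdots\sqcup I_r$, the invariance forces $F=\bigoplus_j(F\cap E_{I_j})$ and hence $d=\sum_j d_{I_j}$; conversely, any such matroid splitting produces a nonscalar stabilizer. Thus the $\GG_m^n/\GG_m$-action is free on $\Gr^{d,E}_\ud$ iff no partition $[n]=I\sqcup([n]\setminus I)$ with $\emptyset\neq I\subsetneq[n]$ satisfies $d_I+d_{[n]\setminus I}=d$. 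Now $\Delta^{d,n}_\ud$ already lies in the hyperplane $\sum_\alpha x_\alpha=d$, so $\dim_{\mathbb R}\Delta^{d,n}_\ud\le n-1$, with equality failing iff some additional defining inequality $\sum_{\alpha\in I}x_\alpha\ge d_I$ is tight on every integer vertex of $\Delta^{d,n}_\ud$; and this tightness is equivalent to the splitting relation $d_I+d_{[n]\setminus I}=d$. This proves the first "iff".

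Next I would construct the Gelfand--MacPherson map scheme-theoretically. Sending $F$ to the tuple $(P_\alpha)_\alpha$ gives a $\GG_m^n$-invariant morphism $\Gr^{d,E}_\ud\to C^{d,n}_\ud$ that is $\PGL_{d-1}$-equivariant after choosing a trivialization of $F^\vee$, and the inverse is obtained by reconstructing $F$ as the kernel of the $n\times d$ transpose of the matrix whose columns are lifts of the $P_\alpha$. Verifying these are mutually inverse on the level of quotients yields a canonical isomorphism
\[
\Gr^{d,E}_\ud\big/(\GG_m^n/\GG_m)\;\xrightarrow{\;\sim\;}\;C^{d,n}_\ud\big/\PGL_{d-1}.
\]
Combined with step one, this automatically transports freeness: $\PGL_{d-1}$ acts freely on $C^{d,n}_\ud$ iff $\GG_m^n/\GG_m$ acts freely on $\Gr^{d,E}_\ud$ iff $\dim_{\mathbb R}\Delta^{d,n}_\ud=n-1$.

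The main obstacle will be the careful matroid bookkeeping under duality and the passage from set-theoretic bijections to a scheme-theoretic isomorphism. On the Grassmannian side the matroid records intersection dimensions with coordinate subspaces, whereas on the configuration side it records span dimensions of point subsets; these are exchanged by matroid duality, so I must match indexing conventions precisely and confirm that the standing hypothesis $d_{[n]\setminus\{\alpha\}}=d-1$ on the configuration side corresponds exactly to the nonvanishing of each projection $F\to E_\alpha^\vee$, which is what guarantees each $P_\alpha$ is a well-defined point and keeps the inverse map regular. Once this dictionary is established, the isomorphism of quotients follows by faithfully flat descent along the free torus (resp.\ $\PGL_{d-1}$) action, and the two freeness statements become equivalent.
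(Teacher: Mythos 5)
The paper offers no proof of this statement at all: it is quoted as Theorem I.11 of Lafforgue's book \cite{La03} and used as a black box, so there is no in-paper argument to measure yours against. On its own merits, your sketch follows the standard Gelfand--MacPherson route and its architecture is sound: torus stabilizers of $F$ correspond to eigenspace decompositions $F=\bigoplus_j(F\cap E_{I_j})$, hence to splittings $d_I+d_{[n]\setminus I}=d$; the polytope detects exactly these splittings; and the double-quotient picture $\GL_d\backslash M/\GG_m^n$ identifies the two quotients and transports freeness.

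Three places deserve more care before this counts as a proof. First, the equivalence ``$\dim_{\mathbb R}\Delta^{d,n}_\ud<n-1$ iff some splitting $d_I+d_{[n]\setminus I}=d$ exists'' is really the Gelfand--Goresky--MacPherson--Serganova fact that $\dim\Delta^{d,n}_\ud=n-c$, with $c$ the number of connected components of the matroid. The implication ``splitting $\Rightarrow$ extra tight hyperplane'' is immediate, but the converse (a dimension drop forces a hyperplane of the special form $\sum_{\alpha\in I}x_\alpha=d_I$) requires the basis-exchange argument that all edge directions of the polytope are of the form $e_\alpha-e_\beta$; your phrase ``tight on every integer vertex'' quietly assumes this. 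Second, ``reconstructing $F$ as the kernel of the $n\times d$ transpose'' should be the image (row span) of the matrix of lifts inside $E\cong\kk^n$, not a kernel --- precisely the duality bookkeeping you flag as the main obstacle. Third, the transfer of freeness between the two sides is not automatic from an isomorphism of quotients; it follows from the observation that the joint stabilizer $\{(g,\bt)\in\GL_d\times\GG_m^n \mid gA\bt=A\}$ projects isomorphically onto each factor stabilizer, where injectivity onto the $\bt$-side uses $\rank A=d$ and injectivity onto the $g$-side uses that no column of $A$ vanishes, i.e.\ the standing hypothesis $d_{[n]\setminus\{\alpha\}}=d-1$. With those three points supplied (or cited), your argument is a complete and correct substitute for the external reference.
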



By the above correspondence, we have the following equivalent version of Theorem \ref{Mn-La}.

\begin{thm}\label{Mn-La-Gr} {\rm (Mn\"ev, Theorem I. 14, \cite{La03})}
Let $X$ be an affine scheme of finite type over $\Spec \ZZ$.
Then, there exists a matroid $\ud$ of rank 3 on
the set $[n]$ such that $(\GG_m^n/\GG_m )$ acts freely on the  matroid Schubert cell $\Gr^{3,E}_\ud$.
Further, there exists a positive integer $r$ and an open subset $U \subset X \times \AA^r$ projecting 
onto $X$ such that $U$ is isomorphic to the quotient space
$\bGr^{3,E}_\ud:=\Gr^{3,E}_\ud/(\GG_m^n/\GG_m)$.
\end{thm}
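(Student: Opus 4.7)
\smallskip
\noindent\textbf{Proof proposal.} The plan is to deduce Theorem \ref{Mn-La-Gr} directly from the combination of Mn\"ev's theorem in its $\PGL_3$-formulation (Theorem \ref{Mn-La}) and the Gelfand--MacPherson correspondence (Theorem \ref{GM}). No new geometric input is needed; the work is purely a matter of translating between the two quotient descriptions of the same universal parameter space.

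First, given an affine scheme $X$ of finite type over $\Spec \ZZ$, I apply Theorem \ref{Mn-La} to obtain the following data: a positive integer $n$, a matroid $\ud=(d_I)_{I\subset [n]}$ of rank $3$ on $[n]$, a positive integer $r$, and an open subset $U\subset X\times \AA^r$ projecting surjectively onto $X$, together with an isomorphism $U\cong C^{3,n}_\ud/\PGL_3$; moreover $\PGL_3$ acts freely on $C^{3,n}_\ud$. Note that Theorem \ref{Mn-La} already guarantees that the matroid produced has the property that $d_{[n]\setminus\{\alpha\}}=d-1=2$ for every $\alpha\in[n]$, so the configuration space $C^{3,n}_\ud$ and the matroid Schubert cell $\Gr^{3,E}_\ud$ are both defined and the $\PGL_3$-action on $C^{3,n}_\ud$ makes sense.

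Next, I invoke Theorem \ref{GM}. Since $\PGL_3$ acts freely on $C^{3,n}_\ud$, the equivalence in Theorem \ref{GM} forces $\dim_\RR \Delta^{3,n}_\ud = n-1$, and the same condition then implies that the action of $\GG_m^n/\GG_m$ on $\Gr^{3,E}_\ud$ is free as well. Theorem \ref{GM} furthermore supplies a canonical identification
\[
C^{3,n}_\ud/\PGL_3 \;\cong\; \Gr^{3,E}_\ud/(\GG_m^n/\GG_m) \;=\; \bGr^{3,E}_\ud .
\]
Composing this with the isomorphism $U\cong C^{3,n}_\ud/\PGL_3$ from Step 1 yields the desired isomorphism $U\cong \bGr^{3,E}_\ud$, compatibly with the surjective projection $U\to X$ coming from $U\subset X\times \AA^r$.

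There is no real obstacle in this proof; the only point requiring care is to verify that the matroid produced by Theorem \ref{Mn-La} fits the hypotheses of Theorem \ref{GM}, i.e.\ that the codimension-one conditions $d_{[n]\setminus\{\alpha\}}=2$ hold so that the rank-$3$ matroid genuinely parametrizes ordered $n$-point configurations in $\PP^2$, and that the freeness hypothesis on one side of Theorem \ref{GM} transfers to the other. Both follow directly from the statements as recorded, so the argument amounts to stringing the two cited theorems together along the Gelfand--MacPherson correspondence.
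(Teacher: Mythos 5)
Your proposal is correct and follows exactly the route the paper intends: the paper derives Theorem \ref{Mn-La-Gr} from Theorem \ref{Mn-La} via the Gelfand--MacPherson correspondence of Theorem \ref{GM}, stating it as an ``equivalent version'' without spelling out the details you supply. Your added care about verifying the hypotheses $d_{[n]\setminus\{\alpha\}}=2$ and transferring freeness through the equivalence $\dim_{\mathbb R}\Delta^{3,n}_\ud=n-1$ is a faithful elaboration of the same argument.
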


\subsection{Global resolution: the affine case}

\begin{thm}\label{resolusion-affine} {\rm (Resulution: Affine Case)}
Let $X$ be 
an affine scheme of finite presentation over a 
 perfect field $\kk$. 
Assume further that $X$ is integral and 
singular. Then,  $X$ admits a resolution, 
that is, there exists a smooth scheme $\tX$ and a 
projective birational  morphism from $\tX$ onto $X$.
\end{thm}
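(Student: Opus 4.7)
The plan is to prove the theorem by induction on $\dim X$, using Mn\"ev's universality together with Theorem \ref{cor:main} to resolve a dense affine open subset of $X$, and then applying the inductive hypothesis to the closed complement (which has strictly smaller dimension). The base case $\dim X = 0$ is automatic, since an integral $0$-dimensional affine scheme of finite presentation over a perfect field is a finite point with residue field separable over $\kk$, hence smooth. For the inductive step, I would first reduce to the case where $X$ is defined over the prime field $\FF \subset \kk$: since $X$ is of finite presentation, the constructions of \cite{EGAIV} 8.8.2 and 8.10.5 allow one to spread out $X/\kk$ to the generic fibre of a dominant morphism $f:Y\to B$ with $Y,B$ integral and of finite presentation over $\ZZ$; once a resolution $\tY\to Y$ is produced over $\FF$, the generic fibre $\tY_\kk\to X$ is regular, and smooth over $\kk$ since $\kk$ is perfect.

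Assume now $X/\FF$ is singular integral affine. Apply Theorem \ref{Mn-La-Gr} to obtain a rank-$3$ matroid $\ud$ on some $[n]$, a positive integer $r$, and an open subset $U \subset X \times \AA^r$ with $U \cong \bGr^{3,E}_\ud = \Gr^{3,E}_\ud/(\GG_m^n/\GG_m)$. Fix $\um\in\II_{3,n}$ with $\bx_\um\in\Delta^{3,n}_\ud$ and set $\Ga := \{x_\ui : \bx_\ui\notin\Delta^{3,n}_\ud\}$; by Proposition \ref{to-Ga} the matroid Schubert cell $\Gr^{3,E}_\ud$ is open in the $\Ga$-scheme $Z_\Ga\subset\rU_\um$. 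Since $X$ is integral, so is $Z_\Ga$, and Theorem \ref{cor:main} supplies a resolution $\varpi_\Ga : \tZ^\dagger_{\ell,\Ga} \to Z_\Ga$. For a generic closed point $a\in\AA^r$ and, after a splitting $(\GG_m^n/\GG_m)\cong\GG_m^{n-1}$, a point $b\in\GG_m^{n-1}$, the principal bundle $\pi:\Gr^{3,E}_\ud\to\bGr^{3,E}_\ud$ is Zariski-locally trivial (the torus is special), so the slice $W_a \subset \Gr^{3,E}_\ud$ above $(V\times\{a\})\cap U$ is isomorphic to an affine open $V\subset X$ and is cut out inside $Z_\Ga$ by the $r+n-1$ functions $t_i'-a_i$, $t_{r+j}'-b_j$ pulled back from the affine and torus coordinates. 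The proper transform $\tW_a := \varpi_\Ga^{-1}(W_a)$ maps to $W_a\cong V$ surjectively, projectively, and birationally.

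Granted that $\tW_a$ is smooth (see below), $V$ is a resolvable dense open of $X$. Set $X' := X\setminus V$, a proper closed affine subscheme of $X$ of dimension $<\dim X$, and decompose $X'_{\rm red} = X'_1\cup\cdots\cup X'_m$ into its integral components. By the inductive hypothesis, each singular $X'_j$ admits a resolution $\tX'_j\to X'_j$, while smooth components are taken as their own resolutions. Form
\[
\tX \;:=\; \tW_a \,\sqcup\, \tX'_1 \,\sqcup\, \cdots \,\sqcup\, \tX'_m
\]
with the induced morphism to $X$. By Definition \ref{disjoint-smooth} the scheme $\tX$ is smooth; the morphism is projective and surjective; and its restriction $\tW_a\to V$ is an isomorphism on a dense open of $X$, so it is birational in the required sense.

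The hard part will be verifying the smoothness of the slice $\tW_a$. The plan is to run the tangent-space comparison used in the (commented) local-resolution theorem: fix $\bz\in\tW_a$ lying on a preferred standard chart $\fV$ of $\tsR_\ell^\circ$, let $\fm\subset\FF[x]_{x\in\var_\fV}$ be the maximal ideal at $\bz$, let $I,I_a$ be the ideals of $\tZ^\dagger_{\ell,\Ga}\cap\fV$ and $\tilde{\overline{W}}_a\cap\fV$ respectively, and let $\tt_1,\dots,\tt_{r+n-1}$ be the proper transforms in $\fV$ of the $r+n-1$ slicing functions. The conormal sequence
\[
0 \to I_a/(I_a\cap\fm^2) \to \fm/\fm^2 \to \fn_a/\fn_a^2 \to 0
\]
combined with the fact that the images of $\tt_1,\dots,\tt_{r+n-1}$ are linearly independent in $I_a/(I_a\cap\fm^2)$ modulo $I/(I\cap\fm^2)$ (since they form part of a system of local parameters along $\tW_a$, and $\tW_a$ is birational to the integral affine $V$), yields $\dim T_{\bz}\tW_a \le \dim\tZ^\dagger_{\ell,\Ga}-(r+n-1) = \dim\tW_a$. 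The main technical step is thus the transversality of the $\tt_j$ to $\tZ^\dagger_{\ell,\Ga}$ at every closed point of $\tW_a$, which must be deduced from the Jacobian/pleasant-variable analysis of Theorem \ref{main-thm}; a secondary obstacle is the Bertini-type choice of generic $(a,b)$ so that $\tW_a$ is equidimensional of the expected dimension.
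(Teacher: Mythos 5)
Your route is genuinely different from the paper's, and it has a gap that I do not think can be patched as written. You resolve only a dense affine open $V\subset X$ (via the generic slice $\tW_a\to W_a\cong V$) and then handle $X\setminus V$ by induction on dimension, assembling $\tX:=\tW_a\sqcup\tX'_1\sqcup\cdots\sqcup\tX'_m$. But this disjoint union is not birational to $X$: each component $\tX'_j$ maps onto a proper closed subset $X'_j\subsetneq X$, so no dense open subset of $\tX$ (which must meet every component) can map isomorphically onto a dense open of $X$. What you produce is a projective surjection from a smooth scheme that is an isomorphism over a dense open --- a stratified collection of local resolutions --- not a single projective birational morphism onto $X$. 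This is precisely the local-to-global gluing problem that the introduction flags as unresolved for general schemes; induction on dimension does not circumvent it. A secondary gap is the smoothness of $\tW_a$ at \emph{every} closed point: the tangent-space comparison you invoke requires the slicing functions $\tt_1,\dots,\tt_{r+n-1}$ to be transverse to $\tZ^\dagger_{\ell,\Ga}$ at all points of the proper transform, and genericity of $(a,b)$ only controls general points. (This argument appears in the source only inside a commented-out ``Local Uniformization'' block, which suggests it was abandoned.)

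The paper's actual proof avoids slicing entirely. It observes that the blowup $\tGr^{3,E}_\ud\to\Gr^{3,E}_\ud$ induced by the resolution $\tZ^\dagger_{\ell,\Ga}\to Z_\Ga$ is the blowup of an ideal $\tJ$ whose local generators $g_i(\bx,\bt,\bs)$ depend only on the coordinates $\bx$ of $X\subset\AA^m$ --- not on the $\AA^r$-coordinates $\bt$ nor the torus coordinates $\bs$. Hence $\tJ$ descends to an ideal $J$ on $X$ itself, and one sets $\tX:=\Proj_X\oplus_{d\ge 0}J^d$, a single blowup of $X$, which is automatically projective and birational over $X$. Smoothness of $\tX$ is then obtained by descent through the cartesian diagram: $\tGr^{3,E}_\ud\to\widetilde{\bGr}^{3,E}_\ud$ is a $(\GG_m^n/\GG_m)$-bundle and $\widetilde{\bGr}^{3,E}_\ud\to\tX$ is a smooth morphism with fibers open in $\AA^r$, so smoothness of $\tGr^{3,E}_\ud$ forces smoothness of $\tX$. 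If you want to salvage your approach, the essential missing ingredient is exactly this descent of the blowup ideal to $X$; without it, slicing and induction cannot deliver a birational morphism onto $X$.
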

\begin{proof} 

 First, we assume that $X$ is defined over $\Spec \ZZ$.

 We apply Theorem \ref{Mn-La-Gr} to $X$ and follow  the notations in Theorem \ref{Mn-La-Gr}.

We identify $U \subset X \times \AA^r$ with the quotient space
$\bGr^{3,E}_\ud=\Gr^{3,E}_\ud/(\GG_m^n/\GG_m)$.

Consider the quotient map 
$$\pi: \Gr^{3,E}_\ud \lra \bGr^{3,E}_\ud=\Gr^{3,E}_\ud/(\GG_m^n/\GG_m).$$
We have the diagram
\begin{equation}\label{la-di} \xymatrix{
\Gr^{3,E}_\ud  \ar^{\pi \;\;\;\;\;\;\;\;\;\;\;\;\;\;\;\;\;\;\;\;\;\;\;\;\;\;}[r] 
& \bGr^{3,E}_\ud=\Gr^{3,E}_\ud/(\GG_m^n/\GG_m) 
\cong U  \ar @{^{(}->}[r]  & X \times \AA^r  \ar[d]   \\
& & X.
} \end{equation}

We can apply Proposition \ref{to-Ga} to the   matroid Schubert cell 
$\Gr^{3,E}_\ud$.  

Since $\Delta^{3,n}_\ud \ne \emptyset$, there exists $\um \in \II_{3,n}$ such that
$\bx_\um \in \Delta^{3,n}_\ud$.
We define
\begin{equation}\label{ud=Ga}
\Ga:=\Ga_\ud =\{ \ui \in \II_{3,n} \mid \bx_{\ui} \notin \Delta^{3,n}_\ud \}.
\end{equation}

Then, we have that 
$$\xymatrix{ 
\Gr^{3,E}_\ud \ar @{^{(}->}[r]  &  Z_\Ga \ar @{^{(}->}[r]  & \rU_\um } $$
and $\Gr^{3,E}_\ud$ is an open subset of the $\Ga$-scheme $Z_\Ga \subset \rU_\um$. 
As $X$ is integral (by assumption), one sees that $Z_\Ga$ is integral.


We then let $$\vp_\Ga: \tZ^\dagger_{\ell, \Ga} \lra Z_\Ga$$ be as in
 Theorem \ref{cor:main}. This is a resolution.
We set $\widetilde\tGr^{3,E}_\ud=\vp_\Ga^{-1}(\Gr^{3,E}_\ud)$, scheme-theoretically.
Then $$\varpi|_{\tGr^{3,E}_\ud}:  \widetilde\tGr^{3,E}_\ud \lra \Gr^{3,E}_\ud$$ is a resolution.

We now aim to produce a resolution of the scheme
$\bGr^{3,E}_\ud \cong U \subset X \times \AA^r$.
Recall that we have  the quotient map 
$\pi: \Gr^{3,E}_\ud \lra \bGr^{3,E}_\ud \cong U \subset X \times \AA^r$.
In what follows, we will identify  $\bGr^{3,E}_\ud$ with the open subset $U$ of $X \times \AA^r$.

As the morphism $\vp_\Ga: \tZ^\dagger_{\ell, \Ga} \lra Z_\Ga$ is projective, 
so is the restricted morphism 
$\varpi|_{\tGr^{3,E}_\ud}:  \tGr^{3,E}_\ud \to \Gr^{3,E}_\ud$.
Hence, by Theorem 7.17 of \cite{Hartshorne},  we can assume   that 
$\tGr^{3,E}_\ud \to \Gr^{3,E}_\ud$ is the blowup of $\Gr^{3,E}_\ud$
along an ideal sheaf $\tJ$  on $\Gr^{3,E}_\ud$.

As $X$ is affine, we assume that $X$ is a closed affine subscheme of $\AA^m$ for some
integer $m$.  Thus, $X \times \AA^r$ is 
a closed affine subscheme of $\AA^m \times \AA^r$.
We let $\bx=(x_1, \cdots, x_m)$ be the affine
coordinates of $\AA^m$ (the first factor of $\AA^m \times \AA^r$)
and $\bt=(t_1, \cdots, t_r)$ be the affine
coordinates of $\AA^r$ (the second factor of $\AA^m \times \AA^r$).

Now, observe  that the quotient map $$\pi: 
\Gr^{3,E}_\ud \lra \bGr^{3,E}_\ud (\cong U)$$ is 
 a principal $(\GG_m^n/\GG_m)$-bundle, and is
\'etale  locally trivial.  As  any \'etale  locally trivial principal $(\GG_m^n/\GG_m)$-bundle
is Zariski locally trivial (that is, $(\GG_m^n/\GG_m)$ is special in the sense of Serre),
we can over $\bGr^{3,E}_\ud (\cong U)$ by a finite set $\{O\}$ of open subsets
 such that for any open subset $O$ in the cover, we have a trivialization
\begin{equation}\label{trivialization-O}
\Gr^{3,E}_\ud|_{O} \cong O \times (\GG_m^n/\GG_m).
\end{equation}
Further, we take a split and let
 $$(\GG_m^n/\GG_m) \cong \GG_m^{n-1}=\Spec \FF[s_1^\pm, \cdots, s_{n-1}^\pm].$$
Then, we can realize $\Gr^{3,E}_\ud|_{O} \cong O \times (\GG_m^n/\GG_m)$
as a locally closed subset of 
$$\Spec \FF[x_1, \cdots, x_m, t_1, \cdots, t_r, s_1^\pm, \cdots, s_{n-1}^\pm].$$
In what follows, we will write ${\bf s}=(s_1, \cdots, s_{n-1})$.
For any $\ba=(a_1, \cdots, a_{n-1}) \in \ZZ^{n-1}$, we write
$\bs^\ba=s_1^{a_1}\cdots s_{n-1}^{a_{n-1}}$.

Over the open subset $\Gr^{3,E}_\ud|_{O}$ and using the trivialization \eqref{trivialization-O}, 
we can suppose that the ideal $\tJ|_O$ ($\tJ$ restricted to $\Gr^{3,E}_\ud|_{O}$)
is generated by 
$$g_1(\bx, \bt, \bs), \cdots, g_k(\bx, \bt, \bs) \in \tJ|_{O}
\subset \FF[\bx, \bt, s_1^\pm, \cdots, s_{n-1}^\pm],$$ modulo the ideal of 
(the closure of) $\Gr^{3,E}_\ud|_{O}$ in $ \FF[\bx, \bt, s_1^\pm, \cdots, s_{n-1}^\pm],$
for some positive integer $k$.


By the construction of the isomorphism $\bGr^{3,E}_\ud \cong U \subset X \times \AA^r$
(see the proof of Theorem I. 14, \cite{La03}), the variables $\bt$ of $\AA^r$ correspond to
the choices of some auxiliary free points on the projective plane. Then, by the construction of the
resolution $\tZ^\dagger_{\ell, \Ga} \lra Z_\Ga$, we conclude that for every $i \in[k]$,
$g_i(\bx, \bt, \bs)= g_i(\bx)$ is independent of  the variables $\bt$ and $\bs$.
 
  In particular, we can let $\tbJ|_O$ be the ideal of
 $(O \subset U \subset) \;X \times \AA^r$ generated by $g_i(\bx), 1\le i\le k$, modulo the ideal
 of $O$. 
 
 We define the following
 \begin{itemize}
\item We let $J$ be the ideal of $\FF[\bx]$ generated by  
 $g_i(\bx)$, $1\le i\le k$, modulo the ideal of $X$. We then let
 $$\tX \to X$$ be the blowup of $X$ along $J$.
 \item We let $\tbJ$ be the ideal sheaf of  $\bGr^{3,E}_\ud$ corresponding to the closure
 of $V(\tbJ|_O)$, where $V(\tbJ|_O)$ is the closed subscheme of $O$ corresponding to $\tbJ|_O$. We let 
 $$\widetilde\bGr^{3,E}_\ud \to \bGr^{3,E}_\ud$$ be the blowup of $\bGr^{3,E}_\ud$ along $\tbJ$. 
 \item  We let $\tJ$ be the ideal sheaf of  $\Gr^{3,E}_\ud$ corresponding to the closure
 of $V(\tJ|_O)$,  where $V(\tJ|_O)$ is the closed subscheme of 
 $\Gr^{3,E}_\ud|_{O}$ corresponding to $\tJ|_O$. Note here that 
 $\tGr^{3,E}_\ud \to \Gr^{3,E}_\ud$ is the blowup of $\Gr^{3,E}_\ud$ along $\tJ$.
  \end{itemize}

 A priori, the ideals $\tJ$ and $\tbJ$, and their corresponding close subscheme $V(\tJ)$ and $V(\tbJ)$,
 are constructed from the open subset  $\Gr^{3,E}_\ud|_{O}$.
 But, because $\Gr^{3,E}_\ud$ and  $\bGr^{3,E}_\ud$ are irreducible, 
 it is straightforward to check that  $V(\tJ)$ (resp. the ideal sheaf $\tJ$)
$V(\tbJ)$ (resp. the ideal sheaf $\tbJ$) 
 do not depend on the choice of the open subset $O$ nor on
 the trivialization  $\Gr^{3,E}_\ud|_{O} \cong O \times (\GG_m^n/\GG_m)$.
Of course, for any open subset $O$ in the cover $\{O\}$ of $\bGr^{3,E}_\ud$,
the ideals $\tJ$ and  $\tbJ$  admit
  similar descriptions as above.

The above discussion implies the following. We have the  cartesian diagram
 \begin{equation}\label{fiber-square} \xymatrix{ 
\tGr^{3,E}_\ud =\Proj_{\Gr^{3,E}_\ud} \oplus_{d \ge 0} (\tilde J)^d \ar[d]  \ar[r]& 
\Gr^{3,E}_\ud   \ar[d] \\
 \widetilde\bGr^{3,E}_\ud =\Proj_{\bGr^{3,E}_\ud} \oplus_{d \ge 0} (\tbJ)^d \ar[r] \ar[d]& \bGr^{3,E}_\ud \cong U \subset X \times \AA^r \ar[d]\\
  \tX =\Proj_{X} \oplus_{d \ge 0} J^d \ar[r]^{\;\;\;\;\;\; \rho} & X ,
  } \end{equation}
  such that
  \begin{itemize}
\item $\tGr^{3,E}_\ud \to \widetilde\bGr^{3,E}_\ud$ is the $(\GG_m^n/\GG_m)$-fiber bundle,
obtained from the pullback of the $(\GG_m^n/\GG_m)$-fiber bundle
$\Gr^{3,E}_\ud \to \bGr^{3,E}_\ud$;
\item 
 $\widetilde\bGr^{3,E}_\ud \lra \tX$ is a smooth morphism with typical fiber
 isomorphic to an open subset of $\AA^r$.
 \end{itemize}
 Here, we say a morphism $f:Y \to S$ between two schemes $Y$ and $S$ is a $F$-fiber bundle 
 for some fixed scheme $F$ if
 $S$ can be covered by an open subset $\{O\}$ such that
 $f^{-1}(O)$ is isomorphism to $O \times F$.
 
 Because  $\tGr^{3,E}_\ud$ is smooth, by 
 the first fiber bundle, $\widetilde\bGr^{3,E}_\ud$ is smooth.
 Then by the second smooth morphism, we conclude that $\tX$ is smooth.
 Therefore, the morphism
$$\rho: \tX \lra X$$ is a resolution over 
the prime field $\FF$, provided that $X$ is defined over $\ZZ$.

Now, we consider the general case when the affine scheme 
$X/\kk$ is of finite presentation over a perfect field $\kk$.

   The field $\kk$ is an extension of its unique prime (minimal) subfield ${\mathbb F}'$. This 
   unique prime  subfield ${\mathbb F}'$ is isomorphic to 
   $\QQ$ when $\kk$ has characteristic zero or isomorphic to 
   ${\mathbb F}_p$ when $\kk$ has the characteristic $p>0$,
   that is, ${\mathbb F}' \cong \FF$.
 
 We suppose that $X/\kk$ is defined by a finite set of polynomials
  $g_1,\cdots, g_m$ in $\kk[x_1,\cdots, x_n]$ for some positive integers $m$ and $n$.
  Let $R'=\FF'[\hbox{coefficients of $g_1,\cdots, g_m$}]$. When $\FF'$ has the characteristic zero,
  we let $R=R'$; when $\FF'$ has the characteristic $p>0$, we let $R$ be the subring of $\kk$
  generated by $R'$ and $p^i$-th roots of elements of $R'$ for all $i >0$.
  Then the same description of $X$ over $\kk$ also makes sense as the description of 
  a scheme $Y/\FF'$ over $B=\Spec R$ (cf. \cite{EGAIV}, Theorems 8.8.2 and 8.10.5).
 Let $K$ be the fraction field of $R$. This is a perfect subfield of $\kk$.
Then, by the above, we have  a dominant morphism $f: Y \to B$ of finite presentation over $B$,
  such that $X/K$ is isomorphic to the generic fiber $Y_K$ of the morphism $f$.

   
   
   Now, as $Y$ is affine and defined over $\FF'$, we can 
 take a resolution $\tY \to Y$ over $\FF'$. Then, we consider the induced dominant morphism
 $\tilde f: \tY \to B$. 
 Since $\tY$ is  smooth,  we have that the generic fiber $\tX/K$ of $\tilde f$ is regular as well.
 Because $K$ is perfect, we have that $\tX/K$  is smooth.  Then by the scalar extension
 $K \subset \kk$, we obtain that $\tX/\kk$ is smooth.
 Clearly, the induced morphism $\tX/\kk \to  X/\kk$ is projective, birational,
 and surjective,  hence is a resolution,
 as desired.

This implies  Theorem \ref{resolusion-affine}.
\end{proof}

\subsection{Global resolution: the projective case}

\begin{thm}\label{resolusion-proj} {\rm (Resulution: Projective Case)}
Let $X$ be 
a projective scheme of finite presentation over any fixed  
 perfect field $\kk$. 
Assume further that $X$ is integral and 
singular. Then,  $X$ admits a resolution, 
that is, there exists a smooth scheme $\tX$ and a 
projective birational  morphism from $\tX$ onto $X$.
\end{thm}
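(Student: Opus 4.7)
The plan is to reduce the projective case to the affine case (Theorem~\ref{resolusion-affine}) by an affine covering and graph-closure construction. Since $X$ is projective, hence quasi-compact, cover $X$ by finitely many affine open subsets $V_1, \ldots, V_N$. Each $V_i$ is integral and of finite presentation over the perfect field $\kk$, so by Theorem~\ref{resolusion-affine} admits a projective birational morphism $\pi_i \colon \tilde{V}_i \to V_i$ with $\tilde{V}_i$ smooth (when $V_i$ is already smooth we take $\pi_i = \id$). The smooth locus $X^{\rm sm} \subset X$ is dense, and over $X^{\rm sm} \cap V_i$ each $\pi_i$ is an isomorphism, so there is a well-defined rational map $X \dashrightarrow X \times \prod_i \tilde{V}_i$ sending $x \in X^{\rm sm}$ to $(x, (\pi_i^{-1}(x))_i)$.

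Next, define $\tilde{X}$ to be the scheme-theoretic closure of the graph of this rational map. The first projection $\tilde{X} \to X$ is projective (as a closed subscheme of $X \times \prod_i \tilde{V}_i$, which is projective over $X$) and birational. Over each $V_i$, the second projection factors as $\tilde{X}|_{V_i} \to \tilde{V}_i \to V_i$, where the first arrow is a projective birational morphism between integral schemes with $\tilde{V}_i$ smooth. If these arrows $\tilde{X}|_{V_i} \to \tilde{V}_i$ are all isomorphisms, the $\tilde{X}|_{V_i}$ form a smooth open cover of $\tilde{X}$, hence $\tilde{X}$ is smooth and $\tilde{X} \to X$ is the desired resolution. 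Finally, the passage from a prime field to a general perfect $\kk$ is handled by the same spreading-out argument used at the end of the proof of Theorem~\ref{resolusion-affine}, applied to $X$ and its resolution.

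The main obstacle is verifying that $\tilde{X}|_{V_i} \to \tilde{V}_i$ is an isomorphism for each $i$, equivalently that the local resolutions $\tilde{V}_i$ and $\tilde{V}_j$ agree as birational models over $V_i \cap V_j$. The construction in Theorem~\ref{resolusion-affine} depends on numerous non-canonical choices (affine embedding, matroid $\ud$ from Mn\"ev--Lafforgue, chart $\rU_\um$, and the total orderings on $\sfm$ and on $\var_{\rU_\um}$), so automatic compatibility on overlaps is not evident. I anticipate handling this by iteration: if $\tilde{X}$ is still singular, the singular locus is contained in the union of the total transforms of the pairwise overlaps $V_i \cap V_j$, so after covering the possibly-singular $\tilde{X}$ by affines and repeating the graph-closure construction, the singular locus descends strictly; termination then follows by Noetherian induction on the dimension of the singular locus, using that smooth loci are preserved under the construction and that Theorem~\ref{resolusion-affine} applies to every integral affine scheme of finite presentation over $\kk$.
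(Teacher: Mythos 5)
Your reduction to the affine case via a graph closure over an affine cover is not the paper's route, and it contains a genuine gap at exactly the point you flag as ``the main obstacle.'' The paper instead passes to the affine cone $C_X \subset \AA^{m+1}$ of a fixed projective embedding $X \subset \PP^m$, runs the affine-case construction (Mn\"ev--Lafforgue, the $\Ga$-scheme $Z_\Ga$, and the resolution $\tZ^\dagger_{\ell,\Ga} \to Z_\Ga$) on the cone, and then observes the crucial fact that the resulting blowup ideal is generated by polynomials $g_i(\bx)$ that are independent of the auxiliary variables $\bt,\bs$ \emph{and homogeneous in} $\bx$. This yields a homogeneous ideal $J$ on $X$, and the blowup $\tX = \Proj_X \oplus_{d\ge 0} J^d$ sits under the resolved cone via a $\GG_m$-bundle $\widetilde{C_X^0} \to \tX$; smoothness of the resolved cone (supplied by the affine case) then descends to $\tX$ along a chain of fiber bundles and smooth morphisms. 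Working with a single global affine model --- the cone --- is what makes the compatibility question you worry about a non-issue.

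Your fallback --- iterate the graph-closure construction and conclude by ``Noetherian induction on the dimension of the singular locus'' --- is not a proof. No argument is given that the singular locus strictly drops, and none is available: the closure of the graph of the rational map into $\prod_i \tilde{V}_i$ is essentially the blowup of a product of the ideals defining the local resolutions, and such blowups can acquire new singularities over the overlaps $V_i \cap V_j$ with no control on their dimension. Reconciling independently constructed local resolutions (which here depend on the affine embedding, the matroid $\ud$, the chart $\rU_\um$, and the chosen orderings) is precisely the gluing problem that the paper explicitly leaves open for general integral schemes (``It remains to glue finitely many such local resolutions to obtain a global one. This is being pursued.''). There is also a secondary, fixable defect: each $\tilde{V}_i$ is proper only over $V_i$, so the closure of the graph in $X \times \prod_i \tilde{V}_i$ need not be proper over $X$; one must first extend each blowup ideal from $V_i$ to all of $X$ before forming a model projective over $X$.
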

\begin{proof} 
We continue to follow the idea and notation of the proof of Theorem \ref{resolusion-affine}.

We first assume that $X$ is defined over $\ZZ$.

Take and fix a projective embedding of $X \subset \PP^m$.
Let $C_X$ be the affine cone of $X$ defined by the above embedding,
and, let $C_X^0 = C_X \- \{0\}$. Then, $C_X^0$ is a $\GG_m$-bundle over $X$, 
locally trivial in Zariski topology.

The affine cone $C_X$ is a closed affine subscheme of the affine space $\AA^{m+1}$. 
We let  $\bx=(x_0, \cdots, x_m)$ (resp. $[\bx]=[x_0, \cdots, x_m]$) be the
affine coordinates of $\AA^{m+1}$ (resp. the homogeneous coordinates of 
the projective space $\PP^m$).

 As in the proof of Theorem \ref{resolusion-affine}, by applying the similar argument to
the affine scheme $C_X$, we obtain  the following diagram
\begin{equation}\label{la-di} \xymatrix{
\tGr^{3,E}_\ud \ar[r] & \Gr^{3,E}_\ud  \ar^{\pi \;\;\;\;\;\;\;\;\;\;\;\;\;\;\;\;\;\;\;\;\;\;\;\;\;\;}[r] 
& \bGr^{3,E}_\ud=\Gr^{3,E}_\ud/(\GG_m^n/\GG_m) 
\cong U  \ar @{^{(}->}[r]  & C_X \times \AA^r  \ar[d]   \\
& & C_X^0 \ar @{^{(}->}[r]  \ar[d] & C_X \ar @{-->} [d] \\
& & X \ar^{=}[r] & X \subset \PP^m,
} \end{equation}
where $\tGr^{3,E}_\ud \to \Gr^{3,E}_\ud$ is induced from 
the resolution $\tZ^\dagger_{\ell, \Ga} \lra Z_\Ga$ for some $\Ga$.

Using $C_X$ to take the role of the affine scheme $X$ as in the proof of 
Theorem \ref{resolusion-affine}, we can keep and follow the notations used in that proof.
In particular, using the trivialization of \eqref{trivialization-O}
$$\Gr^{3,E}_\ud|_{O} \cong O \times (\GG_m^n/\GG_m),$$
we  can realize $\Gr^{3,E}_\ud|_{O} \cong O \times (\GG_m^n/\GG_m)$
as a locally closed subset of 
$$\Spec \FF[x_1, \cdots, x_m, t_1, \cdots, t_r, s_1^\pm, \cdots, s_{n-1}^\pm].$$

We can assume that the $\tGr^{3,E}_\ud|_{O} \to \Gr^{3,E}_\ud|_{O}$ is
the blowup of $ \Gr^{3,E}_\ud|_{O}$ along an ideal   $\tilde J|_O$,
generated by $g_1, \cdots, g_k \in 
\subset \FF[\bx, \bt, s_1^\pm, \cdots, s_{n-1}^\pm]$, modulo the ideal of 
(the closure of) $\Gr^{3,E}_\ud|_{O}$ in $ \FF[\bx, \bt, s_1^\pm, \cdots, s_{n-1}^\pm],$
for some positive integer $k$.

 Again, as in the proof of Theorem \ref{resolusion-affine},
by the construction of the isomorphism $\bGr^{3,E}_\ud \cong U \subset C_X \times \AA^r$
(see the proof of Theorem I. 14, \cite{La03}), 
and by the construction of the 
resolution $\tZ^\dagger_{\ell, \Ga} \lra Z_\Ga$, we conclude that for every $i \in [k]$,
$g_i(\bx, \bt, \bs)= g_i(\bx)$ is free of  the variables $\bt$ and $\bs$, and furthermore,
 it is also homogeneous in $\bx$.  In particular, we can let $\tbJ|_O$ be the (affine) ideal of
 $(O \subset U \subset) \; C_X \times \AA^r$ generated by $g_i(\bx), 1\le i\le k$, modulo the ideal
 of $O$. 
 
 We define the following
 \begin{itemize}
\item We let $J$ be the homogeneous ideal of $\FF[\bx]$ generated by  
 $g_i(\bx)$, modulo the ideal of $X$. We then let
 $$\tX \to X$$ be the blowup of $X$ along $J$.
 \item  We let $J_{\rm aff}$ be the affine ideal of $\FF[\bx]$ generated by  
 $g_i(\bx)$, modulo the ideal of $C_X$.  We then let
 $$\widetilde{C_X} \to C_X$$ be the blowup of $C_X$ along $J_{\rm aff}$. 
 \item We let $\tbJ$ be the ideal sheaf of  $\bGr^{3,E}_\ud$ corresponding to the closure
 of $V(\tbJ|_O)$. We then let 
 $$\widetilde\bGr^{3,E}_\ud \to \bGr^{3,E}_\ud$$ be the blowup of $\bGr^{3,E}_\ud$ along $\tbJ$. 
 \item  We let $\tJ$ be the ideal sheaf of  $\Gr^{3,E}_\ud$ corresponding to the closure
 of $V(\tJ|_O)$.  Note here that 
 $\tGr^{3,E}_\ud \to \Gr^{3,E}_\ud$ is the blowup of $\Gr^{3,E}_\ud$ along $\tJ$.
  \end{itemize}
As in the proof of Theorem \ref{resolusion-affine}, the above definitions do not depend on the choice
of the open subset $O$ nor on the trivialization  \eqref{trivialization-O}. 

Then, the above discussions imply the following. We have the diagram

 \begin{equation}\label{fiber-square} \xymatrix{ 
& \tGr^{3,E}_\ud =\Proj_{\Gr^{3,E}_\ud} \oplus_{d \ge 0} (\tilde J)^d \ar[d]  \ar[r] & 
\Gr^{3,E}_\ud   \ar[d]  & \\
&  \widetilde\bGr^{3,E}_\ud =\Proj_{\bGr^{3,E}_\ud} \oplus_{d \ge 0} (\tbJ)^d \ar[r] \ar[d]& \bGr^{3,E}_\ud \cong U \subset C_X \times \AA^r \ar[d] & \\
  \widetilde{C_X^0} \ar[r] \ar[d] & \widetilde{C_X} =\Proj_{C_X} \oplus_{d \ge 0} J_{\rm aff}^d \ar[r]\ar @{-->}[d] & C_X \ar @{-->}[d] & C_X^0 \ar[l] \ar[d]  \\ 
\tX \ar[r]^{=\;\;\;\;\;\;\;\;\;\;\;\;\;\;\;\;\;\;} & \tX =\Proj_{X} \oplus_{d \ge 0}J^d \ar[r] & X & X\ar[l]_{=}
  } \end{equation}
such that 
\begin{itemize}
\item $\tGr^{3,E}_\ud \to \widetilde\bGr^{3,E}_\ud$ is the $(\GG_m^n/\GG_m)$-fiber bundle,
obtained from the pullback of the $(\GG_m^n/\GG_m)$-fiber bundle
$\Gr^{3,E}_\ud \to \bGr^{3,E}_\ud$;
\item 
 $\widetilde\bGr^{3,E}_\ud \lra \widetilde{C_X}$ is a smooth morphism with typical fiber
 isomorphic to an open subset of $\AA^r$;
 \item $\widetilde{C_X^0} \lra \tX$ is the $\GG_m$-fiber bundle,
obtained from the pullback of the $\GG_m$-fiber bundle $C_X^0 \lra X$.
 \end{itemize}
 
 Because $\tGr^{3,E}_\ud$ is smooth, by the first fiber-bundle,
 we obtain that $\widetilde\bGr^{3,E}_\ud$ is smooth; by the second-fiber bundle, we see that
 $\widetilde{C_X}$ and hence its open subset $\widetilde{C_X^0}$ are smooth;
 by the third fiber bundle, we conclude that  $\tX$ is smooth over any prime field $\FF$.
 
 Thus,  the morphism $\tX \lra X$ is a resolution over $\FF$.

 
 Now, we consider the general case when the affine scheme 
$X/\kk$ is of finite presentation over a perfect field $\kk$.

   The field $\kk$ is an extension of its unique prime (minimal) subfield ${\mathbb F}'$. This 
   unique prime  subfield ${\mathbb F}'$ is isomorphic to 
   $\QQ$ when $\kk$ has characteristic zero or isomorphic to 
   ${\mathbb F}_p$ when $\kk$ has the characteristic $p>0$,
   that is, ${\mathbb F}' \cong \FF$.
 
Now, using the similar arguments as in the end of the proof of Theorem \ref{resolusion-affine},
 there exist a subring $R$ of $\kk$ such that its fraction field $K$ is a perfect subfield of $\kk$,
 an integral scheme $Y/\FF'$,
 and a dominant morphism $f: Y \to B$ of finite presentation over $B=\Spec R$,
  such that $X/K$ is isomorphic to the generic fiber $Y_K$ of the morphism $f$.
 By taking a projective closure of $B$ and the corresponding projective
 closure of $Y/B$, we may assume that $B$ and $Y$ are projective.

   
   
   Now, as $Y$ is projective and defined over $\FF'$, we can 
 take a resolution $\tY \to Y$ over $\FF'$.
Then, we consider the induced dominant morphism
 $\tilde f: \tY \to B$. 
 Since $\tY/\FF'$ is  smooth,  we have that the generic fiber $\tX/K=Y_K$ of $\tilde f$ is regular as well.
 Because $K$ is perfect, we have that $\tX/K$  is smooth.  
 Hence, by the scalar extension $K \subset \kk$, $\tX/\kk$  is smooth.
 Clearly, the induced morphism $\tX/\kk \to  X/\kk$ is projective, birational,
 and surjective,  hence is a resolution,
 as desired.

This proves  Theorem \ref{resolusion-proj}.
\end{proof}

(Of course Theorem \ref{resolusion-affine} is a direct consequence of Theorem \ref{resolusion-proj},
and the proofs of the two theorems are largely parallel. But, we find the above organization makes
our idea and proofs more tranparent.)

When the base field $\kk$ has characteristic zero,
the above two theorems are  well known from Hironaka's resolution \cite{Hironaka64}. 
When the base field $\kk$ has positive characteristic, Abhyankar \cite{Abh} proved resolution of singularities for algebraic threefolds in characteristic greater than 6.
(One may consult \cite{Zariski2} for the case when $\kk$ is not perfect.)

 In this article, we approach resolution of singularity 
by performing  blowups of  (a chart of)  $\Gr^{3,E}$. 
It is convincible that certain parallel blowups exist for $(\PP^2)^n$ that can also lead to achieve
 resolution of singularity  (\cite{Hu15b}). 
(Indeed, when the author began to work on resolution of singularity, he tried both approaches and switched between the two for quite a while 
before settling down on the current approach via  Grassmannians.)

\section{ Geometric Resolution}

Prior to de Jong's geometric approach \cite{deJong96}, resolutions of varieties in general dimensions
are essentially done by finding good algorithms.
In such an approach, one isolates a set of bounded invariants and prove that after certain finite steps,
such invariants improve strictly. As the invariants are bounded, the algorithm terminates.
These approaches are nicely presented in Kollar's book \cite{Kollar}.

According to \cite{Vakil}, many moduli spaces or deformation spaces exhibit arbitrary singularities.
In other words, all singularities exist {\it geometrically}.
Since singularities exist for geometric reasons, one would wonder whether
there should be {\it geometric ways} to resolve them, avoiding pure algorithms on polynomials. Being  philosophically optimistic,
the author believes that every singular moduli admits a resolution, in a specific relative sense,
 such that the resolution itself is also a moduli. 
 
In other words, it would be desirable if the following problem can be answered in some positive ways.

\begin{problem}\label{program}
For any singular moduli space $\mathfrak M$, find another moduli space $\widetilde{\mathfrak M}$
that only modifies the boundary objects  of  $\mathfrak M$ 
 such that every irreducible component of $\widetilde{\mathfrak M}$, endowed with the reduced stack structure,
  is smooth, and
all such irreducible components meet transversally.
\end{problem}

Here, an object of $\widetilde{\mathfrak M}$ should be obtained from the corresponding object of
$\mathfrak M$ by adding certain extra data. The extra data should reduce the automorphisms
of the original object, and ideally, should remove  all removable obstructions.

See Conjectures 5.4 and 5.5 of \cite{Hu17} for  somewhat more precise formulations.


\end{document}